\documentclass[reqno,11pt]{amsart}
\usepackage[colorlinks=true, linkcolor=blue, citecolor=blue]{hyperref}

\usepackage{amssymb}
\usepackage{amsmath, graphicx, rotating}
\usepackage{color}     
\usepackage{soul}
\usepackage[dvipsnames]{xcolor}   
\usepackage{tcolorbox}

\usepackage[T1]{fontenc}
\usepackage{lmodern}
\usepackage[english]{babel}

\usepackage{ upgreek }
\usepackage{stmaryrd}
\SetSymbolFont{stmry}{bold}{U}{stmry}{m}{n}
\usepackage{amsthm}
\usepackage{float}

\usepackage{ bbm }
\usepackage{ stmaryrd }
\usepackage{ mathrsfs }
\usepackage{ frcursive }
\usepackage{ comment }

\usepackage{pgf, tikz}
\usetikzlibrary{shapes}
\usepackage{varioref}
\usepackage{enumitem}
\usepackage{longtable}

\usepackage{mathtools}

\usepackage{dsfont}

\setcounter{MaxMatrixCols}{10}
\definecolor{rouge}{rgb}{0.7,0.00,0.00}
\definecolor{vert}{rgb}{0.00,0.5,0.00}
\definecolor{bleu}{rgb}{0.00,0.00,0.8}


\usepackage[margin=1.3in]{geometry}

\newtheorem{theorem}{Theorem}[section]
\newtheorem*{theorem*}{Theorem}
\newtheorem{lemma}[theorem]{Lemma}

\newtheorem{corollary}[theorem]{Corollary}
\newtheorem{proposition}[theorem]{Proposition}

\labelformat{hypothesis}{\textbf{M\kern-0.1mm#1}}

\renewcommand\dots{\hbox to 1em{.\hss.\hss.}}

\theoremstyle{definition}

\newtheorem{remark}[theorem]{Remark}

\numberwithin{equation}{section}

\def\bb#1{\mathbb{#1}}

\def\bf#1{\mathbf{#1}}
\def\scr#1{\mathscr{#1}}

\def\geq{\geqslant}
\def\leq{\leqslant}
\def\vphi{\varphi}

\def\ddd{\ldots}
 
\newcommand\ee{\varepsilon}

\def\geq{\geqslant}
\def\leq{\leqslant}
\def\Rd {\mathbb{R}^d}
\def\Rd*{(\mathbb{R}^d)^*}
\def\Pd{{\mathbb{P}}^{d-1}}
\def\Pd*{(\mathbb{P}^{d-1})^*}

\def\bb#1{\mathbb{#1}}

\begin{document}
\title[Conditioned random walks on linear groups]{Conditioned random walks on linear groups I: construction of the target harmonic measure} 

\author{Ion Grama}
\author{Jean-Fran\c cois Quint}
\author{Hui Xiao}

\curraddr[Grama, I.]{Univ Bretagne Sud, CNRS UMR 6205, LMBA, Vannes, France.}
\email{ion.grama@univ-ubs.fr}

\curraddr[Quint, J.-F.]{IMAG, Univ Montpellier, CNRS UMR 5149, Montpellier, France.}
\email{Jean-Francois.Quint@umontpellier.fr}

\curraddr[Xiao, H.]{State Key Laboratory of Mathematical Sciences, Academy of Mathematics and Systems Science, Chinese Academy of Sciences, Beijing 100190, China.}
\email{xiaohui@amss.ac.cn}

\date{\today }
\subjclass[2020]{Primary 60F17, 60J05, 60J10. Secondary 22E46, 22D40}
\keywords{Random walks on groups, exit time, 
random walks conditioned to stay positive, local limit theorem, target harmonic measure}

\begin{abstract}
Our objective is to explore random walks on the general linear group, constrained to a specific domain, 
with a primary focus on establishing the conditioned local limit theorem. 
This paper represents the first step toward achieving this goal, 
specifically entailing the construction of a novel entity -- the target harmonic measure. 
This measure, together with the harmonic function, serves as a pivotal component in establishing
 the conditioned local limit theorem.
Using a reversal identity, we introduce a reversed sequence characterized as a dual random walk with a perturbation depending on future observations. The investigation of such walks, which rely on future information, lies at the heart of this paper.
To carry out this study, we develop an approach grounded in the finite-size approximation of perturbations, 
enabling us to simplify the investigation to an array of Markov chains with increasing dimensions.
\end{abstract}

\maketitle

\tableofcontents


\section{Introduction and results} \label{sec-Introduction and results}

\subsection{Notation and background} \label{sec-background and motiv}
We define $\bb R$ and $\bb R_+=[0,\infty)$ as the real line and  the non-negative half-line, respectively. 
The set of non-negative integers is denoted by $\bb N$ and that of positive integers by $\bb N^*$.
Let $\bb V$ be a finite-dimensional vector space over the field $\bb R$ and
let $\bb G = \mathrm{GL}(\bb V)$ be the group of linear automorphisms of $\bb V$.
We equip $\bb V$ with a Euclidean norm $\| \cdot \|$. 
If $g$ is a linear endomorphism of $\bb V$, 
we write $\|g\| = \sup_{v \in \bb V \smallsetminus \{0\}} \frac{\| gv \|}{\| v \|}$ for the operator norm of $g$. 
The group $\bb G$ acts on the projective space $\bb P(\bb V)$ of $\bb V$ through the formula
$ g ( \bb R v ) = \bb R (gv)$, where $g \in \bb G$ and $v \in \bb V \smallsetminus \{0\}$. 

Let $\mu$ be a Borel probability measure on the group $\bb G$. 
Denote by $\Gamma_{\mu}$ the closed subsemigroup of $\bb G$ spanned by the support of the measure $\mu$. 
We will make use of the condition that $\Gamma_{\mu}$ is proximal, meaning that   
$\Gamma_{\mu}$ contains an element $g$ such that 
the characteristic polynomial of $g$ admits a unique root of maximal modulus, and that this root is simple. 
We will additionally assume that $\Gamma_{\mu}$ is strongly irreducible, meaning that no finite union of proper non-zero subspaces 
of $\bb V$ is $\Gamma_{\mu}$-invariant. 
Finally, we will say that $\mu$ admits a finite exponential moment if 
there exists a constant  $\alpha >0$ such that 
\begin{align}\label{Exponential-moment}
\int_{\bb G} \max \{ \|g\|,  \|g^{-1} \| \}^{\alpha} \mu(dg) < \infty. 
\end{align}
For $g \in \bb G$ and $x = \bb R v \in \bb P(\bb V)$, we write 
\begin{align*}
\sigma(g, x) = \log \frac{\|gv\|}{\|v\|}. 
\end{align*}
The function $\sigma: \bb G \times \bb P(\bb V) \to \bb R$ satisfies the cocycle identity: for any $g, h \in \bb G$ and $x \in \bb P(\bb V)$, we have 
$\sigma(g h, x) = \sigma(g, h x) + \sigma(h, x).$ 

Below we will consider random variables with values in $\bb R$ and $\bb G$-valued random elements  that all are assumed to be 
 defined on some probability space $(\Omega, \scr A, \bb P)$. 
The expectation with respect to the probability measure $\bb P$ is denoted by $\bb E$.
We denote by $\mathds 1_{B}$ the indicator function of the event $B\in \scr A$. 
 For brevity, given a random variable  $X$  and an event $B\in \scr A$, we will write $\bb E (X; B)$ 
 for the expectation $\bb E (X \mathds 1_{B})$.  

Let $g_1, g_2, \ldots$ be a sequence of independent and identically distributed random elements of $\bb G$ with law $\mu$.
For any starting point $x\in \bb P(\bb V)$, consider the random walk
\begin{align} \label{def of direct RW-001}
 \sigma(g_n \cdots g_1,x) = \sum_{k=1}^{n}  \sigma(g_k, g_{k-1} \cdots g_1 x) , 
 \quad n\geq 1,
\end{align}
where, by a convention applied throughout the paper, 
whenever $m<1$, the empty left product $g_m \cdots g_1$ is
identified with the identity matrix.  
Under the assumptions that the measure $\mu$ 
admits an exponential moment and its associated semigroup $\Gamma_\mu$ is proximal and strongly irreducible, 
it is well known that there exists a real number $\lambda_\mu$, 
called the first Lyapunov exponent of $\mu$, 
such that for any $x \in \bb P(\bb V)$, 
$\bb P$-almost surely, as $n \to \infty$, 
\begin{align*}
\frac{1}{n}  \sigma(g_n \cdots g_1, x) \to \lambda_{\mu}. 
\end{align*}
We assume that the random walk \eqref{def of direct RW-001} is centered, meaning that  $\lambda_{\mu} = 0$.  
For any $x \in \bb P(\bb V)$ and $t\in \bb R$, consider 
the first time when the random walk 
$(t+\sigma(g_k \cdots g_1, x))_{k\geq 1}$
leaves the non-negative half-line $\bb R_+$: 
\begin{align} \label{stopping time tau x t-001}
\tau_{x, t}   = \min \{ k \geq 1: t + \sigma(g_k \cdots g_1, x) < 0 \}, 
\end{align}
where by convention $\min \emptyset =\infty$.
A similar time can be associated with the random walk $(t-\sigma(g_k \cdots g_1, x))_{k\geq 1}$: 
\begin{align} \label{stopping time tau x t-002}
\check \tau_{x, t}   = \min \{ k \geq 1: t - \sigma(g_k \cdots g_1, x) < 0 \}.   
\end{align}
These stopping times have been studied in \cite{GLP17}, 
where the asymptotics of the probability $\bb P(\tau_{x,t} >n)$ 
as well as the law of the random walk $t + \sigma(g_n \cdots g_1, x)$ 
conditioned on the event $\{ \tau_{x,t} >n \}$ have been determined. 
Our focus in this paper lies in examining the corresponding local limit theorem within the same context.
 
To clarify the concepts required for such a conditioned local limit theorem, 
we turn briefly to the case of random walks in $\bb R$. 
Let $S_n =\sum_{k=1}^n \xi_k $ for $n\geq 1$, where $(\xi_k)_{k\geq 1}$ 
is a sequence of independent and identically distributed real-valued random variables of mean $0$ and finite variance $\upsilon^2 >0$. 
For any $t\in \bb R$, we define the stopping times 
\begin{align*}
& \tau_{t}=\min \{ k \geq 1: t + S_k < 0 \}, \quad \check{\tau}_{t}=\min \{ k \geq 1: t - S_k < 0 \}, 
\end{align*}
where, as before, $\min \emptyset =\infty$. 
Assume  that the random walk $(S_n)_{n\geq 1}$ is non-lattice and that $\xi_1$ has a moment of order $2$.
Then, from the results in \cite{VatWacht09, Don12, GQX23, GX2022IID},  
the following asymptotic holds, which we reformulate in a suitable way: 
for any $t\in \bb R$ and any continuous compactly supported function $h: \bb R \to \bb R$, 
\begin{align} \label{loc limit theorem finite MC-001}
\lim_{n\to\infty} n^{3/2} \bb E \Big(h( t+ S_n ); \tau_{t} >n-1  \Big) 
& = \frac{2V(t)}{\sqrt{2\pi} \upsilon^3} \int_{\bb R} h(t') \check{V}(t')dt'  \notag\\
& = \frac{2V(t)}{\sqrt{2\pi} \upsilon^3} \int_{\bb R} h(t') \rho(dt'). 
\end{align}
Here $V(t)= \lim_{n\to\infty} \bb E (t+S_n; \tau_{t}>n)$, $t\in \bb R,$
is the harmonic function pertaining to the random walk $(S_n)_{n\geq 1}$,
$\check{V}(t) = \lim_{n\to\infty}    \bb E (t +\check S_n;  \check{\tau}_{t}>n)$, $t\in \bb R,$
is the harmonic function pertaining to the reversed random walk $\check S_n = (-S_n)_{n\geq 1}$,
and $\rho$ is the absolutely continuous Radon measure on $\bb R$ defined by $\rho(dt) = \check{V}(t)dt$.
The measure $\rho$ may be thought of as the exit target measure 
in the conditioned local limit theorem \eqref{loc limit theorem finite MC-001}.
Using the following reversal identity 
\begin{align} \label{duality-ident-001}
\int_{\bb R} h(t)  \bb E \Big( t + \check S_n; \check{\tau}_{t}>n \Big) dt = \int_{\bb R_+} t \, \bb E \Big( h(t+S_n); \tau_{t}>n-1 \Big) dt,
\end{align}
the target measure $\rho$ can be redefined in terms of the direct  random walk $(S_n)_{n\geq 1}$ as follows:
\begin{align} \label{alternative form 001}
\int _{\bb R} h(t) \rho(dt)= \lim_{n\to\infty} \int_{\bb R_+} t \, \bb E \Big( h(t+S_n); \tau_{t}>n-1 \Big) dt.
\end{align}

The identities \eqref{alternative form 001} and  \eqref{duality-ident-001} 
serve as the starting point for extending the concept of the target measure 
to the random walk $(\sigma(g_n \cdots g_1, x))_{n\geq 1}$.
 This extension, still denoted by $\rho$, 
 will henceforth be referred to as the target harmonic measure  associated with the random walk 
$(\sigma(g_n \cdots g_1, x))_{n\geq 1}$ killed upon exiting $\bb R_+$.

The construction of the target harmonic measure $\rho$ for the walk $(\sigma(g_n \cdots g_1, x))_{n\geq 1}$
is the main purpose of this paper.
A key step in this construction is to identify a suitable adaptation of the concept 
of a reversed random walk for $\sigma(g_n \cdots g_1, x)$. 
The reversed random walk is defined using an analog of the reversal identity  \eqref{duality-ident-001}, 
as formulated in Lemma \ref{lemma-duality-for-product-001}, 
and is expressed through the relation \eqref{reversed RWfor products-001}.  
One of the main challenges tackled in this work lies in the fact that the resulting reversed random walk 
 no longer constitutes a Markov chain. 
However, using the cohomological identity  \eqref{cohomological eq -vers 002}, 
we interpret this reversed process as a Markov chain perturbed by dependencies on future coordinates.
It is precisely this dependency that renders the construction of the associated harmonic function, as
developed in \cite{DW15, GLP17}, unsuitable for the analysis of the exit time $\tau_{x,t}$. 

To address this issue, we construct a sequence of Markov chains with increasing dimension,
leading to  a corresponding sequence of harmonic functions that depend on the dimension. 
Through a refined approximation procedure, this framework ultimately enables the
construction of the harmonic measure $\rho$.

The use of the term \textit{harmonic} is justified by the fact that $\rho$ satisfies a harmonicity property, 
as demonstrated in Corollary \ref{Thm-measure-rho-v003}. 
Further insights into this property are provided in Subsection \ref{sec-heuristic for harmonic measure}.

The significance of the target measure $\rho$  becomes particularly evident in the formulation of the conditioned local limit theorem, 
which we prove in a separate paper \cite{GQX24}. 
For the reader's convenience, it is restated at the end of the next section, 
following the statement of the main results, see Theorem \ref{Thm-CLLT-cocycle}.

Finally, the theory developed here will also prove to be useful for studying the conditioned 
random walk of the form $\log \|g_n\cdots g_1\|$, where $\|\cdot \|$ is any norm on the group $\bb G$.  
The results related to this theory will be presented in a forthcoming work \cite{GQX25}.

\subsection{Statement of the main results}
We begin by recalling the following existence result \cite[Theorem 2.1]{GLP17}. 

\begin{theorem} \label{Theorem GLPL2017} 
Assume that $\Gamma_{\mu}$ is proximal and strongly irreducible, 
the measure $\mu$ admits an exponential moment and the Lyapunov exponent $\lambda_{\mu}$ is zero. 
Then, for any $x \in \bb P(\bb V)$ and $t \in \bb R$, the following limits exist:
\begin{align} 
& \lim_{n \to \infty} \bb E \Big( t + \sigma(g_n \cdots g_1, x); \tau_{x, t} > n \Big) = V(x, t),  \label{harm function GLPP-001} \\
 & \lim_{n \to \infty} \bb E \Big( t - \sigma(g_n \cdots g_1, x); \check{\tau}_{x, t} > n \Big) = \check{V}(x, t).  \label{harm function GLPP-002}
\end{align}
Moreover, 
uniformly in $x \in \bb P(\bb V)$, it holds 
\begin{align*} 
\lim_{t \to \infty} \frac{V(x, t)}{t}  =\lim_{t \to \infty}\frac{\check{V}(x, t)}{t} = 1.
\end{align*} 
\end{theorem}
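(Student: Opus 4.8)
The plan is to convert the cocycle sum into a genuine martingale and then carry out a random-walk-conditioned-to-stay-positive analysis adapted to the Markov environment. Work with the Markov chain $X_n = g_n \cdots g_1 x$ on $\bb P(\bb V)$ and its transition operator $Pf(x) = \int_{\bb G} f(gx)\,\mu(dg)$. Under the standing hypotheses (proximality, strong irreducibility, exponential moment), $P$ acts with a spectral gap on a suitable space of H\"older-continuous functions on $\bb P(\bb V)$ and admits a unique stationary probability measure $\nu$; moreover $\varphi(x) := \int_{\bb G} \sigma(g,x)\,\mu(dg)$ is H\"older with $\nu(\varphi) = \lambda_\mu = 0$. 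Hence $\theta := \sum_{k \ge 0} P^k \varphi$ converges in H\"older norm to a bounded function with $\theta - P\theta = \varphi$, and a direct computation with the cocycle identity shows that
\[
M_n := \sigma(g_n \cdots g_1, x) + \theta(X_n) - \theta(x), \qquad n \ge 0,
\]
is a martingale for the natural filtration $(\scr F_n)$, with increments having a finite exponential moment uniformly in the state. Writing $T_n := t + \sigma(g_n\cdots g_1,x)$ and $\tilde T_n := t + M_n$, we obtain a true martingale $\tilde T_n$ with $\tilde T_0 = t$, $|T_n - \tilde T_n| \le 2\|\theta\|_\infty =: c_0$, and asymptotic variance $\upsilon^2 > 0$; thus the exit problem for $T_n$ from $\bb R_+$ coincides, up to a uniformly bounded shift, with that for $\tilde T_n$.

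The analytic core, and the step I expect to be the main obstacle, is a pair of a priori estimates valid for all $n \ge 1$, all $x$ and all $t$:
\[
\frac{c}{\sqrt n} \ \le\ \bb P(\tau_{x,t} > n) \ \le\ \frac{C(1 + \max(t,0))}{\sqrt n}, \qquad \bb E\big( T_n ;\, \tau_{x,t} > n \big) \le C(1 + \max(t,0)),
\]
together with the refined local bound $\bb P(\tau_{x,t} > n,\ T_n \le A) \le C(1+A)(1 + \max(t,0))\, n^{-3/2}$ for every fixed $A \ge 0$. The first two are proved by comparing $M_n$ (hence $T_n$) with a Brownian motion run at speed $\upsilon^2$, controlling the Markov-dependent conditional variances by the spectral gap of $P$ and invoking a strong approximation of Koml\'os--Major--Tusn\'ady / Sakhanenko type; this is precisely where proximality, strong irreducibility, the exponential moment and $\lambda_\mu = 0$ all enter in an essential way. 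The refined local bound then follows by applying the Markov property at time $\lfloor 2n/3 \rfloor$ and a ballot-type estimate for the conditioned chain, itself deduced from the a priori bound applied to both $\sigma$ and $-\sigma$ and from the ordinary (unconditioned) local limit theorem for $(\sigma(g_n\cdots g_1,x))$. As a byproduct one gets $\tau_{x,t} < \infty$ almost surely and, by the same reasoning applied to $-\sigma$, the corresponding statements for $\check\tau_{x,t}$.

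Granting these estimates, the existence of $V(x,t)$ is obtained by showing that $f_n := \bb E(T_n;\tau_{x,t}>n)$ is Cauchy. Conditioning on $\scr F_n$, using $\bb E(\sigma(g_{n+1},X_n)\,|\,\scr F_n) = \varphi(X_n)$ and splitting $\mathds 1_{\tau_{x,t}>n} = \mathds 1_{\tau_{x,t}>n+1} + \mathds 1_{\tau_{x,t}=n+1}$ gives
\[
f_{n+1} - f_n = \bb E\big(\varphi(X_n);\, \tau_{x,t}>n\big) - \bb E\big(T_{n+1};\, \tau_{x,t}=n+1\big).
\]
On $\{\tau_{x,t}=n+1\}$ one has $T_n \ge 0$ and $\sigma(g_{n+1},X_n) < -T_n$, so $|\bb E(T_{n+1};\tau_{x,t}=n+1)| \le \bb E(|\sigma(g_{n+1},X_n)|;\tau_{x,t}=n+1) \le C\,\bb E(e^{-\alpha T_n/2};\tau_{x,t}>n) \le C'(1+\max(t,0))\,n^{-3/2}$ by the refined local bound, hence is summable. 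For the coboundary term $\bb E(\varphi(X_n);\tau_{x,t}>n)$ one applies the Markov property at time $n - \lfloor \log^2 n\rfloor$, writes the conditional expectation of $\varphi(X_n)$ as $P^{\lfloor \log^2 n\rfloor}\varphi$ evaluated at the earlier time plus a correction accounting for the no-exit constraint, and bounds the former by $\|P^{\lfloor \log^2 n\rfloor}\varphi\|_\infty \le C\beta^{\lfloor\log^2 n\rfloor}$ (spectral gap, $\beta < 1$) and the latter by a martingale maximal inequality combined with the refined local bound; this yields $|\bb E(\varphi(X_n);\tau_{x,t}>n)| \le C(1+\max(t,0))\,\log^2 n\,/\,n^{3/2}$, again summable. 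Therefore $\sum_n|f_{n+1}-f_n| < \infty$, so $V(x,t) := \lim_n f_n$ exists; \eqref{harm function GLPP-002} follows by running the identical argument for the centered cocycle $-\sigma$ (or, equivalently, through a reversal identity).

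It remains to prove $V(x,t)/t \to 1$ and $\check V(x,t)/t \to 1$ uniformly in $x$. Optional stopping applied at the bounded stopping time $n\wedge\tau_{x,t}$ to the martingale $\tilde T$ gives $\bb E(\tilde T_{n\wedge\tau_{x,t}}) = t$, i.e. $\bb E(\tilde T_n;\tau_{x,t}>n) = t - \bb E(\tilde T_{\tau_{x,t}};\tau_{x,t}\le n)$; since $\tilde T_{\tau_{x,t}} \le c_0$ always and $-\tilde T_{\tau_{x,t}} \le |\sigma(g_{\tau_{x,t}},X_{\tau_{x,t}-1})| + c_0$ with the right-hand side integrable (by the summability just established), we may let $n\to\infty$ and, using $\bb E(\theta(X_n);\tau_{x,t}>n)\to0$ and $\bb P(\tau_{x,t}>n)\to0$, obtain the representation $V(x,t) = t + \theta(x) - \bb E(\tilde T_{\tau_{x,t}})$. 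As $|\theta|\le\|\theta\|_\infty$, it suffices to show $\bb E|\sigma(g_{\tau_{x,t}},X_{\tau_{x,t}-1})| = o(t)$ uniformly in $x$: splitting at a level $N=N(t)\to\infty$, the part with $\tau_{x,t}\le N$ is small because $\bb P(\tau_{x,t}\le N) \le N\sup_x\bb P(|\sigma(g_1,x)| > t/N)$ is small when $t\gg N$, while the part with $\tau_{x,t}>N$ is bounded by $\sum_{m>N}C(1+\max(t,0))\,m^{-3/2} \le C'(1+\max(t,0))\,N^{-1/2}$; choosing for instance $N = \lfloor \sqrt t\,\rfloor$ makes the total $o(t)$. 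The same computation with $-\sigma$ gives $\check V(x,t)/t\to1$, completing the proof.
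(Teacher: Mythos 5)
Your strategy is genuinely different from the paper's. The paper's Section~\ref{sec-results without perturb} proves the existence of $V$ by exhibiting $V_n(x,t)=\bb E(t+S_n^x;\vartheta_{x,t}>n)$ as a \emph{non-decreasing} sequence (an immediate consequence of optional stopping applied to the zero-mean martingale $S_n^x$, Lemma~\ref{Lem-Vn-increasing}) that is \emph{bounded above} by $c(1+\max\{t,0\})$ (Corollary~\ref{Corollary-Vn-m-001}, itself obtained from the quasi-decreasing iteration of Proposition~\ref{Prop-Vn-001}, which needs only the Berry--Esseen estimate~\eqref{BEmart-001} and a concentration lemma). Convergence is then automatic. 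You instead prove that $f_n=\bb E(T_n;\tau_{x,t}>n)$ is Cauchy by showing $\sum_n|f_{n+1}-f_n|<\infty$. Your decomposition $f_{n+1}-f_n=\bb E(\varphi(X_n);\tau_{x,t}>n)-\bb E(T_{n+1};\tau_{x,t}=n+1)$ is correct, and it has the virtue of attacking the non--strongly-centered case (where only $\lambda_\mu=0$, not $\int_{\bb G}\sigma(g,x)\mu(dg)=0$ for all $x$) directly rather than via a coboundary reduction. But this route forces you to pay a price the paper avoids.

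The genuine gap is the ``refined local bound'' $\bb P(\tau_{x,t}>n,\,T_n\le A)\le C(1+A)(1+\max\{t,0\})\,n^{-3/2}$, which is the load-bearing estimate in your argument: both the summability of $\bb E(|\sigma(g_{n+1},X_n)|;\tau_{x,t}=n+1)$ and the control of the coboundary term $\bb E(\varphi(X_n);\tau_{x,t}>n)$ require a decay strictly faster than $n^{-1}$, and the a~priori bound $\bb P(\tau_{x,t}>n)\lesssim(1+\max\{t,0\})n^{-\beta}$ of Proposition~\ref{Prop-vartheta} (with $\beta<1/4$) is not nearly enough. You claim the $n^{-3/2}$ bound follows from ``the Markov property at time $\lfloor 2n/3\rfloor$ and a ballot-type estimate for the conditioned chain, itself deduced from the a~priori bound applied to both $\sigma$ and $-\sigma$ and from the ordinary LLT.'' For sums with independent increments this is a standard reversal argument, but for the cocycle $\sigma(g_n\cdots g_1,x)$ there is no time-reversal available without passing through the dual cocycle $\sigma^*$ and the cohomological identity~\eqref{ cohomological eq -vers 002}---which is precisely the machinery this paper (and its companion) is in the process of constructing. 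So as written your argument either invokes, as a black box, a conditioned local upper bound that is at the technical heart of the series, or it is circular. The paper's quasi-monotonicity route sidesteps the issue entirely because it never needs a $n^{-3/2}$ estimate: monotone plus bounded suffices. Similarly, your appeal to a Koml\'os--Major--Tusn\'ady/Sakhanenko strong approximation for the Markov-modulated walk is heavier than required and itself nontrivial in this setting; the paper gets by with the Berry--Esseen bound~\eqref{BEmart-001}.

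One small slip: in your optional-stopping representation you wrote $V(x,t)=t+\theta(x)-\bb E(\tilde T_{\tau_{x,t}})$. Since $\tilde T_0=t$, $\bb E(\theta(X_n);\tau_{x,t}>n)\to 0$, and $\theta(x)\bb P(\tau_{x,t}>n)\to 0$, the correct identity is $V(x,t)=t-\bb E(\tilde T_{\tau_{x,t}})$; the extraneous $\theta(x)$ is an algebra error, though it is uniformly bounded and so does not affect the conclusion $V(x,t)/t\to 1$.
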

In \cite{GLP17} this theorem is stated only for $t \geq 0$, 
but it can be extended to any $t\in \bb R$. 
We refer to Section \ref{sec-results without perturb} for an alternative proof in a more general context. 
The function $V$, which is harmonic for the transition operator of the 
Markov random walk \eqref{def of direct RW-001} conditioned to stay positive, 
is the central object to describe the asymptotic behavior of the probabilities $\bb P (\tau_{x, t} > n)$ 
and the conditioned central limit theorems for the walk $t + \sigma(g_n \cdots g_1, x)$. 
For the corresponding statements, we refer to \cite{GLP17}.
Let us note that these types of results become possible following the groundbreaking work of Denisov and Wachtel \cite{DW15}
on random walks in cones.

In order to state a conditioned local limit theorem, in complement to the harmonic functions in \eqref{harm function GLPP-001} and \eqref{harm function GLPP-002}, 
it is necessary to go further and  prove the existence of a target Radon measure
$\rho$ on the locally compact space $\bb P(\bb V) \times \bb R$ related to the 
reversed random walk for $\sigma(g_n \cdots g_1, x)$.
It turns out that the construction of such a measure $\rho$ is highly technical,
and cannot be derived using the techniques developed in \cite{DW15}, nor from the extensions provided in \cite{GLP17, GLL18Ann}.
 In the related article \cite{GQX23}, we tackled this difficulty 
by employing a sequence of approximated reversals in the context of Birkhoff sums over hyperbolic dynamical systems conditioned to stay positive.

The following theorem states the existence of the target harmonic measure $\rho$  for random walks on linear groups. 

\begin{theorem}\label{Thm-measure-rho}
Assume that $\Gamma_{\mu}$ is proximal and strongly irreducible, 
the measure $\mu$ admits an exponential moment and the Lyapunov exponent $\lambda_{\mu}$ is zero. 
Then, there exist Radon measures $\rho$ and $\check{\rho}$ 
on $\bb P(\bb V) \times \bb R$ such that,  
for any continuous compactly supported function $h$ on $\bb P(\bb V) \times \bb R$, uniformly in $x \in \bb P(\bb V)$, the following limits exist
and are independent of $x$, 
\begin{align}
  \lim_{n \to \infty}  \int_{0}^{\infty} t  \, \bb E \Big( h (g_n \cdots g_1 x, & \  t + \sigma(g_n \cdots g_1, x)); \, \tau_{x, t} > n -1 \Big) dt
  \notag \\
   & \qquad\qquad\quad  = \int_{\bb P(\bb V) \times \bb R} h(x', t') \rho(dx', dt'),  \label{exist of measure rho-001} \\ 
   \lim_{n \to \infty}  \int_{0}^{\infty} t  \,  \bb E \Big( h (g_n \cdots g_1 x,  & \  t - \sigma(g_n \cdots g_1, x) ); \, \check{\tau}_{x, t} > n -1 \Big) dt 
\notag \\ 
 &  \qquad\qquad\quad  = \int_{\bb P(\bb V) \times \bb R} h(x', t') \check{\rho}(dx', dt') \label{exist of measure rho-002}. 
\end{align}
\end{theorem}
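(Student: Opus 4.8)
\medskip
\noindent\textbf{Proof strategy.}
We only discuss \eqref{exist of measure rho-001}; identity \eqref{exist of measure rho-002} is obtained in the same way, replacing the cocycle $\sigma$ by $-\sigma$. The argument is modelled on the scalar identities \eqref{duality-ident-001}--\eqref{alternative form 001}, but two new features must be handled: the reversed walk attached to $\sigma(g_n\cdots g_1,x)$ is no longer a Markov random walk, and the state space of this reversed walk has to be enlarged.

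\emph{Step 1 (reversal).} Fix a continuous compactly supported $h$ on $\bb P(\bb V)\times\bb R$. Combining the reversal identity of Lemma~\ref{lemma-duality-for-product-001} with the cohomological identity \eqref{ cohomological eq -vers 002}, one rewrites
$$ \int_0^\infty t\, \bb E\Big( h\big(g_n\cdots g_1 x,\, t+\sigma(g_n\cdots g_1,x)\big);\, \tau_{x,t}>n-1\Big)\,dt $$
as an expectation attached to the reversed random walk \eqref{reversed RWfor products-001}, with the first argument of $h$ becoming the starting configuration of that reversed walk. The reversed process is a dual random walk (driven by the adjoint matrices) carrying an extra perturbation which, from the point of view of reversed time, depends on future observations — it is the value of a coboundary at the endpoint $g_n\cdots g_1 x$ of the direct walk. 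After this step the problem becomes: show that $\lim_{n\to\infty}\bb E\big(\,\check t-\check S_n+\Delta_n\,;\,\check\tau>n\big)$-type limits exist and are independent of $x$, where $(\check S_n)$ is a genuine dual Markov random walk, $\Delta_n$ the future-dependent perturbation, and $\check\tau$ the corresponding exit time from $\bb R_+$.

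\emph{Step 2 (finite-size approximation).} Since $\Delta_n$ depends on arbitrarily far future observations, it is replaced by an approximation $\Delta_n^{(m)}$ obtained by truncating this dependence to a window of size $m$. Recording in the state the coordinates needed to evaluate $\Delta_n^{(m)}$ turns $\check S_n+\Delta_n^{(m)}$ into a Markov random walk over a compact enlarged base $\bb X_m$ whose dimension grows with $m$, and which still satisfies proximality, strong irreducibility and a finite exponential moment in the appropriate sense. Two estimates are needed: (i) a uniform approximation bound $\sup_{n}\|\Delta_n-\Delta_n^{(m)}\|_\infty\le\varepsilon_m$ with $\varepsilon_m\to0$, coming from the exponential contraction of the dual cocycle along the projective dynamics; and (ii) a domination of the expectations produced in Step 1, uniform both in $n$ and in $m$, by a function of $\check t$ integrable against $\check t\, d\check t$ on $\bb R_+$, obtained by extending the exit-time and moment estimates of \cite{GLP17} to the chains $\bb X_m$ along the lines of Section~\ref{sec-results without perturb}.

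\emph{Step 3 (limits and conclusion).} For each fixed $m$, $\check S_n+\Delta_n^{(m)}$ is a Markov random walk, so the version of Theorem~\ref{Theorem GLPL2017} proved in Section~\ref{sec-results without perturb} furnishes a harmonic function $\check V_m$ and, via the reversal identity and dominated convergence, a Radon measure $\rho_m$ on $\bb P(\bb V)\times\bb R$ for which the $m$-truncated form of the left-hand side of \eqref{exist of measure rho-001} converges, as $n\to\infty$, to $\int h\,d\rho_m$, independently of $x$. Comparing the true quantity with its $m$-truncation uniformly in $n$ via (i), and using (ii) to pass to the limit, a $3\varepsilon$ argument shows that $(\rho_m)$ converges to a Radon measure $\rho$ and that the limit in \eqref{exist of measure rho-001} exists and equals $\int h\,d\rho$; the harmonicity of $\rho$ asserted in Corollary~\ref{Thm-measure-rho-v003} is inherited from that of the $\check V_m$. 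Independence of the starting point $x$ survives in the limit because $x$ enters the reversed walk only through a single coboundary term evaluated at time $0$, whose influence is washed out as $n\to\infty$.

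\emph{Main obstacle.} The crux is the uniform-in-$n$ control in Step 2. The perturbation $\Delta_n$ sees the whole future; the conditioning event $\{\check\tau>n\}$ has probability of order $n^{-1/2}$; and a careless approximation could be amplified over the long horizon $n$ or interact badly with the conditioning. Establishing (i) and, above all, (ii) — that is, proving the exit-time asymptotics and the harmonic-function estimates for the enlarged Markov chains $\bb X_m$ with constants independent of the dimension $m$ — is precisely the role of the array of Markov chains with increasing dimensions developed in the body of the paper.
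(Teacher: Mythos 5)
Your overall plan (reverse the walk via Lemma~\ref{lemma-duality-for-product-001}, approximate the future-dependent perturbation by a finite window, then pass to the limit) is indeed the same spirit as the paper, but the specific scheme you outline has a genuine gap at the key step, namely your estimate (i).

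You assert a sup-norm bound $\sup_n\|\Delta_n-\Delta_n^{(m)}\|_\infty\le\varepsilon_m$. This is false here. The perturbation is built from the function $\delta(x,y)=-\log\frac{|\varphi(v)|}{\|\varphi\|\|v\|}$, which is unbounded (it diverges on the incidence variety); and the projective contraction that brings $g_{k+1}\cdots g_{k+m}x_0$ close to $\xi(T^k\omega)$ is only almost sure with an exponential tail, not uniform in $\omega$. The best one can say is the exponential-moment estimate of the paper (Propositions~\ref{Prop-projection approx-001} and~\ref{Prop-projection approx-inte001}), which is an $L^1$-with-exponential-tail bound, not a sup bound. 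Once (i) is replaced by the correct probabilistic bound, your ``compare the true quantity with its $m$-truncation uniformly in $n$'' step no longer goes through: controlling the contribution of the bad event $\{|\Delta_n-\Delta_n^{(m)}|>\varepsilon\}$ to the conditioned expectation, via Cauchy--Schwarz, produces an error of size $\approx e^{-cm}(\max\{t,0\}+\sqrt{n})$ (this is exactly Corollary~\ref{Cor-ideal}); for fixed window $m$ this diverges as $n\to\infty$, so the $3\varepsilon$ argument in your Step~3 does not close.

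The paper avoids this by orchestrating the two parameters differently. It first proves quasi-monotonicity of $n\mapsto U_n$ within a fixed window (Theorem~\ref{Pro-Appendix-Final2-Inequ}, in the guise of Proposition~\ref{Prop-increase-sequence-fn}), which lets one slide from time $n$ down to time $[n/2]$ in the window-$n$ reversed walk; only then does it compare with the walk carrying the ideal perturbation $f(\omega,y)=\delta(\xi(\omega),y)$ (built from Furstenberg's boundary map, which you never introduce), at time $[n/2]$ with window excess $n-[n/2]\gtrsim n/2$, so the Cauchy--Schwarz error becomes $e^{-cn/2}\sqrt{n}\to0$ (Corollary~\ref{Cor-ideal} followed by~\eqref{ideal-003}--\eqref{ideal-004}). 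The measure $\rho$ is defined once and for all from this ideal perturbed walk (Proposition~\ref{prop-reversed oper-000}), not as a limit of your $\rho_m$. In short: the finite-size approximation must grow with $n$ and be applied after the quasi-monotonicity step; freezing $m$ and letting $n\to\infty$ first, as you propose, is not compatible with the only approximation bound that actually holds.
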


We continue with some properties of the measures $\rho$ and $\check \rho$.
The following property states that the measures $\rho$ and $\check \rho$ have absolutely continuous marginals on $\bb R$.

\begin{corollary}\label{Thm-measure-rho-v002}
Assume that $\Gamma_{\mu}$ is proximal and strongly irreducible, 
the measure $\mu$ admits an exponential moment and the Lyapunov exponent $\lambda_{\mu}$ is zero. 
Then, the marginals of the measures $\rho$ and $\check \rho$ on $\bb R$  
are absolutely continuous with respect to the Lebesgue measure with non-decreasing densities. 
\end{corollary}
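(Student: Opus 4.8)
The plan is to reduce the statement to a study of the marginals of $\rho$ and $\check\rho$ on $\bb R$, to obtain the monotonicity of their densities by an elementary shift argument that uses only Theorem \ref{Thm-measure-rho}, and to extract the absolute continuity from the reversal identity of Lemma \ref{lemma-duality-for-product-001}. Since $\bb P(\bb V)$ is compact, for every continuous compactly supported $\phi$ on $\bb R$ the function $h(x',t'):=\phi(t')$ is continuous and compactly supported on $\bb P(\bb V)\times\bb R$; writing $\bar\rho$ for the image of $\rho$ under $(x',t')\mapsto t'$ and $\check{\bar\rho}$ for the image of $\check\rho$, identities \eqref{exist of measure rho-001} and \eqref{exist of measure rho-002} read
\[
\int_{\bb R}\phi(t')\,\bar\rho(dt')=\lim_{n\to\infty}I_n(\phi),\qquad I_n(\phi):=\int_0^\infty t\,\bb E\Big(\phi\big(t+\sigma(g_n\cdots g_1,x)\big);\,\tau_{x,t}>n-1\Big)\,dt,
\]
and the analogous formula for $\check{\bar\rho}$ with $\sigma$ and $\tau$ replaced by $-\sigma$ and $\check\tau$; both $\bar\rho$ and $\check{\bar\rho}$ are Radon measures on $\bb R$, and I would treat $\bar\rho$ in detail, the case of $\check{\bar\rho}$ being entirely symmetric.

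For the monotonicity, I would fix $c>0$ and a non-negative $\phi\in C_c(\bb R)$, substitute $t=s+c$, and write $\sigma_n(x)=\sigma(g_n\cdots g_1,x)$ to get
\[
I_n\big(\phi(\cdot-c)\big)=\int_{-c}^{\infty}(s+c)\,\bb E\Big(\phi\big(s+\sigma_n(x)\big);\,\tau_{x,s+c}>n-1\Big)\,ds\ \geq\ \int_0^{\infty}s\,\bb E\Big(\phi\big(s+\sigma_n(x)\big);\,\tau_{x,s}>n-1\Big)\,ds=I_n(\phi),
\]
because the integrand is non-negative on $[-c,0]$, one has $s+c\geq s\geq 0$ on $[0,\infty)$, and $\{\tau_{x,s}>n-1\}\subseteq\{\tau_{x,s+c}>n-1\}$ since raising the initial level can only postpone the exit from $\bb R_+$. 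Letting $n\to\infty$ (both limits exist by Theorem \ref{Thm-measure-rho}, applied to $h(x',t')=\phi(t'-c)$ and to $h(x',t')=\phi(t')$) yields $\int_{\bb R}\phi(t-c)\,\bar\rho(dt)\geq\int_{\bb R}\phi(t)\,\bar\rho(dt)$ for every $c>0$ and every non-negative $\phi\in C_c(\bb R)$.

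For the absolute continuity, I would invoke the reversal identity of Lemma \ref{lemma-duality-for-product-001} which — in the spirit of \eqref{duality-ident-001} — expresses the limiting quantity $\int_{\bb R}\phi(t')\,\bar\rho(dt')$ as $\lim_{n\to\infty}\int_{\bb R}\phi(t)\,E_n(t)\,dt$, where $E_n(t)$ is an expectation attached to the reversed random walk \eqref{reversed RWfor products-001} — a dual random walk perturbed by the future observations — started from $t$, killed upon leaving $\bb R_+$, and integrated against the reference measure on $\bb P(\bb V)$ supplied by the identity. The estimates established in the course of proving Theorem \ref{Thm-measure-rho} should provide a bound $0\leq E_n(t)\leq C(1+|t|)$, uniform in $n$ and in $x$ (the analogue, in this perturbed setting, of the classical bound $\bb E(t-S_n;\check\tau_t>n)\leq C(1+|t|)$ for centered real random walks). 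Then, for $\phi\in C_c(\bb R)$, the integrands are dominated by the integrable majorant $\|\phi\|_\infty\,\mathds 1_{\supp\phi}(t)\,C(1+|t|)$, and dominated convergence together with the convergence $E_n(t)\to W(t)$ delivered by the construction gives $\int_{\bb R}\phi(t')\,\bar\rho(dt')=\int_{\bb R}\phi(t)\,W(t)\,dt$ with $W\geq 0$ locally bounded; hence $\bar\rho(dt)=W(t)\,dt$ is absolutely continuous with respect to the Lebesgue measure. Combining this with the monotonicity step, $\int_{\bb R}\phi(s)\big(W(s+c)-W(s)\big)\,ds\geq 0$ for all non-negative $\phi\in C_c(\bb R)$ and all $c>0$, so $W$ coincides Lebesgue-almost everywhere with a non-decreasing function. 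This yields the corollary for $\bar\rho$, and symmetrically for $\check{\bar\rho}$.

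I expect the main obstacle to be the uniform-in-$n$ bound $E_n(t)\leq C(1+|t|)$ needed to pass to the limit under the $t$-integral. For the classical reversed walk this follows immediately from optional stopping, the walk being a centered martingale; here, however, the reversed walk \eqref{reversed RWfor products-001} carries a perturbation depending on future observations and is no longer a martingale, so controlling $E_n(t)$ relies on the finite-size approximation of the perturbation and the attendant estimates, in the spirit of \cite{GQX23}.
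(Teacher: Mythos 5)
Your proposal is correct in substance but organized differently from the paper. The paper's proof is a one-line reduction to Proposition \ref{prop-reversed oper-000}, where absolute continuity and monotonicity of the density are obtained \emph{simultaneously}: the marginal of $\rho$ on $\bb R$ is constructed as the distributional limit of the functions $U_n^{\bf 1}(t)$, each of which is non-decreasing in $t$ and uniformly bounded by $c(1+\max\{t,0\})$ (Corollaries \ref{Main th-003} and \ref{Main th-001}), and Lemma \ref{Lem_Conver_Func} then identifies the limit measure as $W(t)\,dt$ with $W$ non-decreasing. You instead split the two properties: you get translation-monotonicity of the marginal by an elementary shift argument on the \emph{direct} walk (substituting $t=s+c$ and using $\{\tau_{x,s}>n-1\}\subseteq\{\tau_{x,s+c}>n-1\}$ — this is correct and is not how the paper argues), and you get absolute continuity from the reversal identity plus the uniform bound $E_n(t)\leq C(1+\max\{t,0\})$, which is exactly Corollary \ref{Main th-001} applied to the reversed perturbed walk. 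Your shift argument is a nice self-contained alternative for the monotonicity; the absolute continuity part necessarily passes through the same heavy machinery as the paper, and you correctly locate the difficulty in the non-martingale nature of the perturbed reversed walk.

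One step should be repaired: you invoke dominated convergence "together with the convergence $E_n(t)\to W(t)$ delivered by the construction", but the construction does \emph{not} deliver pointwise convergence of $E_n(t)$ — Theorem \ref{Pro-Appendix-Final2-Inequ} only yields the two-sided inequalities with shifts $An^{-\gamma}$ in the argument, whence convergence in the sense of distributions only (this is precisely why the paper needs Lemma \ref{Lem_Conver_Func}). The conclusion survives without pointwise convergence: vague convergence of the measures $E_n(t)\,dt$ together with the uniform bound $0\leq E_n(t)\leq C(1+\max\{t,0\})$ gives, for every non-negative $\phi\in C_c(\bb R)$,
\begin{align*}
\int_{\bb R}\phi\,d\bar\rho=\lim_{n\to\infty}\int_{\bb R}\phi(t)E_n(t)\,dt\leq C\int_{\bb R}\phi(t)\bigl(1+\max\{t,0\}\bigr)\,dt,
\end{align*}
so $\bar\rho$ is dominated by a locally finite multiple of Lebesgue measure and is therefore absolutely continuous with locally bounded density; your shift inequality then upgrades the density to a function that agrees almost everywhere with a non-decreasing one.
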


The next property is the analog of the harmonicity property of the functions $V$ and $\check V$.

\begin{corollary}\label{Thm-measure-rho-v003}
Assume that $\Gamma_{\mu}$ is proximal and strongly irreducible, 
the measure $\mu$ admits an exponential moment and the Lyapunov exponent $\lambda_{\mu}$ is zero. 
Then, for any non-negative measurable function $h$ on $\bb P(\bb V) \times \bb R$, we have 
\begin{align*} 
& \int_{\bb P(\bb V) \times \bb R} h(x, t) \rho(dx, dt) 
= \int_{\bb P(\bb V) \times [0,\infty)} \bb E \Big( h\left(g_1 x,t + \sigma \left(g_1, x \right)  \right)  \Big) \rho(dx,dt), \notag\\ 
& \int_{\bb P(\bb V) \times \bb R} h(x, t) \check \rho(dx, dt) 
= \int_{\bb P(\bb V) \times [0,\infty)} \bb E \Big( h\left(g_1 x,t - \sigma \left(g_1, x \right)  \right)  \Big) \check \rho(dx,dt). 
\end{align*} 
\end{corollary}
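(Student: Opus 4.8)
The plan is to read off an exact identity at finite $n$ between the prelimit quantities that define $\rho$, and then to pass to the limit by means of Theorem~\ref{Thm-measure-rho}. Throughout, write $\scr F_n=\sigma(g_1,\dots,g_n)$, and for a measurable $h\geq 0$ on $\bb P(\bb V)\times\bb R$ introduce the Markov operator
\[
Ph(x,t)=\int_{\bb G}h\big(gx,\,t+\sigma(g,x)\big)\,\mu(dg)=\bb E\big(h(g_1x,\,t+\sigma(g_1,x))\big),
\]
so that the right-hand side of the corollary is $\int_{\bb P(\bb V)\times[0,\infty)}Ph\,d\rho$, i.e. the integral against $\rho$ of the function $\phi(x,t):=\mathds 1_{[0,\infty)}(t)\,Ph(x,t)$. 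First I would reduce to $h$ continuous with compact support: once $\int h\,d\rho=\int\phi\,d\rho$ is established for all such $h$, both sides are finite and define Radon measures in $h$ that then coincide on all non-negative measurable functions by the uniqueness part of the Riesz representation theorem and monotone convergence.

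For $n\geq 1$ and $x\in\bb P(\bb V)$ put $\mu_n(x,\psi)=\int_0^{\infty}t\,\bb E\big(\psi(g_n\cdots g_1x,\,t+\sigma(g_n\cdots g_1,x));\,\tau_{x,t}>n-1\big)\,dt$, so Theorem~\ref{Thm-measure-rho} reads $\mu_n(x,h)\to\int h\,d\rho$. The key step is the \emph{exact} prelimit identity
\[
\mu_{n+1}(x,h)=\mu_n(x,\phi),\qquad n\geq 1.
\]
To prove it, observe that $g_{n+1}$ has law $\mu$ and is independent of $\scr F_n$, so conditioning on $\scr F_n$ and using the cocycle identity $\sigma(g_n\cdots g_1,x)+\sigma(g_{n+1},g_n\cdots g_1x)=\sigma(g_{n+1}\cdots g_1,x)$ together with $g_{n+1}(g_n\cdots g_1x)=g_{n+1}\cdots g_1x$ yields
\[
Ph\big(g_n\cdots g_1x,\,t+\sigma(g_n\cdots g_1,x)\big)=\bb E\Big(h\big(g_{n+1}\cdots g_1x,\,t+\sigma(g_{n+1}\cdots g_1,x)\big)\,\Big|\,\scr F_n\Big).
\]
Since $\{\tau_{x,t}>n\}=\{\tau_{x,t}>n-1\}\cap\{t+\sigma(g_n\cdots g_1,x)\geq 0\}$ is $\scr F_n$-measurable, multiplying the last display by $\mathds 1_{\{t+\sigma(g_n\cdots g_1,x)\geq 0\}}\mathds 1_{\{\tau_{x,t}>n-1\}}=\mathds 1_{\{\tau_{x,t}>n\}}$, taking expectations and applying the tower property gives
\[
\bb E\big(\phi(g_n\cdots g_1x,\,t+\sigma(g_n\cdots g_1,x));\,\tau_{x,t}>n-1\big)=\bb E\big(h(g_{n+1}\cdots g_1x,\,t+\sigma(g_{n+1}\cdots g_1,x));\,\tau_{x,t}>n\big),
\]
and integrating $t\,dt$ over $(0,\infty)$ produces the claimed identity.

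Finally I would let $n\to\infty$ in $\mu_{n+1}(x,h)=\mu_n(x,\phi)$: the left-hand side tends to $\int h\,d\rho$ by Theorem~\ref{Thm-measure-rho}, and it remains to show $\mu_n(x,\phi)\to\int\phi\,d\rho=\int_{\bb P(\bb V)\times[0,\infty)}Ph\,d\rho$. This is the main obstacle, since $\phi$ is \emph{not} of the form covered by Theorem~\ref{Thm-measure-rho}: $Ph$ is continuous but not compactly supported (it only decays, exponentially in $t$, thanks to the exponential moment \eqref{Exponential-moment}), and $\phi$ jumps across $\{t=0\}$. The lack of compact support is to be handled by a tightness bound uniform in $n$ — the contribution of $\{|t+\sigma(g_n\cdots g_1,x)|>R\}$ to $\mu_n(x,\cdot)$ vanishes as $R\to\infty$ uniformly in $n$, which follows from the estimate $\bb E(t+\sigma(g_n\cdots g_1,x);\,\tau_{x,t}>n-1)=O(V(x,t))$ underlying Theorem~\ref{Theorem GLPL2017} combined with \eqref{Exponential-moment}. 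The discontinuity of $\phi$ along $\{t=0\}$ is handled by squeezing $\phi$ between continuous functions $\phi^-_\delta\leq\phi\leq\phi^+_\delta$ that coincide with $\phi$ outside a $\delta$-neighbourhood of $\{t=0\}$ and letting $\delta\to 0$, the resulting error going to $0$ because $\rho(\bb P(\bb V)\times\{0\})=0$ by Corollary~\ref{Thm-measure-rho-v002}. This yields $\int h\,d\rho=\int_{\bb P(\bb V)\times[0,\infty)}Ph\,d\rho$ for $h$ continuous compactly supported, hence for all non-negative measurable $h$ by the first paragraph. The statement for $\check\rho$ follows by the same argument, with $\sigma$, $\tau_{x,t}$ and $P$ replaced by $-\sigma$, $\check\tau_{x,t}$ and $\check Ph(x,t)=\bb E(h(g_1x,\,t-\sigma(g_1,x)))$.
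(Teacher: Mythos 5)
Your overall route is the same as the paper's: apply the Markov property at the prelimit level to replace $h$ by $Ph\,\mathds 1_{\{t\geq 0\}}$, pass to the limit by handling separately the lack of compact support of $Ph$ (via its exponential decay) and the jump at $t=0$ (via the absolute continuity of the marginal of $\rho$, Corollary~\ref{Thm-measure-rho-v002}), and then extend from continuous compactly supported $h$ to non-negative measurable $h$ by equality of Radon measures.

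The one step whose justification does not work as written is the uniform-in-$n$ tightness. You claim that the contribution of $\{t+\sigma(g_n\cdots g_1,x)>R\}$ to $\mu_n(x,\cdot)$ vanishes as $R\to\infty$ uniformly in $n$ because $\bb E\big(t+\sigma(g_n\cdots g_1,x);\tau_{x,t}>n-1\big)=O(V(x,t))$ together with \eqref{Exponential-moment}. But the quantity to be controlled is
\begin{align*}
\int_0^{\infty} t\,\bb E\Big(e^{-\alpha\,(t+\sigma(g_n\cdots g_1,x))}\,\mathds 1_{\{t+\sigma(g_n\cdots g_1,x)>R\}};\ \tau_{x,t}>n-1\Big)\,dt,
\end{align*}
and any bound obtained from Markov's inequality in the variable $t+\sigma(g_n\cdots g_1,x)$, of the form $C(1+t)/R$, is not integrable against $t\,dt$ on $(0,\infty)$; the estimate on the first moment of the conditioned walk simply does not mesh with the outer $t\,dt$ integration. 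The paper resolves this by first converting the expression through the reversal identity of Lemma~\ref{lemma-duality-for-product-001}, which turns it into $\int_R^{\infty}e^{-\alpha s}\,U_{n}^{x,n,\mathbf 1}(s)\,ds$, and then invoking the uniform bound $U_{n}^{x,n,\mathbf 1}(s)\leq c(1+\max\{s,0\})$ of Corollary~\ref{Main th-001}; this gives a bound $ce^{-\alpha R/2}$ uniform in $n$. With that substitution your argument goes through; the same repair is needed for the finiteness of $\int_{\bb P(\bb V)\times[0,\infty)}Ph\,d\rho$ itself, for which the paper uses \eqref{bound-U-app}.
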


Throughout this paper, by measurable functions we mean Borel measurable functions.

Denote by $\nu$ the unique $\mu$-stationary Borel probability measure on $\bb P(\bb V)$, 
see \cite{Boug-Lacr85, BQ16b}. 
The next statement describes the behaviour of the measures $\rho$ and $\check \rho$ at $+\infty$:  
it says that close to $+\infty$ these measures look like the product measure $\nu \otimes t \, dt$.

\begin{corollary}\label{Thm-measure-rho-v004}
Assume that $\Gamma_{\mu}$ is proximal and strongly irreducible, 
the measure $\mu$ admits an exponential moment and the Lyapunov exponent $\lambda_{\mu}$ is zero. 
Then, for any continuous compactly supported function $h$ on $\bb P(\bb V) \times \bb R$, the following limits exist:
\begin{align}
\lim_{t \to \infty} \frac{1}{t} \int_{ \bb P(\bb V) \times \bb R } h(x', t'-t) \rho(dx', dt') = \int_{ \bb P(\bb V) \times \bb R } h(x', t') \nu(dx') dt', \label{limit of rho-001} \\
\lim_{t \to \infty} \frac{1}{t} \int_{ \bb P(\bb V) \times \bb R } h(x', t'-t) \check{\rho}(dx', dt') = \int_{ \bb P(\bb V) \times \bb R } h(x', t') \nu(dx') dt'. \label{limit of rho-002}
\end{align}
As a consequence, the marginal densities $W(t)= \frac{\rho(\bb P(\bb V),dt)}{dt}$ and $\check W(t)= \frac{\check \rho(\bb P(\bb V),dt)}{dt}$  
satisfy 
\begin{align} \label{non-degeneracy of W-001}
\lim_{t \to \infty} \frac{W(t)}{t} = \lim_{t \to \infty} \frac{\check{W}(t)}{t} = 1.  
\end{align}
\end{corollary}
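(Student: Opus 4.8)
The plan is to first establish the rescaled vague convergences \eqref{limit of rho-001}--\eqref{limit of rho-002}, and then to deduce \eqref{non-degeneracy of W-001} from them. I describe the argument for \eqref{limit of rho-001}; the one for $\check\rho$ is identical, with \eqref{exist of measure rho-002} in place of \eqref{exist of measure rho-001}. Fix a continuous compactly supported $h$ on $\bb P(\bb V)\times\bb R$, say $h(x',t')=0$ for $|t'|>M$, and set $H(x')=\int_{\bb R}h(x',v)\,dv$, a continuous function on the \emph{compact} space $\bb P(\bb V)$. The starting point is to apply \eqref{exist of measure rho-001} to the vertical translate $(x',t')\mapsto h(x',t'-s)$ and to substitute $t=s+u$: this gives, for every $x\in\bb P(\bb V)$ and $s>0$, the identity $\frac1s\int_{\bb P(\bb V)\times\bb R}h(x',t'-s)\,\rho(dx',dt')=\lim_{n\to\infty}A_n(s)$, where
\begin{align*}
A_n(s):=\frac1s\int_{-s}^{\infty}(s+u)\,\bb E\Big(h\big(g_n\cdots g_1 x,\,u+\sigma(g_n\cdots g_1,x)\big);\,\tau_{x,s+u}>n-1\Big)\,du.
\end{align*}
Writing $A(s)$ for the left-hand side, the goal is to prove $\lim_{s\to\infty}A(s)=\int_{\bb P(\bb V)\times\bb R}h(x',v)\,\nu(dx')\,dv$.

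Two ingredients are elementary. First, for fixed $n$ one has $\lim_{s\to\infty}A_n(s)=\bb E\big(H(g_n\cdots g_1 x)\big)$: after moving the expectation outside by Fubini, on the range of $u$ on which $h(g_n\cdots g_1 x,u+\sigma(g_n\cdots g_1,x))\neq0$ — an almost surely bounded set once $g_1,\dots,g_n$ are fixed — we have $(s+u)/s\to1$, $\mathds 1_{\{u\geq-s\}}\to1$, and $\mathds 1_{\{\tau_{x,s+u}>n-1\}}\to1$ since the finitely many sums $\sigma(g_k\cdots g_1,x)$ ($1\leq k\leq n-1$) are almost surely finite; as the integrand is dominated, uniformly in $s\geq1$, by $\|h\|_\infty(1+M+|\sigma(g_n\cdots g_1,x)|)\,\mathds 1_{\{|u+\sigma(g_n\cdots g_1,x)|\leq M\}}$, an integrable quantity because the exponential moment forces $\bb E|\sigma(g_n\cdots g_1,x)|<\infty$, dominated convergence applies. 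Second, by equidistribution of the random walk $(g_n\cdots g_1 x)_{n\geq1}$ towards the unique stationary probability measure $\nu$ — a standard consequence of proximality and strong irreducibility, see \cite{Boug-Lacr85,BQ16b} — applied to the continuous function $H$, one gets $\lim_{n\to\infty}\bb E\big(H(g_n\cdots g_1 x)\big)=\int_{\bb P(\bb V)}H\,d\nu=\int_{\bb P(\bb V)\times\bb R}h(x',v)\,\nu(dx')\,dv$, uniformly in $x$.

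The hard part, and the genuine content of the corollary, is to interchange the two limits $n\to\infty$ and $s\to\infty$ in $A(s)=\lim_n A_n(s)$. Neither order is legal for free: $A_n(s)\to A(s)$ is inherently a limit in $n$ (this is how $\rho$ is defined), and the convergence $A_n(s)\to\bb E(H(g_n\cdots g_1 x))$ as $s\to\infty$ is not uniform in $n$ — the $n$-dependent error terms there, of sizes roughly $O(n/s)$ and $C^n e^{-\alpha s}$ for some $C>1$, blow up if $n$ is allowed to grow with $s$. I would therefore establish a refinement of Theorem \ref{Thm-measure-rho} that is uniform over the family of vertical translates $\{h(\cdot,\cdot-s):s\geq0\}$: there is a sequence $\varepsilon_n\to0$, depending on $h$ only through $\|h\|_\infty$, the diameter of $\supp h$ and the modulus of continuity of $h$, such that for all $n\geq1$, $s\geq0$ and $x\in\bb P(\bb V)$,
\begin{align*}
\Big|\int_{0}^{\infty}t\,\bb E\Big(h\big(g_n\cdots g_1 x,\,t+\sigma(g_n\cdots g_1,x)-s\big);\,\tau_{x,t}>n-1\Big)\,dt-\int_{\bb P(\bb V)\times\bb R}h(x',t'-s)\,\rho(dx',dt')\Big|\leq\varepsilon_n\,(1+s).
\end{align*}
The loss linear in $s$ is the natural scaling, since $\rho$ carries mass of order $s$ near height $s$ (precisely what is being proved), so all comparison errors at that height should be proportional to $s$; such a bound ought to come out of the proof of Theorem \ref{Thm-measure-rho} by tracking, through the finite-size approximation of the perturbation, the location of the support of the test function. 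Granting it, $|A_n(s)-A(s)|\leq\varepsilon_n(1+s)/s\leq2\varepsilon_n$ for $s\geq1$, uniformly in $s$ and $x$, and a three-$\varepsilon$ argument combining this with the two limits of the previous paragraph yields $\lim_{s\to\infty}A(s)=\int_{\bb P(\bb V)\times\bb R}h(x',v)\,\nu(dx')\,dv$, which is \eqref{limit of rho-001}.

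Finally, \eqref{non-degeneracy of W-001} follows from \eqref{limit of rho-001} by specialization and a monotonicity squeeze. Apply \eqref{limit of rho-001} to $h(x',t')=\psi(t')$ with $\psi\geq0$ continuous, supported in $[-1,1]$ and $\int_{\bb R}\psi=1$ (admissible since $\bb P(\bb V)$ is compact); using $\nu(\bb P(\bb V))=1$ this reads $\frac1s\int_{\bb R}\psi(u)\,W(s+u)\,du\to1$ as $s\to\infty$. Since $W$ is non-decreasing by Corollary \ref{Thm-measure-rho-v002}, the left-hand side lies between $W(s-1)/s$ and $W(s+1)/s$; letting $s\to\infty$ and using monotonicity once more gives $\limsup_{s\to\infty}W(s)/s\leq1\leq\liminf_{s\to\infty}W(s)/s$, hence $W(s)/s\to1$. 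The identical argument applied to \eqref{limit of rho-002} gives $\check W(s)/s\to1$.
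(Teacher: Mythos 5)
Your derivation of \eqref{non-degeneracy of W-001} from \eqref{limit of rho-001} is exactly the paper's argument (monotone sandwich of $W(t)/t$ by the smoothed averages against a bump function $\psi$), so that part is fine. The issue is in your proof of \eqref{limit of rho-001} itself. You correctly identify that the whole content is an interchange of the limits $n\to\infty$ and $s\to\infty$, and that this requires a version of the convergence $\rho_{n,x}\to\rho$ that is uniform over the vertical translates $h(\cdot,\cdot-s)$ with an error at most $\varepsilon_n(1+s)$. But you do not prove this estimate: you assert that it ``ought to come out of the proof of Theorem \ref{Thm-measure-rho}'' and then grant it. As written, that is the entire difficulty of the statement, so the proof has a genuine gap at its central step. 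The estimate is in fact true and is available in the paper, but extracting it requires going back through the chain \eqref{ideal-003}--\eqref{ideal-004}, Corollary \ref{Cor-ideal} and Proposition \ref{Prop-increase-sequence-fn} (ultimately Theorem \ref{Pro-Appendix-Final2-Inequ}), whose error terms are precisely of the form $cn^{-\beta}(1+\max\{t,0\})$ with shifts $cn^{-\gamma}$ in $t$; integrating these against $t\,\psi(t-s)\,dt$ is what produces the $\varepsilon_n(1+s)$ loss. Without at least citing and assembling those inequalities, the key claim is unsupported.

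It is also worth noting that the paper does not take your route at all for \eqref{limit of rho-001}--\eqref{limit of rho-002}: it obtains them in one line from Proposition \ref{prop-reversed oper-000}, equation \eqref{limit-U-t-001}, i.e.\ the asymptotics at $+\infty$ are proved once and for all at the level of the ideally perturbed dual walk used to \emph{construct} $\rho$, via Corollary \ref{Main th-003} (whose statement \eqref{MAIN_GOAL-theta-004} is exactly $\lim_{t\to\infty}U^{\mathfrak f,\theta}(t)/t=\bb E\theta$, itself proved by the two-sided quasi-monotonicity bounds of Theorem \ref{Pro-Appendix-Final2-Inequ} together with the elementary fixed-$n$ limit \eqref{convergence-U-n-t-001}). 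Your plan instead re-derives the asymptotics from the direct-walk representation \eqref{exist of measure rho-001}; the fixed-$n$ computation $\lim_{s\to\infty}A_n(s)=\bb E\,H(g_n\cdots g_1x)$ and the equidistribution $\mu^{*n}*\delta_x\to\nu$ are correct, but the approach forces you to prove the uniform refinement yourself rather than inherit it from the already-established abstract theorem. Either close the gap by invoking the quoted propositions, or simply deduce \eqref{limit of rho-001} from \eqref{limit-U-t-001}.
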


Finally we discuss the behaviour of the measures $\rho$ and $\check \rho$ at $-\infty$.

\begin{corollary}\label{Thm-measure-rho-v005}
Assume that $\Gamma_{\mu}$ is proximal and strongly irreducible, 
the measure $\mu$ admits an exponential moment and the Lyapunov exponent $\lambda_{\mu}$ is zero. 
Then, there exists a constant $c >0$ such that for any $t \leq 0$, one has 
\begin{align}\label{property-W-negative-t}
W(t) \leq c e^{-\alpha |t|} \quad \mbox{and} \quad  \check{W}(t) \leq c e^{-\alpha |t|}, 
\end{align}
where $\alpha>0$ is the exponent from \eqref{Exponential-moment}. 
Moreover, we have 
\begin{align}\label{bound-rho-infty}
\rho \big( \bb P(\bb V) \times (-\infty, 0] \big) \in (0,\infty) 
\quad \mbox{and} \quad  
\check{\rho} \big( \bb P(\bb V) \times (-\infty, 0] \big) \in (0,\infty). 
\end{align}
\end{corollary}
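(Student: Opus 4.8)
The plan is to derive both claims from the already-established properties of $\rho$ and $\check\rho$, principally the harmonicity identity of Corollary \ref{Thm-measure-rho-v003}, the asymptotics at $+\infty$ of Corollary \ref{Thm-measure-rho-v004}, together with the defining limit \eqref{exist of measure rho-001}. For the exponential decay of $W(t)$ for $t\le 0$, I would go back to the finite-$n$ quantity $\int_0^\infty t\,\bb E(h(g_n\cdots g_1 x, t+\sigma(g_n\cdots g_1,x));\tau_{x,t}>n-1)\,dt$ with $h$ supported in $\bb P(\bb V)\times(-\infty,0]$ (approximating the indicator of a bounded interval $[-s-1,-s]$ with $s\ge 0$). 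On the event $\{\tau_{x,t}>n-1\}$ one has $t+\sigma(g_{n-1}\cdots g_1,x)\ge 0$, while $t+\sigma(g_n\cdots g_1,x)\le -s$; hence $\sigma(g_n, g_{n-1}\cdots g_1 x)\le -s-\big(t+\sigma(g_{n-1}\cdots g_1,x)\big)\le -s$, so $\|g_n^{-1}\|\ge e^{s}$ on this event. Conditioning on $\scr F_{n-1}$, using independence of $g_n$ and the Markov inequality with the exponential moment \eqref{Exponential-moment}, the contribution is bounded by $e^{-\alpha s}\int_{\bb G}\|g^{-1}\|^\alpha\mu(dg)$ times $\int_0^\infty t\,\bb P(\tau_{x,t}>n-1)\,\mathds 1_{\{t+\sigma(g_{n-1}\cdots g_1,x)\in I'\}}\,dt$ for a suitable translate $I'$; the latter integral is bounded uniformly in $n$ because it is itself controlled by a quantity of the type appearing in \eqref{exist of measure rho-001} (with a compactly supported test function), hence finite by Theorem \ref{Thm-measure-rho}. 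Passing to the limit gives $W(t)\le c e^{-\alpha|t|}$, and the argument for $\check W$ is identical after replacing $\sigma$ by $-\sigma$ and $g_n^{-1}$ by $g_n$.

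For the finiteness and positivity \eqref{bound-rho-infty}, finiteness is immediate from the first part: $\rho(\bb P(\bb V)\times(-\infty,0])=\int_{-\infty}^0 W(t)\,dt\le c\int_{-\infty}^0 e^{-\alpha|t|}\,dt=c/\alpha<\infty$. For strict positivity I would use the harmonicity identity of Corollary \ref{Thm-measure-rho-v003}: applying it to $h=\mathds 1_{\bb P(\bb V)\times(-\infty,0]}$ and unfolding it once more gives
\begin{align*}
\rho(\bb P(\bb V)\times(-\infty,0]) = \int_{\bb P(\bb V)\times[0,\infty)} \bb P\big(t+\sigma(g_1,x)<0\big)\,\rho(dx,dt).
\end{align*}
If this were zero, then $\bb P(t+\sigma(g_1,x)<0)=0$ for $\rho$-almost every $(x,t)$ with $t\ge 0$; but by Corollary \ref{Thm-measure-rho-v004} the measure $\rho$ charges every region $\bb P(\bb V)\times[T,T+1]$ for large $T$ with mass $\asymp T$, in particular $\rho(\bb P(\bb V)\times[0,\infty))=\infty>0$, so there is a positive-$\rho$-measure set of $(x,t)$ with $t\ge 0$ on which $\bb P(t+\sigma(g_1,x)<0)>0$ — this last fact holds because $\Gamma_\mu$ is strongly irreducible and proximal with $\lambda_\mu=0$, so $\sigma(g_1,x)$ is not a.s. bounded below by $-t$ (equivalently, $\inf\{\sigma(g,x): g\in\Gamma_\mu\}=-\infty$ for every $x$, a standard consequence of non-degeneracy). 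This yields a contradiction, proving $\rho(\bb P(\bb V)\times(-\infty,0])>0$; the same reasoning applies verbatim to $\check\rho$.

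The main obstacle I anticipate is the uniform-in-$n$ control in the first step: one must be careful that after the one-step conditioning the residual integral $\int_0^\infty t\,\bb P(\tau_{x,t}>n-1)\,\mathds 1_{\{t+\sigma(g_{n-1}\cdots g_1,x)\in I'\}}\,dt$ is genuinely bounded independently of $n$ and of $x$. This is where one needs to re-enter the machinery behind Theorem \ref{Thm-measure-rho} rather than merely quote its statement: concretely, one rewrites this residual integral, via the same reversal identity used to construct $\rho$, as $\bb E(t'-\sigma(\cdots);\check\tau>n-1)$-type quantities that are uniformly bounded by $\check V$-asymptotics, or alternatively one dominates it directly by $\sup_n\int_0^\infty t\,\bb E(\widetilde h(g_n\cdots g_1 x,t+\sigma(g_n\cdots g_1,x));\tau_{x,t}>n-1)\,dt$ for a fixed continuous compactly supported $\widetilde h\ge\mathds 1_{I'}$, which converges by Theorem \ref{Thm-measure-rho} and is therefore bounded. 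Everything else is a routine combination of the exponential moment bound, Markov's inequality, and the already-proven Corollaries.
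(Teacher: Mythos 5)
Your overall strategy for the decay of $W$ is the paper's: condition on the last increment, apply Markov's inequality with the exponential moment \eqref{Exponential-moment}, and control the residual integral via the reversal identity. But the step where you replace the contribution by ``$e^{-\alpha s}$ times $\int_0^\infty t\,\bb P(\tau_{x,t}>n-1)\mathds 1_{\{t+\sigma(g_{n-1}\cdots g_1,x)\in I'\}}\,dt$ for a suitable translate $I'$'' is not a valid inequality: the event $\{t+\sigma(g_n\cdots g_1,x)\in[-s-1,-s]\}$ does \emph{not} force $u:=t+\sigma(g_{n-1}\cdots g_1,x)$ into any fixed bounded interval, so a single compact $I'$ misses mass, while dropping the indicator altogether leaves $\int_0^\infty t\,\bb P(\tau_{x,t}>n-1)\,dt$, which diverges. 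The correct move (and what the paper does) is to keep the full output of Markov's inequality, $\bb P(u+\sigma(g,y)\le -s)\le c\,e^{-\alpha(s+u)}$, so the residual becomes $\int_0^\infty t\,\bb E\big(e^{-\alpha(t+\sigma(g_{n-1}\cdots g_1,x))};\tau_{x,t}>n-2\big)dt$, which by Lemma \ref{lemma-duality-for-product-001} equals $\int_0^\infty e^{-\alpha t}U^{x,n-1,\bf 1}_{n-1}(t)\,dt\le c\int_0^\infty e^{-\alpha t}(1+t)\,dt$ by Corollary \ref{Main th-001}, uniformly in $n$ and $x$. (Your alternative of summing over unit intervals for $u$ with weights $e^{-\alpha k}$ would also work, but you would then need the uniform-in-$n$ bound $O(k)$ for each block, which again comes from Corollary \ref{Main th-001}; as written your argument does not perform this summation.)

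The positivity argument has a more serious gap. You reduce to showing that $\bb P(t+\sigma(g_1,x)<0)>0$ on a set of positive $\rho$-measure in $\bb P(\bb V)\times[0,\infty)$, and you justify the pointwise positivity by the unboundedness of $\sigma$ on $\Gamma_\mu$. This is false in general: $\Gamma_\mu$ is the \emph{semigroup generated by} $\supp\mu$, and unboundedness of $\sigma(\cdot,x)$ on $\Gamma_\mu$ says nothing about one-step increments. If $\mu$ has compact (e.g.\ finite) support, then $\sigma(g_1,x)$ is bounded, so $\bb P(t+\sigma(g_1,x)<0)=0$ for all $t$ above a fixed constant $C$, and your argument would then require knowing that $\rho$ charges $\bb P(\bb V)\times[0,C)$ — which none of the established corollaries give you ($W$ is only known to be non-decreasing with $W(t)/t\to1$, so it could a priori vanish on $[0,C)$). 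The paper avoids this by iterating the harmonicity identity of Corollary \ref{Thm-measure-rho-v003} $n$ times with $h_n(x,t)=\bb P(\tau_{x,t}=n)$, obtaining $\rho(\bb P(\bb V)\times(-\infty,0))=\int_{\bb P(\bb V)\times[0,\infty)}\bb P(\tau_{x,t}=n)\,\rho(dx,dt)$ for every $n$; since $\tau_{x,t}<\infty$ almost surely, the sets $\{(x,t):\bb P(\tau_{x,t}=n)>0\}$ cover $\bb P(\bb V)\times[0,\infty)$, which has positive $\rho$-measure by \eqref{non-degeneracy of W-001}, so some $n$ makes the integral strictly positive. You should replace your one-step argument by this $n$-step version.
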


Note that, due to the property  \eqref{non-degeneracy of W-001} or \eqref{bound-rho-infty}, 
the Radon measures $\rho$ and $\check \rho$ are non-zero. 

In a forthcoming paper \cite{GQX24}, we will make use of the harmonic measures $\rho$ and $\check \rho$ constructed above 
to prove the following conditioned local limit theorem for random walks on linear groups.
To formulate the corresponding result we recall that,  for any $x\in \bb P(\bb V)$, the following asymptotic variance 
$$
\upsilon_{\mu}^2 = \lim_{n \to \infty} \frac{1}{n} \bb E  \left[ (\sigma(g_n \cdots g_1, x))^2 \right]
$$ 
exists, does not depend on $x$ and is positive, see for instance \cite[Chapter 13]{BQ16b}.  
The following is Theorem 1.1 from \cite{GQX24}. 

\begin{theorem}\label{Thm-CLLT-cocycle}
Assume that $\Gamma_{\mu}$ is proximal and strongly irreducible, 
the measure $\mu$ admits an exponential moment and the Lyapunov exponent $\lambda_{\mu}$ is zero. 
Then, for any fixed $t\in \bb R$ and for any continuous compactly supported function $h$ on $\bb P(\bb V) \times \bb R$, 
 we have, uniformly in $x \in \bb P(\bb V)$,
\begin{align*}
  \lim_{n \to \infty}  n^{3/2}  \bb E \Big( h (g_n \cdots g_1 x, & \  t + \sigma(g_n \cdots g_1, x));  \tau_{x, t} > n -1 \Big)  \\
&\qquad = \frac{2 V(x, t)}{ \sqrt{2 \pi} \upsilon_{\mu}^3 } \int_{\bb P(\bb V) \times \bb R} h (x',t') \rho(dx',dt'), 
 \notag\\
 \lim_{n \to \infty}  n^{3/2}  \bb E \Big( h (g_n \cdots g_1 x,  & \  t - \sigma(g_n \cdots g_1, x) );   \check{\tau}_{x, t} > n -1 \Big)  \\
&\qquad = \frac{2 \check{V}(x, t)}{ \sqrt{2 \pi} \upsilon_{\mu}^3 } \int_{\bb P(\bb V) \times \bb R} h (x',t') \check{\rho}(dx',dt'). 
\end{align*}
\end{theorem}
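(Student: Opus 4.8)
We sketch the argument for the first assertion; the second follows by the same reasoning with $\sigma$, $\rho$, $\tau$ replaced by $-\sigma$, $\check\rho$, $\check\tau$. Write $\psi_n(x,t)=\bb E\big(h(g_n\cdots g_1 x,\,t+\sigma(g_n\cdots g_1,x));\,\tau_{x,t}>n-1\big)$. The plan is to first establish the uniform estimate
\[
\psi_n(x,t)=\frac{2}{\sqrt{2\pi}\,\upsilon_\mu^{3}}\;n^{-3/2}\,V(x,t)\,e^{-t^{2}/(2\upsilon_\mu^{2}n)}\int_{\bb P(\bb V)\times\bb R}h\,d\rho\;+\;o\big(n^{-3/2}\big),
\]
valid uniformly in $x\in\bb P(\bb V)$ and in $t\in[0,c_1\sqrt n\,]$ for each fixed $c_1>0$, with $\psi_n(x,t)=o(n^{-3/2})$ uniformly for $t\geq c_1\sqrt n$ once $c_1$ is large; the theorem is then the special case of fixed $t$, for which $e^{-t^{2}/(2\upsilon_\mu^{2}n)}\to1$. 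The ingredients are: (i) the exit asymptotics and the conditioned invariance principle (Brownian meander limit), uniform in $x$, from \cite{GLP17} and \cite{GLL18Ann}; (ii) a Stone-type local limit theorem for the \emph{unconditioned} Markov walk $(g_n\cdots g_1 x,\,t+\sigma(g_n\cdots g_1,x))$ on $\bb P(\bb V)\times\bb R$, obtained by the Nagaev--Guivarc'h method from the spectral gap, for small $|z|$, of the perturbed transfer operators $f\mapsto\int_{\bb G}e^{z\sigma(g,\cdot)}f(g\,\cdot)\,\mu(dg)$; (iii) the reversal identity of Lemma~\ref{lemma-duality-for-product-001} together with the construction of $\rho$, $\check\rho$ in Theorem~\ref{Thm-measure-rho} and Corollaries~\ref{Thm-measure-rho-v002}--\ref{Thm-measure-rho-v005}; and (iv) the harmonic function $V$ of Theorem~\ref{Theorem GLPL2017}.

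The backbone is a three-block decomposition in the spirit of \cite{GQX23} (and, in the i.i.d.\ real case, of \cite{VatWacht09,Don12}). Fix large integers $m$ and $\ell$ and apply the Markov property at times $m$ and $n-\ell$: $\psi_n(x,t)$ becomes an integral, over the state $(g_m\cdots g_1 x,\,t+\sigma(g_m\cdots g_1,x))$ on $\{\tau_{x,t}>m\}$ and over the state at time $n-\ell$, of a surviving middle kernel of length $n-m-\ell\sim n$. The initial block yields the factor $V(x,t)$: by Theorem~\ref{Theorem GLPL2017} and ingredient (i), as $m\to\infty$ the pair $\big(\mathds 1_{\{\tau_{x,t}>m\}},\,(g_m\cdots g_1 x,\,t+\sigma(g_m\cdots g_1,x))\big)$ produces, after normalisation, the harmonic weight \eqref{harm function GLPP-001} together with an (almost) $\nu$-distributed direction at a height of order $\sqrt m$. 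The final block carries the killing and $h$, and by the reversal identity of Lemma~\ref{lemma-duality-for-product-001} it is recast as an expectation for the reversed random walk \eqref{reversed RWfor products-001}, whose weight-$t$ integrated asymptotic is, by construction, the right-hand side of \eqref{exist of measure rho-001}; this contributes $\int h\,d\rho$. The middle block connects two heights of order $\sqrt n$ and, by ingredient (ii), spreads over a window of size $\sqrt n$ with a Gaussian profile, supplying $n^{-1/2}e^{-t^{2}/(2\upsilon_\mu^{2}n)}$; combining with the two $n^{-1/2}$-order meander densities at the ends gives the normalisation $n^{-3/2}$, and tracking the constants yields $\tfrac{2}{\sqrt{2\pi}\,\upsilon_\mu^{3}}$ exactly as in \eqref{loc limit theorem finite MC-001}. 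Integrating the displayed estimate against $t\,dt$ — legitimate since the integral concentrates on $t$ of order $\sqrt n$, where the estimate is uniform — reproduces $\int h\,d\rho$ and pins down this constant; this consistency simultaneously identifies the exit measure with $\rho$, using the non-degeneracy $\rho(\bb P(\bb V)\times(-\infty,0])\in(0,\infty)$ of Corollary~\ref{Thm-measure-rho-v005}, and recovers Theorem~\ref{Thm-measure-rho} as the weight-$t$ moment of the conditioned local limit theorem.

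To run this rigorously one first proves the estimate with $h$ smoothed in the $\bb R$-variable (convolving with a narrow kernel and integrating $t+\sigma$ against it), so that the Fourier/transfer-operator machinery of ingredient (ii) and the reversed-walk analysis of ingredient (iii) apply cleanly in the bulk; the smoothing is then removed by sandwiching, using the monotonicity and regularity of $V$ and of the marginal densities $W,\check W$ furnished by Corollary~\ref{Thm-measure-rho-v002} and Corollary~\ref{Thm-measure-rho-v004}. Uniformity in $x$ is inherited from the uniformity of every input.

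The main obstacle is the joint decoupling of the three blocks despite the self-referential nature of the conditioned walk: sending $n\to\infty$ first and then $m,\ell\to\infty$, one needs uniform-in-$n$ control of (a) the probability that the conditioned trajectory comes back near $0$ during the middle block, (b) the error in the meander and local limit approximations, uniformly over the relevant heights of order $\sqrt m$ and $\sqrt\ell$ and over the Gaussian range of $t$, and (c) the error terms in the reversed-walk expansion, whose perturbation depends on \emph{future} increments — precisely the difficulty that the finite-size approximation of perturbations developed in this paper is designed to resolve. A secondary, more bookkeeping, difficulty is the interplay of the two endpoints: the factor $e^{-t^{2}/(2\upsilon_\mu^{2}n)}$ must be tracked uniformly so that the fixed-$t$ limit and the weight-$t$ integral come out simultaneously correct, which is where the prescribed behaviour of $\rho$ at $\pm\infty$ (Corollaries~\ref{Thm-measure-rho-v004} and \ref{Thm-measure-rho-v005}) and its harmonicity (Corollary~\ref{Thm-measure-rho-v003}) are used to guarantee that no mass escapes in the limit.
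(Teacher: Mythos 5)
The paper does not contain a proof of this theorem. Theorem \ref{Thm-CLLT-cocycle} is stated explicitly as a result imported from the companion article \cite{GQX24} (``The following is Theorem 1.1 from \cite{GQX24}''); the present paper only builds the ingredients (the target measures $\rho$, $\check\rho$, and the properties collected in Corollaries \ref{Thm-measure-rho-v002}--\ref{Thm-measure-rho-v005}) that the companion paper will consume. A comparison against ``the paper's own proof'' of this statement therefore cannot be made, and your proposal should be judged only on internal plausibility and on consistency with the framework the paper sets up.

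On that score your sketch is a reasonable high-level plan and is the standard shape for such arguments: a first block producing $V(x,t)$ via the exit-time and meander asymptotics of \cite{GLP17, GLL18Ann}; a final block recast through the reversal identity of Lemma \ref{lemma-duality-for-product-001} to produce $\int h\,d\rho$; and a middle block of length of order $n$ handled by a Stone-type local limit theorem via Nagaev--Guivarc'h spectral perturbation, yielding the Gaussian factor and one power of $n^{-1/2}$. Your consistency check is also correct: integrating the proposed uniform estimate
\[
\psi_n(x,t)\sim \frac{2}{\sqrt{2\pi}\,\upsilon_\mu^{3}}\,n^{-3/2}\,V(x,t)\,e^{-t^{2}/(2\upsilon_\mu^{2}n)}\int h\,d\rho
\]
against $t\,dt$, using $V(x,t)\sim t$ at large $t$ from Theorem \ref{Theorem GLPL2017} and
$\int_{0}^{\infty} t^{2}e^{-t^{2}/(2\upsilon_\mu^{2}n)}\,dt=\tfrac{\sqrt{2\pi}}{2}\upsilon_\mu^{3}n^{3/2}$,
recovers $\int h\,d\rho$, matching Theorem \ref{Thm-measure-rho}. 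This is good evidence that the constant and the Gaussian profile are right.

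However, as you yourself note, the actual mathematics lies in the decoupling of the three blocks and in the uniform error control, and that is exactly the content that is absent here. In particular: (a) your ``Stone-type local limit theorem'' for the middle block is asserted but not proved, and it must be uniform over heights of order $\sqrt n$ and over the direction in $\bb P(\bb V)$; (b) the identification of the final block's contribution with $\int h\,d\rho$ requires converting a conditional expectation at a single large time $n-\ell$ into the weighted-by-$t$ limit of Theorem \ref{Thm-measure-rho}, which is an integrated statement; this step needs its own argument (a sandwiching using the monotonicity of the marginal density $W$ together with the local limit theorem for the reversed walk, which is precisely what the perturbations depending on the future make delicate); (c) the removal of the smoothing in the $\bb R$-variable requires a non-lattice-type estimate that you invoke but do not establish. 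None of these are filled in, and the proposal would not compile into a proof without them. Given that the paper itself defers the whole of this to \cite{GQX24}, the honest verdict is: plausible plan, correct target formula and constant, correct use of the structures provided (the reversal identity, the target measure and its regularity), but a plan rather than a proof.
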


All the above results can be stated for the stopping times defined by using large inequality $\leq$ instead of strict inequality $<$.
In these formulations the harmonic functions $V$ and $\check V$ will get replaced by the 
harmonic functions corresponding to the new stopping times, while  
the target harmonic measures $\rho$ and $\check \rho$ will remain the same.  
The respective results can be obtained by the same methods.


\subsection{Heuristics for the harmonicity of the target measure} \label{sec-heuristic for harmonic measure}

In which sense the target measure $\rho$ is harmonic 
 may be a posteriori  explained through the following formalism. 
The data of the measure $\mu$ on $\bb G$ and of the cocycle $\sigma: \bb G \times \bb P(\bb V) \to \bb R$ 
define a Markov chain on the space $\bb P(\bb V) \times \bb R$, which is represented by the operator $P$ defined, 
for any bounded Borel measurable function $h$ on $\bb P(\bb V) \times \bb R$ and for any $(x, t) \in \bb P(\bb V) \times \bb R$, by 
\begin{align*}
P h(x, t) = \int_{\bb G} h \Big( gx, t + \sigma(g, x) \Big) \mu(dg). 
\end{align*}
On $\bb P(\bb V) \times \bb R$ we also define a killing operator $M$ as follows:
for any bounded Borel measurable function $h$ on $\bb P(\bb V) \times \bb R$ and for any $(x, t) \in \bb P(\bb V) \times \bb R$, 
we set 
\begin{align*}
M h(x, t) =  h(x, t) \mathds 1_{\bb R_+} (t). 
\end{align*}
Then the operator $Q = PM$ represents the process $(t + \sigma(g_n \cdots g_1, x))_{n\geq 1}$ conditioned to stay non-negative: for $n \geq 1$, 
it holds 
\begin{align*}
\bb E \Big( h (g_n \cdots g_1 x,  t + \sigma(g_n \cdots g_1, x)); \tau_{x, t} > n \Big) 
= Q^n h(x, t). 
\end{align*}
We can also define the operator $R = MP$ which does not seem to have an obvious probabilistic interpretation. 
With the help of the operators $P$, $Q$ and $R$, one can write   
\begin{align}\label{expect in terms of Q and R-001}
\bb E \Big( h (g_n \cdots g_1 x,  t + \sigma(g_n \cdots g_1, x)); \tau_{x, t} > n -1 \Big) &= Q^{n-1} P h(x, t) = P R^{n-1} h(x, t).
\end{align}
It is formally evident that if, as in Theorem \ref{Thm-CLLT-cocycle}, 
the expectation in \eqref{expect in terms of Q and R-001} is equivalent to 
 $$
 \frac{2 V(x, t)}{ \sqrt{2 \pi} \upsilon_{\mu}^3  n^{3/2}} \int_{\bb P(\bb V) \times \bb R} h d \rho,
 $$ 
then the function $V$ should be $Q$-harmonic, that is, $QV = V$, and the measure $\rho$ should be $R$-harmonic, that is, 
for any bounded Borel measurable function $h$ on $\bb P(\bb V) \times \bb R$, 
\begin{align} \label{def of harminic measure-001}
\int_{\bb P(\bb V) \times \bb R} R h d \rho = \int_{\bb P(\bb V) \times \bb R}  h d \rho. 
\end{align}

Based on the above rationale, it is reasonable to construct the measure $\rho$ 
as the limit of the sequence $(R^*)^n \eta$ where $\eta$ is an initial Radon measure on $\bb P(\bb V) \times \bb R$.
This construction is precisely what is addressed in Theorem \ref{Thm-measure-rho}.
Indeed, let us fix $x \in \bb P(\bb V)$. Choose $\eta=\eta_x := \delta_x \otimes (t \mathds 1_{\bb R_+}(t) dt)$, 
which means that $\eta_x$ is the Radon measure satisfying, 
for any continuous compactly supported function $h$ on $\bb P(\bb V) \times \bb R$, 
\begin{align}\label{def-eta-x}
\int_{ \bb P(\bb V) \times \bb R } h d \eta_x = \int_{\bb R_+} t h(x, t) dt. 
\end{align}
Then, the conclusion of Theorem \ref{Thm-measure-rho} exactly says that the sequence of measures $(R^*)^n \eta_x$ 
converges vaguely to $\rho$, as $n \to \infty$. 
Indeed, using \eqref{expect in terms of Q and R-001}, we have, uniformly over $x \in \bb P(\bb V)$, as $n \to \infty$, 
\begin{align*}
&\int_{\bb R} h d( (R^*)^n \eta_x)  
 = \int_{\bb R} t R^{n} h(x, t) dt  =  \int_{\bb R} t M P R^{n-1} h(x, t) dt  \notag\\
&\qquad  = \int_{0}^{\infty} t \,  \bb E \Big( h (g_n \cdots g_1 x,  t + \sigma(g_n \cdots g_1, x));  \, \tau_{x, t} > n -1 \Big) dt 
 \to  \int_{ \bb P(\bb V) \times \bb R } h d \rho. 
\end{align*}
From this result we deduce in Corollary \ref{Thm-measure-rho-v002} that $\rho$ satisfies \eqref{def of harminic measure-001}.

Note that the analogs of the operators $P, M, Q=PM, R=MP$ can also be considered   
in the setting with random walks based on independent and identically distributed random variables, 
leading to the same conclusions about the measure $\rho$.

\subsection{Extension to the case of flag manifolds}

The methods of this paper can be applied to formulate intrinsic results for random walks on reductive groups.  
Let $\bf G$ be a real connected reductive group. Denote by $K$ a maximal compact subgroup of $G = \bf G(\bb R)$
and by $\bf A$ a maximal $\bb R$-split torus of $\bf G$ so that the Cartan involution of $\bf G$ associated with $K$ equals $-1$ on the Lie algebra $\mathfrak a$
of  $A = \bf A(\bb R)$. Let $\mathfrak a^+ \subset \mathfrak a$ be a Weyl chamber. Then we have the Cartan decomposition $G = K \exp (\mathfrak a^+) K$
and the associated Cartan projection $\kappa: G \to \mathfrak a^+$.

We let $\bf P$ be the unique minimal  $\bb R$-parabolic subgroup of $\bf G$ whose Lie algebra contains the root spaces associated with the elements 
of $\mathfrak a^+$. We have the Iwasawa decomposition $G = K P$, where $P = \bf P(\bb R)$. 
Let $\bf U$ be the unipotent radical of $\bf P$ so that $P = AU$, where $U = \bf U(\bb R)$,
and hence $G = KAU$. 
More precisely, for $g \in G$, the set $KgU$ contains a unique element of $\exp (\mathfrak a)$. 

We also denote by $\mathcal P = G/P$ the flag manifold of $G$, that is, the set of minimal  $\bb R$-parabolic subgroups of $\bf G$, 
and by $\xi_0$ the unique fixed point of $P$ in $\mathcal P$. 
The set $\mathcal P$ is a compact homogeneous space of $G$.
For $g \in G$ and $\xi \in \mathcal P$, choose $k \in K$ such that $\xi = k \xi_0$,
and denote by $\sigma(g, \xi)$ the unique element of $\mathfrak a$ such that  
\begin{align*}
\exp(\sigma(g, \xi)) \in K g kU. 
\end{align*}
The map $\sigma: G \times \mathcal P \to \mathfrak a$ 
is a smooth cocycle which is usually called the Iwasawa cocycle.

Let $\mu$ be a Borel probability measure on $G$. Assume that the first moment of $\mu$ is finite, meaning that $\int_{G} \| \kappa(g) \| \mu(dg) < \infty$
for some norm $\| \cdot \|$ on the vector space $\mathfrak a$. 
Then, the limit $\lim_{n \to \infty} \frac{1}{n} \bb E \kappa(g_n \cdots g_1)$ exists and is called the Lyapunov vector $\lambda_{\mu} \in \mathfrak a^+$. 
Let $\phi$ be a linear functional on $\mathfrak a$ such that $\phi(\lambda_{\mu}) = 0$. 
Then for $t \in \bb R$ and $\xi \in \mathcal P$, we define the following stopping time 
\begin{align*}
\tau_{\xi, t}  = \min \{ k \geq 1: t + \phi( \sigma(g_k \cdots g_1, \xi) ) < 0 \}. 
\end{align*}
Denote by $\Gamma_{\mu}$ the subsemigroup of $G$ spanned by the support of $\mu$. 

One can show the following analog of Theorem \ref{Theorem GLPL2017}.

\begin{theorem} \label{Theorem GLPL2017-reductive} 
Assume that $\Gamma_{\mu}$ is Zariski dense in $G$,  
the measure $\mu$ admits an exponential moment (i.e. $\int_{G} e^{\alpha \|\kappa(g) \|} \mu(dg) < \infty$ for some $\alpha >0$) 
and $\phi(\lambda_{\mu}) = 0$. 
Then, for any $\xi \in \mathcal P$ and $t \in \bb R$, the following limit exists:
\begin{align} 
& \lim_{n \to \infty} \bb E \Big( t + \phi( \sigma(g_k \cdots g_1, \xi) ); \tau_{\xi, t} > n \Big) = V(\xi, t). 
\end{align}
Moreover, uniformly in $\xi \in \mathcal P$, it holds $\lim_{t \to \infty} \frac{V(\xi, t)}{t} = 1.$ 
\end{theorem}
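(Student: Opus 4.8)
The plan is to reduce the statement for the reductive group $\bf G$ and the linear functional $\phi$ to the vector-space case already handled in Theorem \ref{Theorem GLPL2017}, or rather to the general framework developed in Section \ref{sec-results without perturb} of which Theorem \ref{Theorem GLPL2017} is an instance. The key point is that the real-valued cocycle $\sigma_\phi(g, \xi) := \phi(\sigma(g, \xi))$ over the $G$-action on the compact metric space $\mathcal P$ has exactly the structural properties that the abstract construction requires: it is continuous, it satisfies the cocycle identity $\sigma_\phi(gh, \xi) = \sigma_\phi(g, h\xi) + \sigma_\phi(h, \xi)$ inherited from the Iwasawa cocycle, and the exponential moment hypothesis on $\kappa$ transfers to $\sigma_\phi$ because $|\phi(\sigma(g,\xi))| \leq C\|\sigma(g,\xi)\| \leq C'\|\kappa(g)\|$, the last inequality being the standard fact that the Iwasawa projection is dominated by the Cartan projection uniformly in $\xi$. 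First I would record these three facts, citing the relevant properties of the Iwasawa cocycle (e.g. from the Benoist--Quint book or \cite{BQ16b}) and the Zariski density assumption, which under proximality/irreducibility arguments guarantees that the Markov walk $(t + \sigma_\phi(g_n\cdots g_1, \xi))_{n\geq 1}$ is genuinely one-dimensional and non-degenerate.

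Next I would verify the two analytic inputs that make the harmonic function construction work: centering and non-lattice/non-degeneracy. Centering is immediate from $\phi(\lambda_\mu) = 0$, since $\frac1n \bb E \sigma_\phi(g_n\cdots g_1, \xi) = \phi\big(\frac1n \bb E \kappa(g_n\cdots g_1)\big) + o(1) \to \phi(\lambda_\mu) = 0$, using that the difference between $\sigma$ and $\kappa$ along the walk is sublinear (again a uniform estimate on $\mathcal P$). Non-degeneracy of the asymptotic variance of $\sigma_\phi$ along the walk follows from Zariski density: if the variance vanished, the cocycle $\sigma_\phi$ would be cohomologous to a constant, contradicting the fact that for a Zariski dense subsemigroup the Iwasawa cocycle spans, in the appropriate cohomological sense, all of $\mathfrak a$ modulo the constant given by $\lambda_\mu$ — so no nonzero functional $\phi$ can make $\sigma_\phi$ a coboundary. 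With these in hand, the existence of the limit $V(\xi, t) = \lim_{n\to\infty} \bb E(t + \sigma_\phi(g_n\cdots g_1, \xi); \tau_{\xi,t} > n)$ and the asymptotic $V(\xi,t)/t \to 1$ follow verbatim from the construction in Section \ref{sec-results without perturb}, applied to the base space $\mathcal P$, the measure $\mu$, and the cocycle $\sigma_\phi$.

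The main obstacle I expect is not the reduction itself but checking that the abstract hypotheses of Section \ref{sec-results without perturb} — which are presumably phrased in terms of a spectral gap for the transfer operator associated with $(\mu, \sigma_\phi)$ on an appropriate Hölder space over $\mathcal P$, together with the large-deviation and Berry--Esseen type estimates that underpin the conditioned limit theory — are actually available in the reductive setting. This requires the regularity theory of the Iwasawa cocycle on the flag manifold (Hölder continuity of $\xi \mapsto \sigma(g,\xi)$ with controlled constants, contraction of the $G$-action near the attracting flag) and the spectral gap for the complexified transfer operators $g \mapsto e^{z\sigma_\phi(g,\xi)}$ for $z$ near $0$; these are known under Zariski density, but one must cite or adapt them carefully, since proximality on $\bb P(\bb V)$ is replaced by the analogous contraction properties on $\mathcal P$. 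I would therefore devote the bulk of the proof to assembling these ingredients from the literature and to the uniformity in $\xi \in \mathcal P$, which, as in the projective case, comes from the compactness of $\mathcal P$ together with the equicontinuity of the family $\{\xi \mapsto V(\xi,t)\}$ that the construction produces.
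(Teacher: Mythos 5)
Your overall route is the one the paper intends: the paper gives no explicit proof of Theorem \ref{Theorem GLPL2017-reductive}, stating only that it ``can be obtained in the same way as in Theorems \ref{Theorem GLPL2017} and \ref{Thm-measure-rho}, by using the language of \cite{BQ16b}'', i.e.\ by feeding the real-valued cocycle $\phi\circ\sigma$ on $G\times\mathcal P$ into the abstract machinery of Sections \ref{sec invar func}--\ref{sec-results without perturb}. Your verification of the cocycle identity, of the exponential moment via $|\phi(\sigma(g,\xi))|\leq C\|\kappa(g)\|$ uniformly in $\xi$, and of the non-degeneracy of the variance from Zariski density are all the right ingredients. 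One mischaracterization: the abstract framework does not assume a spectral gap for complexified transfer operators or H\"older regularity of $V(\cdot,t)$; the only analytic input beyond moments and centering is the effective (Berry--Esseen) central limit theorem \eqref{BEmart-001}, which for $\phi\circ\sigma$ on the flag manifold is available from \cite{BQ16b} under Zariski density. The uniformity in $\xi$ of $V(\xi,t)/t\to 1$ then comes from the uniform quantitative bounds of Section \ref{sec-results without perturb} (e.g.\ the analogue of \eqref{Un-xy-Bound-001-0}), not from equicontinuity of $\xi\mapsto V(\xi,t)$.

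The one genuine gap is your treatment of centering. The framework requires the pointwise condition \eqref{centering-001}, namely $\int_G \phi(\sigma(g,\xi))\,\mu(dg)=0$ for \emph{every} $\xi\in\mathcal P$; this is what makes $(S^\xi_n)_n$ a martingale, and the optional-stopping identity \eqref{optional-stp-iden}, Lemma \ref{Lem-Vn-increasing} and the martingale steps in Proposition \ref{Prop-Vn-001} all break down without it. Your Ces\`aro argument only shows that the drift $\lim_n \frac1n\bb E\,\phi(\sigma(g_n\cdots g_1,\xi))$ vanishes, which is strictly weaker: for the Iwasawa cocycle the one-step conditional expectation is in general a nonconstant function of $\xi$. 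The correct device, which the paper itself uses for $\sigma^*$ (see \eqref{lyapunov exp is 0-002} and the discussion around it), is Lemma 10.18 of \cite{BQ16b}: there is a bounded continuous $\psi_0$ on $\mathcal P$ such that $\phi(\sigma(g,\xi))+\psi_0(g\xi)-\psi_0(\xi)$ satisfies \eqref{centering-001}. You then have to account for the discrepancy between the stopping time $\tau_{\xi,t}$ of the statement (defined with the original cocycle) and that of the centered walk; this is exactly a perturbation of the form $f\circ T^n-f$ with $f$ bounded and depending on no future coordinates, so it is absorbed by the perturbed framework of Theorem \ref{Pro-Appendix-Final2-Inequ} (the conditions \eqref{exp mom for g 002}--\eqref{approxim rate for gp-002} are trivially satisfied). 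Without this step your reduction to Section \ref{sec-results without perturb} does not apply as written.
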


The methods of this paper also allow to get the following analog of Theorem \ref{Thm-measure-rho}.

\begin{theorem}\label{Thm-measure-rho-reductive}
Assume that $\Gamma_{\mu}$ is Zariski dense in $G$,  
the measure $\mu$ admits an exponential moment 
and $\phi(\lambda_{\mu}) = 0$. 
Then, there exists a Radon measure $\rho$ 
on $\mathcal P \times \bb R$ such that,  
for any continuous compactly supported function $h$ on $\mathcal P \times \bb R$, uniformly in $\xi \in \mathcal P$, the following limit exists 
and is independent of $\xi$, 
\begin{align*}
  \lim_{n \to \infty}  \int_{0}^{\infty} t  \bb E \Big( h (g_n \cdots g_1 \xi,  t + \phi( \sigma(g_n \cdots g_1, \xi) ) );  \tau_{\xi, t} > n -1 \Big) dt
 = \int_{\mathcal P \times \bb R} h(\xi', t') \rho(d\xi', dt'). 
\end{align*}
The marginal  of the measure $\rho$ on $\bb R$ is absolutely continuous with respect to the Lebesgue measure
with non-decreasing density. 

Moreover, for any continuous compactly supported function $h$ on $\mathcal P \times \bb R$, the following limit exists:
\begin{align*}
\lim_{t \to \infty} \frac{1}{t} \int_{ \mathcal P \times \bb R } h(\xi', t'-t) \rho(d\xi', dt') = \int_{ \mathcal P \times \bb R } h(\xi', t') \nu(d\xi') dt', 
\end{align*}
where $\nu$ is the unique $\mu$-stationary probability measure on $\mathcal P$. 
In particular, the marginal density $W(t) = \frac{\rho(\mathcal P, dt)}{dt}$
satisfies $\lim_{t \to \infty} \frac{W(t)}{t} =  1$. 
\end{theorem}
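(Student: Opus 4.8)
\emph{Sketch of the argument.} The plan is to replay, in the setting of the compact homogeneous space $\mathcal P$ and the real-valued cocycle $\phi\circ\sigma:G\times\mathcal P\to\bb R$, the construction carried out in the body of the paper for $\bb P(\bb V)$ and the cocycle $\sigma:\bb G\times\bb P(\bb V)\to\bb R$. That construction uses only a fixed list of properties of the driving data, and all of them are available here once $\Gamma_\mu$ is Zariski dense in $G$ (the case $\phi=0$ being trivial, with $\rho=\nu\otimes(t\,\mathds 1_{\bb R_+}(t)\,dt)$): by the theory of random walks on reductive groups (see e.g.\ \cite{BQ16b}), the $G$-action on $\mathcal P$ contracts exponentially fast along the random walk towards a unique $\mu$-stationary measure $\nu$; the map $\xi\mapsto\phi(\sigma(g,\xi))$ is H\"older continuous with exponential-moment control; the cocycle $\phi\circ\sigma$ is non-arithmetic and has positive asymptotic variance $\upsilon_\mu^2$; and the perturbed transfer operators $P_z h(\xi)=\int_G e^{z\,\phi(\sigma(g,\xi))}h(g\xi)\,\mu(dg)$ have a spectral gap on a suitable space of H\"older functions for $z$ in a neighbourhood of the imaginary axis. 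Theorem \ref{Theorem GLPL2017-reductive} then supplies the harmonic function $V(\xi,t)$ with $V(\xi,t)/t\to1$.

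The first genuinely new step is the reversal identity, i.e.\ the analog of Lemma \ref{lemma-duality-for-product-001}. In the $\mathrm{GL}(\bb V)$ case this identity is built from the passage to the dual vector space $\bb V^*$ (transpose of matrices); the reductive analog is the passage to the opposite flag manifold $G/P^-$, equivalently to the flag manifold twisted by the anti-involution $g\mapsto\theta(g)^{-1}$ attached to the Cartan involution $\theta$ and the longest Weyl element. Under this anti-involution the Iwasawa cocycle obeys a cohomological identity expressing $\sigma(g,\xi)$ through a cocycle over $G/P^-$ up to an explicit coboundary; composing with $\phi$ and solving this cohomological equation, one obtains, as in \eqref{reversed RWfor products-001}, a reversed Markov random walk on $G/P^-$ perturbed by a term that depends on the whole future of $(g_k)_{k\ge1}$, together with the integral identity matching $\int h(\xi,t)\,\bb E(t-\phi(\sigma(g_n\cdots g_1,\xi));\,\check\tau_{\xi,t}>n)\,d\xi\,dt$ with $\int_{\bb R_+}t\,\bb E(h(g_n\cdots g_1\xi,\,t+\phi(\sigma(g_n\cdots g_1,\xi)));\,\tau_{\xi,t}>n-1)\,dt$.

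With the reversal identity at hand I would proceed exactly as in the main text: truncate the future-dependent perturbation to one depending on finitely many future steps, turning the reversed process into a genuine Markov chain on $G/P^-$ times an auxiliary space of growing dimension; for each finite model the existence of the target measure and the renewal-type asymptotics follow from the spectral-gap analysis of the perturbed transfer operators; then let the truncation level tend to infinity, using the exponential contraction on $G/P^-$ to control the error uniformly in the starting point $\xi$. This yields the Radon measure $\rho$ and the independence of the limit on $\xi$. The remaining assertions are obtained as in Corollaries \ref{Thm-measure-rho-v002} and \ref{Thm-measure-rho-v004}: the $\bb R$-marginal of $\rho$ is absolutely continuous with non-decreasing density, since each finite approximation has this property and it persists under the vague limit (equivalently, because $\rho$ satisfies the analog of the $R$-harmonicity relation \eqref{def of harminic measure-001}); and the decoupling $\tfrac1t\int h(\xi',t'-t)\,\rho(d\xi',dt')\to\int h(\xi',t')\,\nu(d\xi')\,dt'$, hence $W(t)/t\to1$, holds because for $t$ large the killing becomes asymptotically immaterial, so that the $\mathcal P$-coordinate equilibrates to $\nu$ while the real coordinate behaves like an unconditioned centered walk whose occupation density grows linearly.

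The main obstacle is the same as in the $\mathrm{GL}(\bb V)$ case: the construction of $\rho$ itself, namely taming the reversed walk whose perturbation sees the entire future, via the finite-size approximation and the uniform control of the approximation error. The points specific to the reductive setting are (i) pinning down the correct reversed dynamics -- the right anti-involution and the cohomological identity for the Iwasawa cocycle between $\mathcal P$ and $G/P^-$, which is a matter of structure theory rather than probability, and which one may alternatively sidestep, at least when $\phi$ lies in the dominant cone, by realizing $\phi\circ\sigma$ as a norm cocycle in a suitable finite-dimensional representation of $G$ and invoking Theorem \ref{Thm-measure-rho} directly; and (ii) checking that Zariski density delivers the non-arithmeticity and the positivity $\upsilon_\mu^2>0$ needed for the renewal estimates, together with the H\"older-regularity and spectral-gap inputs on $\mathcal P$. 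Once these ingredients are recorded, the probabilistic core of the argument is identical to the one developed for linear groups.
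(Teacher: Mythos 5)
Your proposal follows essentially the route the paper itself indicates: the authors give no separate proof of Theorem \ref{Thm-measure-rho-reductive}, stating only that it is obtained exactly as Theorems \ref{Theorem GLPL2017} and \ref{Thm-measure-rho} in the language of \cite{BQ16b}, and your sketch correctly identifies the reductive-specific ingredients (the opposite flag manifold and the cohomological identity for the Iwasawa cocycle replacing $\bb V^*$ and $\delta$, Zariski density supplying contraction, non-degenerate variance and the effective CLT) together with the reversal identity and the finite-size approximation of the future-dependent perturbation. The only slight mismatch is your description of the finite-range models as handled by renewal-type spectral-gap analysis of perturbed transfer operators, whereas the paper treats them via the Markov chain on $\bb A_p$, martingale/optional-stopping arguments and the Berry--Esseen input, with the target measure emerging only at the end from the quasi-monotonicity inequalities of Theorem \ref{Pro-Appendix-Final2-Inequ}; this does not affect the overall correctness of the strategy.
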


The measure $\rho$ satisfies the same harmonicity property as in Corollary \ref{Thm-measure-rho-v003}. 

We will not prove these results explicitly, but they can be obtained in the same way as in Theorems \ref{Theorem GLPL2017} and \ref{Thm-measure-rho},
by using the language of \cite{BQ16b}. 

\subsection{Extension to the case of local fields}

Theorems \ref{Theorem GLPL2017} and \ref{Thm-measure-rho} are stated for random walks 
with values in the linear group $\mathrm{GL}(d,\bb R)$ over the field of real numbers $\bb R$.
However, they can be directly extended to the case of random walks with values in the linear group $\mathrm{GL}(d,\bb K)$, 
where $\bb K$ is a local field, that is a locally compact topological field.   

Indeed, when working over the complex field $\bb C$, we can consider the cocycle associated with the data of a Hermitian norm
on a given finite-dimensional complex vector space $\bb V$ isomorphic to $\bb C^d$.
When working over a non-Archimedean local field $\bb K$,  we can consider the cocycle associated with the data of an  
ultrametric norm on a given finite-dimensional $\bb K$-vector space $\bb V$ isomorphic to $\bb K ^d$.


\subsection{Proof strategy and organization of the paper} 
The main results of the paper are stated in Section \ref{sec-Introduction and results}, precisely in
Theorem \ref{Thm-measure-rho}. Their proofs will rely on 
studying random walks conditioned to stay non-negative with some perturbations depending on the future.
The conclusion of the latter study is summarized in Theorem \ref{Pro-Appendix-Final2-Inequ} of Section \ref{sec invar func},
which will be formulated in an abstract framework where some general group acts on a general locally compact space.

In Section \ref{Sec-Approximation-properties}, we introduce a random walk featuring an ideal perturbation 
depending on the entire future, which will later be employed to define the target harmonic measure $\rho$. 
Such perturbed random walks will be studied later in an abstract setting in Section \ref{sec invar func}.
At this point, our aim is to verify that our concrete example satisfies the assumptions  
of the general Theorem \ref{Pro-Appendix-Final2-Inequ} -- specifically, 
 that the ideal perturbation can be effectively approximated by perturbations 
depending on a finite number of coordinates. 
We conclude the section by addressing a comparable issue concerning a random walk with a perturbation varying with $n$.

In Section \ref{Sec-harmonic-measure}, we begin by  establishing the existence of the target harmonic measure $\rho$ 
through the utilization of the ideally perturbed random walk introduced in Section \ref{Sec-Approximation-properties}. 
The proof relies on the two-sided approximations articulated in Theorem \ref{Pro-Appendix-Final2-Inequ}. 
In the latter part of the section, 
we define an appropriate reversed random walk for $\sigma(g_n \cdots g_1, x)$  
through a reversal identity similar to \eqref{duality-ident-001}, 
with the precise formulation given in Lemma \ref{lemma-duality-for-product-001}.  
By applying to the reversed walk approximation techniques analogous to those in the first part of the section, 
we derive the conclusions stated in Theorem \ref{Thm-measure-rho}.

The subsequent sections, namely Sections \ref{sec invar func}, \ref{sec-results without perturb}, 
\ref{sec quasi-increasingness}, and \ref{sec-quasi-decreasing property}, 
are devoted to establishing Theorem \ref{Pro-Appendix-Final2-Inequ}. 
This theorem addresses random walks with perturbations 
that lend themselves to approximations by functions depending on a finite set of coordinates. 
For the proof, we develop an approach that involves replacing these walks with 
suitably chosen Markov chains of increasing dimension.


\subsection{Incentive and related work} 
The study of the conditioned limit theorems for random walks with independent and identically distributed jumps on the real line 
has been initiated by Spitzer \cite{Spitzer} and Feller \cite{Feller} and  has attracted the interest of many authors
\cite{Bolth, Igle74, Eppel-1979, Doney85, BertDoney94, Carav05}.  
Random walks in cones have been studied intensely in \cite{Den Wacht 2008, DW15, DW19, PW21, DRTW22, DW24}.
For a historical overview and a comprehensive list of references, we refer readers to \cite{GQX23, GX2022IID}.

The case of sums of dependent random variables is considerably less explored. 
In the setting of additive functionals associated with finite state Markov chains, 
a conditioned local limit theorem has been established in \cite{GLL20}. 
The methodology highlighted in \cite{GLL20} revolves around the existence of a reversed Markov chain, 
which is intricately linked to the original chain via a reversal identity. 
This connection facilitated the formulation of a dual harmonic function and, 
consequently, the implicit demonstration of the existence of the measure $\rho$.
While such an approach is not directly applicable to the general case of products of random matrices, 
it proves effective in specific instances -- particularly when dealing with matrices 
possessing a density with respect to the Haar measure on $\textrm{GL}(d, \mathbb{R})$. 
This specialized case has been explored in \cite{GXM2024ExtrPos} 
where the underlying concept aligns with that in \cite{GLL20}.

A closely related issue concerning positive matrices has been recently addressed in \cite{PP23, GX2024CLLTposMatr}. However, the proof methods employed in these papers fell short of providing the exact asymptotic. Instead, they yielded only two-sided bounds for the local probabilities.

Exact asymptotics in conditioned local limit theorems have been studied within the 
framework of hyperbolic dynamical systems in \cite{GQX23}. 
This work specifically addresses the subshift of finite type setting. 
Notably, this research revealed that for dependent random walks characterized by dependencies extending beyond the Markov type, the traditional harmonic function needs to be replaced by a more comprehensive entity known as the harmonic measure.
Our paper can be viewed as an adaptation of this methodology to the setting with products of random matrices. 
Along with this, our approach crucially employs the techniques developed recently in the 
works  such as \cite{Den Wacht 2008, DW15, GLP17, GLL18Ann}.  
  
The comprehension of conditioned local limit theorems for products of random matrices
is crucial in addressing various problems. 
For instance, it is instrumental in examining random walks on affine groups in the critical case \cite{BBE97, BPP20, ABP24},
the reflected random walks \cite{Lalley95, EsPeRa2013},    
 multitype branching processes in random environment \cite{LPPP18}, 
branching random walks on the linear group \cite{BDGM14, Mentem16, GXM2024ExtrPos}.

 The problem of finding the suitable target Radon measure to achieve a  local limit theorem, 
 similar to the issue encountered in our scenario, also emerges in the study 
 of non-centered random walks on nilpotent Lie groups.
 Relevant insights into this problem can be found in \cite{Bre05, Hou19, DH21, BB23}.


\section{Approximation properties for perturbations}\label{Sec-Approximation-properties}

\subsection{A random walk with the ideal perturbation} 
In this section, we introduce the dual random walk and a perturbation function denoted as $f$, 
which play important roles in constructing the Radon measure $\rho$ featured 
in the primary result of the paper -- Theorem \ref{Thm-measure-rho}. 
Note that the perturbation function $f$ emerges as the limit of a sequence of perturbations $f_n^{x,m}$, $m\geq n\geq 1$, which come into play when investigating the reversed random walk in Section \ref{Sec-harmonic-measure} below. 
For this reason $f$ will be called the ideal perturbation.

Without loss of generality we shall assume in the following that   
 $\Omega = \bb G^{\bb N^*}$, that $\Omega$ is equipped with the Borel $\sigma$-algebra $\scr A$ and with the  
probability measure $\bb P = \mu^{\otimes \bb N^*}$. 
A typical element of $\Omega$ is written as $\omega = (g_1, g_2, \ldots)$.
Then, the sequence of coordinate maps, $\omega \mapsto g_k$, $k=1,2,\ldots$ on the probability space 
$(\Omega,\mathscr A, \bb P)$ forms a sequence of independent and identically distributed elements of $\bb G$ with law $\mu$.
Let $T: \Omega \to \Omega$ be the shift map $\omega = (g_1, g_2, \ldots) \mapsto T \omega = (g_2, g_3, \ldots)$.
We also introduce the shift map $\widetilde T $ on $\Omega\times \bb P(\bb V^*)$, defined as follows: 
for $\omega=(g_1, g_2, \ldots) \in \Omega$ and $y \in \bb P(\bb V^*)$, 
\begin{align*} 
\widetilde T(\omega, y)= \left( (g_2, g_3, \ldots), g^{-1}_{1} y \right).
\end{align*}

Recall that we have chosen a Euclidean norm $\| \cdot \|$ on $\bb V$.  
We equip the projective space $\bb P(\bb V)$ with the sine distance 
$d(x, x')= \frac{\| v \wedge v' \| }{\| v \| \| v' \| }$, where $x=\bb R v \in \bb P(\bb V)$ and $ x'=\bb R v' \in \bb P(\bb V)$.
Consider the dual vector space $\bb V^*$ of $\bb V$ and denote by $\bb P(\bb V^*)$ the projective space of $\bb V^*$. 
We let $\bb G$ act on $\bb V^*$ and $\bb P(\bb V^*)$ in the standard way: 
for $g \in \bb G$ and $\varphi \in \bb V^*$, the action $g \varphi$ is defined as the linear functional that acts on $v \in \bb V$ by
\begin{align*}
g \varphi (v) = \varphi (g^{-1} v). 
\end{align*}
We also equip $\bb V^*$ with the Euclidean norm dual to the norm $\|\cdot\|$ on  $\bb V$, 
defined as follows: for $\varphi \in \bb V^*$, 
\begin{align*}
\| \varphi \| = \sup_{v \in \bb V \smallsetminus \{0\}} \frac{| \varphi(v) |}{\|v\|}. 
\end{align*}
Define a cocycle $\sigma^*: \bb G \times \bb P(\bb V^*) \to \bb R$ by:
for any $g \in \bb G$ and $y = \bb R \varphi \in \bb P(\bb V^*)$, 
\begin{align}\label{dual cocycle-001}
\sigma^*(g, y) = \log \frac{\| g \varphi \|}{\| \varphi \|}. 
\end{align}

The proof of Theorem \ref{Thm-measure-rho} relies on the study of the perturbed dual random walk 
$(\tilde S_n(\omega, y) )_{n\geq 1}$ defined as follows.
Let $f$ be a real-valued measurable function on $\Omega \times \bb P(\bb V)$, referred to as the ideal perturbation. 
For any $\omega=(g_1,g_2,\ldots)\in \Omega$, $y\in \mathbb{P}(\bb V)$ and $n\geq 1$, we set
\begin{align} \label{introduc of perturb RW with f-001}
\tilde S_n(\omega, y) 
= -\sigma^*(g_n^{-1} \cdots g_1^{-1}, y) 
+ f\circ \widetilde T^n (\omega,y) - f(\omega,y).
\end{align}
This walk can be viewed as the random walk $(\sigma^*(g_n^{-1} \cdots g_1^{-1}, y))_{n\geq 1}$ altered by the functions $f\circ \widetilde T^n - f.$
Note that the function $\omega\mapsto f(\omega,y)$ depends on the future coordinates of $\omega$,
which introduces the main challenge in analyzing the properties of the walk \eqref{introduc of perturb RW with f-001}. 
The study of random walks with perturbations depending on the future is at the core of the abstract framework established in Theorem  \ref{Pro-Appendix-Final2-Inequ}. 

Below, we precisely define the perturbation function $f$ that will be appropriate for proving the existence of the target harmonic measure $\rho$. 
We refer to this function as the \textit{ideal perturbation}.
Moreover, we will show that the ideal perturbation $f$ satisfies an approximation property 
that is consistent with the assumptions of Theorem \ref{Pro-Appendix-Final2-Inequ}. 

To define the ideal perturbation $f$ which is appropriate for our study, 
we use classical results on products of random matrices, see \cite{Boug-Lacr85, BQ16, BQ16b}. 
Recall that $\Gamma_{\mu}$ is the closed subsemigroup of $\bb G$ spanned by the support of $\mu$. 
Since $\Gamma_{\mu}$ contains some proximal element and the action of $\Gamma_{\mu}$ on $\bb V$ is strongly irreducible,   
the space $\bb P(\bb V)$ carries a unique $\mu$-stationary probability measure $\nu$. 
By a classical result of Furstenberg \cite{Fur63}, 
there exists a unique measurable map $\xi$ from $\Omega$ into $\bb P(\bb V)$ with the following equivariance property:
for $\bb P$-almost every $\omega = ( g_1, g_2, \ldots) \in \Omega$, 
\begin{align}\label{equivariance-xi}
\xi(\omega) = g_1 \xi(T\omega),  
\end{align}
and hence, by iteration, for any $p\geq 1$,
\begin{align}\label{equivariance-xi-002}
\xi(\omega) = g_1\cdots g_p \xi(T^p \omega). 
\end{align}
Moreover, the law of the random point $\xi$ in $\bb P(\bb V)$ is the stationary measure $\nu$.
The map in \eqref{equivariance-xi} will play an important role in the subsequent analysis.

The duality between $\bb V^*$ and $\bb V$ allows to define a function $\delta$ on the set 
\begin{align*} 
\Delta: = \big\{ (x, y) \in \bb P(\bb V) \times \bb P(\bb V^*):  x = \bb R v,  \  y = \bb R \varphi,  \  \varphi(v) \neq 0  \big\}. 
\end{align*}
For any $(x, y) \in \Delta$ with $x = \bb R v \in \bb P(\bb V)$ and $y = \bb R \varphi \in \bb P(\bb V^*)$, let
\begin{align} \label{def of delta func-001}
\delta(x, y) = - \log \frac{ |\varphi(v)| }{ \| \varphi \| \| v\| }  \geq 0. 
\end{align}
The function $\delta$ and the cocycles $\sigma$ and $\sigma^*$ are related by the cohomological formula:
for $g \in \bb G$ and $(x, y) \in \Delta$, 
\begin{align*}
\delta(gx, gy) = \delta(x, y) + \sigma(g, x) + \sigma^*(g, y). 
\end{align*}
In other words,
\begin{align} \label{cohomological eq -vers 002}
\sigma(g, x) - \delta(gx, y)  = \sigma^*(g^{-1}, y) - \delta (x, g^{-1}y). 
\end{align}
With the notation introduced above, the ideal perturbation function $f$ is defined as follows: 
for $\omega \in \Omega$ and $y\in \bb P(\bb V^*)$,  
\begin{align} \label{perturbed function in bb Y-001}
f(\omega, y) = \delta ( \xi (\omega), y ). 
\end{align}
Note that this function is undefined 
when $\xi(\omega)$ lies in the projective hyperplane in $\bb P(\bb V)$ that is orthogonal to $y$, 
because the denominator in the definition of $\delta$ becomes zero. 
However, 
the law of the random point $\xi \in \bb P(\bb V)$ is the unique $\mu$-stationary measure $\nu$ on $\bb P(\bb V)$,
which assigns zero mass to such projective hyperplanes. Consequently, 
 for any $y\in \bb P(\bb V^*)$, the function $\omega \mapsto f(\omega, y)$ 
is defined almost everywhere with respect to $\bb P$ on $\Omega$.

\subsection{Uniform approximations for the ideal perturbation}\label{sec-Uniform approximations}

For $p \geq 1$, we denote by $\scr A_p$ the $\sigma$-algebra on $\Omega$ 
spanned by the random elements $g_1,\ldots,g_p$, 
and we set $\scr A_0 = \{ \emptyset, \Omega \}$ for the trivial $\sigma$-algebra. 
Henceforth, the symbols $c$ and $C$ denote positive constants whose values may change from line to line. 

We begin by acknowledging that the function $f$ possesses an exponential moment (refer to \cite[Theorem 14.1]{BQ16b}):
there exists a constant $\alpha >0$ such that 
\begin{align}\label{Regularity-nu}
\sup_{y \in \bb P(\bb V^*)} \int_{\Omega}  e^{ \alpha f(\omega, y) } \bb P(d \omega)
 =  \sup_{y \in \bb P(\bb V^*)} \int_{\bb P(\bb V)}  e^{ \alpha \delta(x, y) } \nu(dx)  < \infty. 
\end{align}
Note that our notation differs from that in \cite{BQ16b}: 
what we denoted here by $\delta(x, y)$ would be $-\log \delta(x, y)$ there. 

The subsequent result asserts that the ideal perturbation function $f$, which depends on the entire future, 
can itself be very well approximated by functions that rely solely on a finite number of coordinates.

\begin{proposition} \label{Prop-projection approx-001}
Assume that $\mu$ has a finite exponential moment and that $\Gamma_{\mu}$ is proximal and strongly irreducible.
Then the function $f$ defined by \eqref{perturbed function in bb Y-001} 
satisfies  the following approximation property: 
 there exist constants $\alpha, \beta, c >0$ 
 such that, for any $p \geq 1$, 
\begin{align} \label{approxim rate for gp-001-for bb Y} 
\sup_{y\in\bb P(\bb V^*)} \int_{\Omega} e^{\alpha |f(\omega, y) - \bb E (f(\cdot, y) | \scr A_p)(\omega) |} \bb P(d \omega) \leq 1+ ce^{-\beta p}.
\end{align}
\end{proposition}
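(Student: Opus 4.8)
The plan is to prove the approximation bound by using the equivariance property \eqref{equivariance-xi-002} of the Furstenberg boundary map $\xi$ together with the geometric contraction of the projective action of products of random matrices. The key observation is that $f(\omega, y) = \delta(\xi(\omega), y)$ depends on $\omega$ only through the point $\xi(\omega) \in \bb P(\bb V)$, and by \eqref{equivariance-xi-002} we have $\xi(\omega) = g_1 \cdots g_p\, \xi(T^p \omega)$, so $\xi(\omega)$ lies in the image of the projective contraction $x \mapsto g_1 \cdots g_p\, x$. The first step is therefore to recall the classical fact (from \cite{Boug-Lacr85, BQ16b}) that, under proximality, strong irreducibility and the exponential moment condition, there is a constant $\beta_0 > 0$ such that $g_1 \cdots g_p$ contracts $\bb P(\bb V)$ at an exponential rate: more precisely, the diameter of $g_1 \cdots g_p \cdot \bb P(\bb V)$ in a suitable (possibly $g_1\cdots g_p$-dependent) sense is at most $e^{-\beta_0 p}$ on an event of probability at least $1 - C e^{-\beta_0 p}$, and in any case there are good integrability estimates for this diameter. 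Consequently, for two different tails $T^p\omega$ and $T^p\omega'$ sharing the same first $p$ coordinates, the points $\xi(\omega)$ and $\xi(\omega')$ are exponentially close.

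The second step is to convert this closeness in $\bb P(\bb V)$ into closeness of the values of $\delta(\cdot, y)$. Here one must be careful because $\delta$ blows up near the hyperplane orthogonal to $y$. The standard device is to compare $\delta(\xi(\omega), y)$ with the value of $\delta$ at a reference point, say $\xi(\sigma^p\omega')$ pushed forward, and to control the difference $|\delta(x, y) - \delta(x', y)|$ for $x, x'$ that are $d$-close by a quantity like $\frac{d(x,x')}{|\varphi(v)|/(\|\varphi\|\|v\|)} = d(x,x') e^{\delta(x,y)}$, i.e. a Lipschitz-type bound weighted by $e^{\delta}$. Since $e^{\alpha \delta(\cdot, y)}$ is uniformly $\nu$-integrable by \eqref{Regularity-nu}, and since the contraction estimate gives $d(\xi(\omega), \xi(\omega')) \leq C e^{-\beta_0 p}$ on a large event, one obtains that $|f(\omega,y) - f(\omega', y)|$ is small except on an event of exponentially small probability. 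Then I would take $\bb E(f(\cdot,y)\mid \scr A_p)(\omega) = \int f(\omega_{1:p}, \omega'_{p+1:\infty}, y)\, \bb P(d\omega')$ — roughly speaking, averaging over the tail keeping the first $p$ coordinates fixed — so that $|f(\omega,y) - \bb E(f(\cdot,y)\mid\scr A_p)(\omega)| \leq \int |f(\omega,y) - f(\omega_{1:p},\omega'_{p+1:\infty},y)|\,\bb P(d\omega')$, and split this integral according to whether the contraction is good or not.

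The third step is to exponentiate and integrate: writing $\Delta_p(\omega,y) = |f(\omega,y) - \bb E(f(\cdot,y)\mid\scr A_p)(\omega)|$, I would use $e^{\alpha \Delta_p} \leq 1 + \alpha \Delta_p e^{\alpha \Delta_p}$ (or simply bound $e^{\alpha\Delta_p} - 1$ and use that $\Delta_p$ is typically tiny, while its exponential moment is uniformly bounded by the triangle inequality and \eqref{Regularity-nu}), and then apply Cauchy–Schwarz: on the good event $\Delta_p \leq C e^{-\beta_0 p} e^{\delta(\cdot,y)}$-type bound makes the contribution $\leq c e^{-\beta p}$ after using the exponential moment of $\delta$ with a slightly smaller exponent; on the bad event of probability $\leq Ce^{-\beta_0 p}$, the uniform exponential-moment bound on $\Delta_p$ combined with Hölder makes that contribution also $\leq c e^{-\beta p}$ for a possibly smaller $\beta$. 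Collecting these yields $\int e^{\alpha \Delta_p}\, \bb P(d\omega) \leq 1 + c e^{-\beta p}$ uniformly in $y$, which is exactly \eqref{approxim rate for gp-001-for bb Y}.

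The main obstacle I anticipate is the singularity of $\delta$: because $\delta(x,y) \to \infty$ as $x$ approaches the hyperplane $y^\perp$, a naive Lipschitz estimate in $x$ is not available, and one has to run the argument with the weight $e^{\delta}$ throughout, making sure the exponents match up — that is, the $\alpha$ in \eqref{approxim rate for gp-001-for bb Y} must be taken small enough relative to the $\alpha$ in \eqref{Regularity-nu} so that there is room to absorb an extra factor of $e^{\delta}$ (coming from the Lipschitz bound) and still keep a finite, uniformly bounded exponential moment. Managing this interplay between the contraction rate $\beta_0$, the exponent $\alpha$, and the exponential-moment exponent from \eqref{Regularity-nu}, and doing it uniformly in $y \in \bb P(\bb V^*)$, is the delicate bookkeeping at the heart of the proof.
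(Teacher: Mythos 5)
Your proposal follows essentially the same route as the paper: write $\bb E(f(\cdot,y)\mid\scr A_p)(\omega)=\int_{\bb P(\bb V)}\delta(g_1\cdots g_p x',y)\,\nu(dx')$ via the equivariance of $\xi$, use the exponential contraction of the projective action (Lemma \ref{Lem-random-measure} and \cite[Proposition 14.3]{BQ16b}) together with the mean-value bound $|\delta(x,y)-\delta(x',y)|\lesssim d(x,x')\,e^{\delta(x,y)}$, and split into good and bad events handled by H\"older's inequality and the uniform exponential moment \eqref{Regularity-nu}. The one detail to make explicit is that the weight $e^{\delta(G_px,y)}$ from the Lipschitz bound cannot be absorbed by taking $\alpha$ small (it sits inside a second exponential, so no exponential moment of $\delta$ controls it); one must additionally place $\{\delta(g_p\cdots g_1x,y)>\ee p\}$ with $\ee<a/2$ among the exponentially rare bad events, exactly as the paper does with its event $A_{p,x,y}$.
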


The proof of this proposition will rely on the following concentration estimate for the trajectories 
of random walks on the projective space. 

\begin{lemma}\label{Lem-random-measure}
Assume that $\mu$ has a finite exponential moment and that $\Gamma_{\mu}$ is proximal and strongly irreducible.
Then there exist constants $a, b, c >0$ and $C>0$ such that for any $x \in \bb P(\bb V)$, any Borel probability measure $\varrho$
on $\bb P(\bb V)$ and any $n \geq 1$, 
\begin{align*}
\bb P \left( \varrho \Big\{ x' \in \bb P(\bb V):  d(g_n \cdots g_1 x, g_n \cdots g_1 x') > e^{ - an }  \Big\} > e^{ - bn } \right)  \leq C e^{-cn}. 
\end{align*}
\end{lemma}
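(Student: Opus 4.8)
The plan is to prove Lemma \ref{Lem-random-measure} by reducing the statement to a contraction estimate along the trajectory of the random walk, and then controlling, uniformly over the starting measure $\varrho$, the $\varrho$-mass of the set of points that fail to contract. The key mechanism is that, under proximality and strong irreducibility with an exponential moment, the left products $g_n\cdots g_1$ contract the projective space exponentially fast towards the random direction $\xi(T^n\omega)$ (equivalently, the image of $\xi$ under the first $n$ maps), with overwhelming probability; the loss of contraction can only occur for starting points $x'$ that lie in a shrinking neighbourhood of the ``most expanded'' hyperplane of $g_n\cdots g_1$, and such a neighbourhood has small $\varrho$-mass for \emph{every} probability measure $\varrho$ provided $\varrho$ itself gives small mass to small neighbourhoods of hyperplanes. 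Since an arbitrary $\varrho$ need not be regular, the correct route is to bound $d(g_n\cdots g_1 x, g_n\cdots g_1 x')$ in terms of $d(x,x')$ and the norm expansions, so that the bad event is forced into a genuinely small ball around a single point $x$, of radius that we can make summably small.

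More precisely, first I would recall the standard inequality relating the action on $\bb P(\bb V)$ to singular values: writing $\gamma_1(g)\geq\gamma_2(g)$ for the two top singular values of $g=g_n\cdots g_1$, one has, for the $\sin$-distance,
\begin{align*}
d(gx, gx') \leq \frac{\gamma_1(g)\gamma_2(g)}{\|gv\|\,\|gv'\|/(\|v\|\|v'\|)}\, d(x,x') \leq \frac{\gamma_2(g)}{\gamma_1(g)}\cdot \frac{1}{d(x, x_g^-)\, d(x', x_g^-)}\, d(x,x'),
\end{align*}
where $x_g^-\in\bb P(\bb V)$ is the attracting direction of $g$ and $d(x,x_g^-)$ measures how far $x$ is from the repelling hyperplane (up to the usual $\gamma_1\|v\|^{-1}\|gv\|\geq d(x,x_g^-)$ type estimate). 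Then, on the event that $x$ is not too close to the repelling hyperplane of $g_n\cdots g_1$, say $d(x,\cdot)\geq e^{-bn}$ for the appropriate hyperplane — which holds with probability at least $1-Ce^{-cn}$ by the regularity of the law of the repelling direction together with large deviation bounds for $\log(\gamma_2/\gamma_1)$ — the displacement satisfies
\begin{align*}
d(g_n\cdots g_1 x, g_n\cdots g_1 x') \leq e^{-2an}\cdot \frac{1}{d(x', x_g^-)}\, d(x,x')
\end{align*}
for some $a>0$. If in addition $d(x,x')\leq e^{-an}$ (i.e. $x'$ is in the small ball around $x$), and $x$ is far from the repelling hyperplane so that $x'$ is too, the right-hand side is at most $e^{-an}$. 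Hence the set $\{x' : d(g_n\cdots g_1 x, g_n\cdots g_1 x')>e^{-an}\}$ is contained, off an event of probability $\leq Ce^{-cn}$, in the ball $B(x, e^{-an})$, whose $\varrho$-mass is of course $\leq 1$ but — and this is the subtle point — need not be small.

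To get the final $e^{-bn}$ bound on the $\varrho$-mass one cannot argue for a fixed $\varrho$ and a fixed $x$; instead one must use the randomness of $g_n\cdots g_1$ to push $x$ to a random image and exploit that the \emph{image} measure $(g_n\cdots g_1)_*\varrho$ concentrates. The cleanest implementation: apply the contraction estimate with roles reorganized so that the bad set is $\{x' : d(g_nx,g_nx')>e^{-an}\}$ expressed via the trajectory, note that $g_n\cdots g_1 x'$ and $g_n\cdots g_1 x$ both lie within $e^{-an}$ of the common attracting point $x_g^-$ \emph{unless} $x'$ is near the repelling hyperplane $H_g$ of $g=g_n\cdots g_1$, so the bad set sits inside an $e^{-a'n}$-neighbourhood of $H_g$; then $\varrho$(bad set)$\leq \varrho(\{x' : d(x',H_g)\leq e^{-a'n}\})$, and one integrates this over $\omega$ using that the law of the repelling hyperplane $H_{g_n\cdots g_1}$ has, by duality, a regularity property: its law is close to the stationary measure $\nu^*$ on $\bb P(\bb V^*)$, which by \eqref{Regularity-nu} (Hölder regularity of $\nu$, hence of $\nu^*$) assigns mass $\leq C e^{-\alpha' a' n}$ to any $e^{-a'n}$-neighbourhood of a hyperplane, \emph{uniformly in the hyperplane}, hence uniformly in $\varrho$. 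A Fubini/Markov argument over the event where the repelling direction is in ``generic position'' then yields the claim. The main obstacle is exactly this uniformity-in-$\varrho$ step: since $\varrho$ is arbitrary (possibly atomic, possibly supported near a hyperplane), the smallness of $\varrho$(bad set) must come entirely from the smallness and near-random location of the bad set as a subset of $\bb P(\bb V)$, which forces one to control the law of the repelling hyperplane of $g_n\cdots g_1$ and its interaction with an arbitrary target hyperplane — this is where the Hölder regularity of the stationary measure (the content of \eqref{Regularity-nu}) and the exponential large-deviation bound for the singular-value gap of $g_n\cdots g_1$ both enter, and combining them with the right quantifiers is the delicate part.
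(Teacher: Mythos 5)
Your proposal lands on the same decisive mechanism as the paper — namely, that the only way to control an \emph{arbitrary} $\varrho$ is to prove a probability bound that is pointwise in the pair $(x,x')$, integrate it against $\varrho(dx')$ by Fubini to bound $\bb E\bigl[\varrho(\text{bad set})\bigr]$, and then convert to a probability bound by Markov/Chebyshev — so the endgame is identical. Where you diverge is in how you obtain the pointwise input: the paper simply quotes \cite[Proposition 14.3]{BQ16b}, which gives $\bb P\bigl( d(g_n\cdots g_1 x,\, x^M_{g_n\cdots g_1}) > e^{-an}\bigr) \leq Ce^{-bn}$ uniformly in $x$ (concentration around the density point), and then uses the triangle inequality to get $\bb P\bigl( d(g_n\cdots g_1 x, g_n\cdots g_1 x') > e^{-an}\bigr) \leq 2Ce^{-bn}$ uniformly in $(x,x')$ — three lines, no hyperplanes, no regularity of $\nu^*$. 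You instead essentially re-derive that cited estimate from singular-value-gap large deviations plus the regularity of the law of the repelling hyperplane, which is the standard proof of the quoted proposition; it works, but it is a detour, and it forces you to worry about the law of $H_{g_n\cdots g_1}$ (a right-ordered object whose finite-$n$ law you must control, not just its limit $\nu^*$), a complication the paper's route avoids entirely. Two smaller remarks: in your first (abandoned) attempt the containment is stated backwards — if contraction holds on $B(x,e^{-an})$ then the bad set lies in the \emph{complement} of that ball, which is exactly why that attempt cannot bound $\varrho(\text{bad set})$ — and your final assembly ("a Fubini/Markov argument ... then yields the claim") is left tentative precisely at the point where the paper's version is completely explicit. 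If you replace your front end by the triangle-inequality reduction to the density-point concentration, your argument collapses to the paper's.
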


\begin{proof}
By \cite[Proposition 14.3]{BQ16b} (see equation (14.6)), we may find constants $a, b, C >0$ such that for any $x \in \bb P(\bb V)$
and any $n \geq 1$, 
\begin{align*}
\bb P \left( d (g_n \cdots g_1 x, x_{g_n \cdots g_1}^M) > e^{-an} \right) \leq C e^{-bn},
\end{align*}
where, for $g \in \bb G$, $x_g^M \in \bb P(\bb V)$ is the density point of $g$, see page 224 of \cite{BQ16b}. 
By the triangle inequality, we get that, for any $x, x' \in \bb P(\bb V)$, 
\begin{align*}
\bb P \Big( d (g_n \cdots g_1 x, g_n \cdots g_1 x') > e^{-an} \Big) \leq 2C e^{-bn}.  
\end{align*}
Using Fubini's theorem and integrating over $x'$ yield
\begin{align*}
& \bb E \Big( \varrho \Big\{ x' \in \bb P(\bb V):  d(g_n \cdots g_1 x, g_n \cdots g_1 x') > e^{ - an }  \Big\} \Big)  \notag\\
& = \int_{\bb P(\bb V)} \bb P \Big( d (g_n \cdots g_1 x, g_n \cdots g_1 x') > e^{-an} \Big) \varrho(dx')  \notag\\
& \leq 2C e^{-bn}. 
\end{align*}
By Chebyshev's inequality, we obtain
\begin{align*}
\bb P \left( \varrho \Big\{ x' \in \bb P(\bb V):  d(g_n \cdots g_1 x, g_n \cdots g_1 x') > e^{ - an }  \Big\} > e^{ - \frac{b}{2}n } \right)
\leq  2C e^{ - \frac{b}{2}n },  
\end{align*}
completing the proof of the lemma. 
\end{proof}

Before proving Proposition \ref{Prop-projection approx-001}, 
we use Lemma \ref{Lem-random-measure} to derive the following corollary, which will also be useful to check the assumptions of 
Theorem \ref{Pro-Appendix-Final2-Inequ}.

\begin{corollary} \label{corollary f_y-001}
Assume that $\mu$ has a finite exponential moment and that $\Gamma_{\mu}$ is proximal and strongly irreducible.
Let $\varphi$ be a H\"older continuous function on $\bb P(\bb V)$, 
and define $\theta(\omega) = \varphi(\xi(\omega))$ for $\omega \in \Omega$.
Also, for $x \in \bb P(\bb V)$, $m \geq 1$ and $\omega=(g_1,g_2,\ldots) \in \Omega$, 
define $\theta^{x, m}(\omega) = \varphi(g_1 \cdots g_m x)$.
Then, the functions $\theta$ and $\theta^{x, m}$  satisfy the following approximation property: 
there exist constants $c>0$ and $\beta >0$ such that, for any $p\geq 1$,
\begin{align} \label{approx property of theta-in bb Y-001}
\| \theta -  \bb E (\theta | \scr A_p) \|_{1} = \bb E | \theta -  \bb E (\theta | \scr A_p) | \leq  c e^{-\beta p }
\end{align}
and, for any $m \geq p$ and $x \in \bb P(\bb V)$, 
\begin{align}\label{approx property of theta-in bb Y-002}
\| \theta^{x, m} -  \bb E (\theta^{x, m} | \scr A_p)  \|_{1} = 
\bb E | \theta^{x, m} -  \bb E (\theta^{x, m} | \scr A_p) | \leq  c e^{-\beta p }. 
\end{align}
\end{corollary}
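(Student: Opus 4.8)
The plan is to deduce Corollary \ref{corollary f_y-001} directly from the geometric concentration estimate of Lemma \ref{Lem-random-measure}, exploiting that conditioning on $\scr A_p$ only affects the ``tail'' coordinates $g_{p+1}, g_{p+2}, \ldots$, on which $\xi$ (resp. $g_1\cdots g_m x$) depends only through a point of $\bb P(\bb V)$ that is contracted exponentially fast. Concretely, for the function $\theta$, I would use the equivariance relation \eqref{equivariance-xi-002}, namely $\xi(\omega) = g_1 \cdots g_p\, \xi(T^p\omega)$. Writing $\omega = (g_1,\ldots,g_p, g_{p+1},\ldots)$ and letting $\omega'$ be an independent copy agreeing with $\omega$ on the first $p$ coordinates, I would represent $\bb E(\theta\mid \scr A_p)(\omega)$ as the average of $\varphi(g_1\cdots g_p\,\xi(T^p\omega'))$ over the distribution of the tail. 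Then
\[
|\theta(\omega) - \bb E(\theta\mid\scr A_p)(\omega)| \le \bb E'\,\bigl| \varphi(g_1\cdots g_p\,\xi(T^p\omega)) - \varphi(g_1\cdots g_p\,\xi(T^p\omega')) \bigr|,
\]
and since $\varphi$ is H\"older of exponent $\gamma$, the right side is at most $C\,\bb E' \, d(g_1\cdots g_p\, u, g_1\cdots g_p\, u')^\gamma$ where $u, u'$ are the i.i.d.\ projective points $\xi(T^p\omega), \xi(T^p\omega')$, independent of $g_1,\ldots,g_p$.

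The core estimate is then exactly of the form controlled by Lemma \ref{Lem-random-measure}: taking $\varrho$ to be the law of $u'$ (i.e.\ the stationary measure $\nu$), that lemma gives, for the reversed product $g_1 \cdots g_p$ (note the order of the matrices is immaterial since the lemma holds for any i.i.d.\ product and we may simply relabel), that with probability at least $1 - Ce^{-cp}$ the set of $u'$ with $d(g_1\cdots g_p\, u, g_1\cdots g_p\, u') > e^{-ap}$ has $\nu$-mass at most $e^{-bp}$. On the complementary good event, splitting the inner expectation over $\{d \le e^{-ap}\}$ and its complement and using that $d \le 1$ always and that $\varphi$ is bounded, one gets $\bb E'\, d(\cdots)^\gamma \le e^{-a\gamma p} + e^{-bp}$; on the bad event one uses the crude bound $2\|\varphi\|_\infty$. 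Taking expectation over $g_1,\ldots,g_p$ yields $\|\theta - \bb E(\theta\mid\scr A_p)\|_1 \le e^{-a\gamma p} + e^{-bp} + 2\|\varphi\|_\infty C e^{-cp}$, which is $\le c' e^{-\beta p}$ with $\beta = \min\{a\gamma, b, c\}$. This proves \eqref{approx property of theta-in bb Y-001}.

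For \eqref{approx property of theta-in bb Y-002} the argument is entirely parallel, using the decomposition $g_1\cdots g_m x = (g_1\cdots g_p)(g_{p+1}\cdots g_m x)$, valid because $m \ge p$. Here the role of the i.i.d.\ point $u$ is played by $g_{p+1}\cdots g_m x$, which is $\scr A_p$-independent, and $u'$ is its independent copy obtained from the resampled tail; both are deterministic points (given the tail) in $\bb P(\bb V)$, so Lemma \ref{Lem-random-measure} applies with $\varrho$ the law of $g_{p+1}\cdots g_m x$, and crucially the resulting bound $Ce^{-cp}$ does not depend on $m$ (nor on $x$, by the uniformity in $x$ in that lemma). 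The same split into good/bad events and the H\"older bound on $\varphi$ then give the uniform estimate $\le c e^{-\beta p}$.

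The main obstacle, and the point requiring care, is matching the exact quantifier structure: Lemma \ref{Lem-random-measure} is stated for products $g_n\cdots g_1$ acting on a \emph{fixed} $x$ and a \emph{fixed} measure $\varrho$, whereas here the ``measure'' is the law of a random point ($\xi(T^p\omega')$ or $g_{p+1}\cdots g_m x$) that is independent of the first $p$ matrices. One must therefore argue by first conditioning on (i.e.\ fixing) the relevant tail coordinates to reduce to the fixed-measure statement, or invoke Fubini to integrate the lemma's conclusion against the law of that random point; either way the independence between the first $p$ coordinates and the resampled tail is what makes this legitimate. A minor additional check is that the H\"older exponent $\gamma \in (0,1]$ only helps (it shrinks distances further), so no issue there; and the boundedness of $\varphi$ on the compact space $\bb P(\bb V)$ handles the bad-event contribution. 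Everything else is routine.
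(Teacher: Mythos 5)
Your proposal is correct and follows essentially the same route as the paper: compute $\bb E(\theta\mid\scr A_p)$ via the equivariance $\xi(\omega)=g_1\cdots g_p\,\xi(T^p\omega)$ (resp.\ the decomposition $g_1\cdots g_m x=(g_1\cdots g_p)(g_{p+1}\cdots g_m x)$), apply the H\"older bound, split according to whether the two image points are within $e^{-ap}$, and control the exceptional set with Lemma \ref{Lem-random-measure} applied to $\varrho=\nu$ (resp.\ the law of the tail product). The quantifier issue you flag is handled in the paper exactly as you suggest, by Fubini/conditioning on the tail coordinates, so there is nothing to add.
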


\begin{proof}
As the function $\varphi$ is H\"older continuous, there exist constants $\alpha, C >0$ such that, for any $x,x' \in \bb P(\bb V)$, 
\begin{align*} 
| \varphi(x) - \varphi(x') | \leq C d(x,x')^{\alpha}.
\end{align*}
Taking the conditional expectation in \eqref{equivariance-xi-002} gives, for $\bb P$-almost all $\omega=(g_1,g_2,\ldots) \in \Omega$, 
\begin{align*}
\bb E (\theta | \scr A_p)(\omega) = \int_{\bb P(\bb V)}  \varphi(g_1 \cdots g_p x) \nu(dx). 
\end{align*}
Letting $a, b, c >0$ be as in Lemma \ref{Lem-random-measure}, we obtain that, for any $p \geq 1$, 
\begin{align*}
& \bb E \left| \theta- \bb E (\theta | \scr A_p) \right|  \notag\\
&=\int_{\bb P(\bb V)} \bb E \left| \varphi (g_p \cdots g_1 x)- \int_{\bb P(\bb V)} \varphi (g_p \cdots g_1 x') \nu(dx')  \right| \nu(dx) \\
& =  \int_{\bb P(\bb V)} \bb E \left| \int_{\bb P(\bb V)} \mathds 1_{ \{ d(g_p \cdots g_1 x, g_p \cdots g_1 x') \leq e^{-a p} \} }  
  \left( \varphi (g_p \cdots g_1 x) - \varphi (g_p \cdots g_1 x')\right) \nu(dx')  \right| \nu(dx) \\
& \quad +  \int_{\bb P(\bb V)} \bb E \left| \int_{\bb P(\bb V)} \mathds 1_{ \{ d(g_p \cdots g_1 x, g_p \cdots g_1 x') > e^{-a p} \} }  
 \left( \varphi (g_p \cdots g_1 x) - \varphi (g_p \cdots g_1 x')\right) \nu(dx')  \right| \nu(dx) \\
&\leq C e^{-a \alpha p}  
+ 2 \|\varphi\|_{\infty} 
\int_{\bb P(\bb V)}  \bb E \;  \nu \left\{ x' \in \bb P(\bb V):  d(g_p \cdots g_1 x, g_p \cdots g_1 x') > e^{ - ap }  \right\} \nu(dx) \\
&\leq C e^{-a \alpha p}  + 2 \|\varphi\|_{\infty}  e^{-b p}  \\
& \quad+ 2 \|\varphi\|_{\infty} \int_{\bb P(\bb V)} \bb P \left( \nu \left\{ x' \in \bb P(\bb V):  d(g_p \cdots g_1 x, g_p \cdots g_1 x') > e^{ - ap }  \right\} > e^{ - bp } \right)  \nu(dx) \\
&\leq C e^{-a \alpha p}  + 2 \|\varphi\|_{\infty}  e^{-b p}  + C e^{-cp},  
\end{align*}
where in the last inequality we apply Lemma \ref{Lem-random-measure} with $\varrho = \nu$. This proves \eqref{approx property of theta-in bb Y-001}.

In the same way, for $m \geq p$ and any $x \in \bb P(\bb V)$, we have, $\bb P$-almost surely, 
\begin{align*}
\bb E (\theta^{x, m} | \scr A_p) = \int_{\bb G^{m-p}}  \varphi(g_1 \cdots g_p g'_{p+1} \cdots g'_{m} x) \mu(dg'_{p+1}) \ldots \mu(dg'_{m}), 
\end{align*}
which gives 
\begin{align*}
& \bb E | \theta^{x, m} -  \bb E (\theta^{x, m} | \scr A_p) |   \notag\\
& =\int_{\bb G^m}    \left| \varphi (g_1 \cdots g_m x)
- \int_{\bb G^{m-p}} \varphi (g_1 \cdots g_p g'_{p+1} \cdots g'_{m} x) \mu(dg'_{p+1}) \ldots \mu(dg'_{m})  \right| \notag\\
&\qquad\qquad \mu(dg_{1}) \ldots \mu(dg_{m}). 
\end{align*}
Following the same proof as above, we get \eqref{approx property of theta-in bb Y-002}.  
 \end{proof}

\begin{proof}[Proof of Proposition \ref{Prop-projection approx-001}]
By \eqref{equivariance-xi-002} and \eqref{perturbed function in bb Y-001}, for any $y \in \bb P(\bb V^*)$, we have, $\bb P$-almost surely, 
\begin{align*}
\bb E (f(\cdot, y) | \scr A_p) = \int_{\bb P(\bb V)}  \delta(g_1 \cdots g_p x', y) \nu(dx'). 
\end{align*}
As mentioned before, by \eqref{Regularity-nu}, there exists a constant $\alpha_0 > 0$ such that 
\begin{align} \label{new expmoment001}
\sup_{y \in \bb P(\bb V^*)} \int_{\bb P(\bb V)}  e^{ \alpha_0 \delta(x', y) } \nu(dx')  < \infty. 
\end{align}
This yields that, for any $\alpha \in (0,\alpha_0/3]$, 
\begin{align} \label{eq-projappr-001}
 \int_{\Omega} e^{\alpha |f(\omega, y) - \bb E (f(\cdot, y) | \scr A_p)(\omega) |} \bb P(d \omega)  
 = \bb E  \left(  \int_{\bb P(\bb V)}  \exp \left( \alpha \left| \bar{\delta}(G_px, y)  \right|  \right)  \nu(dx) \right), 
\end{align}
where, for short, we denote $G_p = g_p \cdots g_1$ and 
\begin{align*}
\bar{\delta}(G_px, y) = \delta(G_px, y) - \int_{\bb P(\bb V)}  \delta(G_px', y) \nu(dx'). 
\end{align*}
Note that, by H\"older's inequality and \eqref{new expmoment001},  
the expectation on the right-hand side of \eqref{eq-projappr-001} can be shown to be finite.
Therefore, by using Fubini's theorem,  we get,  for any $\alpha \in (0,\alpha_0/3]$,
\begin{align*}
\int_{\Omega} e^{\alpha |f(\omega, y) - \bb E (f(\cdot, y) | \scr A_p)(\omega) |} \bb P(d \omega) 
 =   \int_{\bb P(\bb V)}  \bb E \exp \left( \alpha \left| \bar{\delta}(G_px, y)  \right|  \right)  \nu(dx). 
\end{align*}
We fix $\ee \in (0, a/2)$, where $a$ is as in Lemma \ref{Lem-random-measure}.  
Set $A_{p,x,y} = \{ \delta(G_px, y) > \ee p \}$.
By \cite[Proposition 14.3]{BQ16b}, we have uniformly in $x \in \bb P(\bb V)$ and $y \in \bb P(\bb V^*)$,
\begin{align} \label{exp bound Apx-001}
\bb P \left( A_{p, x, y} \right) \leq C e^{ - cp }. 
\end{align}
On the set $A_{p, x, y}^c$, we have 
\begin{align*}
 \left| \bar{\delta}(G_px, y)  \right| 
& \leq  \left|   \int_{ B_{p, x}(\omega) } \left( \delta(G_px, y) -  \delta(G_px', y) \right) \nu(dx')  \right|  \notag\\
& \quad   + \left|   \int_{ B_{p, x}(\omega)^c } \left( \delta(G_px, y) -  \delta(G_px', y) \right) \nu(dx')  \right|, 
\end{align*}
where $B_{p, x}(\omega)$ is the random set $\{x' \in \bb P(\bb V): d(G_px, G_px') \leq e^{-ap} \}$. 
Here and in the rest of this proof, we omit $\omega$ in $G_p(\omega)$ for short. 
On $A_{p,x,y}^c$, for $x' \in B_{p, x}(\omega)$, we have, by the mean value theorem, 
\begin{align*}
|\delta( G_px, y ) - \delta( G_px', y ) | \leq C e^{\delta (G_px, y)} e^{- ap}
\leq  C e^{ \ee p } e^{- ap} \leq  C e^{- \frac{a}{2} p}. 
\end{align*}
Hence, we obtain that on $A_{p,x,y}^c$,
\begin{align*}
\left| \bar{\delta}(G_px, y)  \right|  
& \leq  C  e^{- \frac{a}{2} p}
+ \left|   \int_{ B_{p, x}(\omega)^c } \left( \delta(G_px, y) -  \delta(G_px', y) \right) \nu(dx')  \right|   \notag\\
& \leq   C e^{- \frac{a}{2} p}  +  \left( \nu \left(  B_{p, x}(\omega)^c \right) \right)^{1/2} 
  \left( \int_{ \bb P(\bb V) } \left( \delta(G_px, y) -  \delta(G_px', y) \right)^2 \nu(dx') \right)^{1/2}  \notag\\
& \leq  C e^{- \frac{a}{2} p} +  \left( \nu \left(  B_{p, x}(\omega)^c \right) \right)^{1/2} 
    \left[ \delta(G_px, y) +  \left(  \int_{ \bb P(\bb V) }   \delta(G_px', y)^2 \nu(dx') \right)^{1/2}  \right]   \notag\\
& \leq   C e^{- \frac{a}{2} p} +  \left( \nu \left(  B_{p, x}(\omega)^c \right) \right)^{1/2} 
    \left[ \ee p +  \left( \int_{ \bb P(\bb V) }   \delta(G_px', y)^2 \nu(dx') \right)^{1/2}  \right]. 
\end{align*}
We set $E_{p,x} = \{ \omega \in \Omega: \nu (  B_{p, x}(\omega)^c ) > e^{ -bp } \}$. 
By Lemma \ref{Lem-random-measure}, we have that 
\begin{align} \label{exp bound Epx-001}
\bb P ( E_{p,x} ) \leq Ce^{-cp}. 
\end{align}
Therefore, for any $\alpha \in (0,\alpha_0/3]$, 
\begin{align*}
& \bb E \exp \left\{ \alpha \left| \bar{\delta}(G_px, y)  \right|  \right\}  
 \leq  \bb E  \mathds 1_{ E_{p,x} \cup A_{p,x,y} } \exp \left\{ \alpha \left| \bar{\delta}(G_px, y) \right|  \right\}  \notag\\
 & \quad +  \bb E \mathds 1_{ E^c_{p,x} \cap A^c_{p,x} } \exp \left( \alpha C e^{- \frac{a}{2} p}  +  \alpha e^{- b p/2} 
    \left[ \ee p +  \left(  \int_{ \bb P(\bb V) }   \delta(G_px', y)^2 \nu(dx') \right)^{1/2}  \right] \right) . 
\end{align*}
As the measure $\nu$ is $\mu$-stationary, we get
\begin{align*}
\bb E  \int_{ \bb P(\bb V) }   \delta(G_px', y)^2 \nu(dx')  
=  \int_{ \bb P(\bb V) }   \delta(x', y)^2 \nu(dx')
\leq C,
\end{align*}
where $C$ does not depend on $y \in \bb P(\bb V^*)$ due to \eqref{new expmoment001}. 
Set 
\begin{align*}
F_{p,y} = \left\{ \int_{ \bb P(\bb V) }   \delta(G_px', y)^2 \nu(dx')  > e^{\frac{bp}{2}}  \right\}. 
\end{align*}
By Chebyshev's inequality, we obtain
\begin{align} \label{exp bound Fp-001}
\bb P\left( F_{p,y} \right) \leq  C e^{- \frac{bp}{2}}. 
\end{align}
Now we get, for any $\alpha \in (0,\alpha_0/3]$,
\begin{align*}
& \bb E \exp \left\{ \alpha \left| \bar{\delta}(G_px, y)  \right|  \right\}  \notag\\
& \leq  \bb E  \mathds 1_{ F_{p,y}^c \cap  E^c_{p,x} \cap A^c_{p,x,y}  }  \exp \left( \alpha C e^{- \frac{a}{2} p}  +  \alpha e^{- b p/2} 
    \left\{ \ee p +  e^{bp /4}  \right\} \right)   \notag\\
 & \quad +  \bb E  \mathds 1_{ E_{p,x} \cup A_{p,x,y} \cup F_{p,y} } \exp \left( \alpha \left| \bar{\delta}(G_px, y)  \right|  \right) \notag \\
& \leq   \exp \left( \alpha C e^{- \frac{a}{2} p}  +  \alpha e^{- b p/2} 
    \left\{ \ee p +  e^{bp /4}  \right\} \right)   \notag\\
 & \quad +  \bb E  \mathds 1_{ E_{p,x} \cup A_{p,x,y} \cup F_{p,y} } \exp \left( \alpha \left| \delta(G_px, y) \right| 
 + \alpha  \left| \int_{\bb P(\bb V)}  \delta(G_px', y) \nu(dx')  \right|  \right). 
\end{align*}
As $p \to \infty$, the first term is bounded by $1+e^{-cp}$, for some constant $c>0$.
We claim that if $\alpha>0$ is chosen small enough, then the expectation of the second term tends to $0$ at an exponential rate.
Indeed, by H\"older's inequality, we have, for any $\alpha \in (0,\alpha_0/3]$,
\begin{align}  \label{Final bound projection-001001}
&\int_{\bb P(\bb V)} 
\bb E  \mathds 1_{ E_{p,x} \cup A_{p,x,y} \cup F_{p,y} } \exp \left( \alpha \left| \delta(G_px, y) \right| 
 + \alpha \left| \int_{\bb P(\bb V)}  \delta(G_px', y) \nu(dx')  \right|  \right) \nu(dx) \notag \\
 &\leq 
 \left( \int_{\bb P(\bb V)}  \bb P ( E_{p,x} \cup A_{p,x,y} \cup F_{p,y} ) \nu(dx)\right)^{1/3} 
 \left(\int_{\bb P(\bb V)}  \bb E  \exp \left( 3 \alpha \left| \delta(G_px, y) \right| \right) \nu(dx)\right)^{1/3} \notag \\
&\qquad\qquad\times  \left[ \bb E  \exp \left( 3 \alpha  \left| \int_{\bb P(\bb V)}  \delta(G_px', y) \nu(dx')  \right| \right) \right]^{1/3}.
 \end{align}
From \eqref{exp bound Apx-001}, \eqref{exp bound Epx-001} and \eqref{exp bound Fp-001}, we have that, for some constant $c>0$,
uniformly in $x\in \bb P(\bb V)$, $y \in \bb P(\bb V^*)$ and $p$ large enough,
\begin{align*} 
\bb P ( E_{p, x} \cup A_{p, x, y} \cup F_{p, y} ) \leq e^{-cp}.
\end{align*}
By Jensen's inequality, the third factor on the right-hand side of  \eqref{Final bound projection-001001}  
is less than the second factor. 
As the measure $\nu$ is $\mu$-stationary, the latter is equal to
\begin{align*} 
\int_{\bb P(\bb V)}  \exp \Big( 3 \alpha \left| \delta(x, y) \right| \Big) \nu(dx),
\end{align*}
which is finite by \eqref{new expmoment001}, as soon as $3\alpha \leq \alpha_0$.
\end{proof}

\subsection{Integral approximation for perturbations depending on $n$} \label{sec-integral type approx for perturb}
In this section, we establish an approximation property 
similar to that demonstrated in Proposition \ref{Prop-projection approx-001},
but for a different type of perturbations that arises in the proof of Theorem \ref{Thm-measure-rho}. 
We will show that these perturbations also satisfy the assumption of Theorem \ref{Pro-Appendix-Final2-Inequ}, 
thanks to the proposition below which can be seen as an integral version of Proposition \ref{Prop-projection approx-001}. 

Let $\mu^{-1}$ be the image of the measure $\mu$ under the inverse map $g \mapsto g^{-1}$ of $\bb G$;
this is the law of the increments of the random walk $g_n^{-1} \cdots g_1^{-1}$.
The semigroup spanned by the support of $\mu^{-1}$ is the set $\Gamma_{\mu}^{-1}$ of inverses of elements of $\Gamma_{\mu}$. 
Since the semigroup $\Gamma_{\mu}^{-1}$ is proximal and strongly irreducible in $\bb V^*$, 
the probability measure $\mu^{-1}$ on $\Gamma_{\mu}^{-1}$ admits a unique stationary probability measure $\nu^*$ on $\bb P(\bb V^*)$. 
Let  $x \in \bb P(\bb V)$. 
We define the perturbation functions on $\Omega \times \bb P(\bb V^*)$ as follows: for
$\omega=(g_1,g_2,\ldots) \in \Omega$ and $y \in \bb P(\bb V^*)$,  
\begin{align}\label{perturbed function in bb Y-inte-001}
f_{n}^{x,m}(\omega, y) = \delta(g_1 \cdots g_{m-n} x, y), \quad 0 \leq n\leq m,  
\end{align}
with the convention $f_{m}^{x,m}(\omega, y) = \delta(x, y)$ for $m \geq 0$.

As in the case of the ideal perturbation, the function $f_{m}^{x,m}$ has a finite exponential moment 
with respect to the probability measure $\bb P \otimes \nu^*$:
there exists a constant $\alpha > 0$ such that for any $0 \leq n \leq m$ and $x \in \bb P(\bb V)$,
\begin{align}\label{First-important-Property}
& \int_{\bb P(\bb V^*)}  \int_{\Omega} e^{\alpha  f_{n}^{x,m}(\omega, y)} \bb P(d\omega) \nu^*(dy) < \infty.
\end{align}
 The constant $\alpha$ can be taken to be the same as in the condition \eqref{Regularity-nu}. 
Indeed, for any $0 \leq n \leq m$ and $x \in \bb P(\bb V)$, we have 
\begin{align*}
& \int_{\bb P(\bb V^*)}  \int_{\Omega} e^{\alpha  f_{n}^{x,m}(\omega, y)} \bb P(d\omega) \nu^*(dy)  \notag\\
& = \int_{\bb G^{m-n}}  \int_{\bb P(\bb V^*)}   e^{\alpha  \delta(g_1 \cdots g_{m-n} x, y)} \nu^*(dy)  \mu(dg_1)\ldots \mu(dg_{m-n}) \notag\\
& \leq  \sup_{x' \in \bb P(\bb V)} \int_{\bb P(\bb V^*)}  e^{ \alpha \delta(x', y) } \nu^*(dy) < \infty,
\end{align*}
where the finiteness of the last quantity  is due to \cite[Theorem 14.1]{BQ16b}.

The following result is an integral version (with respect to the variable $y\in \bb P(\bb V^*)$) of Proposition \ref{Prop-projection approx-001}.
It asserts that the function $f^{x, m}_{n}$ can  be well approximated by functions 
that rely on a finite number of coordinates.

\begin{proposition} \label{Prop-projection approx-inte001} 
Assume that $\mu$ has a finite exponential moment and that $\Gamma_{\mu}$ is proximal and strongly irreducible.
Then the function $f^{x,m}$ defined by \eqref{perturbed function in bb Y-inte-001} satisfies 
 the following approximation property: 
 there are constants $\alpha, \beta, c >0$ such that, for any $0 \leq n \leq m$, $x \in \bb P(\bb V)$ and $p \geq 1$, 
\begin{align*}
\int_{\bb P(\bb V^*)} \int_{\Omega}   e^{\alpha |f^{x,m}_{n}(\omega, y) - \bb E (f^{x,m}_{n}(\cdot, y) | \scr A_p)(\omega) |} \bb P(d\omega) \nu^*(dy) 
\leq 1+ ce^{-\beta p}.
\end{align*}
\end{proposition}

\begin{proof}
Our proof follows a similar approach as the one used in Proposition \ref{Prop-projection approx-001}.
Note that, by \eqref{perturbed function in bb Y-inte-001}, 
whenever  $p \geq m-n$, we have, for $\bb P$-almost all $\omega \in \Omega$, 
\begin{align*}
f^{x,m}_{n}(\omega, y) - \bb E (f^{x,m}_{n}(\cdot, y) | \scr A_p)(\omega)  = 0,  
\end{align*}
so the assertion of the proposition becomes evident. 
Therefore, in the remainder of the proof, we can assume that $p < m-n$. 
For fixed $x \in \bb P(\bb V)$, 
still by \eqref{perturbed function in bb Y-inte-001}, 
we have, for $\nu^*$-almost every $y \in \bb P(\bb V^*)$ and $\bb P$-almost every $\omega=(g_1,g_2,\ldots) \in \Omega$, 
\begin{align*}
\bb E (f^{x,m}_{n}(\cdot, y) | \scr A_p)(\omega)
 = \int_{\bb G^{m-n-p}}  \delta \Big(g_1 \cdots g_p g_{p+1}' \cdots g_{m-n}' x, y \Big)  \mu(dg_{p+1}') \ldots \mu(dg_{m-n}'). 
\end{align*}
For short, set
\begin{align}\label{def-tilde-delta}
 \widetilde{\delta}(\omega, x, y)  
& =  \delta(g_1 \cdots g_{m-n} x, y)  \notag\\
& \quad - \int_{\bb G^{m-n-p}} \delta \Big( g_1 \cdots g_p g_{p+1}' \cdots g_{m-n}' x, y \Big)  \mu(dg_{p+1}') \ldots \mu(dg_{m-n}'). 
\end{align}
As before, by \eqref{Regularity-nu}, 
there exists a constant $\alpha_0 > 0$ satisfying \eqref{new expmoment001}. 
Applying Fubini's theorem,  for $\alpha\in (0,\alpha_0/3]$, this yields 
\begin{align*}
& \int_{\bb P(\bb V^*)} \int_{\Omega}   e^{\alpha |f^{x,m}_{n}(\omega, y) - \bb E (f^{x,m}_{n}(\cdot, y) | \scr A_p)(\omega) |} \bb P(d\omega) \nu^*(dy) \notag\\
& =   \int_{\Omega} \int_{\bb P(\bb V^*)}  \exp \left\{ \alpha \left| \widetilde{\delta}(\omega, x, y) \right|  \right\}  \nu^*(dy) \bb P(d\omega)  \notag\\
& =   \int_{\Omega} \int_{\bb P(\bb V^*)}  \exp \bigg\{ \alpha \bigg|  \int_{\bb G^{m-n-p}} 
 \left( \delta(g_1 \cdots g_{m-n} x, y) - \delta(g_1 \cdots g_p g_{p+1}' \cdots g_{m-n}' x, y) \right)  \notag\\
& \qquad \mu(dg_{p+1}') \ldots \mu(dg_{m-n}')  \bigg|  \bigg\}   \nu^*(dy) \bb P(d\omega), 
\end{align*}
where the finiteness of the integrals is guaranteed by \eqref{new expmoment001}. 
Let $\ee > 0$, whose value will be determined later. 
For $y \in \bb P(\bb V^*)$, define the set $A_{p,x,y} = \{ \omega \in \Omega: \delta (g_1 \cdots g_{m-n} x, y) > \ee p \}$. 
Since $p < m-n$, 
by \cite[Proposition 14.3]{BQ16b}, we have uniformly in $x \in \bb P(\bb V)$ and $y \in \bb P(\bb V^*)$,
\begin{align} \label{exp bound Apx-001-modi}
\bb P \left( A_{p,x,y} \right) \leq C e^{ - cp }. 
\end{align}
On the set $A_{p,x,y}^c$, we have 
\begin{align*}
& \left| \int_{\bb G^{m-n-p}}   \left( \delta(g_1 \cdots g_{m-n} x, y) - \delta(g_1 \cdots g_p g_{p+1}' \cdots g_{m-n}' x, y) \right) 
 \mu(dg_{p+1}') \ldots \mu(dg_{m-n}') \right|  \notag\\
& \leq  \left| \int_{ B_{p,x}(\omega) } 
\left( \delta(g_1 \cdots g_{m-n} x, y) - \delta(g_1 \cdots g_p g_{p+1}' \cdots g_{m-n}' x, y) \right)   \mu(dg_{p+1}') \ldots \mu(dg_{m-n}') \right| \notag\\
& \quad   +  \left| \int_{ B_{p,x}(\omega)^c } 
\left( \delta(g_1 \cdots g_{m-n} x, y) - \delta(g_1 \cdots g_p g_{p+1}' \cdots g_{m-n}' x, y) \right)
 \mu(dg_{p+1}') \ldots \mu(dg_{m-n}') \right|, 
\end{align*}
where $B_{p,x}(\omega)$ is the random set 
\begin{align*}
\Big\{ (g_{p+1}', \cdots, g_{m-n}') \in \bb G^{m-n-p}: d \Big( g_1 \cdots g_{m-n} x, g_1 \cdots g_p g_{p+1}' \cdots g_{m-n}' x \Big) &\leq e^{-ap} \Big\} \\
&\subseteq \bb G^{m-n-p}.
\end{align*}
By Lemma \ref{Lem-random-measure}, we have 
\begin{align}\label{inequa-random-measure-modified}
\bb P \left( \mu^{\otimes (m-n-p)} \left( B_{p,x}(\omega)  \right) > e^{ - bp } \right)  \leq C e^{-cp}. 
\end{align}
On $A_{p,x,y}^c$, for $(g_{p+1}', \ldots, g_{m-n}') \in B_{p, x}(\omega)$, we get, by the mean value theorem, 
\begin{align*}
& \left|  \delta(g_1 \cdots g_{m-n} x, y) - \delta(g_1 \cdots g_p g_{p+1}' \cdots g_{m-n}' x, y) \right|  \notag\\
&  \leq C e^{\delta (g_1 \cdots g_{m-n} x, y)} e^{- ap}
\leq  C e^{ \ee p } e^{- ap} \leq  C e^{- \frac{a}{2} p}, 
\end{align*}
where we have assumed that $\ee \leq a/2$. 
Hence, we obtain that for any $\omega \in A_{p,x,y}^c$,
\begin{align*}
& \left| \int_{\bb G^{m-n-p}}  \left( \delta(g_1 \cdots g_{m-n} x, y) - \delta(g_1 \cdots g_p g_{p+1}' \cdots g_{m-n}' x, y) \right)
 \mu(dg_{p+1}') \ldots \mu(dg_{m-n}') \right|    \notag\\
& \leq  C  e^{- \frac{a}{2} p} \\
& \quad + \left| \int_{ B_{p,x}(\omega)^c }  
 \left( \delta(g_1 \cdots g_{m-n} x, y) - \delta(g_1 \cdots g_p g_{p+1}' \cdots g_{m-n}' x, y) \right) 
  \mu(dg_{p+1}') \ldots \mu(dg_{m-n}') \right|   \notag\\
& \leq   C e^{- \frac{a}{2} p}  +  \left( \mu^{\otimes (m-n-p)} \left(  B_{p, x}(\omega)^c \right) \right)^{1/2}   \notag\\ 
& \quad \times   \left[ \int_{ \bb G^{m-n-p} }  \left( \delta(g_1 \cdots g_{m-n} x, y) - \delta(g_1 \cdots g_p g_{p+1}' \cdots g_{m-n}' x, y) \right)^2  \mu(dg_{p+1}') \ldots \mu(dg_{m-n}')  \right]^{1/2}  \notag\\
& \leq  C e^{- \frac{a}{2} p} +  \left( \mu^{\otimes (m-n-p)} \left(  B_{p, x}(\omega)^c \right) \right)^{1/2}  \notag\\ 
& \quad \times    \left\{ \delta(g_1 \cdots g_{m-n} x, y) +  \left[  \int_{ \bb G^{m-n-p} }   
     \delta(g_1 \cdots g_p g_{p+1}' \cdots g_{m-n}' x, y)^2  \mu(dg_{p+1}') \ldots \mu(dg_{m-n}')  \right]^{1/2}  \right\}   \notag\\
& \leq   C e^{- \frac{a}{2} p} +  \left( \mu^{\otimes (m-n-p)} \left(  B_{p, x}(\omega)^c \right) \right)^{1/2}  \notag\\ 
& \quad \times    \left\{ \ee p +  \left[  \int_{ \bb G^{m-n-p} }   
     \delta(g_1 \cdots g_p g_{p+1}' \cdots g_{m-n}' x, y)^2  \mu(dg_{p+1}') \ldots \mu(dg_{m-n}')  \right]^{1/2}  \right\}. 
\end{align*}
We set $E_{p,x} = \{ \mu^{\otimes (m-n-p)} (  B_{p, x}(\omega)^c ) > e^{ -bp } \}$. 
By \eqref{inequa-random-measure-modified}, we have 
\begin{align} \label{exp bound Epx-001-modi}
\bb P ( E_{p,x} ) \leq Ce^{-cp}. 
\end{align}
Therefore, for $\alpha\in (0,\alpha_0/3]$,
\begin{align*}
& \int_{\bb P(\bb V^*)} \int_{\Omega}   e^{\alpha |f^{x,m}_{n}(\omega, y) - \bb E (f^{x,m}_{n}(\cdot, y) | \scr A_p)(\omega) |} \bb P(d\omega) \nu^*(dy)  \notag\\
& \leq  \int_{\bb P(\bb V^*)} \bb E \mathds 1_{ E^c_{p,x} \cap A^c_{p,x,y} } \exp \Bigg( \alpha C e^{- \frac{a}{2} p}
+  \alpha e^{- b p/2}   \Bigg\{ \ee p   \notag\\
& \quad   +  \bigg[  \int_{ \bb G^{m-n-p} }   
     \delta(g_1 \cdots g_p g_{p+1}' \cdots g_{m-n}' x, y)^2  \mu(dg_{p+1}') \ldots \mu(dg_{m-n}')  \bigg]^{1/2}  \Bigg\} \Bigg) \nu^*(dy)  \notag\\
 & \quad + \int_{\bb P(\bb V^*)} \int_{\Omega}  \mathds 1_{ E_{p,x} \cup A_{p,x,y} }  
   \exp \left\{ \alpha \left| \widetilde{\delta}( \omega, x, y) \right|  \right\}  \bb P(d \omega) \nu^*(dy), 
\end{align*}
where $\widetilde{\delta}(\omega, x, y)$ is defined by \eqref{def-tilde-delta}. 
Now we have
\begin{align}\label{Triw-modified-inequa}
& \int_{\bb P(\bb V^*)} \bb E \int_{ \bb G^{m-n-p} }   
     \delta(g_1 \cdots g_p g_{p+1}' \cdots g_{m-n}' x, y)^2  \mu(dg_{p+1}') \ldots \mu(dg_{m-n}')  \nu^*(dy) \notag\\
& \leq  \sup_{x' \in \bb P(\bb V)} \int_{\bb P(\bb V^*)}   \delta(x', y)^2 \nu^*(dy) 
\leq C,
\end{align}
where the last inequality holds due to \cite[Theorem 14.1]{BQ16b}. 
For $y \in \bb P(\bb V^*)$, set 
\begin{align*}
F_{p,y} = \left\{ \omega \in \Omega:   \int_{ \bb G^{m-n-p} }   
     \delta(g_1 \cdots g_p g_{p+1}' \cdots g_{m-n}' x, y)^2  \mu(dg_{p+1}') \ldots \mu(dg_{m-n}')  > e^{\frac{bp}{2}}  \right\}. 
\end{align*}
By Chebyshev's inequality and \eqref{Triw-modified-inequa}, we obtain
\begin{align} \label{exp bound Fp-001-modi}
\int_{\bb P(\bb V^*)} \bb P \left( F_{p,y} \right) \nu^*(dy) \leq  C e^{- \frac{bp}{2}}. 
\end{align}
Then, we derive that, for $\alpha\in (0,\alpha_0/3]$, 
\begin{align*}
&  \int_{\bb P(\bb V^*)} \int_{\Omega}   e^{\alpha |f^{x,m}_{n}(\omega, y) - \bb E (f^{x,m}_{n}(\omega, y) | \scr A_p) |} \bb P(d\omega) \nu^*(dy)  \notag\\
& \leq  \int_{\bb P(\bb V^*)} \bb E  \mathds 1_{ F_{p,y}^c \cap  E^c_{p,x} \cap A^c_{p,x,y}  }  \exp \left( \alpha C e^{- \frac{a}{2} p}  +  \alpha e^{- b p/2} 
    \left\{ \ee p +  e^{bp /4}  \right\} \right)  \nu^*(dy)  \notag\\
 & \quad + \int_{\bb P(\bb V^*)} 
   \int_{\Omega}  \mathds 1_{ E_{p,x} \cup A_{p,x,y} \cup F_{p,y} }   
    \exp \left\{ \alpha \left| \widetilde{\delta}(\omega, x, y) \right|  \right\} \bb P(d \omega) \nu^*(dy) \notag \\
& \leq   \exp \left( \alpha C e^{- \frac{a}{2} p}  +  \alpha e^{- b p/2} 
    \left\{ \ee p +  e^{bp /4}  \right\} \right)   \notag\\
 & \quad + \int_{\bb P(\bb V^*)}   \bb E  \mathds 1_{ E_{p,x} \cup A_{p,x,y} \cup F_{p,y} } \exp \Bigg\{ \alpha \left| \delta(g_1 \cdots g_{m-n} x, y) \right|  \notag\\
& \qquad\qquad 
  + \alpha  \left|  \int_{\bb G^{m-n-p}} \delta(g_1 \cdots g_p g_{p+1}' \cdots g_{m-n}' x, y)  \mu(dg_{p+1}') \ldots \mu(dg_{m-n}')  \right|  \Bigg\} \nu^*(dy).
\end{align*}
As $p \to \infty$, the first term is bounded by $1+e^{-cp}$, for some constant $c>0$.
We claim that if $\alpha>0$ is chosen small enough, then the expectation of the second term  goes to $0$ with an exponential rate.
Indeed, by H\"older's inequality, we have
\begin{align}  \label{Final bound projection-001001-modi}
& \int_{\bb P(\bb V^*)}   \bb E  \mathds 1_{ E_{p,x} \cup A_{p,x,y} \cup F_{p,y} } \exp \Bigg\{ \alpha \left| \delta(g_1 \cdots g_{m-n} x, y) \right|  \notag\\
& \qquad\qquad\qquad 
  + \alpha  \left|  \int_{\bb G^{m-n-p}} \delta(g_1 \cdots g_p g_{p+1}' \cdots g_{m-n}' x, y)  \mu(dg_{p+1}') \ldots \mu(dg_{m-n}')  \right|  \Bigg\} \nu^*(dy) \notag \\
 &\leq 
 \left( \int_{\bb P(\bb V^*)}  \bb P ( E_{p,x} \cup A_{p,x,y} \cup F_{p,y} ) \nu^*(dy)\right)^{1/3}   \notag\\
& \quad \times \left( \int_{\bb P(\bb V^*)}  \bb E  \exp \left\{ 3 \alpha \left| \delta(g_1 \cdots g_{m-n} x, y) \right| \right\} \nu^*(dy) \right)^{1/3} \notag \\
&\quad  \times  \Big( \int_{\bb P(\bb V^*)} \bb E  \exp \Big\{ 3 \alpha  \Big| \int_{\bb G^{m-n-p}} \delta(g_1 \cdots g_p g_{p+1}' \cdots g_{m-n}' x, y)   \notag \\
&  \qquad\qquad  \mu(dg_{p+1}') \ldots \mu(dg_{m-n}')  \Big| \Big\}  \nu^*(dy)  \Big)^{1/3}.
 \end{align}
From \eqref{exp bound Apx-001-modi}, \eqref{exp bound Epx-001-modi} and \eqref{exp bound Fp-001-modi}, we have that, for some $c>0$,
uniformly in $x\in \bb P(\bb V)$ and $p$ large enough,
\begin{align*} 
\int_{\bb P(\bb V^*)} \bb P ( E_{p,x} \cup A_{p,x,y} \cup F_{p,y} ) \nu^*(dy) \leq e^{-cp}.
\end{align*}
By Jensen's inequality, the third factor on the right-hand side of  \eqref{Final bound projection-001001-modi}  
is less than the second factor. 
The latter is bounded by 
\begin{align*} 
\sup_{x' \in \bb P(\bb V)} \int_{\bb P(\bb V^*)}  \exp \left\{ 3 \alpha \left| \delta(x', y) \right| \right\} \nu^*(dy),
\end{align*}
which, by \eqref{new expmoment001}, is finite as soon as $3\alpha\leq \alpha_0$. 
The conclusion of the proposition follows. 
\end{proof}


\section{Existence of the target measure}\label{Sec-harmonic-measure}
In this section, we prove Theorem \ref{Thm-measure-rho}, which establishes the existence of a Radon measure $\rho$ on $\bb P(\bb V) \times \bb R$ 
that satisfies \eqref{exist of measure rho-001}. 
This will be achieved as an application of a more general theorem concerning random walks with perturbations depending on the future,  
which will be discussed in Section \ref{sec invar func}. 

\subsection{From the perturbed random walk toward the target measure} \label{sec-ideal preturb-001}
Following the heuristics presented in Section \ref{sec-heuristic for harmonic measure},
we start the construction of the measure $\rho$ by relating  the random walk \eqref{def of direct RW-001} 
to a convenient reversed random walk. 
This reversed random walk is determined by the dual cocycle $\sigma^*$ (defined in \eqref{dual cocycle-001})
which acts on the space $\bb G \times \bb P(\bb V^*)$, and by a perturbation function on the same space. 
The perturbation function that we need is the limiting ideal perturbation defined in \eqref{perturbed function in bb Y-001}, 
which takes the form $f(\omega, y) = \delta ( \xi (\omega), y )$ for  $\omega \in \Omega=\bb G^{\bb N^*}$ and $y\in \bb P(\bb V^*)$.

In other words, the existence of the target harmonic measure $\rho$ is related with the perturbed 
random walk $(\tilde S_n(\cdot,y))_{n\geq1}$ defined as follows:
for $\omega=(g_1,g_2,\ldots) \in \Omega$ and $y\in \bb P(\bb V^*)$,
\begin{align} \label{eq-idealRWaaa-001}
\tilde S_n(\omega,y) := - \sigma^*( g^{-1}_n\cdots g^{-1}_1, y)  + f\circ \widetilde T^n (\omega, y) - f(\omega, y), 
\quad n\geq 1.
\end{align}
A distinct feature of this model is the presence of the perturbation term $f\circ \widetilde T^n-f$,  
which depends on the whole sequence $\omega=(g_1,g_2,\ldots) \in \Omega$, particularly on the future coordinates $(g_k)_{k > n}$.
For $y\in \bb P(\bb V^*)$ and $t\in \bb R$, we define the perturbed exit time 
\begin{align} \label{stop time ideal case-001}
\tilde \tau_{y,t}(\omega) 
= \min \left\{ n \geq 1 : t + \tilde S_n(\omega,y) < 0 \right\}. 
\end{align}
The time $\tilde \tau_{y,t}$ is a random variable on $\Omega$,
 but it is not a stopping time with respect to the natural filtration $(\scr A_k)_{k \geq 0}$  
associated with the dual random walk $-\sigma^*( g^{-1}_k\cdots g^{-1}_1, y)$. 
For any  continuous function $\varphi$ on $\bb P(\bb V)$, $t\in \bb R$ and $n\geq 1$, we set
\begin{align} \label{def-U-varphi-001}
U^{\varphi}_{n}(t) 
= \int_{\bb P(\bb V^*) } \int_{\Omega} \bigg[  \Big( t +\tilde S_n(\omega,y)  \Big) \varphi(\xi(\omega) ); 
\tilde \tau_{y,t}(\omega) >n     \bigg]  \bb P(d\omega) \nu^*(dy). 
\end{align}

We will verify that the assumptions of Theorem \ref{Pro-Appendix-Final2-Inequ} 
are satisfied for the action of the group $\bb G$ on the space $\bb X = \bb P(\bb V^*)$, 
 equipped with the cocycle $ -\sigma^*$ (instead of $\sigma$ there) 
and the perturbation sequence $\mathfrak f = (f_n)_{n \geq 0}$,
where the sequence is constant with 
$f_n(\omega, y) = f(\omega, y) = \delta ( \xi (\omega), y )$ for all $n\geq 0$.
By the assumptions of Theorem \ref{Thm-measure-rho}, the Lyapunov exponent $\lambda_{\mu}$ corresponding to the cocycle $\sigma$ is zero. 
Consequently, the cocycle $\sigma^*$ also has a zero Lyapunov exponent (see Theorem 3.28 in \cite{BQ16b}).
From Lemma 10.18 in \cite{BQ16b}, 
it follows that the cocycle $\sigma^*$ can be centered, 
meaning that there exists a continuous function 
$\psi_0$ on $\bb P(\bb V^*)$ such that, for any $y\in \bb P(\bb V^*)$, 
\begin{align*} 
\int_{\bb G } \Big( \sigma^*(g^{-1},y) + \psi_0(g^{-1} y) - \psi_0(y) \Big)  \mu(dg) = 0.
\end{align*}
In other words,  up to a bounded coboundary, the cocycle $\sigma^*$ satisfies the assumption \eqref{centering-001}.
Besides, the effective central limit theorem \eqref{BEmart-001} holds true, 
by the Berry-Esseen theorem in \cite{LePage82}; see also \cite{Boug-Lacr85} (Theorem 5.1, page 122). 
Note that, due to the bound \eqref{Regularity-nu} and Proposition \ref{Prop-projection approx-001}, 
the sequence  $\mathfrak f = (f_n)_{n \geq 0}$  
satisfies both the moment condition \eqref{exp mom for g 002} and the approximation property \eqref{approxim rate for gp-002}. 
The approximation property \eqref{approx property of theta-001} is confirmed by
 \eqref{approx property of theta-in bb Y-001} of Corollary \ref{corollary f_y-001}. 
 Therefore, all conditions of Theorem  \ref{Pro-Appendix-Final2-Inequ} are verified for 
the group $\bb G$, the space $\bb X =\bb P(\bb V^*)$, the cocycle $-\sigma^*$ 
(in place of $\sigma$ in the original formulation)
and the perturbation sequence $\mathfrak f $.
As a consequence, Corollaries \ref{Main th-003} and \ref{Main th-001} hold true. 
This fact will be used below to prove the following important proposition, 
which states the existence of the target measure $\rho$.

\begin{proposition} \label{prop-reversed oper-000}
Assume that  $\Gamma_{\mu}$ is proximal and strongly irreducible and that 
$\mu$ has a finite exponential moment and its Lyapunov exponent is $0$.
Then, there exists a Radon measure $\rho$ on $\bb P(\bb V) \times \bb R$ such that, 
for any continuous function $\varphi$ on $\bb P(\bb V)$ 
and any continuous compactly supported function $\psi$ on $\bb R$, we have,
as $n\to\infty$,
\begin{align} \label{target measure for ideally pertub RW-001}
 \int_{\bb R} \psi(t) U^{\varphi}_{n}(t)  dt \to   \int_{\bb P(\bb V) \times \bb R}  \varphi(x) \psi(t) \rho(dx,dt).  
\end{align}
The marginal of the Radon measure $\rho$ on $\bb R$  is absolutely continuous 
with respect to the Lebesgue measure with non-decreasing density function
$W(t)=\frac{\rho(\bb P(\bb V),dt)}{dt}$, $t\in \bb R$ such that 
\begin{align} \label{bound-U-app}
W(t) \leq c (1+\max \{t,0\}), \quad t\in \bb R.
\end{align}
Moreover, for any continuous compactly supported function $h$ on $\bb P(\bb V) \times \bb R$, we have 
\begin{align}\label{limit-U-t-001}
\lim_{t \to \infty} \frac{1}{t} \int_{ \bb P(\bb V) \times \bb R }  h(x', t' - t) \rho(dx', dt') 
= \int_{  \bb P(\bb V) \times \bb R  }  h(x', t') \nu(dx') dt'. 
\end{align}
\end{proposition}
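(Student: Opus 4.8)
The plan is to deduce Proposition \ref{prop-reversed oper-000} from the abstract two-sided estimates of Theorem \ref{Pro-Appendix-Final2-Inequ}, whose hypotheses have already been verified just above for the action of $\bb G$ on $\bb X = \bb P(\bb V^*)$ with the cocycle $-\sigma^*$ and the constant perturbation sequence $f_n = f$. First I would record that, by Corollaries \ref{Main th-003} and \ref{Main th-001}, the functions
\begin{align*}
U^{\varphi}_n(t) = \int_{\bb P(\bb V^*)} \int_{\Omega} \Big[ \big( t + \tilde S_n(\omega,y) \big) \varphi(\xi(\omega)); \, \tilde\tau_{y,t}(\omega) > n \Big] \bb P(d\omega) \nu^*(dy)
\end{align*}
enjoy uniform (in $n$ and $t$) bounds of the form $0 \le U^{\varphi}_n(t) \le c(1 + \max\{t,0\})\|\varphi\|_\infty$ together with a quasi-monotonicity / equicontinuity property in $t$, which is exactly what is needed to extract, via a Helly-type / Banach--Alaoglu argument, a subsequential vague limit of the measures $\psi \mapsto \int_{\bb R} \psi(t) U^{\varphi}_n(t) dt$. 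The linear dependence on $\varphi$ and a density argument over a countable set of test functions $\varphi$ then let me assemble these limits into a single Radon measure $\rho$ on $\bb P(\bb V)\times\bb R$ with $\int \varphi(x)\psi(t)\rho(dx,dt)$ realized as the limit of $\int_{\bb R}\psi(t)U^{\varphi}_n(t)dt$; the marginal bound \eqref{bound-U-app} is inherited directly from the uniform bound on $U^{\varphi}_n$ with $\varphi \equiv 1$, and the non-decreasing density follows from the quasi-monotonicity of $U^1_n(t)$ in $t$ passed to the limit.

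The crucial point is to upgrade subsequential convergence to genuine convergence, i.e.\ to show the limit is unique and independent of the subsequence. This is where the harmonicity heuristics of Section \ref{sec-heuristic for harmonic measure} enter: any vague limit $\rho$ must satisfy the invariance relation coming from $R = MP$ (the $R$-harmonicity \eqref{def of harminic measure-001}), and I would use the abstract uniqueness statement packaged in Theorem \ref{Pro-Appendix-Final2-Inequ} (the reason it is phrased as a two-sided inequality sandwiching the true quantity between approximants depending on finitely many coordinates) to conclude that the limit is forced. Concretely, the finite-size approximations $f^{x,m}_n$ from Section \ref{sec-integral type approx for perturb}, together with Proposition \ref{Prop-projection approx-inte001}, give upper and lower comparison walks whose associated $U$-functions converge by the finite-dimensional Markov chain machinery of the later sections, and letting the approximation parameter tend to infinity squeezes $U^\varphi_n(t)$ to a single limit. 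I expect this squeezing step — carefully matching the error terms $ce^{-\beta p}$ in the approximations against the polynomial growth in $t$, uniformly in $n$ — to be the main technical obstacle.

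Finally, for the behaviour at $+\infty$, i.e.\ \eqref{limit-U-t-001}, the plan is to analyze $\frac{1}{t}\int h(x', t'-t)\rho(dx',dt')$ by translating the test function and using the defining limit \eqref{target measure for ideally pertub RW-001}: after the shift, the relevant quantity is $\frac{1}{t}\int_{\bb R}\psi(s) U^\varphi_n(s+t)\,ds$ for large $t$, and for $t$ large the exit time $\tilde\tau_{y, s+t}$ is typically $> n$ with overwhelming probability, so $U^\varphi_n(s+t) \approx (s+t)\int \varphi\,d\nu + o(t)$ by the centering of $\sigma^*$, the stationarity of $\nu^*$, and the fact that $\xi$ has law $\nu$ (so $\bb E[\varphi(\xi)] = \int\varphi\,d\nu$). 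Dividing by $t$ and letting $t\to\infty$ collapses the walk contribution and the perturbation coboundary (both $O(1)$ in probability, with uniformly controlled tails by \eqref{Regularity-nu}), leaving $\int \varphi(x')\,d\nu(x')\int\psi(s)\,ds$, i.e.\ the product measure $\nu\otimes ds$; a routine approximation of a general compactly supported $h$ by finite sums $\sum \varphi_i(x')\psi_i(t')$ then gives the stated identity. The subtlety here is controlling the event $\{\tilde\tau_{y,s+t} \le n\}$ uniformly in $n$ — but this is precisely the content of Theorem \ref{Theorem GLPL2017} and the harmonic function estimates $V(x,t)/t \to 1$, which bound the defect by something $o(t)$.
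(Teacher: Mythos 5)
Your proposal circles the right tools but misses the structure that makes the paper's argument work, and the step you flag as ``the main technical obstacle'' is handled incorrectly. The convergence of $\int_{\bb R}\psi(t)U^{\varphi}_n(t)\,dt$ is not obtained by extracting a subsequential vague limit and then proving uniqueness: Corollary \ref{Main th-003} (itself a consequence of the two-sided inequalities of Theorem \ref{Pro-Appendix-Final2-Inequ}, via the $\limsup\leq\liminf$ squeeze and Lemma \ref{Lem_Conver_Func}) already delivers genuine convergence to a unique non-decreasing limit $U^{\mathfrak f,\theta}$, so no compactness argument is needed. More seriously, your proposed route to uniqueness via the $R$-harmonicity \eqref{def of harminic measure-001} is not available: the harmonicity of $\rho$ is a \emph{consequence} of the convergence (Corollary \ref{Thm-measure-rho-v003}), no uniqueness theorem for $R$-harmonic Radon measures is established anywhere in the paper, and invoking it here would be circular. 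The finite-size approximations you cite ($f^{x,m}_n$ and Proposition \ref{Prop-projection approx-inte001}) belong to the reversed walk of Section \ref{sec-reversed random walk-001}, not to the ideal perturbation treated here, for which the relevant input is Proposition \ref{Prop-projection approx-001} and the verification carried out just before the statement.

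The second gap is that Corollary \ref{Main th-003} cannot be applied directly to an arbitrary continuous $\varphi$: it requires the twist function $\theta=\varphi\circ\xi$ to satisfy the approximation property \eqref{approx property of theta-001}, which Corollary \ref{corollary f_y-001} guarantees only for \emph{H\"older} continuous $\varphi$. The paper therefore proceeds in two steps: first H\"older $\varphi$ via Corollary \ref{Main th-003}, then general continuous $\varphi$ by uniform approximation combined with the bound $U_n(t)\leq c(1+\max\{t,0\})$ of Corollary \ref{Main th-001}, which shows $\int\psi\,U^{\varphi}_n\,dt$ is Cauchy; a further product-approximation and Riesz's theorem produce $\rho$ for general $h$. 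Your proposal skips this entirely. Finally, for \eqref{limit-U-t-001} your plan of estimating $U^{\varphi}_n(s+t)$ uniformly in $n$ via exit-time bounds is delicate and the tools you cite (Theorem \ref{Theorem GLPL2017}, which concerns the direct unperturbed walk) are not the right ones; the paper instead applies \eqref{MAIN_GOAL-theta-004}, i.e.\ $\lim_{t\to\infty}U^{\mathfrak f,\theta}(t)/t=\bb E\theta$, directly to the limit function for H\"older $\varphi$, and then extends to general $h$ by the same approximation scheme using \eqref{bound-U-app}.
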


\begin{proof}
First assume that the function $\varphi$ is H\"older continuous on $\bb P(\bb V)$.  
By Corollary \ref{corollary f_y-001}, the function $\omega \mapsto \varphi(\xi(\omega)) $ 
satisfies the approximation property \eqref{approx property of theta-001}.
Therefore, by Corollary \ref{Main th-003},
for any non-negative continuous compactly supported function $\psi$ on $\bb R$, as $n\to \infty$, 
the quantity $ \int_{\bb R} U^{\varphi}_{n}(t) \psi(t) dt  $
has a limit of the form $ \int_{\bb R} U^{\varphi}(t) \psi(t) dt$ for some non-decreasing function $U^{\varphi}: \bb R \to \bb R_+$.

In the case where $\varphi$ is any continuous function on $\bb P(\bb V)$, 
for any $\ee >0$, we can choose a H\"older continuous function $\varphi'$ on $\bb P(\bb V)$
such that $\sup_{x \in \bb P(\bb V)} |\varphi(x)-\varphi'(x)| \leq \ee$. 
Then,  for any $\psi$ as above and any $n\geq 1$, we have, using Corollary \ref{Main th-001}, 
\begin{align*} 
\left|    \int_{\bb R} U^{\varphi}_{n}(t) \psi(t) dt  - \int_{\bb R} U^{\varphi'}_{n}(t) \psi(t) dt  \right|
&\leq  \ee   \int_{\bb R} U_{n}(t)   \psi(t) dt  \\
&\leq  c \ee  \int_{\bb R}  (1+\max{t,0})    \psi(t) dt.  
\end{align*}
Since, by the first part of the proof, the integral $\int_{\bb R} U^{\varphi'}_{n}(t) \psi(t) dt$ has a limit,
the inequality above shows that the sequence of integrals $ \int_{\bb R} U^{\varphi}_{n}(t) \psi(t) dt,$ $n\geq 1$, 
forms a Cauchy sequence and thus also converges as $n\to\infty$.
In other words, for any continuous compactly supported function $h$ on $\bb P(\bb V) \times \bb R$ which is of the product form
$(x, t) \mapsto \varphi(x) \psi(t)$, the integral 
\begin{align} \label{INT-EXPECT-001}
\int_{\bb P(\bb V^*) \times \bb R} \int_{\Omega} &\bigg[  \bigg( t + \tilde S_n(\omega,y) \bigg) h(\xi(\omega) , t);  
\tilde\tau_{y,t}(\omega) >n \bigg] \bb P(d\omega) 
\nu^*(dy) dt
\end{align}
has a limit as $n\to\infty$.
Since every continuous compactly supported function $h$ on $\bb P(\bb V) \times \bb R$ 
can be uniformly approximated by sums of functions of the product form,
the same argument as above shows that the integral  \eqref{INT-EXPECT-001}
has a limit for any $h$.
As this integral is non-negative when $h$ is non-negative, 
by the Riesz representation theorem, the limit is of the form $\rho (h)$, 
where $\rho$ is a Radon measure on $\bb P(\bb V) \times \bb R$.

The properties of the marginal of the measure $\rho$ on $\bb R$  are direct consequences of Corollaries \ref{Main th-003} and \ref{Main th-001}. 
Finally, we prove \eqref{limit-U-t-001}. 
Assume first that $h$ is a continuous compactly supported function on $\bb P(\bb V) \times \bb R$ which is of the form $(x, t) \mapsto \varphi(x) \psi(t)$,
where $\varphi$ is a H\"older continuous function on $\bb P(\bb V)$ and $\psi$ is a continuous compactly supported function on $\bb R$. 
Then, by Corollaries \ref{corollary f_y-001} and \ref{Main th-003}, we obtain 
\begin{align*}
\lim_{t \to \infty} \frac{1}{t} \int_{\bb R} U^{\varphi}(t') \psi(t' - t) dt'
&=  \int_{\bb R} \psi(t') dt' \int_{\Omega} \varphi(\xi(\omega)) \bb P(d \omega) \\
&= \int_{\bb R} \psi(t') dt' \int_{\bb P(\bb V)} \varphi(x) \nu(dx),
\end{align*}
which establishes \eqref{limit-U-t-001} when $h$ is of the above form. 

The general case of  \eqref{limit-U-t-001} follows by standard approximation and by using the bound \eqref{bound-U-app}. 
Indeed, if $h$ is any continuous compactly supported function on $\bb P(\bb V) \times \bb R$,
then for any $\ee > 0$, we can find an integer $m \geq 1$, H\"older continuous functions $\varphi_1, \ldots, \varphi_m$ on $\bb P(\bb V)$
and continuous compactly supported functions $\psi_1, \ldots, \psi_m$ on $\bb R$, such that 
\begin{align} \label{approxim by finite sum of prod-001}
\sup_{(x, t) \in \bb P(\bb V) \times \bb R} \Big| h(x, t) - \sum_{i = 1}^m \varphi_i(x) \psi_i(t) \Big| \leq \ee.
\end{align}
Thus, for any $t \in \bb R$, we get 
\begin{align*} 
& \left| \int_{ \bb P(\bb V) \times \bb R }  h(x', t' - t) \rho(dx', dt') -  \sum_{i = 1}^m \int_{ \bb P(\bb V) \times \bb R }  \varphi_i(x') \psi_i(t' - t)  \rho(dx', dt') \right| \notag\\
& \leq \ee \rho(\bb P(\bb V) \times [t-C, t + C] ),
\end{align*}
for some constant $C>0$ depending on the support of the above functions. Using the bound \eqref{bound-U-app}, 
we get $\rho(\bb P(\bb V) \times [t-C, t + C] ) \leq c (t + 2C).$
Dividing by $t$ and using  \eqref{limit-U-t-001} which has been already established for functions $h$  of the form $(x, t) \mapsto \varphi(x) \psi(t)$,  
we obtain 
\begin{align*}
\limsup_{t \to \infty} 
\left|  \frac{1}{t} \int_{ \bb P(\bb V) \times \bb R }  h(x', t' - t) \rho(dx', dt') 
-  \sum_{i = 1}^m \int_{ \bb P(\bb V) \times \bb R }  \varphi_i(x') \psi_i(t')  \nu(dx') dt' \right| 
\leq c \ee. 
\end{align*}
The result ensues from the application of \eqref{approxim by finite sum of prod-001} and letting $\ee\to 0$. 
\end{proof}

\subsection{The reversed random walk} \label{sec-reversed random walk-001}
 In the previous subsection, the target measure $\rho$ appears in 
equation \eqref{target measure for ideally pertub RW-001} as the vague
limit of a sequence of Radon measures pertaining to the ideal random walk \eqref{eq-idealRWaaa-001}. 
In order to prove Theorem \ref{Thm-measure-rho}, we will deal 
with the sequence of Radon measures $(\rho_{n,x})_{n\geq 1}$ on $\bb P(\bb V) \times \bb R$ defined as follows: 
for $n\geq 1$, $x\in \bb P(\bb V)$
and any continuous compactly supported function $h$ on $\bb P(\bb V) \times \bb R$,
\begin{align} \label{Expect in main Theorem-001}
\int_{\bb P(\bb V) \times \bb R}   h(x', t) \rho_{n,x}(dx',dt)
= \int_{0}^{\infty}  t \,  \bb E\Big(  h(g_n\cdots g_1 x, t+\sigma(g_n\cdots g_1, x)); \tau_{x,t} > n - 1 \Big) dt. 
\end{align}
Actually, the proof of Theorem \ref{Thm-measure-rho} consists in showing that the sequence of measures $(\rho_{n,x})_{n\geq 1}$ converges vaguely
to the same measure $\rho$. 
Towards this goal, we need the following lemma.

\begin{lemma} \label{lem-good y-001}
Assume that $\mu$ is such that  
$\Gamma_{\mu}$ is proximal and strongly irreducible.
Then, for any $x \in \bb P(\bb V)$,  $\nu^*$-almost surely in $y\in \bb P(\bb V^*)$, for any $n\geq 1$, we have 
\begin{align*} 
\bb P\left( \delta( g_n \cdots g_1 x, y) < \infty \right) = 1.
\end{align*}
\end{lemma}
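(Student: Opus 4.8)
The plan is to reduce the statement to the classical regularity property of stationary measures on projective space. First I would unwind the definition of $\delta$ in \eqref{def of delta func-001}: writing $x' = \bb R v'$ and $y = \bb R \varphi$, one has $\delta(x', y) = \infty$ if and only if $\varphi(v') = 0$, i.e. if and only if $y$ lies in the projective hyperplane $H_{x'} := \{ \bb R \varphi \in \bb P(\bb V^*) : \varphi(v') = 0 \}$ of $\bb P(\bb V^*)$ determined by $v'$. Thus, for each $n \geq 1$, the event $\{ \delta(g_n \cdots g_1 x, y) = \infty \}$ coincides with $\{ y \in H_{g_n \cdots g_1 x} \}$, where $H_{g_n \cdots g_1 x}$ is an $\scr A_n$-measurable, random proper projective subspace of $\bb P(\bb V^*)$.

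The key input is that $\nu^*$ charges no proper projective subspace. Since $\Gamma_{\mu}$ is proximal and strongly irreducible on $\bb V$, the dual semigroup $\Gamma_{\mu}^{-1}$ is proximal and strongly irreducible on $\bb V^*$, so that its unique stationary probability measure $\nu^*$ on $\bb P(\bb V^*)$ is proper: $\nu^*(W) = 0$ for every proper projective subspace $W \subseteq \bb P(\bb V^*)$ (this is the dual of the property of $\nu$ already invoked for the ideal perturbation in Section \ref{Sec-Approximation-properties}, and the identification of $\nu^*$ as the stationary measure is recorded in Subsection \ref{sec-integral type approx for perturb}; see \cite{Boug-Lacr85, BQ16b}). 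In particular $\nu^*(H_{x'}) = 0$ for every deterministic $x' \in \bb P(\bb V)$.

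Next I would fix $n \geq 1$ and integrate over $y$ against $\nu^*$, exchanging the order of integration by Fubini's theorem and applying the previous step conditionally on $g_1, \ldots, g_n$:
\[
\int_{\bb P(\bb V^*)} \bb P\big( \delta(g_n \cdots g_1 x, y) = \infty \big)\, \nu^*(dy) = \bb E\big[ \nu^*(H_{g_n \cdots g_1 x}) \big] = 0.
\]
Hence there is a $\nu^*$-null set $N_n \subseteq \bb P(\bb V^*)$ such that $\bb P(\delta(g_n \cdots g_1 x, y) < \infty) = 1$ for every $y \notin N_n$. Setting $N = \bigcup_{n \geq 1} N_n$, which is again $\nu^*$-null, gives the conclusion for all $n$ simultaneously. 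The argument is soft; the only point requiring any care is the transfer of proximality and strong irreducibility to $\bb V^*$ together with the identification of $\nu^*$, which is standard duality, so I do not anticipate a genuine obstacle.
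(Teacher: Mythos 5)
Your proof is correct. It rests on the same two pillars as the paper's argument — a Fubini/integration-over-$y$ computation and the fact that $\nu^*$ assigns zero mass to proper projective subspaces of $\bb P(\bb V^*)$ — but the mechanics differ slightly. You keep the randomness on the hyperplane side: the event $\{\delta(g_n\cdots g_1 x,y)=\infty\}$ is $\{y\in H_{g_n\cdots g_1 x}\}$, and you conclude by $\bb E[\nu^*(H_{g_n\cdots g_1 x})]=0$, using that \emph{every} deterministic hyperplane is $\nu^*$-null; stationarity of $\nu^*$ plays no role. The paper instead first transfers the randomness onto $y$ via the identity $\delta(gx,y)=\infty \iff \delta(x,g^{-1}y)=\infty$ and then invokes the $\mu^{-1}$-stationarity of $\nu^*$ to reduce to the single null set $\{y:\delta(x,y)=\infty\}$ for the fixed $x$. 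Both inputs (uniform properness over all hyperplanes, versus properness of one hyperplane plus stationarity) are standard and recorded in \cite{BQ16b}, so the two routes are of equal strength; yours is marginally more economical in that it avoids stationarity altogether.
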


\begin{proof}
Fix $x\in \bb P(\bb V)$.
By the definition of $\delta$ (cf.\ \eqref{def of delta func-001}) and \eqref{cohomological eq -vers 002}, for any $y \in \bb P(\bb V^*)$ and $g \in \bb G$, 
we have that $\delta(gx, y) =  \infty$ if and only if $ \delta(x, g^{-1} y) = \infty$. 
Therefore, in order to establish the lemma, it is equivalent to prove that, $\nu^*$-almost surely in $y\in \bb P(\bb V^*)$, 
for any $n\geq 1$, 
$\bb P \left(  \delta(x, (g_n \cdots g_1)^{-1} y) < \infty    \right) = 1$. 

As the measure $\nu^*$ assigns zero mass to every proper projective subspace of $\bb P(\bb V^*)$ (see Lemma 4.6 in \cite{BQ16b}), 
we have
\begin{align*} 
\nu^*\big( y\in \bb P(\bb V^*): \delta(x,y) = \infty   \big) =0.
\end{align*}
Since $\nu^*$ is $\mu^{-1}$-stationary, we get that for any $n\geq 1$, 
\begin{align*} 
& \int_{\bb P(\bb V^*)} \bb P \left(  \delta(x, (g_n \cdots g_1)^{-1} y) = \infty    \right) \nu^*(dy) \notag\\
& = \bb E \left[  \nu^*\left( y\in \bb P(\bb V^*): \delta(x, (g_n \cdots g_1)^{-1} y) = \infty   \right)   \right]  \notag\\
& = \nu^*\left( y\in \bb P(\bb V^*): \delta(x,y) = \infty   \right) = 0,
\end{align*}
which ends the proof of the lemma. 
\end{proof}

For $x \in \bb P(\bb V)$, set 
\begin{align*}
\Delta_x: = \big\{ y \in \bb P(\bb V^*):  \bb P( \delta( g_n \cdots g_1 x, y) < \infty ) = 1, \forall n \geq 1  \big\}, 
\end{align*}
so that, by Lemma \ref{lem-good y-001}, we have $\nu^*(\Delta_x)  =1.$

We will relate the integral on the right-hand side of \eqref{Expect in main Theorem-001}  
to an equivalent expression in terms of the following array of reversed random walks:  
for $\omega=(g_1,g_2,\ldots) \in \Omega,$  $x\in\bb P(\bb V)$, $y \in \Delta_x$ and $1 \leq n \leq m$, 
\begin{align} \label{reversed RWfor products-001}
 \tilde S^{x,m}_{n}(\omega, y) =
-\sigma^*( g^{-1}_{n}\cdots g^{-1}_1,y) +   \delta(g_{n+1}\cdots g_m x, g^{-1}_{n} \cdots g^{-1}_{1}y) - \delta(g_1\cdots g_m x,y),
\end{align}
with the convention that, for $k>m$, the empty right product $g_k \cdots g_m$ is identified with the identity matrix. 
In particular, with $m=n$, we have 
\begin{align} \label{reversed RWfor products-001bbb}
 \tilde S^{x,n}_{n}(\omega, y) =
-\sigma^*( g^{-1}_{n}\cdots g^{-1}_1,y) +   \delta(x, g^{-1}_{n} \cdots g^{-1}_{1}y) - \delta(g_1\cdots g_n x,y).
\end{align}
In these definitions, the additional parameter $m$ represents the range of dependence: 
it specifies how far into the future the perturbation of the random walk $-\sigma^*( g^{-1}_{n}\ldots g^{-1}_1,y)$ can extend. 
The ideal perturbation considered in the previous subsection corresponds to the case where $m = \infty$.

The right-hand side of \eqref{Expect in main Theorem-001} is connected to
the array $(\tilde S^{x,n}_{k}(\cdot, y))_{1\leq k\leq n}$ through the following reversal lemma, 
in analogy to \eqref{duality-ident-001} for random walks on $\bb R$.
\begin{lemma} \label{lemma-duality-for-product-001}
Let $x\in \bb P(\bb V)$ and $y\in \Delta_x$. For any $n\geq 1$ and non-negative measurable function $h$ on
$\bb P(\bb V)\times \bb R$, we have 
\begin{align} \label{duality id-001}
& \int_{\bb R_+} t \,  \bb E \Big[ h\Big( g_n\cdots g_1 x, t+ \sigma (g_n\cdots g_1,x) \Big);  \tau_{x,t} > n-1 \Big] dt \notag\\
 &= \int_{\bb R} \bb E\Big[ \Big(t+ \tilde S^{x,n}_{n}(\cdot,y)  \Big)   
 h(g_1\cdots g_n x, t);   t+ \tilde S^{x,n}_{k}(\cdot,y) \geq 0, 1\leq k\leq n  \Big] dt. 
\end{align}
\end{lemma}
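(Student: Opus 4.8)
The plan is to establish the reversal identity \eqref{duality id-001} by a change of variables in the random walk, reversing the order of the matrices $g_1,\ldots,g_n$, combined with the cohomological relation \eqref{ cohomological eq -vers 002}. First I would use the fact that $(g_1,\ldots,g_n)$ has the same law as $(g_n,\ldots,g_1)$ under $\bb P$ to rewrite the left-hand side of \eqref{duality id-001}: replacing $g_k$ by $g_{n+1-k}$, the left product $g_n\cdots g_1$ becomes $g_1\cdots g_n$, while $\sigma(g_n\cdots g_1,x)$ becomes $\sigma(g_1\cdots g_n,x)=\sum_{k=1}^n\sigma(g_k,g_{k+1}\cdots g_n x)$, and the partial sums $\sigma(g_j\cdots g_1,x)$ that define $\tau_{x,t}$ become $\sigma(g_{n-j+1}\cdots g_n,x)$. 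So after this relabeling, the event $\{\tau_{x,t}>n-1\}$ turns into the event that $t+\sigma(g_j\cdots g_n,x)\geq 0$ for every $j=1,\ldots,n$ (strictness versus non-strictness is handled by the remark at the end of the statements, since Lebesgue-a.e.\ $t$ the boundary is not hit; one works with $\geq$ as in the displayed RHS).

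Next I would insert the cohomological identity to convert the forward cocycle $\sigma$ along the reversed trajectory into the dual cocycle $\sigma^*$ with the $\delta$ correction terms. The key telescoping observation is that for fixed $y\in\Delta_x$, applying \eqref{ cohomological eq -vers 002} with $g=g_k$, $x$ replaced by $g_{k+1}\cdots g_n x$ and $y$ replaced by $g_{k-1}^{-1}\cdots g_1^{-1}y$, one gets
\begin{align*}
\sigma(g_k, g_{k+1}\cdots g_n x) - \delta(g_k\cdots g_n x, g_{k-1}^{-1}\cdots g_1^{-1}y)
= \sigma^*(g_k^{-1}, g_{k-1}^{-1}\cdots g_1^{-1}y) - \delta(g_{k+1}\cdots g_n x, g_k^{-1}\cdots g_1^{-1}y).
\end{align*}
Summing over $k=1,\ldots,j$ makes the $\delta$ terms telescope, yielding
\begin{align*}
\sigma(g_j\cdots g_n x)\text{-type sum} \;=\; \sigma(g_1\cdots g_j, \cdot)\ \text{reversed}\ -\ \sigma^*(g_j^{-1}\cdots g_1^{-1},y)\ +\ \delta(g_{j+1}\cdots g_n x, g_j^{-1}\cdots g_1^{-1}y) - \delta(g_1\cdots g_n x, y),
\end{align*}
which after re-indexing is exactly $\tilde S^{x,n}_{j}(\cdot,y)$ as in \eqref{reversed RWfor products-001}. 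Hence $t+\sigma(g_j\cdots g_n,x)$ (the reversed partial sum) equals $t+\tilde S^{x,n}_{n+1-j}(\cdot,y)$ up to the relabeling, and in particular $t+\sigma(g_n\cdots g_1,x)$ reversed equals $t+\tilde S^{x,n}_{n}(\cdot,y)$; the factor $t$ in front on the left matches the weight $(t+\tilde S^{x,n}_{n}(\cdot,y))$ appearing on the right once the roles of the endpoints are swapped. The point is that the whole identity is valid pathwise for each $y\in\Delta_x$ (where all $\delta$'s are finite), so it holds after integrating against $\bb P$ and does not depend on which admissible $y$ is chosen.

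Finally I would assemble the pieces: apply the law-preserving relabeling to the left-hand integral, use the telescoped cohomological computation to express the reversed increments as $\tilde S^{x,n}_k$, check that the exit constraint $\tau_{x,t}>n-1$ becomes $\{t+\tilde S^{x,n}_k(\cdot,y)\geq 0,\ 1\leq k\leq n\}$, match the weights $t\leftrightarrow t+\tilde S^{x,n}_n(\cdot,y)$ and the test function $h(g_n\cdots g_1x,t+\sigma)\leftrightarrow h(g_1\cdots g_nx,t)$ (the first argument becomes $g_1\cdots g_n x$ after reversal), and conclude by Fubini/Tonelli since $h\geq 0$. The main obstacle I anticipate is bookkeeping: getting the indices in the telescoping sum and the direction of all the products exactly right, and carefully justifying that one may freely swap strict and non-strict inequalities and move the ``$t$'' weight from one side to the other (this uses that, for a.e.\ $t$, the reversed walk does not vanish at the relevant times, together with the change of variable $t\mapsto t+\tilde S^{x,n}_n$ inside the Lebesgue integral $\int_{\bb R}dt$, which is measure-preserving). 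Once the pathwise identity for fixed $y$ is in hand, the passage to the stated form is routine.
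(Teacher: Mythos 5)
Your proposal is correct and follows essentially the same route as the paper's proof: the paper performs the change of variable $t=u+S^x_n$ first, then applies the cohomological identity \eqref{ cohomological eq -vers 003} to the block products $g_n\cdots g_{k+1}$ (which is exactly your telescoped sum), and finally invokes exchangeability to reverse the order of $g_1,\ldots,g_n$; you permute these three steps but the ingredients and the resulting identification of the constraints with $\{t+\tilde S^{x,n}_k\geq 0,\ 1\leq k\leq n\}$ are identical. The only caveat is a harmless off-by-one in your intermediate description of the reversed exit event (the suffix products of lengths $0,\dots,n-1$ appear, not $1,\dots,n$, until the shift by $\tilde S^{x,n}_n$ realigns the indices), and the strict-versus-non-strict worry is unnecessary since $\{\tau_{x,t}>n-1\}$ is exactly $\{t+S^x_k\geq 0,\ 1\leq k\leq n-1\}$.
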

\begin{proof}
For brevity, set $S^x_n= \sigma(g_n\cdots g_1, x)$ for $n\geq 1$ and $S^x_0=0$. 
Using the change of variable $t = u+ S^x_n$, the left-hand side of \eqref{duality id-001} can be rewritten as
\begin{align} \label{After a change of variable-001}
J: & =\int_{\bb R_+} u \bb E \Big[ h  \Big( g_n\cdots g_1 x, u+ S^x_n \Big);  \tau_{x,u} > n-1 \Big] du \notag\\
& = \int_{\bb R}  \bb E \Big[ u h  \Big(g_n\cdots g_1 x, u+ S^x_n\Big);  
 u + S^x_k \geq 0, 0\leq k\leq n-1  \Big] du \notag\\
 & = \int_{\bb R} \bb E\Big[ \Big(t - S^x_n \Big)
   h(g_n\cdots g_1 x, t);    t - S^x_n + S^x_k \geq 0, 0\leq k\leq n-1 \Big] dt.
\end{align}
In the last integral, we have, for $0\leq k\leq n-1$,
\begin{align*} 
S^x_n - S^x_k = \sigma(g_{n}\cdots g_{k+1}, g_{k}\cdots g_{1} x ),
\end{align*}
where by convention $g_{k}\cdots g_{1} x=x$ for $k=0$.
We shall make use of the following cohomological equation, which is obtained from \eqref{cohomological eq -vers 002}:
for $g \in \bb G$, $a \in \bb P(\bb V)$ and $b \in \bb P(\bb V^*)$, we have 
\begin{align} \label{ cohomological eq -vers 003}
\sigma(g, a)  =  \sigma^*(g^{-1}, b)  - \delta (a, g^{-1}b) +  \delta(g a, b). 
\end{align}
Then, with $g =g_{n}\cdots g_{k+1}$, $a =g_{k}\cdots g_{1} x$ and $b=y$, we get, for $0\leq k\leq n$,
\begin{align} \label{ cohomological eq -vers 004}
&S^x_n - S^x_k 
= \sigma(g_{n}\cdots g_{k+1}, g_{k}\cdots g_{1} x )  \notag \\
&= \sigma^* \left( (g_{n}\cdots g_{k+1})^{-1}, y \right) 
- \delta \left( g_{k}\cdots g_{1} x, (g_{n}\cdots g_{k+1})^{-1}y \right)  +  \delta(g_{n}\cdots g_{1} x, y).
\end{align}
Using identity \eqref{ cohomological eq -vers 004} we have
\begin{align*}
J&= \int_{\bb R} \bb E\Big[ \Big(t -\sigma^*( g^{-1}_1\cdots g^{-1}_n,y) + \delta(x,g^{-1}_1\cdots g^{-1}_{n}y)-\delta(g_n\cdots g_1x,y) \Big)
   h(g_n\cdots g_1 x, t);  \notag\\
&\qquad\qquad  t -\sigma^*( g^{-1}_{k+1}\cdots g^{-1}_n,y) +  \delta(g_k\cdots g_1 x, g^{-1}_{k+1} \cdots g^{-1}_{n}y)-\delta(g_n\cdots g_1x,y)
\geq 0, \notag\\
 & \qquad\qquad\qquad\qquad 0\leq k\leq n-1  \Big] dt.
\end{align*}
The random elements  $g_1,\ldots,g_n$ are exchangeable, which is justified by the assumption that these elements 
are independent and identically distributed. 
Therefore, we can reverse the order of the elements $g_1,\ldots,g_n$, 
which gives 
\begin{align*} 
J & = \int_{\bb R}  \bb E\Big[ \Big(t-\sigma^*( g^{-1}_n\cdots g^{-1}_1,y) + \delta(x,g^{-1}_n\cdots g^{-1}_{1}y) - \delta(g_1\cdots g_nx,y) \Big) 
   h(g_1\cdots g_n x, t);  \notag \\
& \qquad\qquad  t-\sigma^*( g^{-1}_{k}\cdots g^{-1}_1,y) +   \delta(g_{k+1}\cdots g_n x, g^{-1}_{k} \cdots g^{-1}_{1}y) - \delta(g_1\cdots g_nx,y) \geq 0, 
\notag\\
& \qquad\qquad\qquad\qquad  1\leq k\leq n  \Big] dt \notag\\ 
& = \int_{\bb R} \bb E\Big[ \Big(t + \tilde S^{x,n}_{n}(\cdot, y) \Big) 
   h(g_1\cdots g_n x, t);   t + \tilde S^{x,n}_{k}(\cdot, y) \geq 0, 1\leq k\leq n  \Big] dt. 
\end{align*}
This is exactly the integral on the right-hand side of \eqref{duality id-001}.
\end{proof}

From Lemma \ref{lemma-duality-for-product-001}, by integrating with respect to $\nu^*(dy)$ and applying Fubini's theorem, we obtain
the following equivalent representation of the integral in \eqref{Expect in main Theorem-001}:
\begin{align} \label{duality id-integr-version-002}
&\int_{\bb P(\bb V) \times \bb R}   h(x',t) \rho_{n,x}(dx',dt) \\
 &=  \int_{\bb R} \int_{\bb P(\bb V^*)} \bb E\Big[ \Big(t+ \tilde S^{x,n}_{n}(\cdot,y)  \Big)   
 h(g_1\cdots g_n x, t);   t+ \tilde S^{x,n}_{k}(\cdot,y) \geq 0, 1\leq k\leq n  \Big] \nu^*(dy) dt. \notag
\end{align}
To apply the theory developed in subsequent sections (more precisely Theorem \ref{Pro-Appendix-Final2-Inequ}),
we need to rewrite the perturbation in the reversed random walks \eqref{reversed RWfor products-001} and \eqref{reversed RWfor products-001bbb} 
in a convenient form.
Recall that, from \eqref{perturbed function in bb Y-inte-001}, 
 for any $\omega = (g_1, g_2, \ldots) \in \Omega$, $x \in \bb P(\bb V)$ and $y \in \Delta_x$, we have defined 
\begin{align} \label{more general perturb-001}
f_n^{x, m}(\omega, y) = \delta(g_1 \cdots g_{m-n} x, y),\ 0 \leq n \leq m. 
\end{align}
From \eqref{more general perturb-001} we get that, for any $0 \leq n \leq m$,
\begin{align*} 
f_n^{x, m} \circ \widetilde T^n(\omega, y)  = \delta \left( g_{n+1}\cdots g_m x, g^{-1}_{n} \cdots g^{-1}_{1}y \right) 
\end{align*}
with the convention $f_m^{x, m} \circ \widetilde T^m(\omega, y)  = \delta (x, g^{-1}_{m} \cdots g^{-1}_{1}y )$ 
and $f_0^{x, m} (\omega, y) = \delta (g_{1}\cdots g_m x, y )$. 
With this notation, we can rewrite 
$(\tilde S^{x,m}_{n}(\cdot,y))_{1\leq n \leq m}$ defined by \eqref{reversed RWfor products-001} as 
\begin{align} \label{another form of the perturbRW-001}
\tilde S^{x,m}_{n}(\omega,y) 
= -\sigma^*( g^{-1}_{n}\cdots g^{-1}_1,y) +  f_n^{x, m} \circ \widetilde T^n(\omega, y)  - f_0^{x, m} \left(\omega, y \right),
\end{align}
where $\omega$ stands for the infinite sequence $(g_1, g_2, \ldots) \in \Omega$.
Note that the ideal version of this walk, with the full dependence on the future (corresponding to the case $m=\infty$), 
has been introduced already in Section \ref{sec-ideal preturb-001}, see equation \eqref{eq-idealRWaaa-001}. 

At this point we assume that $h(x, t) = \varphi(x) \psi(t)$ for $x \in \bb P(\bb V)$ and $t \in \bb R$, 
where $\varphi$ is a non-negative H\"older continuous function on $\bb P(\bb V)$
and $\psi$ is a compactly supported non-negative continuous function on $\bb R$. 
Set, for any $x \in \bb P(\bb V)$, $t\in \bb R$ and $1 \leq n \leq m$,
\begin{align} \label{def of U^xm_n with phi-001}
U^{x,m,\varphi}_{n}(t) 
=  \int_{\bb P(\bb V^*)}
\bb E \Big[ \Big( t +\tilde S^{x,m}_{n}(\cdot,y) \Big)  \varphi(g_1\cdots g_m x);  t + \tilde S^{x,m}_{k}(\cdot,y) \geq 0, 1\leq k\leq n  \Big]  \nu^*(dy).
\end{align}
With this notation,  equation \eqref{duality id-integr-version-002} becomes 
\begin{align} \label{identity-rho-001}
\int_{\bb P(\bb V) \times \bb R}   h(x',t) \rho_{n,x}(dx',dt) 
=\int_{\bb R} \psi(t)  U^{x,n, \varphi}_{n}(t) dt. 
\end{align}
For the functions $U^{x, m, \varphi}_{n}$, we establish the following statement. 

\begin{proposition}\label{Prop-increase-sequence-fn}
Assume that  $\Gamma_{\mu}$ is proximal and strongly irreducible and that 
$\mu$ has a finite exponential moment and its Lyapunov exponent is $0$.
Let $\varphi$ be a non-negative H\"older continuous function on $\bb P(\bb V)$. 
Then, there exist constants $\beta, \gamma, c >0$ such that, 
for any $x \in \bb P(\bb V)$, $1\leq n\leq n' \leq m$ and $t \in \bb R$,  we have 
\begin{align*}
U^{x,m,\varphi}_{n}(t) \leq  U^{x,m,\varphi}_{n'}(t + c n^{-\gamma})  + c n^{-\beta} \left( 1 + \max \{t,0\}   \right) 
\end{align*}
and 
\begin{align*}
U^{x,m,\varphi}_{n'}(t) \leq  U^{x,m,\varphi}_{n} \left( t + c n^{-\gamma} \right) + c n^{-\beta} (1+\max \{t,0\}). 
\end{align*}
\end{proposition}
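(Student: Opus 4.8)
The plan is to derive Proposition \ref{Prop-increase-sequence-fn} by reducing the quantities $U^{x,m,\varphi}_{n}$ to the setting of the abstract Theorem \ref{Pro-Appendix-Final2-Inequ}, applied with the group $\bb G$, the space $\bb X = \bb P(\bb V^*)$, the cocycle $-\sigma^*$, and the perturbation sequence $\mathfrak f = (f_n^{x,m})_{0 \le n \le m}$ given by \eqref{more general perturb-001}; beyond index $m$ the perturbation is held constant (or the walk is simply not considered past $m$, which is harmless since we only need $n \le n' \le m$). First I would observe, using \eqref{another form of the perturbRW-001} and the definition \eqref{def of U^xm_n with phi-001}, that $U^{x,m,\varphi}_{n}(t)$ is exactly of the form covered by the conclusions of Theorem \ref{Pro-Appendix-Final2-Inequ} (or rather its Corollaries \ref{Main th-003} and \ref{Main th-001}), once we identify the observable: here the role of the H\"older function $\theta$ is played by $\omega \mapsto \varphi(g_1 \cdots g_m x) = \theta^{x,m}(\omega)$ in the notation of Corollary \ref{corollary f_y-001}, integrated against $\nu^*(dy)$. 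The two-sided comparison inequalities asserted in the proposition are precisely the type of quasi-monotonicity estimates that Theorem \ref{Pro-Appendix-Final2-Inequ} delivers (with the shift $c n^{-\gamma}$ in the spatial variable and the error term $c n^{-\beta}(1 + \max\{t,0\})$), so the whole task is to verify that the present data satisfy the hypotheses of that theorem, uniformly in the auxiliary parameters $x$ and $m$.

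The verification splits into three points. First, the centering and Berry--Esseen hypotheses \eqref{centering-001} and \eqref{BEmart-001} for the cocycle $-\sigma^*$: these have already been recorded in Subsection \ref{sec-ideal preturb-001}, via Lemma 10.18 of \cite{BQ16b} for the coboundary reduction \eqref{lyapunov exp is 0-002} and via \cite{LePage82} for the effective CLT, and they do not involve the perturbation, so they transfer verbatim. Second, the exponential moment condition \eqref{exp mom for g 002} for the perturbation sequence: this is exactly \eqref{First-important-Property}, with the constant $\alpha$ uniform in $0 \le n \le m$ and $x$. Third, and this is the crux, the finite-coordinate approximation property \eqref{approxim rate for gp-002} for the perturbation and \eqref{approx property of theta-001} for the observable: these are supplied, respectively, by Proposition \ref{Prop-projection approx-inte001} (the integral version, tailored exactly to the functions $f_n^{x,m}$) and by \eqref{approx property of theta-in bb Y-002} of Corollary \ref{corollary f_y-001} (for $\theta^{x,m}$, valid for $m \ge p$). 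Both of these come with constants $\alpha,\beta,c$ independent of $n$, $m$, $x$ — which is the uniformity we need for the conclusion to hold with constants $\beta,\gamma,c$ not depending on $x$ and $m$. With all hypotheses checked, one invokes Theorem \ref{Pro-Appendix-Final2-Inequ} and reads off the two displayed inequalities, the first from the ``quasi-increasing'' direction and the second from the ``quasi-decreasing'' direction.

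I expect the main obstacle to be purely bookkeeping: making sure the abstract theorem is genuinely applicable with the parameter $m$ \emph{fixed but arbitrary}, i.e. that none of the constants produced by Sections \ref{sec invar func}--\ref{sec-quasi-decreasing property} secretly depend on the length of the perturbation's memory. Concretely, one must check that \eqref{approxim rate for gp-002} is being used in the form where the approximating $\sigma$-algebra index $p$ ranges over $0 \le p < m-n$ (for $p \ge m-n$ the approximation is exact, as noted in the proof of Proposition \ref{Prop-projection approx-inte001}), and that the abstract machinery tolerates a perturbation sequence that is eventually constant; both are true but deserve an explicit sentence. A secondary point is to confirm that the integration over $y \in \bb P(\bb V^*)$ against the stationary measure $\nu^*$ — rather than a pointwise-in-$y$ statement — is the right framework: Theorem \ref{Pro-Appendix-Final2-Inequ} is set up on the product space $\Omega \times \bb X$ with the measure $\bb P \otimes \nu^*$, so $U^{x,m,\varphi}_{n}(t)$ as defined in \eqref{def of U^xm_n with phi-001} is literally the object the theorem controls, and no further reduction is needed. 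Once these two caveats are dispatched, the proof is a direct citation.
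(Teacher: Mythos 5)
Your proposal is correct and follows essentially the same route as the paper: the paper's own proof of Proposition \ref{Prop-increase-sequence-fn} is exactly a verification of the hypotheses of Theorem \ref{Pro-Appendix-Final2-Inequ} for the cocycle $-\sigma^*$ on $\bb P(\bb V^*)$, extending $(f_n^{x,m})_{0\le n\le m}$ to a full sequence by setting $f_n^{x,m}=\delta(x,\cdot)$ for $n>m$, and citing \eqref{First-important-Property}, Proposition \ref{Prop-projection approx-inte001} and Corollary \ref{corollary f_y-001} for the moment, perturbation-approximation and twist-approximation conditions respectively. The uniformity-in-$m$ caveat you flag is handled in the paper precisely by the constancy of these bounds in $0\le n\le m$ and $x$, as you anticipated.
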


\begin{proof}
This statement is a translation of Theorem \ref{Pro-Appendix-Final2-Inequ} in the setting of products of random matrices. 
Indeed, we can check that the assumptions of this theorem are satisfied. 
First, we extend the sequence of perturbations $(f_{n}^{x,m})_{0 \leq n \leq m}$ as a full sequence 
$(f_{n}^{x,m})_{n \geq 0}$ as follows: 
for $n > m$, $\omega \in \Omega$ and $y \in \bb P(\bb V^*)$, we set, for example, $f_{n}^{x,m}(\omega, y)= f_{m}^{x,m}(\omega, y) = \delta(x, y)$. 
(Actually, the precise values of $f_{n}^{x,m}$ for $n > m$ will not play a role in our computations.)  
We also extend the random walk $(\tilde S^{x,m}_{n}(y))_{1\leq n \leq m}$ as a full random walk 
$(\tilde S^{x,m}_{n}(y))_{n \geq 1}$ by using the same formula as in \eqref{another form of the perturbRW-001}. 
Therefore, for any fixed $x\in \bb P(\bb V)$ and $m\geq 1$, 
the random walk $(\tilde S^{x,m}_n(y))_{n\geq 1}$ has the form required in Theorem \ref{Pro-Appendix-Final2-Inequ}. 
As already mentioned, it follows from Lemma 10.18 in \cite{BQ16b} that the cocycle $\sigma^*$ can be centered, 
meaning that there exists a continuous function 
$\psi_0$ on $\bb P(\bb V^*)$ such that, for any $y\in \bb P(\bb V^*)$, 
\begin{align*}
\int_{\bb G } \left( \sigma^*(g^{-1},y) + \psi_0(g^{-1} y)-\psi_0(y) \right)  \mu(dg) = 0.
\end{align*}
In other words, up to a bounded coboundary, the assumption \eqref{centering-001} is satisfied with the cocycle $\sigma = -\sigma^*.$
Besides, the effective central limit theorem \eqref{BEmart-001} holds true, by
the Berry-Esseen theorem in Bougerol-Lacroix \cite{Boug-Lacr85} (Theorem 5.1, page 122). 
The perturbation sequence $\mathfrak f =(f^{x,m}_n)_{n\geq 1}$ satisfies the 
bound \eqref{exp mom for g 002} by virtue of inequality \eqref{First-important-Property},
and possesses the approximation property \eqref{approxim rate for gp-002} due to Proposition \ref{Prop-projection approx-inte001}. 
Finally, the approximation property \eqref{approx property of theta-001} is satisfied thanks to Corollary \ref{corollary f_y-001}. 
\end{proof}

The further strategy of the proof is to show that the right-hand side in \eqref{identity-rho-001} 
has the same behavior as the analogous quantity based on the ideal perturbation, 
which is the one defined in \eqref{def-U-varphi-001}. To be more precise, we 
will compare \eqref{identity-rho-001} with its ideal version 
\begin{align*}
 \int_{\bb R} \psi(t) U_{n}^{\varphi}(t) dt =
    \int_{\bb R} \psi(t) 
    \int_{\bb P(\bb V^*) } \bb E \bigg[  \Big( t +\tilde S_n(\cdot,y)  \Big)  \varphi(\xi (\omega) ); \tilde \tau_{y,t} >n     \bigg] \nu^*(dy)  dt.
\end{align*}
To this aim, we will need the following lemma. 

\begin{lemma}\label{Lem-ideal-1}
There exist constants $a, b, c>0$ such that, 
for any $x \in \bb P(\bb V)$, $y \in \bb P(\bb V^*)$ and $0 \leq n \leq m$, 
\begin{align*}
\bb P \left( \left| f(\omega, y) - f_n^{x,m}(\omega, y) \right| > e^{-a(m-n)}, f_n^{x,m}(\omega, y) < \infty  \right) \leq ce^{-b(m-n)}. 
\end{align*}
\end{lemma}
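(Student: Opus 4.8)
The plan is to prove the lemma by recalling that $f(\omega,y) = \delta(\xi(\omega),y)$ while $f_n^{x,m}(\omega,y) = \delta(g_1\cdots g_{m-n}x,y)$, and by the equivariance relation \eqref{equivariance-xi-002} we have $\xi(\omega) = g_1\cdots g_{m-n}\,\xi(T^{m-n}\omega)$. So both points are images under the \emph{same} matrix $g_1\cdots g_{m-n}$ of the two points $\xi(T^{m-n}\omega)$ and $x$. The key mechanism is therefore the contraction property of products of random matrices on the projective space: after applying $g_1\cdots g_{m-n}$, with overwhelming probability the images of \emph{any} two starting points in $\bb P(\bb V)$ are at distance at most $e^{-a(m-n)}$ of each other, which is precisely the content used in the proof of Lemma \ref{Lem-random-measure} (coming from \cite[Proposition 14.3]{BQ16b}).

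First I would write, for $k := m-n$ and $G_k := g_k\cdots g_1 = g_1\cdots g_{m-n}$ (reading the product in the order dictated by \eqref{equivariance-xi-002}), the bound from the proof of Lemma \ref{Lem-random-measure}: there are constants $a,b,C>0$ such that for every $x,x'\in\bb P(\bb V)$,
\begin{align*}
\bb P\left( d(G_k x, G_k x') > e^{-ak} \right) \leq 2C e^{-bk}.
\end{align*}
Apply this with $x' = \xi(T^k\omega)$; note that $\xi(T^k\omega)$ is a random point of $\bb P(\bb V)$ but is independent of $G_k = g_1\cdots g_k$ (it depends only on $(g_{k+1},g_{k+2},\ldots)$), so integrating over its law $\nu$ via Fubini preserves the same exponential bound. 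This gives an event $\mathcal E_{k}$ of probability at least $1 - Ce^{-bk}$ on which $d(g_1\cdots g_{m-n}x,\; \xi(\omega)) \leq e^{-a(m-n)}$.

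Next I would bound the difference $|f(\omega,y) - f_n^{x,m}(\omega,y)| = |\delta(\xi(\omega),y) - \delta(g_1\cdots g_{m-n}x,y)|$ on the event $\mathcal E_k$ intersected with a further event controlling the size of $\delta(g_1\cdots g_{m-n}x,y)$. Exactly as in the proof of Proposition \ref{Prop-projection approx-001}, on the set $A^c := \{\delta(g_1\cdots g_{m-n}x,y) \leq \ee k\}$ (for a small fixed $\ee < a/2$), the mean value theorem / Lipschitz-type estimate for $\delta$ — namely $|\delta(z,y) - \delta(z',y)| \leq C e^{\delta(z,y)} d(z,z')$ for $z,z'$ close — yields
\begin{align*}
|\delta(\xi(\omega),y) - \delta(g_1\cdots g_{m-n}x,y)| \leq C e^{\ee k} e^{-ak} \leq C e^{-\frac{a}{2}(m-n)},
\end{align*}
and by \cite[Proposition 14.3]{BQ16b} the bad set $A$ has $\bb P(A) \leq Ce^{-ck}$ uniformly in $x$ and $y$. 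Combining, outside an event of probability $\leq Ce^{-b'(m-n)}$ we have the desired smallness; and on the complement the finiteness requirement $f_n^{x,m}(\omega,y)<\infty$ is simply dropped, so the estimate holds. Renaming constants gives the statement. The main obstacle — and the only genuinely delicate point — is the independence/Fubini bookkeeping: one must be careful that $\xi(T^{m-n}\omega)$ is independent of the block $g_1,\ldots,g_{m-n}$ so that the concentration bound of Lemma \ref{Lem-random-measure} can be applied with the random second argument $x' = \xi(T^{m-n}\omega)$, and that the Lipschitz constant for $\delta(\cdot,y)$ near a point $z$ is controlled by $e^{\delta(z,y)}$ uniformly in $y$, which is exactly the estimate already exploited in Propositions \ref{Prop-projection approx-001} and \ref{Prop-projection approx-inte001}.
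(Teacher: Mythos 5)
Your proposal is correct and follows essentially the same route as the paper: concentration of $g_1\cdots g_{m-n}x$ near $\xi(\omega)$ at exponential rate (the paper cites \cite[Proposition 14.3]{BQ16b} directly, where you rederive it from the two-point contraction plus the independence of $\xi(T^{m-n}\omega)$ from $g_1,\dots,g_{m-n}$), combined with the bound $\delta(g_1\cdots g_{m-n}x,y)\leq \ee(m-n)$ off an exponentially small event and the mean value estimate $|\delta(z,y)-\delta(z',y)|\leq Ce^{\delta(z,y)}d(z,z')$. The only cosmetic caveat is your identification $g_k\cdots g_1=g_1\cdots g_{m-n}$, which holds in law rather than pointwise, but the exchangeability of the i.i.d.\ increments makes the argument go through as intended.
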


\begin{proof}
Recall that for $\bb P$-almost every $\omega = ( g_1, g_2, \ldots) \in \Omega$ and any $n \geq 1$,
we have $\xi(\omega) = g_1 \cdots g_n \xi (T^n \omega)$. 
Therefore, by \cite[Proposition 14.3]{BQ16b} (see equation (14.6)), 
we may find constants $a, b, c>0$ such that for $x \in \bb P(\bb V)$ and $n \geq 1$,
\begin{align*}
\bb P \left(d \left( g_1 \cdots g_{n} x, \xi (\omega) \right) > e^{-an} \right) \leq  ce^{-bn}. 
\end{align*}
Besides, still by \cite[Proposition 14.3]{BQ16b}, we may also assume that, for any $x \in \bb P(\bb V)$, $y \in \bb P(\bb V^*)$ and $n \geq 1$, 
\begin{align*}
\bb P \left( \delta\left( g_1 \cdots g_n x, y \right) > \frac{a}{2} n \right) \leq c e^{-bn}. 
\end{align*}
By the mean value theorem, we obtain, for any $x \in \bb P(\bb V)$, $y \in \bb P(\bb V^*)$ and $n \geq 1$, 
\begin{align*}
\bb P \left( \left| \delta (g_1 \cdots g_n x, y) - \delta (\xi (\omega), y) \right| > e^{-\frac{a}{2} n},  \delta (g_1 \cdots g_n x, y) < \infty \right)
\leq 3c e^{-bn}, 
\end{align*}
which ends the proof of the lemma. 
\end{proof}

From Lemma \ref{Lem-ideal-1}, we get a comparison between $U_n^{x,m,\varphi}$ 
defined by \eqref{def of U^xm_n with phi-001} and the ideal function 
$U_n^{\varphi}$ defined by \eqref{def-U-varphi-001}.
\begin{corollary}\label{Cor-ideal}
There exist constants $a, b, c>0$ such that, 
for any $x \in \bb P(\bb V)$, $t \in \bb R$ and $0 \leq n \leq m$, 
\begin{align*}
U_n^{x,m,\varphi}(t) \leq U_n^{\varphi} \left(t + 2e^{-a(m-n)} \right) + c e^{-b(m-n)} ( \max\{t, 0\} + \sqrt{n}) 
\end{align*}
and 
\begin{align*}
U_n^{x,m,\varphi}(t) \geq U_n^{\varphi} \left(t - 2e^{-a(m-n)} \right) - c e^{-b(m-n)} ( \max\{t, 0\} + \sqrt{n}). 
\end{align*}
\end{corollary}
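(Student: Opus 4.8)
The strategy is to unpack the definitions of $U_n^{x,m,\varphi}(t)$ and $U_n^{\varphi}(t)$ side by side and to exploit the comparison between the perturbation functions $f_n^{x,m}(\omega,y)$ and $f(\omega,y)=\delta(\xi(\omega),y)$ furnished by Lemma \ref{Lem-ideal-1}. Recall that
$$
\tilde S^{x,m}_{n}(\omega,y) = -\sigma^*( g^{-1}_{n}\cdots g^{-1}_1,y) +  f_n^{x, m}\circ \widetilde T^n(\omega, y)  - f_0^{x, m}(\omega, y)
$$
while $\tilde S_n(\omega,y) = -\sigma^*( g^{-1}_{n}\cdots g^{-1}_1,y) + f\circ \widetilde T^n(\omega, y) - f(\omega,y)$. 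Subtracting, the difference $\tilde S^{x,m}_{n}(\omega,y) - \tilde S_n(\omega,y)$ equals $\big(f_n^{x,m}\circ\widetilde T^n - f\circ\widetilde T^n\big)(\omega,y) - \big(f_0^{x,m} - f\big)(\omega,y)$, i.e. it is controlled by the errors at times $0$ and $n$. Moreover $\varphi(g_1\cdots g_m x)$ and $\varphi(\xi(\omega))$ differ by at most a H\"older power of $d(g_1\cdots g_m x,\xi(\omega))$, which is exponentially small in $m$ off an event of exponentially small probability (the first display in the proof of Lemma \ref{Lem-ideal-1}).

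\textbf{Key steps.} First I would introduce the good event $G_{n,m}$ on which simultaneously $|f_n^{x,m}(\omega,y)-f(\omega,y)|\le e^{-a(m-n)}$, $|f_n^{x,m}\circ\widetilde T^n(\omega,y)-f\circ\widetilde T^n(\omega,y)|\le e^{-a(m-n)}$ (note $\widetilde T^n$ shifts so the second inequality is again an instance of Lemma \ref{Lem-ideal-1} with $m,n$ replaced appropriately — here $f_n^{x,m}\circ\widetilde T^n$ reads as $\delta(g_{n+1}\cdots g_m x,\cdot)$ which compares to $\delta(\xi(T^n\omega),\cdot)$ with a gap of order $m-n$), and $|\varphi(g_1\cdots g_m x)-\varphi(\xi(\omega))|\le C e^{-\alpha'(m-n)}$; by Lemma \ref{Lem-ideal-1} and the concentration estimate, $\bb P(G_{n,m}^c)\le c e^{-b(m-n)}$, uniformly in $x,y$. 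On $G_{n,m}$ we have $|\tilde S^{x,m}_{n}(\omega,y)-\tilde S_n(\omega,y)|\le 2e^{-a(m-n)}$, hence the event $\{t+\tilde S^{x,m}_k(\cdot,y)\ge 0,\ 1\le k\le n\}$ is sandwiched between $\{t-2e^{-a(m-n)}+\tilde S_k\ge 0,\ \forall k\}$ and $\{t+2e^{-a(m-n)}+\tilde S_k\ge 0,\ \forall k\}$, i.e. between $\{\tilde\tau_{y,t-2e^{-a(m-n)}}>n\}$ and $\{\tilde\tau_{y,t+2e^{-a(m-n)}}>n\}$. Plugging this in, replacing $t+\tilde S^{x,m}_n$ by $t+2e^{-a(m-n)}+\tilde S_n$ (for the upper bound) and $\varphi(g_1\cdots g_m x)$ by $\varphi(\xi(\omega))+Ce^{-\alpha'(m-n)}$, and integrating over $y\in\bb P(\bb V^*)$ against $\nu^*$, gives $U_n^{x,m,\varphi}(t)\le U_n^{\varphi}(t+2e^{-a(m-n)})$ plus the contribution of the bad event $G_{n,m}^c$ and of the extra $e^{-\alpha'(m-n)}$ factor.

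\textbf{Controlling the error terms.} The bad-event contribution is $\int_{\bb P(\bb V^*)}\bb E\big[(t+\tilde S^{x,m}_n(\cdot,y))_+\varphi(g_1\cdots g_m x);\,G_{n,m}^c,\ t+\tilde S^{x,m}_k\ge 0\ \forall k\big]\nu^*(dy)$; by Cauchy--Schwarz this is bounded by $\bb P(G_{n,m}^c)^{1/2}$ times an $L^2$ norm of $(t+\tilde S^{x,m}_n(\cdot,y))_+$, which is $\le C(\max\{t,0\}+\sqrt n)$ using that $\tilde S^{x,m}_n$ is a centered walk (up to bounded coboundary) of length $n$ perturbed by functions with exponential moments (inequalities \eqref{Regularity-nu}, \eqref{First-important-Property}); since $\bb P(G_{n,m}^c)^{1/2}\le c e^{-\frac b2(m-n)}$, this yields the term $ce^{-b(m-n)}(\max\{t,0\}+\sqrt n)$ after renaming $b$. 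The $Ce^{-\alpha'(m-n)}\,\bb E[(t+\tilde S^{x,m}_n)_+;\ldots]$ term from the H\"older estimate on $\varphi$ is handled identically, giving $\le c e^{-b(m-n)}(\max\{t,0\}+\sqrt n)$. The lower bound is symmetric: on $G_{n,m}$ one uses $t+\tilde S^{x,m}_n\ge t-2e^{-a(m-n)}+\tilde S_n$, $\varphi(g_1\cdots g_m x)\ge \varphi(\xi(\omega))-Ce^{-\alpha'(m-n)}$, and the inclusion $\{t+\tilde S^{x,m}_k\ge 0\ \forall k\}\supseteq\{t-2e^{-a(m-n)}+\tilde S_k\ge 0\ \forall k\}\cap G_{n,m}$, then subtracts the same type of bad-event error; the only subtlety is that dropping $G_{n,m}$ inside the indicator for $U_n^{\varphi}$ again costs a Cauchy--Schwarz error bounded as above. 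The main obstacle is bookkeeping: one must be careful that the perturbation error at the shifted time $n$ (the term $f_n^{x,m}\circ\widetilde T^n$ versus $f\circ\widetilde T^n$) is genuinely of size $e^{-a(m-n)}$ and not $e^{-am}$ — this is exactly why the statement carries $m-n$ rather than $m$ in the exponent, and it is forced by the range-of-dependence interpretation of $m$ noted after \eqref{reversed RWfor products-001bbb}; once this is pinned down, the two bounds drop out by the routine $L^1$/$L^2$ estimates just described.
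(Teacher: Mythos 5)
Your overall strategy coincides with the paper's: introduce a good event on which the finite-range perturbations $f_k^{x,m}$ are exponentially close to the ideal perturbation $f$, deduce that the perturbed walk and the ideal walk differ by at most $2e^{-a(m-n)}$ so that the survival events and the terminal values can be compared after a shift of $t$, and absorb the bad event by Cauchy--Schwarz together with the $L^2$ bound $(t+\tilde S^{x,m}_n)_+\lesssim \max\{t,0\}+\sqrt n$ and Corollary \ref{Main th-001}. However, there is a genuine gap in the definition of your good event $G_{n,m}$: you only impose the approximation at the two times $0$ and $n$ (and, as written, the first condition involves $f_n^{x,m}(\omega,y)$ where it should be the initial perturbation $f_0^{x,m}(\omega,y)$). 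The sandwiching
\begin{align*}
\{\tilde\tau_{y,\,t-2e^{-a(m-n)}}>n\}\ \subseteq\ \{t+\tilde S^{x,m}_k(\cdot,y)\ge 0,\ 1\le k\le n\}\ \subseteq\ \{\tilde\tau_{y,\,t+2e^{-a(m-n)}}>n\}
\end{align*}
requires $|\tilde S^{x,m}_k-\tilde S_k|\le 2e^{-a(m-n)}$ for \emph{every} $k\in[1,n]$, and $\tilde S^{x,m}_k$ contains the term $f_k^{x,m}\circ\widetilde T^k$, so closeness at the single time $k=n$ gives no control at intermediate times. As stated, the inclusion of events does not follow from your hypotheses.

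The fix is exactly what the paper does: put into the good event the conditions $|f\circ\widetilde T^k(\omega,y)-f_k^{x,m}\circ\widetilde T^k(\omega,y)|\le e^{-a(m-k)}$ for \emph{all} $k\in[0,n]$ (each an instance of Lemma \ref{Lem-ideal-1} with gap $m-k\ge m-n$), and observe that the union bound costs
\begin{align*}
\sum_{k=0}^{n} c\,e^{-b(m-k)}\ \le\ c'\,e^{-b(m-n)},
\end{align*}
a geometric sum dominated by its largest term $k=n$, so the exponential rate in $m-n$ survives. With this correction the rest of your argument (the worst-case error $2e^{-a(m-n)}$ coming from $k=n$, the H\"older comparison of $\varphi(g_1\cdots g_m x)$ with $\varphi(\xi(\omega))$, and the Cauchy--Schwarz treatment of the bad event) goes through and reproduces the paper's proof.
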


\begin{proof}
We only provide a proof of the upper bound, as the lower bound can be obtained in a similar manner. 
Let $a, b>0$ be as in Lemma \ref{Lem-ideal-1}. 
For $x \in \bb P(\bb V)$ and $1 \leq n \leq m$, we set 
\begin{align*}
& E_{n}^{x, m} = \Big\{(\omega, y) \in \Omega \times \bb P(\bb V^*):  
d(g_1 \cdots g_m x, \xi(\omega)) \leq e^{-am}, \notag\\
& \quad \forall k \in [0, n],  \left| f\circ \tilde T^k(\omega, y) - f_k^{x,m}\circ \tilde T^k(\omega, y) \right| \leq e^{-a(m-k)}, 
 f_k^{x,m}\circ \tilde T^k(\omega, y) < \infty \Big\}
\end{align*}
and let $(E_{n}^{x, m})^c$ be its complement. 
By Lemmas \ref{Lem-random-measure} and \ref{Lem-ideal-1}, we have 
$$\bb P \otimes \nu^* ((E_{n}^{x, m})^c) \leq c \sum_{k=0}^n e^{-b(m-k)} \leq c' e^{-b(m-n)}.$$
Then using \eqref{def of U^xm_n with phi-001},  we obtain
\begin{align*}
 U^{x,m,\varphi}_{n}(t) 
 &=  \int_{\bb P(\bb V^*)}
\int _{\Omega} \Big[ \Big( t + \tilde S^{x,m}_{n}(\omega,y)\Big) \varphi(g_1(\omega)\cdots g_m(\omega) x);  \\  
& \qquad\qquad\qquad    t + \tilde  S^{x,m}_{k}(\omega,y)  \geq 0, 1\leq k\leq n, (\omega, y) \in E_{n}^{x, m}  \Big] \bb P(d\omega)  \nu^*(dy) \notag\\
& \quad + \int_{\bb P(\bb V^*)}
\int _{\Omega} \Big[ \Big( t + \tilde S^{x,m}_{n}(\omega,y)\Big) \varphi(g_1(\omega)\cdots g_m(\omega) x);   \\
&\qquad\qquad\qquad    t + \tilde  S^{x,m}_{k}(\omega,y)
 \geq 0, 1\leq k\leq n, (\omega, y) \notin E_{n}^{x, m}  \Big] \bb P(d\omega) \nu^*(dy).
\end{align*}
The first integral is dominated by 
\begin{align*}
& U_n^{\varphi}(t + 2 e^{-a(m-n)})  + Ce^{-cm} U_n^{\bf 1}( t + 2 e^{-a(m-n)}) \notag\\
& \leq  U_n^{\varphi}(t + 2 e^{-a(m-n)})  +  Ce^{-cm} (1 + \max\{t, 0\}), 
\end{align*}
where in the last inequality we used Corollary \ref{Main th-001}. 
By the Cauchy-Schwarz inequality, Lemma \ref{Lem-trick} and inequality  \eqref{First-important-Property}, 
the second integral is bounded by 
\begin{align*}
\Big( \bb P \otimes \nu^* ((E_{n}^{x, m})^c) \Big)^{1/2}  \left( \max\{t, 0\} + c \sqrt{n} \right)
\leq  c' e^{- \frac{b}{2}(m-n)} \left( \max\{t, 0\} + c \sqrt{n} \right), 
\end{align*}
which concludes the proof. 
\end{proof}

Now we are equipped to prove Theorem \ref{Thm-measure-rho} and Corollaries \ref{Thm-measure-rho-v002}, 
\ref{Thm-measure-rho-v003}, \ref{Thm-measure-rho-v004} and \ref{Thm-measure-rho-v005}. 

\begin{proof}[Proof of Theorem \ref{Thm-measure-rho}]
Using Proposition \ref{Prop-increase-sequence-fn} and the fact that $\psi$ is non-negative, 
there exist constants $\beta, \gamma, c >0$ such that,
 for any $x \in \bb P(\bb V)$ and $n \geq 1$, 
\begin{align}\label{ideal-003}
\int_{\bb R} \psi(t) U_n^{x,n,\varphi}(t) dt 
 \leq  \int_{\bb R} \psi(t) U_{[n/2]}^{x,n,\varphi}(t + c n^{-\gamma}) dt 
 + c n^{-\beta}  \int_{\bb R} \psi(t) (1+\max \{t,0\}) dt 
\end{align}
and
\begin{align}\label{ideal-003bbb}
\int_{\bb R} \psi(t) U_n^{x,n,\varphi}(t) dt 
\geq  \int_{\bb R} \psi(t) U_{[n/2]}^{x,n,\varphi}(t - c n^{-\gamma}) dt 
 - c n^{-\beta}  \int_{\bb R} \psi(t) (1+\max \{t,0\}) dt. 
\end{align}
By \eqref{ideal-003} and Corollary \ref{Cor-ideal}, 
there exist constants $\beta, \gamma, c >0$ such that, for any $x \in \bb P(\bb V)$ and $n \geq 1$, 
\begin{align}\label{ideal-004}
\int_{\bb R} \psi(t) U_n^{x,n,\varphi}(t) dt 
& \leq    \int_{\bb R} \psi(t)  U_{[n/2]}^{\varphi} \left(t + c n^{-\gamma} \right) dt 
 + c n^{-\beta}  \int_{\bb R} \psi(t) (1+\max \{t,0\}) dt \notag\\
& =   \int_{\bb R} \psi(t - c n^{-\gamma})  U^{\varphi}_{[n/2]} \left(t \right) dt  + c n^{-\beta}  \int_{\bb R} \psi(t) (1+\max \{t,0\}) dt \notag\\
& \leq   \int_{\bb R} \psi(t)  U^{\varphi}_{[n/2]} \left( t \right) dt 
  +   \int_{\bb R} \sup_{|u| \leq c n^{-\gamma}} | \psi(t + u) - \psi(t)|  U^{\varphi}_{[n/2]} \left( t \right) dt  \notag\\
& \quad  + c n^{-\beta}  \int_{\bb R} \psi(t) (1+\max \{t,0\}) dt. 
\end{align}
By Proposition \ref{prop-reversed oper-000} and the continuity of $\psi$, we obtain, uniformly in $x \in \bb P(\bb V)$, 
\begin{align*}
& \limsup_{n \to \infty} \int_{\bb P(\bb V) \times \bb R}   h(x', t)  \rho_{n,x}(dx', dt) 
= \limsup_{n \to \infty} \int_{\bb R} \psi(t) U_n^{x,n,\varphi}(t) dt \notag\\
& \qquad \leq  \int_{\bb P(\bb V) \times \bb R}  \varphi(x') \psi(t)  \rho(dx', dt) 
=  \int_{\bb P(\bb V) \times \bb R}  h(x', t)  \rho(dx', dt). 
\end{align*}
Reasoning in the same way, and using \eqref{ideal-003bbb} instead of \eqref{ideal-003}
we get, uniformly in $x \in \bb P(\bb V)$, 
\begin{align*}
 \liminf_{n \to \infty} \int_{\bb P(\bb V) \times \bb R}   h(x', t)  \rho_{n,x}(dx', dt) 
= \liminf_{n \to \infty} \int_{\bb R} \psi(t) U_n^{x,n,\varphi}(t) dt 
 \geq  \int_{\bb P(\bb V) \times \bb R}  h(x', t)  \rho(dx', dt). 
\end{align*}
Therefore, the first assertion \eqref{exist of measure rho-001} follows
for functions of the form $h(x, t) = \varphi(x) \psi(t)$,
where $\varphi$ is a non-negative H\"older continuous on $\bb P(\bb V)$, 
and $\psi$ is a compactly supported non-negative continuous on $\bb R$.
 The extension to any continuous compactly supported function $h$ on $\bb P(\bb V) \times \bb R$
follows by standard approximation arguments. 
The shows \eqref{exist of measure rho-001}. 
The proof of \eqref{exist of measure rho-002} can be done in the same way. 
\end{proof}

\begin{proof}[Proof of Corollary \ref{Thm-measure-rho-v002}]
This is a consequence of Proposition \ref{prop-reversed oper-000}.
\end{proof}

\begin{proof}[Proof of Corollary \ref{Thm-measure-rho-v003}]
Define a Borel measure $\rho'$ on $\bb P(\bb V) \times \bb R$ by setting, for any non-negative Borel measurable function 
$h$ on $\bb P(\bb V) \times \bb R$, 
\begin{align*}
\int_{\bb P(\bb V) \times \bb R} h(x,t) \rho'(dx,dt) = 
\int_{\bb P(\bb V) \times [0,\infty) } \bb E h \Big(g_1 x, t + \sigma(g_1, x) \Big) \rho(dx,dt). 
\end{align*}
We claim that $\rho'$ is a Radon measure on $\bb P(\bb V) \times \bb R$, meaning that it is finite on compact sets. 
Indeed, since $\mu$ has a finite exponential moment, 
by the bound \eqref{bound-U-app} of Proposition \ref{prop-reversed oper-000}, 
there exists a constant $\alpha>0$ such that, for any $a\geq 0$, 
 \begin{align*} 
\rho'(\bb P(\bb V) \times (-\infty,a]) 
& = \int_{\bb P(\bb V) \times [0,\infty) } \bb P \Big( t + \sigma(g_1, x)  \leq a \Big) \rho(dx,dt) \\
& \leq c \int_{0}^{\infty} e^{-\alpha (t-a) } (1+t) dt <\infty. 
\end{align*}

Now we show that the Radon measure $\rho'$ coincides with the Radon measure $\rho$ 
on continuous compactly supported functions. 
Indeed, fix $x\in \bb P(\bb V)$. 
For any continuous compactly supported function $h$ on $\bb P(\bb V) \times \bb R$,
we have, by Theorem \ref{Thm-measure-rho} and the Markov property,
\begin{align} \label{intRPhhh000aa}
& \int_{\bb P(\bb V) \times [0,\infty) }  h (x',t')   \rho(dx',dt') \notag \\
& = \lim_{n\to\infty} 
\int_{\bb P(\bb V) \times [0,\infty) } t \,  \bb E \Big( h(g_n\cdots g_1 x, t + \sigma(g_n\cdots g_1, x)) ;  \tau_{x,t} >n-1 \Big) dt \notag \\
& = \lim_{n\to\infty} 
\int_{\bb P(\bb V) \times [0,\infty) } t \,  \bb E  \Big( P h (g_{n-1}\cdots g_{1} x, t + \sigma(g_{n-1}\cdots g_1, x)) ;  \tau_{x,t} >n-1 \Big) dt,
\end{align}
where we used the notation
\begin{align*} 
Ph(x',t') = \bb E h \Big( g_1 x',t'+\sigma(g_1,x') \Big).
\end{align*}
Note that $Ph$ is continuous, but we cannot apply directly Theorem \ref{Thm-measure-rho}, since the function $(x',t') \mapsto Ph(x',t') \mathds 1_{\{t'\geq 0 \}}$ 
may not be compactly supported. 
Nevertheless, for any $a\geq 0$, by Theorem \ref{Thm-measure-rho} and Corollary \ref{Thm-measure-rho-v002},
we get
\begin{align} \label{intRPhhh000} 
\int_{\bb P(\bb V) \times [0,a] }  P h (x',t') \rho(dx',dt')  
& = \lim_{n\to\infty} 
\int_{\bb R_+} t \bb E  \Big(   P h (g_{n-1}\cdots g_{1} x, t + \sigma(g_{n-1}\cdots g_1, x)) ; \notag \\
& \qquad\qquad  \tau_{x, t} >n-1, t+\sigma(g_{n-1}\cdots g_1, x))\leq a  \Big) dt. 
\end{align}
Now we notice that, since $h$ has compact support and $\mu$ has finite exponential moments, 
we may find constants $\alpha, c >0$ such that for any $(x',t') \in \bb P(\bb V) \times \bb R$, 
\begin{align*} 
| Ph(x',t') |   \leq c e^{-\alpha t'}   
\end{align*}
so that, using the bound \eqref{bound-U-app}, we arrive at
\begin{align} \label{intRPhhh001}
\int_{\bb P(\bb V) \times [a,\infty) }  P h (x',t') \rho(dx',dt') \leq \int_{\bb P(\bb V) \times [a,\infty) } c (1+t') e^{-\alpha t'} dt'  
\leq c e^{-\alpha a}.
\end{align}
Besides, by Lemma \ref{lemma-duality-for-product-001} and \eqref{def of U^xm_n with phi-001}, we deduce that, for $n \geq 1$, 
\begin{align} \label{intRPhhh002}
& \int_{\bb R_+}  t \bb E  \left(   P h (g_{n-1}\cdots g_{1} x, t + \sigma(g_{n-1}\cdots g_1, x)) ;  \tau_{x,t} >n-1,
 t + \sigma(g_{n-1}\cdots g_1, x)) > a  \right) dt \notag \\
& \leq c \int_{\bb R_+}  t \bb E  \left(   e^{-\alpha (t + \sigma(g_{n-1}\cdots g_1, x) ) }  ;  \tau_{x,t} >n-1,
t+\sigma(g_{n-1}\cdots g_1, x)) > a  \right) dt \notag \\
& = c \int_a^\infty   e^{-\alpha t}    U_{n-1}^{x,n-1,\bf 1} (t)  dt \notag \\
& \leq c \int_a^\infty   e^{-\alpha t}    (1+t)  dt \leq c e^{-\alpha a},
\end{align}
where for the last line we used Corollary \ref{Main th-001}.
Combining \eqref{intRPhhh000}, \eqref{intRPhhh001} and \eqref{intRPhhh002} yields 
\begin{align*} 
& \int_{\bb P(\bb V) \times \bb R_+ }  P h (x',t') \rho(dx',dt') \notag \\
& = \lim_{n\to\infty} 
\int_{\bb R_+} t \bb E  \Big(   P h (g_{n-1}\cdots g_{1} x, t + \sigma(g_{n-1}\cdots g_1, x)); \tau_{x,t} >n-1  \Big) dt, 
\end{align*}
which together with  \eqref{intRPhhh000aa} implies $\rho(h)=\rho'(h)$.
As the Radon measures $\rho$ and $\rho'$ are equal on continuous compactly supported functions, they are also equal 
on non-negative Borel measurable functions, which is the assertion of Corollary \ref{Thm-measure-rho-v003}. 
\end{proof}

\begin{proof}[Proof of Corollary \ref{Thm-measure-rho-v004}]
The assertions \eqref{limit of rho-001} and \eqref{limit of rho-002} follow from Proposition \ref{prop-reversed oper-000}. 
To show \eqref{non-degeneracy of W-001}, we choose a non-negative continuous function $\psi$ on $\bb R$, supported in $[0,1]$, with 
$\int_{0}^{1} \psi(t) dt =1$. Then, as $W$ is non-decreasing, for any $t > 0$, we deduce that 
\begin{align*} 
\frac{W(t)}{t} 
\leq \frac{1}{t} \int_{\bb R} \psi(t'-t) W(t')dt' 
= \frac{1}{t} \int_{\bb P(\bb V) \times \bb R} \psi(t'-t) \rho(dx',dt')
\leq \frac{W(t+1)}{t}.  
\end{align*}
Since, by \eqref{limit of rho-001}, we have
\begin{align*} 
\lim_{t\to\infty}  \frac{1}{t} \int_{\bb P(\bb V) \times \bb R} \psi(t'-t) \rho(dx',dt') =\int_{0}^{1} \psi(t) dt =1,
\end{align*} 
the assertion \eqref{non-degeneracy of W-001} follows. 
\end{proof}

\begin{proof}[Proof of Corollary \ref{Thm-measure-rho-v005}]
We shall first prove \eqref{property-W-negative-t}.
We claim that there exists $c>0$ such that for any $s \geq 0$, one has
\begin{align}\label{rho-integrability-001}
\rho(\bb P(\bb V) \times (-\infty, -s]) \leq c e^{-\alpha s}, 
\end{align}
where $\alpha>0$ is the exponent from \eqref{Exponential-moment}. 
Indeed, for $0 \leq s < s'$, consider the function 
\begin{align*}
h_{s, s'}(x, t) = \bb P \left( t + \sigma(g, x) \in [-s', -s] \right) \mathds 1_{\{t \geq 0\}}. 
\end{align*}
Then, by \eqref{exist of measure rho-001}, we get that for any $x \in \bb P(\bb V)$, 
\begin{align*}
\rho(\bb P(\bb V) \times [-s', -s]) 
& =  \lim_{n \to \infty} \int_{0}^{\infty} t \bb P \left( t + \sigma(g_n \cdots g_1, x) \in [-s', -s], \tau_{x, t} > n-1   \right) dt \notag\\
& = \lim_{n \to \infty} \int_{0}^{\infty} t \bb E \Big[ h_{s, s'} (g_{n-1} \cdots g_1 x, t + \sigma(g_{n-1} \cdots g_1, x) ); \tau_{x, t} > n-2  \Big] dt, 
\end{align*}
where in the last equality we used the Markov property. 
By our moment assumption \eqref{Exponential-moment} and Markov's inequality,  
there exists $c>0$ such that, for any $x \in \bb P(\bb V)$ and $t \geq 0$, 
\begin{align*}
h_{s, s'}(x, t) \leq c e^{-\alpha (s+t)}. 
\end{align*}
Thus we get
\begin{align*}
\rho(\bb P(\bb V) \times [-s', -s]) 
& \leq  c e^{-\alpha s} \limsup_{n \to \infty}  \int_{0}^{\infty} t  \bb E \Big[ e^{-\alpha ( t + \sigma(g_{n-1} \cdots g_1, x) )}; \tau_{x, t} > n-2  \Big] dt \notag\\
& =  c e^{-\alpha s}  \limsup_{n \to \infty}  \int_{0}^{\infty} e^{-\alpha t} U_{n-1}^{x, n-1, \bf{1}}(t) dt, 
\end{align*}
where the last equality holds due to Lemma \ref{lemma-duality-for-product-001}. 
By Corollary \ref{Main th-001} (see the proof of Proposition \ref{Prop-increase-sequence-fn} for the reason why this result can be applied here), we get 
\begin{align*}
\rho(\bb P(\bb V) \times [-s', -s])  
\leq  c e^{-\alpha s}  \int_{0}^{\infty} e^{-\alpha t} (1 + t) dt
\leq c' e^{-\alpha s}. 
\end{align*}
By letting $s' \to \infty$, we obtain \eqref{rho-integrability-001}. 
Since the function $W$ is non-decreasing, for any $s\geq 0$, we have 
\begin{align*}
W(-s-1) \leq  \rho(\bb P(\bb V) \times [-s-1, -s])  \leq c' e^{-\alpha s} 
\end{align*} 
and \eqref{property-W-negative-t} follows.

To conclude the proof of the corollary, it remains to show that $\rho( \bb P(\bb V)  \times (-\infty,0)  ) >0$.
For $x\in \bb P(\bb V)$ and $t\in \bb R$, set $h_0(x,t) = \mathds 1_{\{t < 0\}}$ and, for $n\geq 1$,
define $h_n(x,t) = \bb P \left(\tau_{x,t} = n  \right)$.
By Corollary \ref{Thm-measure-rho-v003}, for any $n\geq 1$, we have 
\begin{align*} 
\int_{\bb P(\bb V)  \times \bb R }  h_{n-1} (x,t)  \rho(dx,dt) 
= \int_{\bb P(\bb V)  \times [0, \infty) }  h_{n} (x,t)  \rho(dx,dt). 
\end{align*}
This gives 
\begin{align*} 
\rho( \bb P(\bb V)  \times (-\infty,0)  ) = \int_{\bb P(\bb V)  \times [0, \infty) } \bb P \left(\tau_{x,t} = n  \right) \rho(dx,dt).
\end{align*}
 Since, for any $x\in \bb P(\bb V)$ and $t\in \bb R$, the stopping time $\tau_{x,t}$ is $\bb P$-almost surely finite, 
 it follows that 
 \begin{align*}
\bb P(\bb V)  \times [0,\infty) = \bigcup_{n\geq 1} \{ (x,t) \in   \bb P(\bb V)  \times [0,\infty) :  \bb P \left(\tau_{x,t} = n  \right) >0  \}.
\end{align*}
By \eqref{non-degeneracy of W-001} of Corollary \ref{Thm-measure-rho-v004}, we know that $\rho$ is not zero on $\bb P(\bb V)  \times [0,\infty)$.
Therefore, there exists $n\geq 1$ such that 
\begin{align*}
\rho\left( \{ (x, t) \in   \bb P(\bb V)  \times [0,\infty) :  \bb P \left(\tau_{x, t} = n  \right) >0  \} \right) > 0, 
\end{align*}
which implies that 
\begin{align*}
\int_{\bb P(\bb V)  \times [0,\infty) } \bb P \left(\tau_{x, t} = n  \right) \rho(dx, dt) > 0.
\end{align*}
The conclusion follows.
\end{proof}


\section{Random walks with perturbation depending on the future}\label{sec invar func}

\subsection{Setting and assumptions}\label{subsec set assump}
In the previous section, we deduced Theorem \ref{Thm-measure-rho} from Theorem \ref{Pro-Appendix-Final2-Inequ}. 
The remainder of the paper is devoted to stating and proving Theorem \ref{Pro-Appendix-Final2-Inequ}. 
This theorem will be presented within an abstract framework, which we introduce below. 
The strategy of the proof relies on extending the approach of Denisov and Wachtel \cite{DW15}.

In the sequel, let $\bb G$ denote a general second countable locally compact group, and $\mu$ 
a Borel probability measure on $\bb G$. 
Consider the measurable space $\Omega= \bb G^{\bb N^*}$ endowed 
with the product probability measure $\bb P = \mu^{\otimes \bb N^*}$. 
For any $k\geq 1$, denote by $g_k$ the coordinate map $\omega\mapsto g_k(\omega)$ on $\Omega$. 
Correspondingly, $g_1,g_2,\ldots$ will form a  sequence of independent and identically distributed elements of $\bb G$ with law  $\mu$.
The expectation with respect to $\bb P$ is denoted by $\bb E$.

We fix a second countable locally compact space $\bb X$ equipped with a continuous action of the group $\bb G$.
Assume that the space $\bb X$ is equipped with a $\mu$-stationary probability measure $\nu$. 
We also fix a continuous cocycle $\sigma: \bb G \times \bb X \to \bb R$, meaning that  
for any $g_1, g_2 \in \bb G$ and $x\in \bb X$,
\begin{align} \label{def-cocycle}
\sigma(g_2 g_1, x) = \sigma(g_2,  g_1 x) + \sigma(g_1, x). 
\end{align}
Assume that $\sigma$ has an exponential moment with respect to $\mu$: there exists a constant $\alpha_0>0$ such that 
\begin{align} \label{exp mom for f 001}
\int_{\bb G} e^ {\alpha_0 \sup_{x\in \bb X} | \sigma (g,x) |} \mu (dg) < \infty. 
\end{align}
Assume also that $\sigma$ is centered in the following strong sense: for any $x\in \bb X$,
\begin{align} \label{centering-001}
\int_{\bb G} \sigma (g,x) \mu (dg) = 0. 
\end{align}
On the probability space $(\Omega, \bb P)$, for any $x\in \bb X$, consider the random walk $(S^x_n)_{n\geq 1}$:
for $\omega=(g_1, g_2, \ldots)  \in \Omega$,
\begin{align} \label{random walk S^x_n-001}
S^x_n (\omega): = \sigma(g_n  \cdots  g_1, x ) = \sum_{i=1}^n \sigma\left(g_i, g_{i-1}\cdots g_1 x\right), \quad n\geq 1.
\end{align}
We equip the space $\Omega\times \bb X$ with the shift map $T$ which is defined as follows:  
for $x\in \bb X$ and $\omega=(g_1, g_2, \ldots)  \in \Omega$,
\begin{align}\label{def-T-Omega-X}
T(\omega, x): = ((g_2, g_3, \ldots), g_1 x). 
\end{align}

In what follows, we will study the properties of the random walk $(\widetilde S_n^x)_{n\geq 1}$ 
with perturbations depending on the future. 
More precisely,  given a sequence $\mathfrak f = (f_n)_{n \geq 0}$ of 
measurable functions $f_n: \Omega\times \bb X \to \bb R$, the walk
$(\widetilde  S_n^x)_{n\geq 1}$ is defined as follows: 
for any $\omega\in \Omega$ and $x\in \bb X$,  
\begin{align} \label{the perturbed random walk 001}
\widetilde  S^x_n(\omega) 
= S^x_n(\omega) + f_n \circ T^n(\omega, x) - f_0(\omega, x), \quad n\geq 1.  
\end{align}
The primary challenge in analyzing the walk \eqref{the perturbed random walk 001} 
stems from the dependence of the functions $f_n$ on the entire sequence $(g_{i})_{i \geq 1}$, 
including the future coordinates $(g_k)_{k > n}$.  
To address this intricate dependence, we introduce an abstract approximation property below. 
This property asserts that perturbation functions with an infinite number of coordinates 
can be effectively approximated by functions dependent on a finite subset of coordinates. 
This concept stands as one of the key ideas in this paper.

First we assume that the sequence $\mathfrak f = (f_n)_{n \geq 0}$ satisfies a uniform exponential moment assumption. 
More precisely, 
we assume that there exists a constant $\alpha > 0$ such that 
\begin{align} \label{exp mom for g 002}
C_{\alpha}(\mathfrak f) 
= \sup_{n \in \bb N} \int_{\bb X} \int_{\Omega} e^{ \alpha |f_n(\omega,x)| } \bb P(d\omega)  \nu(dx)
< \infty.
\end{align}
For any $p\geq 1$, let $\mathscr A_p$ be the $\sigma$-algebra on $\Omega$ 
spanned by the coordinate functions $\omega \in \Omega  \mapsto  g_{i} \in \bb G$ for $1\leq i \leq p$.

For any $p\geq 1$, we consider a finite-size approximation sequence $\mathfrak f_p = (f_{n, p})_{n \geq 0}$, 
where the functions $f_{n, p}$ depend on a finite subset of coordinates in $\Omega$. 
Specifically, given $p \geq 1$,  
we define the approximation $f_{n, p}$ of $f_n$ by setting, for $\omega=(g_1,g_2,\ldots) \in \Omega$
and $x \in \bb X$, 
\begin{align} \label{def-approxi-fnp}
f_{n,p}(\omega,x) 
& = \int_{\bb G^{\bb N^*}} f_n(g_1, \ldots, g_p, g_{p+1}', g_{p+2}', \ldots, x) \mu(dg_{p+1}') \mu(dg_{p+2}') \ldots  \notag\\
& = \bb E( f_{n}(\cdot, x) | \scr A_p)(\omega),  
\end{align}
where the integral makes sense due to condition \eqref{exp mom for g 002}. 
With this definition, the finite-size approximation property of the sequence $\mathfrak f = (f_n)_{n \geq 0}$ is stated as follows: 
 there are constants $\alpha, \beta >0$ such that 
\begin{align} \label{approxim rate for gp-002} 
D_{\alpha,\beta}(\mathfrak f) 
= \sup_{n \in \bb N} \sup_{p \in \bb N^*}   e^{\beta p} 
\left( \int_{\bb X}  \int_{\Omega}  
 e^{\alpha |f_n(\omega, x) - f_{n,p}(\omega, x) |} \bb P(d\omega) \nu(dx)  - 1\right)     <\infty.  
\end{align}
It is evident that if $D_{\alpha,\beta}(\mathfrak f) < \infty$ for some $\alpha, \beta>0$, then for any $\beta' \in (0, \beta]$,
we also have $D_{\alpha,\beta'}(\mathfrak f) < \infty$.
Additionally, it is straightforward to see that the constant $\alpha$ in conditions \eqref{exp mom for g 002} and \eqref{approxim rate for gp-002} can be assumed to be identical. The latter assumption will be consistently applied throughout the paper.
 
In Proposition \ref{Prop-projection approx-inte001}, 
we have already established that property \eqref{approxim rate for gp-002} holds for the random walks with perturbations considered in 
Sections \ref{sec-ideal preturb-001} and \ref{sec-reversed random walk-001}.

In the following we will need a normal approximation result for the sum $S^x_{n}$: 
there exist constants $c, \bf{v}, \epsilon > 0$ such that
for any $a_1 < a_2$, $x\in \bb X$ and $n\geq 1$, 
\begin{align} \label{BEmart-001}
\left\vert  \mathbb{P} \left( \frac{S^x_{n}}{\sqrt{n}}\in [a_1, a_2] \right) 
- \int_{a_1}^{a_2} \phi_{\bf{v}^2} (u) du 
\right\vert \leq \frac{c}{n^{\epsilon}},  
\end{align}
where $\phi_{\bf{v}^2}$ is the normal density of mean $0$ and variance $\bf{v}^2$.

\subsection{Quasi-monotonicity bounds}\label{subsec invar func fixed}
We shall analyze the behavior of the perturbed random walk, 
focusing on the first time the process exits the non-negative half-line $\mathbb{R}_{+}= \left[ 0,\infty \right)$. 
Formally, for any $x\in \bb X$ and $t\in \bb R$, consider the first time
when the perturbed random walk $( t + \widetilde S^x_{k} ) _{k\geq 1}$ (see \eqref{the perturbed random walk 001}) exits $\mathbb{R}_{+}$: 
\begin{align} \label{def-stop time with preturb-001}
\tau_{x,t}^{\mathfrak f} = \min \left\{ k\geq 1: t+ \widetilde S^x_{k} < 0\right\}.
\end{align}
We then introduce the following function:
\begin{align}\label{def-U-f-n-t-001}
U^{\mathfrak f}_n(t) 
= \int_{\bb X} \bb E \left( t +  \widetilde S^{x}_n;  \tau^{\mathfrak f}_{x,t} > n \right) \nu(dx). 
\end{align}

Upon initial consideration, our objective is to establish asymptotic bounds for $U^{\mathfrak f}_n(t)$.
Nevertheless, by closely examining identity \eqref{identity-rho-001} and equation \eqref{def of U^xm_n with phi-001}, 
which precisely delineates the quantity subject to the application of Theorem \ref{Pro-Appendix-Final2-Inequ}, 
it becomes clear that, in addition to monitoring $U^{\mathfrak f}_n(t)$, 
we must also control its modified version, which we will define in what follows.

Let $\theta$ be a bounded measurable non-negative function on $\Omega$, which in the sequel will be called twist function. 
In analogy with \eqref{def-U-varphi-001}, 
for $t \in \bb R$, $n \geq 1$ and $\theta \in L^{\infty}(\Omega, \bb P)$, we set
\begin{align}\label{def-U-f-n-t-theta-001}
U^{\mathfrak f, \theta}_n(t)
=  \int_{\bb X} \bb E \left((t +  \tilde S^{x}_n )  \theta ;  \tau^{\mathfrak f}_{x,t} > n \right) \nu(dx). 
\end{align}
For any $\omega=(g_1,g_2,\ldots) \in \Omega$ and $p\geq 1$, let 
\begin{align}\label{def-theta-p}
\theta_p(\omega)= \bb E(\theta | \scr A_p)(\omega)
=\int_{\bb G^{\bb N^*}} \theta(g_1, \ldots, g_p, g_{p+1}', g_{p+2}', \ldots) \mu(dg_{p+1}') \mu(dg_{p+2}') \ldots.
\end{align}
We shall assume that the function $\theta$ itself satisfies a finite-size 
approximation condition as stated below: 
there exists a constant $\beta >0$ such that 
\begin{align} \label{approx property of theta-001}
N_{\beta}(\theta) : = \sup_{p \geq 1} e^{\beta p} \bb E | \theta - \theta_p | < \infty. 
\end{align}
The constant $\beta$ in both conditions \eqref{approx property of theta-001} and \eqref{approxim rate for gp-002} can be assumed to be same.
As usual, we denote $\|\theta\|_{\infty} = {\rm esssup} |\theta|$. 

The principal outcome in this 
part of the article is contained in the following theorem, which can be regarded as a quantified version of the statement 
that the sequence of Radon measures $U^{\mathfrak f, \theta}_n(t) dt$ converges vaguely on $\bb R$. 
This result asserts that the sequence of functions $U^{\mathfrak f, \theta}_n$ converges 
in the space of distributions on $\bb R$. 
Moreover, this convergence is effective, with constants depending uniformly on $\mathfrak f$ and $\theta$ under certain bounds.

\begin{theorem}\label{Pro-Appendix-Final2-Inequ}
Suppose that the cocycle $\sigma$ admits finite exponential moments \eqref{exp mom for f 001} 
and is centered \eqref{centering-001}. 
We also suppose that the effective central limit theorem \eqref{BEmart-001} holds. 
For any $\beta > 0$ and $B \geq 1$, there exist $A, b, \gamma >0$ with the following property. 
Assume that $\mathfrak f = (f_n)_{n \geq 0}$ is a sequence of  measurable functions on $\Omega \times \bb X$ 
satisfying the moment condition \eqref{exp mom for g 002} and the approximation property \eqref{approxim rate for gp-002} with 
$C_{\alpha}(\mathfrak f) \leq B$ and $D_{\alpha,\beta}(\mathfrak f) \leq B$. 
Assume that $\theta \in L^{\infty}(\Omega, \bb P)$ satisfies the 
approximation property \eqref{approx property of theta-001},
with $\|\theta\|_{\infty} \leq B$ and $N_{\beta}(\theta) \leq B$. 
Then, for any $1 \leq n\leq m$ and $t \in \bb R$, we have
\begin{align} \label{bound with m for U-105-01-2}
U^{\mathfrak f, \theta}_n(t) \leq  U^{\mathfrak f, \theta}_{m}(t + A n^{-\gamma}) 
 + A n^{-b} \left( 1 + \max \{t,0\}  \right)
\end{align}
and 
\begin{align}\label{bound with m for U-105-01-3}
U^{\mathfrak f, \theta}_{m} (t) \leq 
U^{\mathfrak f, \theta}_{n} \left( t + A n^{-\gamma} \right) + A n^{-b} (1+\max \{t,0\}). 
\end{align}
\end{theorem}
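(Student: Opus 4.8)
The plan is to prove Theorem \ref{Pro-Appendix-Final2-Inequ} by reducing the study of the perturbed random walk $(\widetilde S^x_n)$ — whose perturbation depends on the whole future — to that of a family of genuine Markov chains of increasing dimension, for which classical results on random walks conditioned to stay positive can be applied. First I would fix $p = p(n)$, a slowly growing truncation level (something like $p \asymp (\log n)^2$ or $p \asymp n^{\gamma'}$ for small $\gamma'$), and replace $f_k$ by its conditional expectation $f_{k,p} = \mathbb E(f_k(\cdot,x)\mid\mathscr A_p)$ and $\theta$ by $\theta_p$; the approximation hypotheses \eqref{approxim rate for gp-002} and \eqref{approx property of theta-001} control the cost of this replacement in $L^1$ (after a Cauchy–Schwarz/Hölder splitting that uses the uniform exponential moments \eqref{exp mom for g 002} to handle the product with $(t+\widetilde S^x_n)$ and with $\theta$). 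The key point is that $f_{k,p}\circ T^k(\omega,x)$ depends only on the block of coordinates $(g_{k+1},\dots,g_{k+p})$ and on $g_k\cdots g_1 x$, so the augmented state $Z_k = (g_k\cdots g_1 x, g_{k+1},\dots,g_{k+p})$ is a Markov chain on $\bb X\times\bb G^p$, and the truncated perturbed sum $S^x_k + f_{k,p}\circ T^k - f_{0,p}$ is, up to the bounded coboundary-type boundary terms $f_{0,p}$ and $f_{k,p}\circ T^k$, an additive functional of this chain — in fact, after a telescoping, $f_{k,p}\circ T^k(\omega,x) - f_{k-1,p}\circ T^{k-1}(\omega,x)$ is a one-step increment depending on $(Z_{k-1}, g_k,\dots)$.

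The second step is to apply, to each of these finite-dimensional Markov chains, the existing theory for Markov walks conditioned to stay positive: the harmonic function exists (this is Theorem \ref{Theorem GLPL2017} in the abstract setting of Section \ref{sec-results without perturb}, proved there), and the conditioned local limit theorem / the vague convergence of the conditioned occupation measures holds, with effective error bounds coming from the Berry–Esseen estimate \eqref{BEmart-001}. The subtle part is uniformity: the constants $A,b,\gamma$ in the conclusion must depend only on $\beta$ and on the bound $B$ on $C_\alpha(\mathfrak f),D_{\alpha,\beta}(\mathfrak f),\|\theta\|_\infty,N_\beta(\theta)$, and not on the individual chain or on $p$. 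This forces one to carry the exponential-moment and approximation constants through every estimate explicitly rather than invoking qualitative convergence; in particular the martingale/Berry–Esseen input must be quantitative in the dimension only through the universal variance $\bf v$ and moment $\alpha_0$ of $\sigma$, which is legitimate since the perturbation contributes only lower-order ($O(1)$ in $L^1$-with-exponential-moments) terms and does not affect the diffusive scaling.

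The bounds \eqref{bound with m for U-105-01-2} and \eqref{bound with m for U-105-01-3} are then obtained as follows: for $n\le m$, condition the expectation defining $U^{\mathfrak f,\theta}_m(t)$ on the first $n$ steps, write it via the Markov property as a further conditioned expectation, and compare with $U^{\mathfrak f,\theta}_n(t)$. The discrepancy between "staying positive up to $n$" and "staying positive up to $m$" is absorbed by the standard fact that, conditionally on survival up to time $n$, the walk is at height of order $\sqrt n$ with overwhelming probability, so it survives the remaining steps with probability $\to 1$; the terms $f_{k,p}\circ T^k$ with $k$ between $n$ and $m$ are shifted into the argument of $U$ as the $\pm A n^{-\gamma}$ perturbation of $t$ (after the $f_{k,p}\leftrightarrow f_k$ swap, using that $|f_{k,p}|$ has exponential moments and that we may restrict to an event where $|f_{k,p}\circ T^k|\le \varepsilon\sqrt n$ up to an error $\le A n^{-b}(1+\max\{t,0\})$). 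The additive error term $A n^{-b}(1+\max\{t,0\})$ collects: (i) the truncation errors from $f_k\to f_{k,p}$ and $\theta\to\theta_p$, which are $O(e^{-\beta p(n)})$ times a factor $1+\max\{t,0\}$ coming from the linear weight; (ii) the Berry–Esseen error $O(n^{-\epsilon})$; (iii) the probability of the bad events on which the truncated perturbations or the walk itself are atypically large. The main obstacle, as flagged above, is bookkeeping the uniformity of all these constants in the dimension $p$ — i.e. making the classical conditioned-limit machinery quantitative enough that the perturbation of unbounded-but-exponentially-integrable size enters only through $B$ and $\beta$ — and this is precisely what the subsequent Sections \ref{sec-results without perturb}, \ref{sec quasi-increasingness}, and \ref{sec-quasi-decreasing property} are set up to do.
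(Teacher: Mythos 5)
Your overall architecture is the paper's: truncate $f_k$ to $f_{k,p}=\bb E(f_k(\cdot,x)\mid\scr A_p)$ and $\theta$ to $\theta_p$ with $p$ growing slowly in $n$, observe that the truncated walk is an additive functional of the augmented Markov chain with state $(g_k\cdots g_1x,\,g_{k+1},\dots,g_{k+p})$ (the chain on $\bb A_p$ of Subsection \ref{Subsec-Markov-chian}), and carry all constants through so they depend only on $\alpha,\beta,B$. With that structure, the quasi-increasing bound \eqref{bound with m for U-105-01-2} does come out essentially as you indicate, from optional stopping of the martingale $M_n=\sum_i h_p(\xi_i)$ together with an a priori control of the jump and of $|\tilde f(\xi_{\tilde\tau^{\mathfrak f}_t})|$ on $\{n<\tilde\tau^{\mathfrak f}_t\le m\}$.

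The gap is in your treatment of the converse bound \eqref{bound with m for U-105-01-3}. The heuristic you invoke --- condition on the first $n$ steps; given survival up to $n$ the walk sits at height $\asymp\sqrt n$ and hence ``survives the remaining steps with probability $\to1$'' --- is false for $m\gg n$: that conditional survival probability is of order $\sqrt{n/m}$. What actually has to be shown is that the signed contribution of trajectories killed in $(n,m]$ is small, i.e.\ that the overshoot term $-\bb E\bigl(t+\widetilde S_{\tilde\tau};\,n<\tilde\tau\le m\bigr)$ is $O(n^{-b}(1+\max\{t,0\}))$; bounding it by Cauchy--Schwarz requires the a priori estimates $\bb P(\tilde\tau^{\mathfrak f}_t>n)\le c(1+\max\{t,0\})n^{-\beta}$ and $U^{\mathfrak f}_n(t)\le c(1+\max\{t,0\})$, and the latter is itself a \emph{consequence} of \eqref{bound with m for U-105-01-3} (it is Corollary \ref{Main th-001}), so the argument as sketched is circular. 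The paper breaks the circle by first proving the self-improving recursion $U^{\mathfrak f,\theta}_n(t)\le(1+cn^{-\ee})\,U^{\mathfrak f,\theta}_{[n^{1-\ee}]}(t+2n^{-\ee})+cn^{-\ee/2}(1+\max\{t,0\})$ (Proposition \ref{AA-Prop iterative 002-app}): one stops the chain at the first exit $\tilde\nu_{n,t}$ from the window $[-2n^{1/2-\ee},2n^{1/2-\ee}]$, applies the Markov property there together with the bound $W^{\mathfrak f}_k(a,s)\le(1+ck^{-\ee})s+\dots$ valid for $s\ge k^{1/2-\ee}$, and controls the boundary terms $\sum_{i=\tilde\tau-p+1}^{\tilde\tau}h_p(\xi_i)$ and $\tilde f(\xi_{\tilde\tau})$ on $\{\tilde\nu_{n,t}<\tilde\tau^{\mathfrak f}_t\}$. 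The comparison between arbitrary $n\le m$ then follows by iterating along $m_{j+1}=[m_j^{1-\ee}]$ and verifying that the product $\prod_j(1+cm_j^{-\ee})$ and the accumulated shifts of $t$ stay of size $1+cn^{-\ee(1-\ee)}$ (Lemma \ref{lem-iter-proc-001}). Neither the exit-time decomposition, nor the multiplicative factor, nor the iteration scheme that tames it appears in your sketch, and without them the passage from $U^{\mathfrak f,\theta}_m$ to $U^{\mathfrak f,\theta}_n$ is not justified.
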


The above inequalities are inspired by the analogous ones from \cite{DW15, GLP17, GLL18Ann}.
The main difference is the presence of the perturbation term $A n^{-\gamma}$ affecting the variable $t$ 
on the right-hand sides of \eqref{bound with m for U-105-01-2} and \eqref{bound with m for U-105-01-3}. 
Because of the presence of this perturbation, the convergence of  $U^{\mathfrak f, \theta}_{n} (t)$ holds in the distributional sense. 
This is similar to  the case of hyperbolic dynamical systems discussed in \cite{GQX23}.

As a consequence of the previous theorem, we can deduce that the sequence of Radon measures $U_n^{\mathfrak f, \theta}(t)  dt$ on $\bb R$ 
converges vaguely to a limiting Radon measure $U^{\mathfrak f, \theta}(t)  dt$. 
\begin{corollary} \label{Main th-003}
Suppose that the cocycle $\sigma$ admits finite exponential moments \eqref{exp mom for f 001}
and is centered \eqref{centering-001}.
We also suppose that the effective central limit theorem \eqref{BEmart-001} holds.
Assume that $\mathfrak f = (f_n)_{n \geq 0}$ is a sequence of  measurable functions on $\Omega \times \bb X$ 
satisfying the moment condition \eqref{exp mom for g 002} and the approximation property \eqref{approxim rate for gp-002}. 
Let $\theta \in L^{\infty}(\Omega, \bb P)$ be such that \eqref{approx property of theta-001} holds.
Then, there exists a measurable function $U^{\mathfrak f, \theta}: \bb R \to \bb R_+$ such that 
for any continuous compactly supported test function $\varphi$ on $\bb R$, we have 
\begin{align} \label{MAIN_GOAL-theta-003}
\lim_{n\to\infty } \int_{\bb R}  \varphi(t) U_n^{\mathfrak f, \theta}(t)  dt
=  \int_{\bb R} \varphi(t)  U^{\mathfrak f, \theta}(t)  dt.
\end{align}
Moreover, the following holds: 
\begin{align}\label{MAIN_GOAL-theta-004}
\lim_{t \to \infty} \frac{1}{t} U^{\mathfrak f, \theta}(t) = \bb E \theta. 
\end{align}
\end{corollary}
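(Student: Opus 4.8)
The plan is to derive both assertions entirely from the two-sided quasi-monotonicity bounds \eqref{bound with m for U-105-01-2}--\eqref{bound with m for U-105-01-3} of Theorem \ref{Pro-Appendix-Final2-Inequ}, after establishing two elementary preliminary facts about the functions $U_n^{\mathfrak f,\theta}$ of \eqref{def-U-f-n-t-theta-001}. First I would observe that each $U_n^{\mathfrak f,\theta}$ is \emph{non-decreasing} in $t$: on the event $\{\tau^{\mathfrak f}_{x,t}>n\}$ one has $t+\widetilde S^x_n\ge0$, so the integrand equals $\max\{t+\widetilde S^x_n,0\}\,\theta\,\prod_{k=1}^{n}\mathds 1_{\{t+\widetilde S^x_k\ge0\}}$, a product of non-negative non-decreasing functions of $t$ (here $\theta\ge0$ is used). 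Second I would record the uniform linear bound $\sup_n U_n^{\mathfrak f,\theta}(t)\le c(1+|t|)$: applying \eqref{bound with m for U-105-01-3} with $n=1$ gives $U_m^{\mathfrak f,\theta}(t)\le U_1^{\mathfrak f,\theta}(t+A)+A(1+\max\{t,0\})$ for every $m\ge1$, and $U_1^{\mathfrak f,\theta}(t)\le\|\theta\|_{\infty}\int_{\bb X}\bb E\,|t+\widetilde S^x_1|\,\nu(dx)\le c(1+|t|)$, because $\int_{\bb X}\bb E\,|\widetilde S^x_1|\,\nu(dx)<\infty$ by \eqref{exp mom for f 001}, \eqref{exp mom for g 002} and the $T$-invariance of $\bb P\otimes\nu$ (which yields $\int_{\bb X}\bb E\,|f_n\circ T^n|\,\nu(dx)=\int_{\bb X}\bb E\,|f_n|\,\nu(dx)<\infty$).

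Next I would prove \eqref{MAIN_GOAL-theta-003}. Fix $\varphi\in C_c(\bb R)$ with $\varphi\ge0$ and $\supp\varphi\subseteq[-R,R]$, and set $a_n=\int_{\bb R}\varphi\,U_n^{\mathfrak f,\theta}\ge0$. Integrating \eqref{bound with m for U-105-01-2} against $\varphi$, changing variable $t\mapsto t-An^{-\gamma}$ in the leading term, and bounding $|\varphi(t-An^{-\gamma})-\varphi(t)|$ by the modulus of continuity $\omega_\varphi(An^{-\gamma})$ on the compact set where it is nonzero (using the uniform bound of the previous paragraph to control $\int U_m^{\mathfrak f,\theta}$ there), I obtain $a_n\le a_m+c_\varphi\bigl(\omega_\varphi(An^{-\gamma})+An^{-b}\bigr)$ for all $1\le n\le m$; the reverse inequality follows symmetrically from \eqref{bound with m for U-105-01-3}. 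Hence $(a_n)_n$ is Cauchy and converges; by linearity ($\varphi=\varphi^{+}-\varphi^{-}$) the limit $\Lambda(\varphi)$ exists for all $\varphi\in C_c(\bb R)$, is a non-negative linear functional, and satisfies $|\Lambda(\varphi)|\le c\int_{\bb R}|\varphi(t)|(1+|t|)\,dt$. Extending $\Lambda$ to $L^1\bigl((1+|t|)\,dt\bigr)$ and invoking $L^1$--$L^\infty$ duality produces a measurable $U^{\mathfrak f,\theta}\ge0$ with $U^{\mathfrak f,\theta}(t)\le c(1+|t|)$ and $\Lambda(\varphi)=\int_{\bb R}\varphi\,U^{\mathfrak f,\theta}$, which is \eqref{MAIN_GOAL-theta-003}; comparing interval averages of the monotone functions $U_n^{\mathfrak f,\theta}$ and passing to the limit shows that $U^{\mathfrak f,\theta}$ has a non-decreasing representative, which I fix from now on.

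For \eqref{MAIN_GOAL-theta-004} I would first check that for each fixed $n$, $\frac{1}{t}U_n^{\mathfrak f,\theta}(t)\to\bb E\theta$ as $t\to\infty$: the integrand $\frac{1}{t}(t+\widetilde S^x_n)\,\theta\,\mathds 1_{\{\tau^{\mathfrak f}_{x,t}>n\}}$ tends pointwise to $\theta$ (since $\widetilde S^x_1,\dots,\widetilde S^x_n$ are almost surely finite, the exit event eventually holds) and is dominated for $t\ge1$ by $(1+|\widetilde S^x_n|)\,\|\theta\|_{\infty}$, which is $\bb P\otimes\nu$-integrable; dominated convergence and $\nu(\bb X)=1$ give the claim. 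Then I would let $m\to\infty$ in \eqref{bound with m for U-105-01-2}--\eqref{bound with m for U-105-01-3} (the first evaluated at $t-An^{-\gamma}$): monotonicity of all the functions together with the vague convergence already proved upgrades these to the pointwise sandwich, valid at every continuity point $t$ of $U^{\mathfrak f,\theta}$ and every fixed $n$,
\begin{align*}
U_n^{\mathfrak f,\theta}\bigl(t-An^{-\gamma}\bigr)-An^{-b}\bigl(1+\max\{t,0\}\bigr) \le U^{\mathfrak f,\theta}(t) \le U_n^{\mathfrak f,\theta}\bigl(t+An^{-\gamma}\bigr)+An^{-b}\bigl(1+\max\{t,0\}\bigr).
\end{align*}
Dividing by $t$, sending $t\to\infty$ through continuity points, and using the previous limit gives $\bb E\theta-An^{-b}\le\liminf_{t\to\infty}\frac{1}{t}U^{\mathfrak f,\theta}(t)\le\limsup_{t\to\infty}\frac{1}{t}U^{\mathfrak f,\theta}(t)\le\bb E\theta+An^{-b}$; letting $n\to\infty$ yields \eqref{MAIN_GOAL-theta-004}.

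I expect the main obstacle to be this last exchange of the limits $n\to\infty$ and $t\to\infty$. The quasi-monotonicity bounds pin down $U_n^{\mathfrak f,\theta}$ only up to a horizontal shift $An^{-\gamma}$ and an additive error $An^{-b}(1+\max\{t,0\})$ that itself grows linearly in $t$, so one cannot take $t\to\infty$ directly inside them. The resolution is to use monotonicity of the $U_n^{\mathfrak f,\theta}$ to turn the vague convergence of \eqref{MAIN_GOAL-theta-003} into the genuine pointwise two-sided bound displayed above; after dividing by $t$ the shift becomes harmless and the additive error collapses to $An^{-b}$, which then vanishes as $n\to\infty$.
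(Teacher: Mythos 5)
Your proposal is correct and follows essentially the same strategy as the paper: integrate the two-sided quasi-monotonicity bounds of Theorem~\ref{Pro-Appendix-Final2-Inequ} against a test function to get a Cauchy sequence for \eqref{MAIN_GOAL-theta-003}, then combine the fixed-$n$ limit $\frac{1}{t}U_n^{\mathfrak f,\theta}(t)\to\bb E\theta$ with Theorem~\ref{Pro-Appendix-Final2-Inequ} and a final $n\to\infty$ to obtain \eqref{MAIN_GOAL-theta-004}. The one place where you diverge in execution is the second part: the paper keeps the estimate mollified throughout (it bounds $U^{\mathfrak f,\theta}(t)\le\int\varphi(t'-t)U^{\mathfrak f,\theta}(t')\,dt'$ and then controls this by a supremum of $U_n^{\mathfrak f,\theta}$ over a short interval, avoiding any pointwise convergence claim), whereas you first upgrade the distributional convergence to pointwise convergence $U_m^{\mathfrak f,\theta}(s)\to U^{\mathfrak f,\theta}(s)$ at continuity points of the non-decreasing limit — a standard fact for monotone functions — and then take $t\to\infty$ through continuity points. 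Both routes are rigorous; yours obtains the cleaner ``pointwise sandwich'' at the cost of tracking continuity points (and eventually arguing that the limit over continuity points equals the unrestricted limit, which again follows from monotonicity), while the paper's mollified version sidesteps this bookkeeping. For the first part the paper simply invokes its earlier abstract Lemma~\ref{Lem_Conver_Func}, which encapsulates precisely the monotonicity-plus-duality construction you carry out by hand; this is a presentational rather than substantive difference.
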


In the proof of this corollary we will make use of the following upper bound of $U^{\mathfrak f}_n(t)$. 

\begin{corollary} \label{Main th-001}
Suppose that the cocycle $\sigma$ admits finite exponential moments \eqref{exp mom for f 001}
and is centered \eqref{centering-001}.
We also suppose that the effective central limit theorem \eqref{BEmart-001} holds.
Assume that $\mathfrak f = (f_n)_{n \geq 0}$ is a sequence of  measurable functions on $\Omega \times \bb X$ 
satisfying the moment condition \eqref{exp mom for g 002} and the approximation property \eqref{approxim rate for gp-002}. 
Then, there is a constant $c>0$ such that for any $t\in \bb R$ and $n \geq 1$,
\begin{align} \label{MAIN_GOAL-001}
U^{\mathfrak f}_n(t) \leq c \left( 1 + \max \{t, 0 \}  \right).
\end{align}
\end{corollary}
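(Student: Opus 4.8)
\emph{Plan for the proof of Corollary~\ref{Main th-001}.} Since $U^{\mathfrak f}_n$ is the $\theta\equiv\mathbf 1$ instance of $U^{\mathfrak f,\theta}_n$, what is needed is an \emph{absolute} bound, linear in $\max\{t,0\}$ and uniform in $n$; the relative estimates of Theorem~\ref{Pro-Appendix-Final2-Inequ} do not by themselves give this, and the crude bound $U^{\mathfrak f}_n(t)\le t+O(1)+O(\sqrt n)$ (obtained by adding back the negative part of $t+\widetilde S^x_n$ and using $\bb E\,S^x_n=0$ together with $\|S^x_n\|_2=O(\sqrt n)$) grows with $n$, so the conditioning $\{\tau^{\mathfrak f}_{x,t}>n\}$ must be used in an essential way. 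The obstruction is that $\tau^{\mathfrak f}_{x,t}$ defined by \eqref{def-stop time with preturb-001} is not a stopping time for $(\scr A_k)$, because $\widetilde S^x_k$ depends on $(g_{k+1},g_{k+2},\dots)$ through $f_k\circ T^k$. The plan is: first replace $\mathfrak f$ by a finite-memory truncation; then recast the truncated walk as a first-exit problem for a Markov chain on an enlarged state space; finally invoke on that chain the optional-stopping and conditioned-fluctuation estimates developed in Sections~\ref{sec quasi-increasingness} and \ref{sec-quasi-decreasing property}.

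\emph{Step 1 (finite-memory truncation).} Fix $p=p_n=\lceil(c_0/\beta)\log n\rceil$ with $c_0$ a large absolute constant and pass to the truncated sequence $\mathfrak f_p=(f_{n,p})_{n\ge0}$ of \eqref{def-approxi-fnp}. By Markov's inequality and the approximation property \eqref{approxim rate for gp-002}---together with the fact that, by $\mu$-stationarity of $\nu$, the law of $g_k\cdots g_1x$ under $\nu$ is again $\nu$, so that $\int_{\bb X}\bb E\,e^{\alpha|f_k\circ T^k-f_{k,p}\circ T^k|}\nu(dx)\le1+D_{\alpha,\beta}(\mathfrak f)e^{-\beta p}$---the $\nu$-averaged probability of the event $G^c:=\{\sup_{0\le k\le n}|f_k\circ T^k(\omega,x)-f_{k,p}\circ T^k(\omega,x)|>1\}$ is $O(n\,e^{-\beta p})=O(n^{1-c_0})$. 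Writing $\widetilde S^{x,p}$ for the walk \eqref{the perturbed random walk 001} built from $\mathfrak f_p$, on the good event $G$ one has $|\widetilde S^x_k-\widetilde S^{x,p}_k|\le2$ for $k\le n$, whence $G\cap\{\tau^{\mathfrak f}_{x,t}>n\}\subseteq\{\tau^{\mathfrak f_p}_{x,t+2}>n\}$ and $t+\widetilde S^x_n\le t+2+\widetilde S^{x,p}_n$ there, while the $G^c$ contribution is at most $\int_{\bb X}\bb E\big((t+\widetilde S^x_n)^+;G^c\big)\nu(dx)\le\max\{t,0\}+C+\|S^x_n\|_2\,\bb P(G^c)^{1/2}$, which is $O(1+\max\{t,0\})$ once $c_0$ is large (using $\|S^x_n\|_2\le C\sqrt n$ from \eqref{exp mom for f 001}--\eqref{centering-001} and the uniform moment \eqref{exp mom for g 002}). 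Hence $U^{\mathfrak f}_n(t)\le U^{\mathfrak f_p}_n(t+2)+C(1+\max\{t,0\})$, and it remains to bound $U^{\mathfrak f_p}_n$ \emph{uniformly in the truncation level $p$}.

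\emph{Step 2 (enlarged Markov chain and conclusion); the main obstacle.} As $f_{n,p}$ depends only on the first $p$ coordinates of its $\Omega$-argument, $\tau^{\mathfrak f_p}_{x,t}$ is a stopping time for the delayed filtration $(\scr A_{k+p})_{k\ge1}$; equivalently, $Y_k:=(g_{k+1},\dots,g_{k+p};\,g_k\cdots g_1x)$ is a Markov chain and, with $\bar S_k:=t-f_{0,p}(\omega,x)+S^x_k$ (an $(\scr A_k)$-martingale by the strong centering \eqref{centering-001}), one has $t+\widetilde S^{x,p}_k=\bar S_k+h_p(Y_k)$ where $h_p$ is a function of the current state, exponentially integrable uniformly in $p$. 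Thus $\{\tau^{\mathfrak f_p}_{x,t}>n\}$ is the event that $\bar S$ stays above the state-dependent boundary $-h_p(Y_k)$ up to time $n$; applying optional stopping to $\bar S$ at the delayed exit time and bounding the overshoot by its uniformly exponentially integrable negative part, one gets, in the spirit of \cite{DW15, GLP17}, a bound $U^{\mathfrak f_p}_n(t)\le C(1+\max\{t,0\})+C\sum_{k=0}^{n-1}\int_{\bb X}\bb E\big(e^{-c(t+\widetilde S^{x,p}_k)};\tau^{\mathfrak f_p}_{x,t}>k\big)\nu(dx)$. To finish, the $k$-th summand must be $O\big((1+\max\{t,0\})(k+1)^{-3/2}\big)$ uniformly in $p$---that is, the Markov walk $\bar S$ conditioned to stay above its boundary for $k$ steps must satisfy $\int_{\bb X}\bb E(e^{-c\bar S_k};\tau>k)\nu(dx)\asymp(1+\max\{t,0\})k^{-3/2}$---which follows from the Berry--Esseen input \eqref{BEmart-001} transported to $Y_k$ together with the decay and reversal estimates for conditioned Markov walks of Sections~\ref{sec quasi-increasingness}--\ref{sec-quasi-decreasing property}; summing $\sum_k(k+1)^{-3/2}$ then yields the claim. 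The main obstacle is precisely this last point: obtaining the $k^{-3/2}$-type decay \emph{uniformly in the dimension $p$} of the enlarged chain, which is the technical core of those sections. Once it is available, Corollary~\ref{Main th-001} follows; in fact it is naturally obtained as a by-product of the proof of Theorem~\ref{Pro-Appendix-Final2-Inequ}.
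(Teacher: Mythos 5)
Your opening claim---that ``the relative estimates of Theorem~\ref{Pro-Appendix-Final2-Inequ} do not by themselves give this''---is wrong, and this misconception sends the rest of the proposal on an unnecessarily long detour. Inequality~\eqref{bound with m for U-105-01-3} (with $\theta\equiv 1$) is a comparison of $U^{\mathfrak f}_m$ with $U^{\mathfrak f}_n$ for an \emph{arbitrary} fixed $n\leq m$, and the error terms $An^{-\gamma}$, $An^{-b}$ are controlled for any fixed $n$, including $n=1$. Taking $n=1$ gives, uniformly in $m\geq 1$,
\begin{align*}
U^{\mathfrak f}_m(t) \leq U^{\mathfrak f}_1(t+A) + A\left(1+\max\{t,0\}\right),
\end{align*}
and $U^{\mathfrak f}_1(t+A)\leq c\left(1+\max\{t,0\}\right)$ follows at once from the moment assumptions \eqref{exp mom for f 001}, \eqref{centering-001} and \eqref{exp mom for g 002} (for instance by Lemma~\ref{Lem-trick}). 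That is exactly the paper's two-line proof; the whole point of formulating Theorem~\ref{Pro-Appendix-Final2-Inequ} as a two-sided quasi-monotonicity bound with a \emph{fixed reference index} $n$ is precisely to extract such absolute bounds.

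Your Steps~1 and~2 essentially reconstruct the machinery of Sections~\ref{sec quasi-increasingness}--\ref{sec-quasi-decreasing property} (finite-size truncation as in \eqref{def-approxi-fnp} and Proposition~\ref{Prop g approx 002}, the enlarged Markov chain of Subsection~\ref{Subsec-Markov-chian}, optional stopping for the associated martingale $M_n$) rather than using the theorem already in hand. That effort would be warranted if one were proving the theorem itself, but it is redundant for the corollary. Moreover, your Step~2 terminates by invoking a $k^{-3/2}$ decay for $\int_{\bb X}\bb E\big(e^{-c\bar S_k};\tau>k\big)\nu(dx)$ uniformly in $p$, which is a strictly stronger, local-limit-theorem-level estimate that the paper neither proves nor needs here; the only nontrivial input behind \eqref{bound with m for U-105-01-3} is the polynomial tail bound of Lemma~\ref{Lem-tau-prior} (ultimately Proposition~\ref{Prop-vartheta}), not $k^{-3/2}$ decay. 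So even as a from-scratch route, the plan would be asserting an unproved and unnecessarily strong ingredient. Your final remark---that the corollary ``is naturally obtained as a by-product of the proof of Theorem~\ref{Pro-Appendix-Final2-Inequ}''---is close to the truth, but the cleaner statement is that it is an immediate consequence \emph{of} the theorem, via the $n=1$ specialization.
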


\begin{proof}[Proof of Corollary \ref{Main th-001}]
Applying Theorem \ref{Pro-Appendix-Final2-Inequ} with $\theta = 1$, we get that for any $n \geq 1$ and $t \in \bb R$, 
\begin{align*}
U^{\mathfrak f}_n(t) \leq  U^{\mathfrak f}_{1}(t + A) 
 + A \left( 1 + \max \{t,0\}  \right). 
\end{align*}
From our moment assumptions, we have $U^{\mathfrak f}_{1}(t + A) \leq c \left( 1 + \max \{t,0\}  \right)$ and the conclusion follows. 
\end{proof}

We will make use of the following elementary fact from the  theory of distributions, which we reproduce from \cite[Lemma 3.7]{GQX23}:
\begin{lemma}\label{Lem_Conver_Func}
Let $(U_n)_{n \geq 1}$ be a sequence of non-decreasing functions on $\bb R$.
Assume that for every continuous compactly supported function $\vphi$ on $\bb R$, as $n \to \infty$, 
the sequence $\int_{\bb R} U_n(t) \vphi (t) dt$ admits a finite limit. 
Then, there exists a unique right continuous and non-decreasing function $U$ on $\bb R$ such that 
for any continuous compactly supported function $\vphi$, we have
\begin{align*}
\lim_{n \to \infty} \int_{\bb R} U_n(t) \vphi (t) dt = \int_{\bb R} U(t) \vphi (t) dt. 
\end{align*}
\end{lemma}

\begin{proof}[Proof of Corollary \ref{Main th-003}]
We fix a non-negative continuous compactly supported function $\varphi$ on $\bb R$. 
Let $b, \gamma, A > 0$ be as in Theorem \ref{Pro-Appendix-Final2-Inequ}.
Then, for any $1\leq n\leq m$, we have
\begin{align*} 
& \int_{\bb R} U_n^{\mathfrak f, \theta} (t) \varphi(t + A n^{-\gamma}) dt \notag\\
&\leq \int_{\bb R} U_m^{\mathfrak f, \theta} \left(t+ A n^{-\gamma}\right) \varphi(t + A n^{-\gamma}) dt 
+  A n^{-b}  \int_{\bb R} \left( 1 + \max \{t,0\}  \right) \varphi(t + A n^{-\gamma}) dt \notag\\
&= \int_{\bb R} U_m^{\mathfrak f, \theta} (t) \varphi(t) dt 
+  A  n^{-b}  \int_{\bb R} \left( 1 + \max \{t,0\}   \right) \varphi(t + A n^{-\gamma}) dt. 
\end{align*}
Taking the limit as $m \to\infty$, we obtain that, for any $n\geq 1$,
\begin{align} \label{eq sup-inf-001}
& \int_{\bb R} U_n^{\mathfrak f, \theta} (t) \varphi(t + A n^{-\gamma}) dt  \notag\\
& \leq \liminf_{m\to\infty}  \int_{\bb R} U_m^{\mathfrak f, \theta} (t) \varphi(t) dt 
+  A  n^{-b}  \int_{\bb R} \left( 1 + \max \{t,0\}   \right) \varphi(t + A n^{-\gamma}) dt. 
\end{align}
By the continuity of $\varphi$ and the uniform bound of Corollary \ref{Main th-001}, we have that, as $n\to\infty$,
\begin{align*}
\int_{\bb R} U_n^{\mathfrak f, \theta} (t) \varphi(t + A n^{-\gamma}) dt  - \int_{\bb R} U_n^{\mathfrak f, \theta} (t) \varphi(t) dt \to 0.
\end{align*}
Hence, taking the limit as $n\to\infty$ in \eqref{eq sup-inf-001}, we get 
\begin{align} \label{eq sup-inf-002}
\limsup_{n\to\infty}  \int_{\bb R} U_n^{\mathfrak f, \theta} (t) \varphi(t) dt
\leq \liminf_{m\to\infty} \int_{\bb R} U_m^{\mathfrak f, \theta} (t) \varphi(t) dt.
\end{align}
Therefore, the integral $\int_{\bb R} U_n^{\mathfrak f, \theta} (t) \varphi(t) dt$ admits a limit as $n\to\infty$, 
and this limit is finite by Corollary \ref{Main th-001}. 
Hence, the assertion \eqref{MAIN_GOAL-theta-003} follows from Lemma \ref{Lem_Conver_Func}.

Now we prove the assertion \eqref{MAIN_GOAL-theta-004}. 
First, note that, for any $n \geq 1$, we have 
\begin{align}\label{convergence-U-n-t-001}
\lim_{t \to \infty} \frac{1}{t} U_n^{\mathfrak f, \theta}(t) = \bb E \theta. 
\end{align}
Indeed, by \eqref{def-U-f-n-t-theta-001}, we write, for $t \geq 0$, 
\begin{align*}
\big| U_n^{\mathfrak f, \theta}(t) - t \bb E \theta  \big| 
 \leq t  \|\theta\|_{\infty} \int_{\bb X} \bb P \big(   \tau_{x,t}^{\mathfrak f} \leq n \big)  \nu(dx)
 + \|\theta\|_{\infty} \int_{\bb X} \bb E  \big| \tilde S^{x}_n  \big|  \nu(dx). 
\end{align*}
By \eqref{def-stop time with preturb-001}, we have 
\begin{align*}
\int_{\bb X} \bb P \big(   \tau_{x,t}^{\mathfrak f} \leq n \big)  \nu(dx)
 \leq \int_{\bb X} \bb P \Big(  \max_{1\leq k\leq n}|\tilde S^{x}_k| >t \Big)  \nu(dx) 
\leq \sum_{k=1}^n \int_{\bb X} \bb P \big(   \big| \tilde S^{x}_k  \big| > t  \big)  \nu(dx), 
\end{align*}
which implies that 
\begin{align*}
\lim_{t \to \infty}  \int_{\bb X} \bb P \big(   \tau_{x,t}^{\mathfrak f} \leq n \big)  \nu(dx) = 0. 
\end{align*}
Hence \eqref{convergence-U-n-t-001} holds.
To prove \eqref{MAIN_GOAL-theta-004}, 
we fix a continuous compactly supported function $\varphi$ on $\bb R$ which is non-negative
with support in $[0, 1]$. We also assume that $\int_0^1 \varphi(t) dt = 1$. 
In particular, for any $t \in \bb R$, we have
\begin{align*}
U^{\mathfrak f, \theta}(t) = \int_{\bb R} \varphi(t'-t) U^{\mathfrak f, \theta}(t) dt' 
\leq \int_{\bb R} \varphi(t'-t) U^{\mathfrak f, \theta}(t') dt'. 
\end{align*}
By Theorem \ref{Pro-Appendix-Final2-Inequ}, for any $n \geq 1$ and $t \geq 0$, 
we derive that 
\begin{align*}
\int_{\bb R} \varphi(t'-t) U^{\mathfrak f, \theta}(t') dt'
& = \lim_{m \to \infty} \int_{\bb R} \varphi(t'-t) U_m^{\mathfrak f, \theta}(t') dt' \notag\\
& \leq  \int_{\bb R} \varphi(t'-t) U_{n}^{\mathfrak f, \theta}(t' + A n^{-\gamma}) dt' + A n^{-b} \int_{\bb R} \varphi(t'-t) (1 + t') dt'  \notag\\
& \leq  \sup_{t \leq t' \leq t + 1 + A n^{-\gamma}} U_{n}^{\mathfrak f, \theta}(t') + A (t +2) n^{-b}. 
\end{align*}
From \eqref{convergence-U-n-t-001}, we get 
\begin{align*}
\limsup_{t \to \infty} \frac{1}{t}  U^{\mathfrak f, \theta}(t) \leq \bb E \theta + A n^{-b}. 
\end{align*}
As this is true for any $n \geq 1$, we get $\limsup_{t \to \infty} \frac{1}{t}  U^{\mathfrak f, \theta}(t) \leq \bb E \theta$. 
Reasoning in the same way, we can prove that $\liminf_{t \to \infty} \frac{1}{t}  U^{\mathfrak f, \theta}(t) \geq \bb E \theta$ 
and so \eqref{MAIN_GOAL-theta-004} holds. 
\end{proof}


\section{Bound for the stopping time in the absence of perturbation}\label{sec-results without perturb}

In this section, we assume the framework of Section \ref{sec invar func} but focus on 
the random walk $(S^x_n)_{n\geq 1}$ defined by \eqref{random walk S^x_n-001}, which is 
not subject to perturbations.

For any $x\in \bb X$ and $t \in \bb R$, define the stopping time $\vartheta_{x,t} = \tau_{x, t}^0$ on $\Omega$ by
\begin{align} \label{stopping time theta 001}
\vartheta_{x,t} = \inf \{ k\geq 1:  t + S_k^x  < 0 \}.
\end{align}
Our objective is to establish the following bound, which will be employed in the proof of Theorem \ref{Pro-Appendix-Final2-Inequ},  
and more specifically, in the proof of Lemma \ref{Lem-tau-prior}. 

\begin{proposition}\label{Prop-vartheta}
Suppose that the cocycle $\sigma$ admits finite exponential moments \eqref{exp mom for f 001}
and is centered \eqref{centering-001}. 
We also suppose that the effective central limit theorem \eqref{BEmart-001} is satisfied.
Then there exist constants $c, \beta>0$ such that for any $n \geq 1$, $x \in \bb X$ and $t \in \bb R$,
\begin{align} \label{lem-moment of tau-001}
\bb P (\vartheta_{x,t}  = n) \leq \bb P (\vartheta_{x,t}  \geq n) \leq c \frac{1+\max\{t, 0\}}{n^{\beta}}.
\end{align}
\end{proposition}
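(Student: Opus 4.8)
\emph{Reductions.} The first inequality is immediate from $\{\vartheta_{x,t}=n\}\subseteq\{\vartheta_{x,t}\ge n\}$. Since $\{\vartheta_{x,t}\ge n\}=\{\vartheta_{x,t}>n-1\}$ and $t\mapsto\bb P(\vartheta_{x,t}>k)$ is non-decreasing (if $t\le t'$ and $t+S^x_j\ge0$ for $1\le j\le k$, then also $t'+S^x_j\ge0$), it is enough to produce $c,\beta>0$ with $\bb P(\vartheta_{x,t}>n)\le c(1+t)\,n^{-\beta}$ for all $x\in\bb X$, $t\ge0$, $n\ge1$, and $\beta$ may be shrunk afterwards. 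We use throughout that, by the strong centering \eqref{centering-001} and the moment bound \eqref{exp mom for f 001}, the process $(S^x_n)_{n\ge0}$ is a square-integrable $(\scr A_n)$-martingale with $\bb E[(S^x_n)^2]\le Cn$; in particular $\bb E\,S^x_{n\wedge\vartheta_{x,t}}=0$ by optional stopping.

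\emph{A Markov recursion.} From the effective central limit theorem \eqref{BEmart-001}, combined with $\Phi_{\mathbf v^2}(u)\le\tfrac12+u/(\mathbf v\sqrt{2\pi})$ for $u\ge0$ and Chebyshev's inequality (via $\bb E[(S^{x'}_\ell)^2]\le C\ell$, to discard the values $S^{x'}_\ell>K\sqrt\ell$ for a fixed large $K$), one gets absolute constants $q_0\in(0,1)$, $c_1>0$, $\epsilon>0$ such that, for all $x'\in\bb X$, $\ell\ge1$, $w\ge0$,
\[
\bb P\big(w+S^{x'}_k\ge0,\ 1\le k\le\ell\big)\ \le\ \bb P\big(S^{x'}_\ell\ge-w\big)\ \le\ q_0+\frac{c_1w}{\sqrt\ell}+\frac{c_1}{\ell^{\epsilon}}.
\]
Applying the Markov property of $(g_k)_{k\ge1}$ at $m=\lfloor n/2\rfloor$ and using $t+S^x_m\ge0$ on $\{\vartheta_{x,t}>m\}$, one obtains, with $R_m(x,t):=\bb E[(t+S^x_m)\,;\,\vartheta_{x,t}>m]$,
\[
\bb P(\vartheta_{x,t}>n)\ \le\ \Big(q_0+\frac{c_12^{\epsilon}}{n^{\epsilon}}\Big)\bb P(\vartheta_{x,t}>m)\ +\ \frac{c_1\sqrt2}{\sqrt n}\,R_m(x,t).
\]

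\emph{The height term.} The crucial point is that $R_m(x,t)\le C(1+t)$ with $C$ \emph{independent of $m$}. Writing $R_m(x,t)=t\,\bb P(\vartheta_{x,t}>m)+\bb E[S^x_m;\vartheta_{x,t}>m]$ and using $\bb E\,S^x_{m\wedge\vartheta_{x,t}}=0$, we get $\bb E[S^x_m;\vartheta_{x,t}>m]=\bb E[-S^x_{\vartheta_{x,t}};\vartheta_{x,t}\le m]$, and on $\{\vartheta_{x,t}=k\}$ one has $0\le -S^x_k\le t+\bigl|\sigma(g_k,g_{k-1}\cdots g_1x)\bigr|$ since $t+S^x_{k-1}\ge0$; so it suffices to bound $\sum_{k\ge1}\bb E\bigl[\bigl|\sigma(g_k,g_{k-1}\cdots g_1x)\bigr|\,;\,\vartheta_{x,t}=k\bigr]$ by an absolute constant. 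Conditioning the $k$-th jump on $\scr A_{k-1}$ and using \eqref{exp mom for f 001} bounds its contribution by $C\,\bb E\bigl[\1{\vartheta_{x,t}>k-1}\,e^{-\alpha_0(t+S^x_{k-1})/2}\bigr]$, whence the whole sum is at most $C\sum_{i\ge0}e^{-\alpha_0 i/2}\,N_i(x,t)$, where $N_i(x,t)$ is the expected number of $0\le j<\vartheta_{x,t}$ with $t+S^x_j\in[i,i+1)$. The heart of the matter is the bound $N_i(x,t)\le C(1+i)$, uniform in $x$ and $t$: starting from a height $\asymp i$ with $i$ large, the walk exits $[0,\infty)$ before returning above level $i$ with probability $\gtrsim 1/i$ — one first uses \eqref{BEmart-001} and Doob's maximal inequality to push the walk down by $\asymp\sqrt{\ell_0}$ (for a fixed large $\ell_0$, assuming $i\ge K_0\sqrt{\ell_0}$) without exiting, then applies optional stopping to the martingale $S$ between the barriers $\{<0\}$ and $\{\ge i\}$ — so the number of visits to each unit strip is dominated by a geometric variable of mean $O(i)$; the finitely many small $i$ are handled by the trivial fact that from a bounded height the walk exits within $O(1)$ steps with probability bounded below. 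Summing, $R_m(x,t)\le C(1+t)$.

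\emph{Iteration and the main obstacle.} Inserting $R_m(x,t)\le C(1+t)$ into the recursion and iterating along the dyadic scales $n_j=\lfloor n/2^j\rfloor$ ($0\le j\le J\asymp\log_2 n$): the multiplicative part $\prod_{j<J}\bigl(q_0+c_12^{\epsilon}n_j^{-\epsilon}\bigr)$ is $\le C\,q_0^{J}\asymp C\,n^{-\log_2(1/q_0)}$, while the accumulated additive errors form a geometric-type series of total size $\le C(1+t)\,n^{-\min\{1/2,\,\log_2(1/q_0)\}}$; this gives $\bb P(\vartheta_{x,t}>n)\le c(1+t)\,n^{-\beta}$ with $\beta=\min\{1/2,\log_2(1/q_0)\}$, which is \eqref{lem-moment of tau-001}. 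I expect the main obstacle to be exactly the uniform-in-$m$ bound $R_m(x,t)\le C(1+t)$, i.e.\ a quantitative tightness statement for the exit overshoot: the crude martingale estimate only yields $R_m(x,t)\lesssim t+\sqrt m$, which fed into the recursion produces nothing better than a bound of constant order, so the gambler's-ruin estimate $N_i(x,t)\le C(1+i)$ — combining the martingale structure, the central limit theorem \eqref{BEmart-001} and the exponential moment \eqref{exp mom for f 001} of the cocycle — is genuinely needed.
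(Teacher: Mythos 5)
Your overall architecture is sound, and you have correctly located the crux: everything hinges on the uniform-in-$m$ bound $R_m(x,t)=\bb E\big(t+S^x_m;\vartheta_{x,t}>m\big)\leq C(1+\max\{t,0\})$, which is exactly the paper's Corollary \ref{Corollary-Vn-m-001} for the function $V_m$. But your proof of that bound has a genuine gap. The occupation-time estimate $N_i(x,t)\leq C(1+i)$ rests on the claim that from any state $(x',y)$ with $y\in[i,i+1)$ the walk exits $[0,\infty)$ before revisiting the strip $[i,i+1)$ with probability $\gtrsim 1/i$, \emph{uniformly in $x'\in\bb X$}, and the sketch offered does not establish this. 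Doob's maximal inequality gives upper bounds on exceedance probabilities, not the lower bound you need on the event ``descend by $\asymp\sqrt{\ell_0}$ while never returning above the starting level''; that event is itself a persistence-type event whose uniform positivity over all $x'\in\bb X$ does not follow from \eqref{BEmart-001} and \eqref{exp mom for f 001} without an argument (the hypotheses give no uniform anti-concentration or support condition on the conditional increments $\sigma(g,x')$). Moreover, the geometric-domination step is not set up correctly: consecutive visits to $[i,i+1)$ can occur one time step apart, so the $\ell_0$-step block you use to push the walk down can itself contain many visits to the strip, and these must be counted. As written, the key lemma is asserted rather than proved, and since the crude martingale bound only gives $R_m\lesssim t+\sqrt m$ (which, as you note, is useless in the recursion), the proposal does not close.

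For comparison, the paper obtains $V_n(x,t)\leq c(1+\max\{t,0\})$ by an entirely different mechanism: the optional-stopping identity shows $(V_n)$ is non-decreasing in $n$ (Lemma \ref{Lem-Vn-increasing}), a stopping-time argument at the level $2n^{1/2-\ee}$ (the time $\nu_{n,x,t}$) combined with the refined bound $V_n(x,t)\leq(1+cn^{-\ee})t$ for $t\geq n^{1/2-\ee}$ yields the quasi-decrease $V_n\leq(1+cn^{-\ee})V_{[n^{1-\ee}]}+ce^{-cn^{\ee}}(1+\max\{t,0\})$ (Proposition \ref{Prop-Vn-001}), and the iteration Lemma \ref{lem-iter-proc-001} then delivers the uniform bound (Proposition \ref{Prop-Vn-m-001} and Corollary \ref{Corollary-Vn-m-001}). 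Once that bound is available, your dyadic recursion does work (and would even give $\beta$ close to $1/2$), but it is more elaborate than necessary: the paper simply splits $\bb P(\vartheta_{x,t}>n)$ on the event $\{|S^x_n|\leq n^{2\beta}\}$, bounds the first piece by the effective central limit theorem \eqref{BEmart-001} and the second by Chebyshev's inequality together with $V_n(x,t)\leq c(1+\max\{t,0\})$, which gives the claim for any $\beta<1/4$ in three lines. If you can supply a complete proof of $N_i\leq C(1+i)$ under the stated hypotheses, your route would be a legitimate alternative; as it stands, that is the missing piece.
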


It turns out that this proof is not straightforward. 
While a proof for the case of random walks on the general linear group can be found in \cite{GLP17}, 
the more general setting considered here is not covered by that result.  
We decided to include this proof here, 
since it requires techniques 
which will later be instructive for studying the exit time for random walks with perturbations depending on the future.

\subsection{Harmonic function for walks without perturbations}

The proof of Proposition \ref{Prop-vartheta} will rely on the properties of the following function: 
for $n \geq 1$, $x \in \bb X$ and $t \in \bb R$, 
\begin{align}\label{def-V-n-x-t}
V_n(x, t) = \bb E \left( t + S_n^x; \vartheta_{x,t} > n \right). 
\end{align}
Note that $(x,t)\mapsto V_n(x, t)$ is non-negative, and non-decreasing with respect to $t$. 
Clearly, the function $V_n$ coincides with the integrand appearing in the definition of
$U_n^{\mathfrak f}$ (see \eqref{def-U-f-n-t-001}), 
when the perturbation sequence $\mathfrak f = (f_n)_{n \geq 0}$ is identically zero.

In the sequel, it will be useful to note that, if the cocycle $\sigma$ admits finite exponential moments \eqref{exp mom for f 001}
and is centered \eqref{centering-001}, then the sequence $(S^x_n)_{n \geq 0}$ (see \eqref{random walk S^x_n-001}) 
with $S^x_0=0$ is a zero-mean martingale with respect to the natural filtration $(\mathscr A_n)_{n\geq 0}$.

\begin{lemma}\label{Lem-Vn-increasing}
Suppose that the cocycle $\sigma$ admits finite exponential moments \eqref{exp mom for f 001}
and is centered \eqref{centering-001}. 
Then the sequence $(V_n)_{n \geq 1}$ is non-decreasing. 
More precisely, for any $1 \leq n \leq m$, $x \in \bb X$ and $t \in \bb R$, we have 
\begin{align*}
\max\{t, 0\} \leq V_n(x, t) \leq V_m(x, t). 
\end{align*}
\end{lemma}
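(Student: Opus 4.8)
The plan is to establish the two inequalities separately, both relying on the martingale property of $(S^x_n)_{n\ge 0}$ together with the optional stopping theorem applied at the bounded stopping time $\vartheta_{x,t}\wedge n$. First I would record the lower bound $V_n(x,t)\ge \max\{t,0\}$. Since $S^x_0=0$ and $(S^x_k)_{k\ge 0}$ is a zero-mean martingale with respect to $(\mathscr A_k)_{k\ge 0}$, the optional stopping theorem gives $\mathbb E\big(t+S^x_{\vartheta_{x,t}\wedge n}\big)=t$ for every $n\ge 1$. Splitting this expectation over the events $\{\vartheta_{x,t}>n\}$ and $\{\vartheta_{x,t}\le n\}$ yields
\begin{align*}
V_n(x,t) = \mathbb E\big(t+S^x_n;\vartheta_{x,t}>n\big) = t - \mathbb E\big(t+S^x_{\vartheta_{x,t}};\vartheta_{x,t}\le n\big).
\end{align*}
On the event $\{\vartheta_{x,t}\le n\}$ we have $t+S^x_{\vartheta_{x,t}}<0$ by the very definition of $\vartheta_{x,t}$ in \eqref{stopping time theta 001}, so the subtracted term is $\le 0$ and hence $V_n(x,t)\ge t$. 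Combined with the obvious pointwise bound $V_n(x,t)\ge 0$ (the integrand $t+S^x_n$ is non-negative on $\{\vartheta_{x,t}>n\}$, since $\vartheta_{x,t}>n$ forces $t+S^x_k\ge 0$ for all $k\le n$), this gives $V_n(x,t)\ge\max\{t,0\}$.

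Next I would prove monotonicity $V_n(x,t)\le V_m(x,t)$ for $n\le m$. The key identity is that one further step of the walk, combined with the cocycle property \eqref{def-cocycle} and the centering \eqref{centering-001}, relates $V_{n+1}$ to $V_n$; it suffices to treat $m=n+1$ and iterate. Writing $T$ for the shift on $\Omega\times\bb X$ as in \eqref{def-T-Omega-X}, on the event $\{\vartheta_{x,t}>n\}$ one has $t+S^x_n\ge 0$, and conditioning on $\mathscr A_n$ and using the Markov property of the random walk on $\bb X$ together with the fact that $\sigma$ is centered,
\begin{align*}
\mathbb E\big(t+S^x_{n+1}; \vartheta_{x,t}>n\big) = \mathbb E\big(t+S^x_n; \vartheta_{x,t}>n\big) = V_n(x,t),
\end{align*}
since the increment $\sigma(g_{n+1},g_n\cdots g_1 x)$ has conditional mean zero. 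On the other hand $\{\vartheta_{x,t}>n+1\}\subset\{\vartheta_{x,t}>n\}$, so
\begin{align*}
V_{n+1}(x,t) = \mathbb E\big(t+S^x_{n+1};\vartheta_{x,t}>n+1\big) = \mathbb E\big(t+S^x_{n+1};\vartheta_{x,t}>n\big) - \mathbb E\big(t+S^x_{n+1};\vartheta_{x,t}=n+1\big).
\end{align*}
On the event $\{\vartheta_{x,t}=n+1\}$ one has $t+S^x_{n+1}<0$, hence the last expectation is $\le 0$, and therefore $V_{n+1}(x,t)\ge \mathbb E\big(t+S^x_{n+1};\vartheta_{x,t}>n\big)=V_n(x,t)$. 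Iterating from $n$ to $m$ gives $V_n(x,t)\le V_m(x,t)$.

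I expect the main point requiring care to be the justification that optional stopping / the tower property may be applied, i.e. the uniform integrability needed to interchange limits and expectations; here this is harmless because all the stopping times in play, $\vartheta_{x,t}\wedge n$, are bounded, and the finite exponential moment assumption \eqref{exp mom for f 001} guarantees $\mathbb E\big(\sup_{k\le n}|S^x_k|\big)<\infty$, so every expectation above is absolutely convergent. A secondary subtlety is simply keeping the conditioning straight: the increment from step $n$ to step $n+1$ depends on $g_{n+1}$ and on the position $g_n\cdots g_1 x\in\bb X$, and one must invoke the cocycle identity \eqref{def-cocycle} to see that its $\mathscr A_n$-conditional expectation is $\int_{\bb G}\sigma(g,g_n\cdots g_1 x)\,\mu(dg)=0$ by \eqref{centering-001}, uniformly in the current position. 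No approximation or analytic estimate is needed; the lemma is purely a consequence of the martingale structure and the definition of the exit time.
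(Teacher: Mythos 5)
Your proof is correct and follows essentially the same route as the paper: both bounds come from the martingale property of $(S^x_n)_{n\geq 0}$ together with the optional stopping identity $V_n(x,t)=t-\bb E\big(t+S^x_{\vartheta_{x,t}};\vartheta_{x,t}\leq n\big)$ and the observation that the overshoot $t+S^x_{\vartheta_{x,t}}$ is negative. The only cosmetic difference is that you phrase the upper bound as a one-step induction $V_{n+1}\geq V_n$, whereas the paper reads off monotonicity directly from the fact that $-\bb E\big(t+S^x_{\vartheta_{x,t}};\vartheta_{x,t}\leq n\big)$ is non-decreasing in $n$.
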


\begin{proof}
Since the sequence $(S^x_n)_{n \geq 0}$ is a zero-mean martingale,
we can apply the optional stopping theorem to obtain the following expression for $V_n(x, t)$: 
\begin{align}\label{optional-stp-iden}
V_n(x, t) = \bb E \left( t + S_n^x; \vartheta_{x,t} > n \right) 
&= t - \bb E \left( t + S_n^x; \vartheta_{x,t} \leq n \right)  \notag \\
&= t - \bb E \left( t + S_{ \vartheta_{x,t} }^x; \vartheta_{x,t} \leq n \right). 
\end{align}
The lower bound follows as the random variable $t + S_{ \vartheta_{x,t} }^x$ is negative. 
The upper bound also follows from \eqref{optional-stp-iden} since $- \bb E ( t + S_{ \vartheta_{x,t} }^x; \vartheta_{x,t} \leq n )$
is increasing in $n$. 
\end{proof}

The key property of the sequence $(V_n)_{n \geq 1}$ is that   
a kind of converse to Lemma \ref{Lem-Vn-increasing} also holds true. 
More precisely, we will  show that the sequence $(V_n)_{n \geq 1}$ is also quasi-decreasing in the sense stated
in the following proposition: 

\begin{proposition} \label{Prop-Vn-m-001}
Suppose that the cocycle $\sigma$ admits finite exponential moments \eqref{exp mom for f 001}
and is centered \eqref{centering-001}. 
Suppose also that the effective central limit theorem \eqref{BEmart-001} is satisfied.
Then, there exist constants $b, A >0$ such that for any $1 \leq n \leq m$, $x\in \bb X$ 
and $t \in \bb R,$ 
\begin{align*}
V_{m} (x, t)  \leq  V_{n} (x, t)  + A n^{- b} (1+ \max\{t, 0\}).
\end{align*}
\end{proposition}

From Proposition \ref{Prop-Vn-m-001}, we derive the following two corollaries concerning the asymptotic behavior of the function $V_n$. 

\begin{corollary} \label{Corollary-Vn-m-001}
Suppose that the cocycle $\sigma$ admits finite exponential moments \eqref{exp mom for f 001}
and is centered \eqref{centering-001}. 
We also suppose that the effective central limit theorem \eqref{BEmart-001} is satisfied.
Then, there exists a constant $c > 0$ such that for any $n \geq 1$, $x\in \bb X$ and $t \in \bb R,$ 
\begin{align*}
V_{n} (x, t)  \leq  c(1+ \max\{t, 0\}).
\end{align*}
\end{corollary}

\begin{proof}
By Proposition \ref{Prop-Vn-m-001}, for any $n \geq 1$, $x\in \bb X$ and $t \in \bb R,$ 
we get
\begin{align*}
V_{n} (x, t) 
 \leq  V_{1} (x, t)  + A (1+ \max\{t, 0\}). 
\end{align*}
Recall that $V_1(x, t) = \bb E \left( t + \sigma(g_1, x); t + \sigma(g_1, x) \geq 0 \right).$
If $t \leq 0$, we have $V_{1} (x, t) \leq \bb E |\sigma(g_1, x)| \leq c$.  
If $t> 0$, we have $V_{1} (x, t) \leq t + \bb E |\sigma(g_1, x)| \leq t + c$.  
The conclusion follows. 
\end{proof}

\begin{corollary} \label{Corollary-Vn-m-002}
Suppose that the cocycle $\sigma$ admits finite exponential moments \eqref{exp mom for f 001}
and is centered \eqref{centering-001}. 
We also suppose that the effective central limit theorem \eqref{BEmart-001} is satisfied.
Then, there exists $c > 0$ such that for any $n \geq 1$, $x\in \bb X$ and $t \leq 0,$ 
\begin{align*}
V_{n} (x, t)  \leq  c e^{-\alpha_0 |t|}, 
\end{align*}
where $\alpha_0 >0$ is the exponent from the moment assumption \eqref{exp mom for f 001}. 
\end{corollary}

\begin{proof}
Using the Markov property and the definition of the function $V_n$ in \eqref{def-V-n-x-t}, we get 
\begin{align*}
V_n(x, t) = \bb E \Big( V_{n-1} (g_1 x, t + \sigma(g_1, x));  t + \sigma(g_1, x) \geq 0 \Big), 
\end{align*}
where for $n = 1$ we have written $V_0(x, t) = \max\{t, 0\}$. 
By Corollary \ref{Corollary-Vn-m-001}, 
there exists a constant $c > 0$ such that for any $n \geq 1$, $x\in \bb X$ and $t \leq 0,$ 
\begin{align*}
V_n(x, t) \leq  c \bb E \Big( t + \sigma(g_1, x);  t + \sigma(g_1, x) \geq 0 \Big)
& = c \int_{0}^{\infty} \bb P \left( t + \sigma(g_1, x) \geq s \right) ds \notag\\
& = c \int_{-t}^{\infty} \bb P \left( \sigma(g_1, x) \geq s \right) ds. 
\end{align*}
The conclusion now follows from the assumption \eqref{exp mom for f 001} and Markov's inequality. 
\end{proof}

Proposition \ref{Prop-Vn-m-001}, in conjunction with Lemma \ref{Lem-Vn-increasing}, allows us to establish Theorem \ref{Pro-Appendix-Final2-Inequ} 
for the case where the perturbation sequence $\mathfrak f = (f_n)_{n \geq 0}$ is identically zero. 

The key point in the proof of Proposition \ref{Prop-Vn-m-001} is the following 
quasi-decreasing behaviour of $V_n$, 
which is inspired by the results in \cite{DW15, GLP17, GLL18Ann}.

\begin{proposition} \label{Prop-Vn-001}
Suppose that the cocycle $\sigma$ admits finite exponential moments \eqref{exp mom for f 001}
and is centered \eqref{centering-001}. 
We also suppose that the effective central limit theorem \eqref{BEmart-001} is satisfied.
Then, there exist constants $\ee, c>0$ such that for any $n \geq 1$, $x\in \bb X$ 
and $t \in \bb R,$ 
\begin{align}\label{bound-Vn-001-0}
V_{n} (x, t) 
 \leq \left( 1+\frac{c }{n^{\ee }}\right)  
   V_{[n^{1-\ee }] } (x, t)  + c e^{- c n^{\ee}} \left( 1 + \max\{t, 0\} \right).
\end{align}
\end{proposition}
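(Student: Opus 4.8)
The plan is to compare $V_n(x,t)$ with $V_m(x,t)$, where $m=[n^{1-\ee}]$, through the optional stopping identity \eqref{optional-stp-iden} and a careful bound on the overshoot of the stopped martingale. It suffices to treat $t\ge 0$: the case $t\le 0$ reduces to it by conditioning on the first step, which on $\{\vartheta_{x,t}\ge 1\}$ must satisfy $S^x_1\ge|t|$, an event of probability $\le Ce^{-\alpha_0|t|}$ by \eqref{exp mom for f 001}, so that both $V_n(x,t)$ and $V_n(x,t)-V_m(x,t)$ are then $\le Ce^{-\alpha_0|t|}$ and \eqref{bound-Vn-001-0} holds trivially. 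For $t\ge 0$, from \eqref{optional-stp-iden} and $m\le n$,
\begin{align*}
0\le V_n(x,t)-V_m(x,t)=\sum_{k=m+1}^{n}\bb E\bigl(-(t+S^x_k);\ \vartheta_{x,t}=k\bigr).
\end{align*}
On $\{\vartheta_{x,t}=k\}$ one has $t+S^x_{k-1}\ge 0>t+S^x_k$, so $-(t+S^x_k)\le\sup_{y\in\bb X}|\sigma(g_k,y)|$; conditioning on $\scr A_{k-1}$, using the independence of $g_k$ and \eqref{exp mom for f 001} (which gives $\bb E(\sup_y|\sigma(g,y)|;\ \sigma(g,z)<-s)\le Ce^{-\alpha_0 s/2}$ for $s\ge 0$), one obtains
\begin{align*}
V_n(x,t)-V_m(x,t)\le C\sum_{j=m}^{n-1}\bb E\bigl(e^{-c(t+S^x_j)};\ \vartheta_{x,t}>j\bigr)=:C\sum_{j=m}^{n-1}I_j .
\end{align*}

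Next I would estimate each term $I_j$. Since $t+S^x_j\ge 0$ on $\{\vartheta_{x,t}>j\}$, fixing a threshold $\lambda=n^{\ee}$ (chosen below $\sqrt m$, which forces $\ee<\tfrac13$) gives $I_j\le\bb P(\vartheta_{x,t}>j,\ 0\le t+S^x_j\le\lambda)+e^{-c\lambda}\bb P(\vartheta_{x,t}>j)$. The second piece is handled by the martingale deviation inequality for $(S^x_k)$, itself a consequence of \eqref{exp mom for f 001} via Doob's inequality applied to $e^{\pm\alpha S^x_k}$: summing, $\sum_{j=m}^{n-1}e^{-c\lambda}\bb P(\vartheta_{x,t}>j)\le n\,e^{-cn^{\ee}}$, which after adjusting constants is of the required form $ce^{-cn^{\ee}}(1+\max\{t,0\})$.

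The heart of the argument — and the main obstacle — is the bound
\begin{align*}
\sum_{j=m}^{n-1}\bb P\bigl(\vartheta_{x,t}>j,\ 0\le t+S^x_j\le n^{\ee}\bigr)\le c\,n^{-\ee}\bigl(1+\max\{t,0\}\bigr)+(\text{exponentially small}).
\end{align*}
Heuristically, conditioned on staying non-negative the walk is of size $\asymp\sqrt j\gg n^{\ee}$ at time $j$, so being in $[0,n^{\ee}]$ is atypical and $\bb P(\vartheta_{x,t}>j,\ 0\le t+S^x_j\le n^{\ee})$ should be of order $(1+t)\,n^{2\ee}j^{-3/2}$, making the sum $\asymp(1+t)\,n^{2\ee}m^{-1/2}=(1+t)\,n^{2\ee-(1-\ee)/2}$, which is $\le c\,n^{-\ee}(1+t)$ as soon as $\ee$ is small. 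The difficulty is that a sharp bound on $\bb P(\vartheta_{x,t}>j)$ is precisely what Proposition \ref{Prop-vartheta} later provides, so one must break this circularity from scratch. I would do so by a bootstrap: starting from the crude estimate $V_k(x,t)\le\max\{t,0\}+C\sqrt k$ (immediate from the martingale property and \eqref{exp mom for f 001}) and from the uniform lower bound $\bb P(\vartheta_{x',0}>\ell)\ge c\ell^{-1/2}$ (which follows from the effective central limit theorem \eqref{BEmart-001}), one applies the Markov property at an intermediate time together with \eqref{BEmart-001} over a further window of length $\asymp j$ — during which the walk, started at height $\asymp\sqrt j$, overshoots $n^{\ee}$ while staying non-negative with probability bounded below uniformly in $x$ — to improve progressively the exponent $\beta$ in $V_k(x,t)\le C(1+\max\{t,0\})k^{\beta}$, and correspondingly the decay rate in $\bb P(\vartheta_{x,t}>j,\ t+S^x_j\ \text{small})$, until these are small enough for the sum to converge as wanted. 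Turning this heuristic into a quantitative, uniform (in $x$) argument is where the real work lies.

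Finally, to recast the additive error as a multiplicative one I would use that $V_m(x,t)\ge\max\{t,0\}$ and, by \eqref{BEmart-001}, $V_m(x,t)\ge V_1(x,t)\ge\int_{\bb G}(\sigma(g,x))^+\,\mu(dg)\ge c_1>0$ uniformly in $x$ for all $m\ge 1$; hence $V_m(x,t)\ge c_2(1+\max\{t,0\})$, so $c\,n^{-\ee}(1+\max\{t,0\})\le (c/c_2)\,n^{-\ee}V_m(x,t)$, and assembling the three estimates yields \eqref{bound-Vn-001-0}.
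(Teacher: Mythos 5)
Your reduction to
$V_n(x,t)-V_m(x,t)\leq C\sum_{j=m}^{n-1}\bb E\bigl(e^{-c(t+S^x_j)};\,\vartheta_{x,t}>j\bigr)$
is correct, but the estimate you yourself identify as ``the heart of the argument'' --- that $\sum_{j=m}^{n-1}\bb P(\vartheta_{x,t}>j,\;0\leq t+S^x_j\leq n^{\ee})\leq c\,n^{-\ee}(1+\max\{t,0\})$ --- is never proved, and I do not see how your sketch closes it. The bound $\bb P(\vartheta_{x,t}>j,\ t+S^x_j\in[0,\lambda])\lesssim (1+t)\lambda\, j^{-3/2}$ is a conditioned local-limit estimate, which is essentially the deepest result of this line of work; even a weaker factorized version needs a quantitative upper bound on $\bb P(\vartheta_{x,t}>j)$, which is exactly Proposition \ref{Prop-vartheta}, itself deduced \emph{from} the present proposition (via Corollaries \ref{Corollary-Vn-m-001} and Proposition \ref{Prop-Vn-m-001}). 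Your proposed bootstrap to break this circularity is left entirely heuristic, and moreover an anti-concentration step based on the effective CLT \eqref{BEmart-001} produces an error $j^{-\epsilon}$ per term, which summed over $j\in[m,n]$ gives $n\,m^{-\epsilon}$ and is not small. The paper avoids this difficulty altogether: it first proves the ``high-level'' bound $V_k(x',t')\leq(1+c k^{-\ee})t'$ for $t'\geq k^{1/2-\ee}$ (Lemma \ref{Lemma 1-0} plus optional stopping, needing only an overshoot estimate), then stops the walk at the first exit time $\nu_{n,x,t}$ from the strip $|t+S^x_k|<2n^{1/2-\ee}$, which occurs before $n^{1-\ee}$ up to an exponentially small event (Lemma \ref{Lemma 2-0}); the Markov property at $\nu_{n,x,t}$ plus the high-level bound and a submartingale inequality then yield $(1+cn^{-\ee})V_{[n^{1-\ee}]}(x,t)$ directly, with no boundary-layer occupation estimate needed.

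A second, smaller gap is your final absorption step. Your method produces an additive error $c\,n^{-\ee}(1+\max\{t,0\})$, whereas the statement requires $c\,e^{-cn^{\ee}}(1+\max\{t,0\})$, so you must absorb the polynomial part into the multiplicative factor via $V_m(x,t)\geq c_2(1+\max\{t,0\})$. The inequality $V_m(x,t)\geq\max\{t,0\}$ is available (Lemma \ref{Lem-Vn-increasing}), but the uniform lower bound $V_1(x,t)\geq\int_{\bb G}\sigma(g,x)^+\mu(dg)\geq c_1>0$ is not justified in the abstract framework of Section \ref{sec invar func}: the hypotheses \eqref{exp mom for f 001}--\eqref{BEmart-001} do not exclude that $\sigma(\cdot,x)\equiv 0$ $\mu$-a.s.\ for some particular $x$, in which case $V_1(x,0)=0$. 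So even granting the central estimate, the stated conclusion with an exponentially small additive remainder does not follow from your argument as written.
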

Propositions \ref{Prop-Vn-001} and \ref{Prop-Vn-m-001}  will be proven in 
Subsections \ref{sec-proof of proposition upper bound harm fun-001}  and  \ref{subsec: Quasi-monotonicity bounds for sequences}
below, respectively.


\subsection{Properties of the random walk near the level $\sqrt{n}$} \label{sec-control-high-level-00}

Let $\ee >0$, $x\in \bb X$ and $t\in \bb R.$ 
Consider the first time $\nu_{n,x,t}$
when $\left\vert t+ S_{k}^x  \right\vert $ exceeds $2 n^{1/2-\ee }:$ 
\begin{align}\label{nu n}
\nu_{n,x,t} = \min \left\{ k\geq 1: \left|t + S_{k}^x  
\right|  \geq 2 n^{1/2-\ee }\right\} .  
\end{align}

\begin{lemma} \label{Lemma 2-0}
There exists a constant $\beta >0$ 
such that for any $\ee \in (0,\frac{1}{2})$, $n\geq 1$,  $\ell \geq 1$, $x \in \bb X$ and $t\in \bb R$, 
\begin{align*}
\mathbb{P} \left( \nu_{n,x,t} > \ell \right) 
\leq  2 \exp \left(- \frac{\beta \ell }{n^{1-2\ee}} \right).
\end{align*}
\end{lemma}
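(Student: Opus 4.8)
The plan is to prove a slightly stronger, more robust statement: started from any position, within a time of order $n^{1-2\ee}$ the walk leaves the band $[-2n^{1/2-\ee},2n^{1/2-\ee}]$ (translated by $-t$) with probability bounded away from $0$, uniformly in the starting point; iterating this over disjoint time blocks via the Markov property then yields the asserted exponential decay in $\ell$. The uniform one‑block estimate will be extracted from the effective central limit theorem \eqref{BEmart-001}, by rescaling a block of length $\asymp n^{1-2\ee}$ by its natural scale $\asymp n^{1/2-\ee}$.

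Concretely, I would fix a large integer $\kappa$ (to be chosen) and set $L=\lceil\kappa\, n^{1-2\ee}\rceil$, so that $\kappa\le L\le(\kappa+1)n^{1-2\ee}$ using $n^{1-2\ee}\ge1$. The key lemma to establish is that, for $\kappa$ large enough (depending only on the constants $\bf{v},c,\epsilon$ of \eqref{BEmart-001}),
\begin{align*}
\sup_{x'\in\bb X}\bb P\bigl(|S^{x'}_{L}|<4\,n^{1/2-\ee}\bigr)\le\tfrac12
\end{align*}
uniformly in $n\ge1$ and $\ee\in(0,\tfrac12)$. Indeed, with $r:=4n^{1/2-\ee}/\sqrt L\le4/\sqrt\kappa$, applying \eqref{BEmart-001} on $[-r,r]$ and using $L\ge\kappa$ and $\phi_{\bf{v}^2}\ge0$ bounds this probability by $\int_{-4/\sqrt\kappa}^{4/\sqrt\kappa}\phi_{\bf{v}^2}(u)\,du+c\kappa^{-\epsilon}$, and both terms tend to $0$ as $\kappa\to\infty$; one then fixes $\kappa$ making this at most $\tfrac12$.

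For the iteration, assume first $\ell\ge2L$ and set $N=\lfloor\ell/L\rfloor\ge2$. For $2\le j\le N$ I would write, by the cocycle identity, $S^x_{jL}-S^x_{(j-1)L}=\sigma(g_{jL}\cdots g_{(j-1)L+1},\,g_{(j-1)L}\cdots g_1x)$, which conditionally on $\scr A_{(j-1)L}$ has the law of $S^{y}_{L}$ with $y=g_{(j-1)L}\cdots g_1x$ since the $g_i$ are i.i.d.; in particular its conditional probability of lying in $(-4n^{1/2-\ee},4n^{1/2-\ee})$ is $\le\tfrac12$ by the one‑block estimate. On the event $\{\nu_{n,x,t}>\ell\}$ one has $|t+S^x_k|<2n^{1/2-\ee}$ for all $1\le k\le\ell$, and for $2\le j\le N$ both $(j-1)L$ and $jL$ lie in $\{1,\dots,\ell\}$, whence $|S^x_{jL}-S^x_{(j-1)L}|<4n^{1/2-\ee}$; thus $\{\nu_{n,x,t}>\ell\}\subseteq\bigcap_{j=2}^{N}G_j$ with $G_j=\{|S^x_{jL}-S^x_{(j-1)L}|<4n^{1/2-\ee}\}$. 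Conditioning successively on $\scr A_{(N-1)L},\dots,\scr A_{L}$ gives $\bb P(\nu_{n,x,t}>\ell)\le\bb P\bigl(\bigcap_{j=2}^{N}G_j\bigr)\le2^{-(N-1)}$, and since $N-1\ge\ell/(4L)\ge\ell/\bigl(4(\kappa+1)n^{1-2\ee}\bigr)$ this yields the claim with $\beta=(\ln2)/\bigl(4(\kappa+1)\bigr)$. When $\ell<2L$ one has $\ell/n^{1-2\ee}<2(\kappa+1)$, so $2\exp(-\beta\ell/n^{1-2\ee})>2\exp(-\tfrac12\ln2)=\sqrt2\ge1\ge\bb P(\nu_{n,x,t}>\ell)$, and the bound holds trivially; this is exactly what the factor $2$ in the statement is for.

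I do not expect a serious obstacle: the proof is essentially a block‑splitting plus Markov‑property argument. The only point needing care is the calibration of $L$ — it must be proportional to $n^{1-2\ee}$ with a constant $\kappa$ that is large \emph{and independent of $n$ and $\ee$}, so that after rescaling by $\sqrt L$ the fixed displacement $4n^{1/2-\ee}$ becomes the small number $4/\sqrt\kappa$, making the captured Gaussian mass bounded away from $1$; the choice $L\ge\kappa$ simultaneously keeps the Berry–Esseen error in \eqref{BEmart-001} uniformly small, which is where the uniformity over $\bb X$ in that hypothesis is genuinely used.
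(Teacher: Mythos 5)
Your proposal is correct and takes essentially the same approach as the paper. The paper factors the argument through an intermediate statement (Lemma \ref{lemma-sup  S_k -001}) that bounds $\sup_{x,t}\bb P(\sup_{1\le k\le n}|t+S^x_k|\le M)$ by $2e^{-\beta n/M^2}$, proved by exactly your block-splitting scheme (blocks of length $m\asymp M^2$, the effective CLT \eqref{BEmart-001} giving a uniform one-block estimate $\le\tfrac12$, and iteration via the cocycle/Markov property), and then specializes $M=2n^{1/2-\ee}$ to deduce Lemma \ref{Lemma 2-0}; you instead plug in $M\asymp n^{1/2-\ee}$ from the outset and track increments $|S^x_{jL}-S^x_{(j-1)L}|$ rather than the shifted process values $|t+S^x_{km}|$, but after taking the supremum over the shift these are the same computation and the constants come out the same way.
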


In the proof of this lemma we need the following assertion. 

\begin{lemma} \label{lemma-sup  S_k -001} 
There exists a constant $\beta > 0$ such that for any $M \geq 1$ and $n\geq 1$, 
\begin{align*} 
\sup_{x\in \bb X} \sup_{t \in \bb R}
\bb P\left( \sup_{1\leq k \leq n}  | t + S^x_k | \leq M   \right) \leq 2 e^{- \beta \frac{n}{M^2}}.
\end{align*}
 \end{lemma}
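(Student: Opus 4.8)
The plan is to reduce the supremum-over-a-window estimate to a product of block estimates, using the martingale property together with the effective CLT \eqref{BEmart-001}. First I would fix a block length $L = L(M)$ of order $M^2$ (with a suitably large implied constant), and split the window $\{1, \ldots, n\}$ into $\lfloor n/L \rfloor$ consecutive blocks of length $L$. On the event $\{\sup_{1 \le k \le n} |t + S^x_k| \le M\}$, in particular for each block index $j$ the increment $S^x_{jL} - S^x_{(j-1)L}$ has absolute value at most $2M$, because both endpoints lie in $[-t - M, -t + M]$ shifted appropriately; more precisely $|S^x_{jL} - S^x_{(j-1)L}| = |(t + S^x_{jL}) - (t + S^x_{(j-1)L})| \le 2M$. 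So the event in question is contained in $\bigcap_{j=1}^{\lfloor n/L\rfloor} \{ |S^x_{jL} - S^x_{(j-1)L}| \le 2M \}$.

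Next I would estimate the probability of each block event. By the Markov property of the underlying i.i.d. sequence $(g_i)$ and the cocycle identity \eqref{def-cocycle}, conditionally on $\mathscr A_{(j-1)L}$ the increment $S^x_{jL} - S^x_{(j-1)L}$ has the same law as $S^{x'}_L$ for $x' = g_{(j-1)L} \cdots g_1 x$. Applying the effective CLT \eqref{BEmart-001} with $a_1 = -2M/\sqrt{L}$, $a_2 = 2M/\sqrt{L}$ gives
\begin{align*}
\mathbb{P}\left( |S^{x'}_L| \le 2M \right) \le \int_{-2M/\sqrt L}^{2M/\sqrt L} \phi_{\bf v^2}(u) \, du + \frac{c}{L^{\epsilon}} \le \frac{C M}{\sqrt L} + \frac{c}{L^{\epsilon}},
\end{align*}
uniformly in $x'$. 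Choosing $L = K M^2$ with $K$ a large fixed constant makes $CM/\sqrt L \le C/\sqrt K \le 1/4$, and for $M$ (hence $L$) large enough $c/L^\epsilon \le 1/4$, so each block event has conditional probability at most $1/2$. Multiplying over the $\lfloor n/L \rfloor$ blocks (peeling off one conditional factor at a time via the tower property) yields
\begin{align*}
\mathbb{P}\left( \sup_{1 \le k \le n} |t + S^x_k| \le M \right) \le 2^{-\lfloor n/L \rfloor} \le 2 \cdot 2^{-n/L} = 2 \exp\left( -\tfrac{\log 2}{K} \cdot \tfrac{n}{M^2} \right),
\end{align*}
which gives the claim with $\beta = (\log 2)/K$. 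The small-$M$ case (finitely many values of $M$ with $L$ not yet large) is absorbed by enlarging the constant in front, or simply noting the bound is trivial when $2 e^{-\beta n/M^2} \ge 1$.

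The main obstacle is making the block argument genuinely uniform in $x$ and $t$: the CLT \eqref{BEmart-001} is stated uniformly in $x \in \bb X$, which is exactly what is needed so that the conditional bound $\mathbb{P}(|S^{x'}_L| \le 2M \mid \mathscr A_{(j-1)L}) \le 1/2$ holds regardless of the (random) base point $x' = g_{(j-1)L}\cdots g_1 x$; and the translation parameter $t$ disappears once we rewrite everything in terms of increments, so uniformity in $t$ is automatic. The only mild care needed is that the block threshold $2M$ must be matched to a block length $L \asymp M^2$ so that $2M/\sqrt{L}$ is a fixed small number — choosing the constants in the right order ($K$ large, then $M$ large) is the one place where one must be slightly careful.
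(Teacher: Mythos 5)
Your proposal is correct and follows essentially the same route as the paper: the paper also partitions $\{1,\dots,n\}$ into blocks of length $m\asymp M^2$, restricts to the subsequence $S^x_{km}$, and iterates via the cocycle/Markov property to reduce to $\sup_{x',t'}\bb P(|t'+S^{x'}_m|\le M)\le 1/2$, which it obtains from the effective CLT \eqref{BEmart-001}. Your phrasing in terms of block increments $|S^x_{jL}-S^x_{(j-1)L}|\le 2M$ is an equivalent reformulation of the paper's sup over the shift $t'$, and your treatment of the trivial small-$M$ regime is fine.
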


\begin{proof}
Let $m = [\gamma^{-2} M^2]$ and
$K= \left[ n/ m \right]$, where $\gamma > 0$ will be chosen later.
It is easy to see that, for any $x\in \bb X$ and $t \in \bb R$,
\begin{align}
\mathbb{P}\left( \max_{1\leq k\leq n}\left\vert t + S^x_{k}\right\vert \leq  M \right)   
\leq \mathbb{P}\left( \max_{1\leq k\leq K}\left\vert t+S^x_{km}\right\vert \leq M \right) .  \label{nu000}
\end{align}
Using the cocycle property \eqref{def-cocycle}, it follows that 
\begin{align*}
\mathbb{P}\left( \max_{1\leq k\leq K}\left\vert t +S^x_{km}\right\vert \leq M \right)  
\leq \mathbb{P}\left( \max_{1\leq k\leq K-1}\left\vert t +S^x_{km}\right\vert \leq M \right) 
\sup_{x'\in \bb X} \sup_{t'\in \mathbb{R}} \mathbb{P} \left( \left\vert t'+S^{x'}_{m}\right\vert \leq M \right) ,
\end{align*}
from which iterating, we get
\begin{align} \label{Piterations-001}
\mathbb{P} \left( \max_{1\leq k\leq K}\left\vert t +S^x_{km}\right\vert \leq  M \right) 
 \leq \left( \sup_{x'\in \bb X} \sup_{t'\in \mathbb{R}} \mathbb{P} \left( \left\vert t'+S^{x'}_{m}\right\vert \leq M \right) \right) ^{K}.
\end{align}
By the effective central limit theorem \eqref{BEmart-001}, 
there exist constants $c, \bf{v}, \epsilon > 0$ such that for any $x' \in \bb X$, $t' \in \bb R$ and $m \geq 1$, 
\begin{align*}
\mathbb{P}\left( \left|  t' +S^{x'}_{m} \right| \leq M \right) 
 = \mathbb{P} \left( \frac{S^{x'}_{m}}{\sqrt{m}} \in \left[ \frac{-M - t' }{\sqrt{m}},  \frac{M- t'}{\sqrt{m}} \right] \right)  
 \leq  \int_{  \frac{-M-t'}{\sqrt{m}} }^{ \frac{M-t'}{\sqrt{m} } }  \phi_{\bf{v}^2} (u) du  + \frac{c}{ m^{\epsilon} }
\leq  \frac{1}{2}, 
\end{align*}
where in the last inequality we take $\gamma >0$ sufficiently small so that 
$\frac{2M}{\sqrt{m}} + \frac{c}{ m^{\epsilon} } \leq 4 \gamma + c (2 \gamma^2)^{\epsilon}  \leq \frac{1}{2}$. 
The assertion of the lemma follows from \eqref{Piterations-001}. 
\end{proof}

\begin{proof}[Proof of Lemma \ref{Lemma 2-0}]
By Lemma \ref{lemma-sup  S_k -001}, there exists a constant $\beta>0$ such that for any $n \geq 1$, $\ell \geq 1$, $x \in \bb X$ and $t \in \bb R$,
\begin{align*} 
\mathbb{P} \left( \nu_{n,x,t}> \ell \right) 
= \mathbb{P} \left( \sup_{ 1\leq k \leq \ell } |t + S_k^{x} |  \leq  2 n^{1/2-\ee  }  \right) 
\leq 2 \exp \left(- \frac{\beta \ell}{4 n^{1-2\ee}}\right), 
\end{align*}
completing the proof of the lemma, by replacing $\beta/4$ with $\beta$. 
\end{proof}

\begin{lemma}\label{Lem-nu-less-n-0} 
\label{Lemma-nu-lower}For any $\ee \in (0,\frac{1}{4})$, there exist constants $c, c_{\ee} >0$ 
such that for any $n\geq 1$, $x\in \bb X$ and  $t\geq - n^{\ee}$, 
\begin{align*}
\bb{P} \left( \nu_{n,x,t} \leq n^{1/2-\ee } - t \right) 
\leq c \exp \left( -c_{\ee }n^{\ee/2}   \right). 
\end{align*}
\end{lemma}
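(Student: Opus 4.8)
The plan is to prove Lemma \ref{Lem-nu-less-n-0} by showing that, for the walk started from a level $t \geq -n^{\ee}$, it is very unlikely that the process $|t + S_k^x|$ reaches the high level $2n^{1/2-\ee}$ within only $n^{1/2-\ee} - t \leq n^{1/2-\ee} + n^{\ee}$ steps. Since $\ee < 1/4$, this number of steps is at most of order $n^{1/2-\ee}$, while to move a distance of order $n^{1/2-\ee}$ a centered walk with exponential moments typically needs of order $n^{1-2\ee}$ steps; so the event is a moderate deviation event. The event $\{\nu_{n,x,t} \leq \ell\}$ with $\ell = \lfloor n^{1/2-\ee} - t\rfloor$ is contained in $\{\max_{1 \leq k \leq \ell} |t + S_k^x| \geq 2 n^{1/2-\ee}\}$, and since $|t| \leq n^{\ee} \ll n^{1/2-\ee}$ for $n$ large, this forces $\max_{1 \leq k \leq \ell} |S_k^x| \geq n^{1/2-\ee}$ (say).

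The key step is then a maximal moderate-deviation bound: I would show that there exist constants $c, c_{\ee} > 0$ such that for all $n \geq 1$, $x \in \bb X$,
\begin{align*}
\bb P\left( \max_{1 \leq k \leq \ell} |S_k^x| \geq n^{1/2-\ee} \right) \leq c \exp(-c_{\ee} n^{\ee/2}),
\end{align*}
where $\ell \leq c' n^{1/2-\ee}$. To get this, I would use the exponential moment assumption \eqref{exp mom for f 001}: writing $S_k^x$ as a martingale (which it is, by \eqref{centering-001}) with increments $\sigma(g_i, g_{i-1}\cdots g_1 x)$ that are uniformly bounded in exponential moment, I would apply a sub-Gaussian / Bernstein-type maximal inequality. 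Concretely, $\bb E e^{\lambda S_k^x} \leq e^{C \lambda^2 k}$ for $|\lambda| \leq \lambda_0$ by expanding $\bb E e^{\lambda \sigma(g_i, \cdot)}$ conditionally and using the cocycle structure with a product over the $k$ steps, so by Doob's inequality applied to the submartingale $e^{\lambda S_k^x}$ (and separately to $e^{-\lambda S_k^x}$),
\begin{align*}
\bb P\left( \max_{1 \leq k \leq \ell} S_k^x \geq M \right) \leq e^{-\lambda M} \bb E e^{\lambda S_\ell^x} \leq e^{-\lambda M + C\lambda^2 \ell}.
\end{align*}
With $M = n^{1/2 - \ee}$, $\ell \leq c' n^{1/2-\ee}$, optimizing in $\lambda$ (respecting $\lambda \leq \lambda_0$) gives an exponent of order $-M^2/\ell \wedge (-\lambda_0 M) \asymp -n^{1/2-\ee}$, which is far stronger than the claimed $-n^{\ee/2}$. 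Actually one must be slightly careful: $\lambda$ is capped at $\lambda_0$, so the bound is $\exp(-\lambda_0 M + C\lambda_0^2 \ell)$; since $\ell$ may be comparable to $M$ this is not automatically small. The cleaner route, which matches the exponent $n^{\ee/2}$ in the statement and mirrors the proof of Lemma \ref{Lemma 2-0}, is to \emph{not} use the full exponential moment but rather split the $\ell$ steps into blocks and use the central limit theorem \eqref{BEmart-001} blockwise, or simply to invoke Lemma \ref{lemma-sup S_k -001} in a complementary regime.

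On reflection, the cleanest approach is the following. Choose a block length $m = \lfloor n^{1-2\ee} \rfloor$; by the effective CLT \eqref{BEmart-001}, over $m$ steps the walk $S_m^{x'}$ is spread at scale $\sqrt{m} \asymp n^{1/2-\ee}$, so there is a uniform constant $p_0 > 0$ with $\inf_{x', t'} \bb P(|t' + S_m^{x'}| \geq 2 n^{1/2-\ee}) \geq \dots$ — no, that is false since the walk could stay near $0$. Instead I would directly bound: for the walk to \emph{not} yet have exited by time $\ell$, i.e. $\nu_{n,x,t} > \ell$, Lemma \ref{Lemma 2-0} gives $\bb P(\nu_{n,x,t} > \ell) \leq 2\exp(-\beta \ell / n^{1-2\ee})$, but we want the \emph{opposite} bound, on $\{\nu_{n,x,t} \leq \ell\}$. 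So the right tool is the maximal inequality above. I will therefore carry out: (1) reduce $\{\nu_{n,x,t} \leq n^{1/2-\ee} - t\}$ to $\{\max_{1\leq k \leq \ell} |S_k^x| \geq n^{1/2-\ee}\}$ using $t \geq -n^{\ee}$ and $\ee < 1/4$ (so $n^{\ee} = o(n^{1/2-\ee})$) — note $\ell = n^{1/2-\ee} - t \leq n^{1/2-\ee} + n^{\ee} \leq 2 n^{1/2-\ee}$; (2) apply Doob's maximal inequality to $e^{\pm \lambda S_k^x}$ with $\lambda = n^{-(1/4 - \ee/2)} \wedge \lambda_0$ chosen so that $\lambda M = \lambda n^{1/2-\ee} \asymp n^{1/4 + \ee/2}$ while $\lambda^2 \ell \asymp n^{-(1/2-\ee)} \cdot n^{1/2-\ee} = 1$ — hmm, that only gives exponent $n^{1/4+\ee/2}$, even better than $n^{\ee/2}$. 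The conservative statement $n^{\ee/2}$ in the lemma is presumably what is needed downstream, so any bound of the form $c\exp(-c_{\ee} n^{\ee/2})$ suffices and I would simply verify the chosen $\lambda$ yields at least that; the main obstacle is bookkeeping the cap $\lambda \leq \lambda_0$ and ensuring the moment bound $\bb E e^{\lambda \sigma(g,x)} \leq 1 + C\lambda^2$ holds uniformly (which follows from \eqref{exp mom for f 001} and \eqref{centering-001} by a standard Taylor expansion), and then producing the geometric product $\bb E e^{\lambda S_\ell^x} \leq (1 + C\lambda^2)^\ell \leq e^{C\lambda^2 \ell}$ via the cocycle identity and the tower property over the filtration $(\mathscr A_k)$.

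Summarizing the plan: First I would record the sub-exponential increment estimate $\sup_{x}\bb E e^{\lambda\sigma(g_1,x)} \leq 1 + C\lambda^2$ for $|\lambda|\leq \lambda_0$, deduced from \eqref{exp mom for f 001} and \eqref{centering-001}. Second, using the cocycle identity \eqref{def-cocycle} and conditioning on $\mathscr A_{k-1}$, I would show $(e^{\lambda S_k^x})_{k\geq0}$ and $(e^{-\lambda S_k^x})_{k\geq0}$ are submartingales with $\bb E e^{\pm\lambda S_\ell^x}\leq e^{C\lambda^2\ell}$. Third, Doob's maximal inequality gives $\bb P(\max_{k\leq\ell}|S_k^x|\geq M)\leq 2e^{-\lambda M+C\lambda^2\ell}$ for $|\lambda|\leq\lambda_0$. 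Fourth, I would set $M=n^{1/2-\ee}$, note that on $\{\nu_{n,x,t}\leq n^{1/2-\ee}-t\}$ with $t\geq-n^{\ee}$ and $\ee<1/4$ we have (for $n$ large) $\max_{k\leq\ell}|S_k^x|\geq M$ with $\ell\leq2n^{1/2-\ee}$, and choose $\lambda=\min\{\lambda_0,\,n^{-(1/4-\ee/2)}\}$; this yields $-\lambda M+C\lambda^2\ell\leq -c_{\ee}n^{\ee/2}$ for a suitable $c_{\ee}>0$ and all large $n$, and the finitely many small $n$ are absorbed into the constant $c$. The only genuine subtlety — the main obstacle — is handling the regime where $\lambda_0 < n^{-(1/4-\ee/2)}$, i.e. small $n$, and checking that with the cap $\lambda=\lambda_0$ one still gets $-\lambda_0 M + C\lambda_0^2\ell \leq -c_{\ee} n^{\ee/2}$; since $M, \ell \asymp n^{1/2-\ee}$ this requires $\lambda_0 M - C\lambda_0^2\ell \geq c n^{1/2-\ee} \geq c_{\ee} n^{\ee/2}$ once $\lambda_0^2 C \cdot 2 < \lambda_0/2$, i.e. for all $n$ large enough — and for the remaining bounded range of $n$ the probability is trivially $\leq 1 \leq c\exp(-c_{\ee}n^{\ee/2})$ after enlarging $c$. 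This completes the proof.
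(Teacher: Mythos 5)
Your proof is correct, but it takes a genuinely different route from the paper. The paper's argument is a crude union bound on the individual increments: it introduces the good event $A_{n,x,t}=\{\max_{k\leq n^{1/2-\ee}-t}|\sigma(g_k,g_{k-1}\cdots g_1x)|\leq n^{\ee/2}\}$, bounds $\bb P(A_{n,x,t}^c)\leq c(n^{1/2-\ee}-t)e^{-\alpha n^{\ee/2}}$ using the exponential moment \eqref{exp mom for f 001} — this union bound is precisely where the exponent $n^{\ee/2}$ in the statement comes from — and then argues deterministically that on $A_{n,x,t}$ the walk cannot have reached the level $2n^{1/2-\ee}$ by time $n^{1/2-\ee}-t$, so that $\{\nu_{n,x,t}\leq n^{1/2-\ee}-t\}\subseteq A_{n,x,t}^c$. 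You instead run a Chernoff--Doob argument: a uniform sub-Gaussian bound $\sup_x\bb E e^{\lambda\sigma(g_1,x)}\leq 1+C\lambda^2$ for $|\lambda|\leq\lambda_0$ (valid by \eqref{exp mom for f 001} and \eqref{centering-001}), the submartingale property of $e^{\pm\lambda S_k^x}$ via the cocycle identity and Jensen, and Doob's maximal inequality with $M=n^{1/2-\ee}$, $\ell\leq 2n^{1/2-\ee}$ and $\lambda=\min\{\lambda_0,n^{-(1/4-\ee/2)}\}$. Your reduction of the exit event to $\{\max_{k\leq\ell}|S_k^x|\geq n^{1/2-\ee}\}$ is sound (when the event is nonempty one has $|t|\leq n^{1/2-\ee}$, using $t\geq -n^{\ee}$ and $\ee<1/4$), and your treatment of the cap $\lambda\leq\lambda_0$ for small $n$ is fine. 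What your approach buys is a genuinely stronger exponent — of order $n^{1/4-\ee/2}\geq n^{\ee/2}$, and in fact $n^{1/2-\ee}$ with the optimal $\lambda$ — and it sidesteps the delicate exponent count in the paper's deterministic step (where the displacement bound one obtains from increments of size $n^{\ee/2}$ over $\sim n^{1/2-\ee}$ steps is of order $n^{1/2-\ee/2}$, which exceeds the exit threshold $2n^{1/2-\ee}$, so the paper's inclusion requires more care than the two lines it is given). The price is that you need the centering hypothesis \eqref{centering-001} and a martingale structure, which the paper's union-bound route does not use; both are available here.

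One small slip: in your fourth step you write $\lambda M\asymp n^{1/4+\ee/2}$; with $\lambda=n^{-(1/4-\ee/2)}$ and $M=n^{1/2-\ee}$ this should be $n^{1/4-\ee/2}$. Since $\ee<1/4$ implies $1/4-\ee/2>\ee/2$, the conclusion $-\lambda M+C\lambda^2\ell\leq -c_{\ee}n^{\ee/2}$ is unaffected.
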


\begin{proof}
Set $A_{n,x,t} = \{ \max_{ k \in [1, n^{1/2-\ee } - t] } |\sigma(g_{k},g_{k-1} \cdots g_{1} x)| \leq n^{\ee/2}  \}$. 
By \eqref{exp mom for f 001}, there exist constants $c, c_{\ee}, \alpha > 0$ 
such that for any $n\geq 1$, $x\in \bb X$ and  $t\geq - n^{\ee}$, 
\begin{align*}
\bb P \big( A_{n,x,t}^c \big) \leq c  \max\{ n^{1/2-\ee } - t, 0 \} e^{- \alpha n^{\ee/2}} \leq c e^{- c_{\ee} n^{\ee/2}}. 
\end{align*}
On the event $A_{n,x,t}$, it holds that for any $k \in [1, n^{1/2-\ee } - t]$, 
\begin{align*}
|t + S_k^x| \leq |t| +  k n^{\ee/2} \leq n^{1/2 - \ee/2}. 
\end{align*}
Hence, by the definition of $\nu_{n,x,t}$, it follows that
$\{ \nu_{n,x,t} \leq n^{1/2-\ee } - t \}$ is included in $A_{n,x,t}^c$. 
The lemma follows. 
\end{proof}


\subsection{Upper bound for the harmonic function: Denisov and Wachtel's method} \label{sec-proof of proposition upper bound harm fun-001}
 In this subsection, we prove Proposition \ref{Prop-Vn-001} by
 using the method originally introduced by Denisov and Wachtel \cite{DW15}. 
To do so, we first state the following basic inequality. 
 
\begin{lemma}\label{Lem-trick}
For any $t \in \bb R$ and any random variable $Z$,
\begin{align*} 
\bb E^{1/2}\left( \left(t + Z\right)^2; t+Z\geq 0 \right) \leq \max\{t,0 \} + \bb E^{1/2} (Z^2).
\end{align*}
\end{lemma}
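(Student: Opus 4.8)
The plan is to deduce the inequality from Minkowski's inequality in $L^2(\Omega,\bb P)$ after establishing an elementary pointwise bound. First I would prove the deterministic estimate
\[
\left| (t+Z)\, \mathds 1_{\{t+Z \geq 0\}} \right| \leq \max\{t,0\} + |Z|,
\]
which holds for every outcome. On the event $\{t+Z<0\}$ the left-hand side is zero, so there is nothing to check. On the event $\{t+Z \geq 0\}$ the left-hand side equals $t+Z$; if $t \geq 0$ then $t + Z \leq t + |Z| = \max\{t,0\} + |Z|$, while if $t < 0$ then $t+Z \leq |Z|$ because $|Z| - Z \geq 0 > t$. This is the only place where the sign of $t$ enters, and it is handled by this short case distinction.

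Next I would square both sides, take expectations, and note that $(t+Z)^2\, \mathds 1_{\{t+Z\geq 0\}} = \big( (t+Z)\, \mathds 1_{\{t+Z\geq 0\}} \big)^2$, so that
\[
\bb E\left( (t+Z)^2; t+Z \geq 0 \right) \leq \bb E\left[ \big( \max\{t,0\} + |Z| \big)^2 \right].
\]
Taking square roots and applying Minkowski's inequality to the right-hand side, using that $\max\{t,0\}$ is a constant, yields
\[
\bb E^{1/2}\left( (t+Z)^2; t+Z \geq 0 \right) \leq \big\| \max\{t,0\} + |Z| \big\|_{L^2} \leq \max\{t,0\} + \bb E^{1/2}(Z^2),
\]
which is exactly the claimed bound.

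There is no substantial obstacle here: the statement is an elementary moment estimate, and the proof is complete once the pointwise inequality is in hand. The only mild subtlety, worth recording explicitly so that the case $t<0$ is not overlooked, is that the indicator $\mathds 1_{\{t+Z\geq 0\}}$ is essential: without it the bound $|t+Z| \leq \max\{t,0\} + |Z|$ fails for negative $t$, and it is precisely the restriction to the event $\{t+Z\geq 0\}$ that makes it valid.
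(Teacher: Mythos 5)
Your proof is correct. The paper states Lemma \ref{Lem-trick} without proof, treating it as elementary, and your argument — the pointwise bound $(t+Z)\mathds 1_{\{t+Z\geq 0\}}=\max\{t+Z,0\}\leq \max\{t,0\}+|Z|$ followed by Minkowski's inequality in $L^2$ — is precisely the standard argument the authors leave implicit, with the case $t<0$ handled correctly.
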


The next lemma provides an upper bound on the value of the process $(t+S_n^x)_{n\geq 1}$ when it crosses the boundary. 
Recall that $\vartheta_{x, t}$ is defined by \eqref{stopping time theta 001}. 
 
 \begin{lemma}\label{Lemma 1-0} 
There exist constants $c>0$ and  $\ee _{0}>0$ such that for any $\ee \in (0,\ee _{0})$, $n\geq 1$,  $x \in \bb X$ and $t \geq  n^{1/2-\ee }$,  
\begin{align*}
\bb{E} \left( \left\vert t + S_{ \vartheta_{x, t} }^x \right\vert; \vartheta_{x, t} \leq n \right) 
\leq c \frac{t}{n^{\ee }}. 
\end{align*}
\end{lemma}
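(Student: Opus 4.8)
The plan is to control the overshoot $|t+S^x_{\vartheta_{x,t}}|$ on the event $\{\vartheta_{x,t}\le n\}$ by combining a martingale argument with a crude tail estimate on the increments. Recall from \eqref{optional-stp-iden} that, since $(S^x_k)_{k\ge 0}$ is a zero-mean martingale, the optional stopping theorem gives
\begin{align*}
\bb{E}\left(|t+S^x_{\vartheta_{x,t}}|;\vartheta_{x,t}\le n\right)
= -\bb{E}\left(t+S^x_{\vartheta_{x,t}};\vartheta_{x,t}\le n\right)
= t - V_n(x,t),
\end{align*}
because on $\{\vartheta_{x,t}\le n\}$ the stopped value $t+S^x_{\vartheta_{x,t}}$ is negative, while $V_n(x,t)=\bb E(t+S^x_n;\vartheta_{x,t}>n)$. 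So it suffices to show $t - V_n(x,t)\le c\,t/n^{\ee}$ for $t\ge n^{1/2-\ee}$, equivalently $V_n(x,t)\ge t(1-c n^{-\ee})$.

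The key step is to show that, starting from $t\ge n^{1/2-\ee}$, the walk is very unlikely to exit $\bb R_+$ before time $n$: I would prove $\bb P(\vartheta_{x,t}\le n)$ is exponentially small in a power of $n$. Indeed, the event $\{\vartheta_{x,t}\le n\}$ forces $\min_{1\le k\le n}(t+S^x_k)<0$, hence $\max_{1\le k\le n}|S^x_k|>t\ge n^{1/2-\ee}$. Splitting according to whether the large increments stay controlled — i.e.\ intersecting with the event $A_n=\{\max_{1\le k\le n}|\sigma(g_k,g_{k-1}\cdots g_1 x)|\le n^{1/2-2\ee}\}$ whose complement has probability $\le c\,n\,e^{-\alpha_0 n^{1/2-2\ee}}$ by \eqref{exp mom for f 001} and Markov's inequality — one is left with bounding $\bb P(A_n\cap\{\max_{1\le k\le n}|S^x_k|>n^{1/2-\ee}\})$. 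On this event the martingale with bounded-by-$n^{1/2-2\ee}$ increments makes a displacement of order $n^{1/2-\ee}$ over $n$ steps; Azuma–Hoeffding (applicable after centering, using \eqref{centering-001}) then gives a bound of order $\exp(-c n^{2\ee})$ on each $\bb P(|S^x_k|>n^{1/2-\ee})$, and a maximal inequality (Doob or a union bound after a dyadic decomposition) upgrades this to a bound on the maximum. Thus $\bb P(\vartheta_{x,t}\le n)\le c\,e^{-c_\ee n^{\ee}}$ for $\ee$ small enough.

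Given this, I would finish by writing
\begin{align*}
t - V_n(x,t)
&= \bb E\left(t+S^x_n;\vartheta_{x,t}\le n\right) + \bb E\left(-(t+S^x_n);\vartheta_{x,t}\le n\right)\big|_{\text{reorganized}}
\end{align*}
— more cleanly: $t-V_n(x,t)=\bb E(t+S^x_n;\vartheta_{x,t}\le n)$ since $\bb E(t+S^x_n)=t$, and then by Cauchy–Schwarz and Lemma \ref{Lem-trick} (or directly, since $|t+S^x_n|\le t+|S^x_n|$),
\begin{align*}
\left|\bb E\left(t+S^x_n;\vartheta_{x,t}\le n\right)\right|
\le \bb P(\vartheta_{x,t}\le n)^{1/2}\,\bb E^{1/2}\big((t+S^x_n)^2\big)
\le c\,e^{-\frac{c_\ee}{2} n^{\ee}}\,(t+\bb E^{1/2}(S^x_n)^2).
\end{align*}
Using that $\bb E (S^x_n)^2\le cn$ (from \eqref{exp mom for f 001} and $L^2$-orthogonality of the martingale increments) and $t\ge n^{1/2-\ee}$, the prefactor $e^{-\frac{c_\ee}{2}n^{\ee}}(t+c\sqrt n)$ is bounded by $c\,t/n^{\ee}$ for all $n\ge 1$, since exponential decay beats any polynomial. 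This yields the claimed bound. The main obstacle is the maximal-type estimate for the martingale staying within a band of width $n^{1/2-\ee}$ over $n$ steps; this is exactly the kind of estimate encapsulated in Lemmas \ref{lemma-sup  S_k -001} and \ref{Lemma 2-0}, which I would invoke rather than reprove, choosing $\ee$ small enough that all the exponents $n^{2\ee}$, $n^{1/2-2\ee}$, etc., are genuinely positive powers of $n$.
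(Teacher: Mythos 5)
There is a genuine gap, and it is fatal to the whole strategy. Your central claim is that $\bb P(\vartheta_{x,t}\le n)$ is exponentially small in a power of $n$ when $t\ge n^{1/2-\ee}$. This is false: the level $n^{1/2-\ee}$ is of \emph{smaller} order than the typical fluctuation $\sqrt{n}$ of the centered walk at time $n$, so by the (effective) central limit theorem the walk crosses below zero before time $n$ with probability tending to $1$, not to $0$. Your Azuma--Hoeffding computation reflects this: with increments bounded by $M=n^{1/2-2\ee}$ and $\lambda=n^{1/2-\ee}$, the exponent is $\lambda^2/(2nM^2)=n^{2\ee-1}/2\to 0$, so the bound is $2\exp(-n^{2\ee-1}/2)\approx 2$, i.e.\ vacuous — not $\exp(-cn^{2\ee})$ as you wrote. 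You have also reversed the direction of Lemmas \ref{lemma-sup  S_k -001} and \ref{Lemma 2-0}: they show that \emph{staying inside} a band of width $M$ for $n$ steps is unlikely (probability $\le 2e^{-\beta n/M^2}$); they say nothing small about the probability of \emph{leaving} the band, which is what you would need. Consequently the Cauchy--Schwarz step at the end, which requires $\bb P(\vartheta_{x,t}\le n)^{1/2}$ to beat the polynomial factor $t+c\sqrt n$, cannot be salvaged.

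The correct mechanism is different: one should not try to show that exit before time $n$ is rare (it is not), but rather that the \emph{overshoot} at the exit time is small. On the event $A_{n,x}=\{\max_{1\le k\le n}|\sigma(g_k,g_{k-1}\cdots g_1x)|\le n^{1/2-2\ee}\}$, the walk is non-negative at time $\vartheta_{x,t}-1$ and negative at time $\vartheta_{x,t}$, so $|t+S^x_{\vartheta_{x,t}}|$ is bounded by the last increment, hence by $n^{1/2-2\ee}\le t/n^{\ee}$ (using $t\ge n^{1/2-\ee}$). The complement $A_{n,x}^c$ has probability $\le cne^{-\alpha n^{1/2-2\ee}}$ directly from the exponential moment assumption \eqref{exp mom for f 001} and a union bound, and its contribution is then controlled by Cauchy--Schwarz and $\bb E(S^x_k)^2\le cn$. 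Your first display (the optional-stopping identity $\bb E(|t+S^x_{\vartheta_{x,t}}|;\vartheta_{x,t}\le n)=t-V_n(x,t)$) is correct and could be kept, but the estimate that follows must bound the overshoot pathwise on a high-probability event rather than the exit probability itself.
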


\begin{proof}
Consider the event 
\begin{align*}
A_{n,x} = \left\{ \max_{1 \leq k \leq n}\left\vert \sigma(g_k, g_{k-1} \cdots g_1 x) \right\vert \leq n^{1/2-2\ee } \right\}. 
\end{align*}
We have
\begin{align}
\bb{E} \left( \left\vert t + S_{ \vartheta_{x, t} }^x \right\vert; \vartheta_{x, t} \leq n \right)
= \bb{E} \left( \left\vert t + S_{ \vartheta_{x, t} }^x \right\vert; \vartheta_{x, t} \leq n,  A_{n,x} \right)  
  +  \bb{E} \left( \left\vert t + S_{ \vartheta_{x, t} }^x \right\vert; \vartheta_{x, t} \leq n,   A_{n,x}^c  \right).  \label{eq-lemma1-000-0}
\end{align}
For the first term, since $\vartheta_{x, t}$ is the first integer $k \geq 1$
when $t + \sum_{i=1}^{k} \sigma(g_k, g_{k-1} \cdots g_1 x)$ 
becomes negative and the size of the jump is bounded by $|\sigma(g_{\vartheta_{x, t}}, g_{ \vartheta_{x, t} -1} \cdots g_1 x)|$  
which does not exceed $n^{1/2-2\ee }$ on
the event $A_{n,x}$, we get that for $t \geq  n^{1/2-\ee }$,  
\begin{align}\label{eq-lemma1-R1-0}
\bb{E} \left( \left\vert t + S_{ \vartheta_{x, t} }^x \right\vert; \vartheta_{x, t} \leq n, A_{n,x} \right) 
\leq  n^{1/2-2\ee }\mathbb{P} \left( A_{n,x}\right)  \leq  n^{1/2-2\ee } \leq {\frac{t}{n^{\ee}}}. 
\end{align}
To handle the second term in \eqref{eq-lemma1-000-0}, by Markov's inequality and \eqref{exp mom for f 001}, we get
\begin{align*}
\mathbb{P} \left( A_{n,x}^c \right) 
&=\bb P \left( \max_{1 \leq k\leq n} |\sigma(g_k, g_{k-1} \cdots g_1 x)| >  n^{1/2 - 2\ee }   \right)  \notag \\
&\leq n \sup_{x\in \bb X}  \mathbb{P} \left( |\sigma(g_{1}, x)| >  n^{1/2-2\ee }   \right) 
\leq   c n e^{- \alpha n^{1/2-2\ee } } \leq c e^{- \alpha n^{1/3}}.   
\end{align*}
Therefore, by H\"older's inequality, 
\begin{align}\label{eq-lemma1-R2-0}
 \bb{E} \left( \left\vert t + S_{ \vartheta_{x, t} }^x \right\vert; \vartheta_{x, t} \leq n,   A_{n,x}^c \right)
& =  \sum_{k =1}^n \bb{E} \left( \left\vert t + S_{k}^x \right\vert;   \vartheta_{x, t} = k,  A_{n,x}^c \right)  \notag\\
& \leq  \sum_{k =1}^n \bb{E} \left( \left\vert t + S_{k}^x \right\vert;  A_{n,x}^c \right)  \notag\\
& \leq     \sum_{k = 1}^n  \bb{E}^{1/2} \left( \left\vert t + S_{k}^x \right\vert^2 \right)  
             \bb P^{1/2}(A_{n,x}^c)  \notag\\
& \leq  c n (t + n^{1/2})  e^{- \frac{\alpha}{2} n^{1/3 } },
\end{align}
where in the last inequality we used 
\begin{align*}
\bb{E}^{1/2} \left( \left\vert t + S_{k}^x \right\vert^2 \right)
\leq t+ \sup_{x\in \bb X} \sqrt{\mathbb{E}  (S_{k}^{x} )^{ 2 }    }
\leq t+c n^{1/2}, 
\end{align*}
by Minkowski's inequality. 
Combining \eqref{eq-lemma1-R1-0} and \eqref{eq-lemma1-R2-0} completes the proof. 
\end{proof}

\begin{proof}[Proof of Proposition \ref{Prop-Vn-001}]
 The main idea of the proof is to stop the process $(t+S_n^x)_{n\geq 1}$ at the exit time $\nu_{n,x,t}$ and to apply the Markov property. 

We shall first show that there exist constants $c, \ee _{0}>0$ such that for any 
$\ee \in (0, \ee_0)$, $n\geq 1$,  $x \in \bb X$ and $t\geq n^{1/2-\ee }$, 
\begin{align}\label{Un-xy-Bound-001-0}
V_n(x, t) \leq \left( 1+ \frac{c}{n^{\ee}} \right) t. 
\end{align}
Let $x \in \bb X$ and $t\geq n^{1/2-\ee }$. 
Using \eqref{optional-stp-iden}, we get
\begin{align*}
V_n(x, t)  
= t - \bb{E} \left( t + S_{ \vartheta_{x, t} }^x; \  \vartheta_{x, t} \leq n\right) 
 \leq t + \bb{E} \left( \left\vert t + S_{ \vartheta_{x, t} }^x \right\vert; \vartheta_{x, t} \leq n \right). 
\end{align*}
This inequality, combined with Lemma \ref{Lemma 1-0}, yields the desired bound \eqref{Un-xy-Bound-001-0}.

Now we prove \eqref{bound-Vn-001-0} by using the bound \eqref{Un-xy-Bound-001-0} and the Markov property. 
Note that for any $\ee >0$, $n \geq 1$, $x \in \bb X$ and $t \in \bb R$, 
\begin{align}
V_{n} (x, t) = \bb{E} \left( t + S_{n}^x; \vartheta_{x, t} > n, \nu_{n,x, t} > n^{1-\ee } \right) 
 + \bb{E} \left( t + S_{n}^x; \vartheta_{x, t} > n, \nu_{n,x, t} \leq n^{1-\ee } \right).  \label{bound J1+J2-0}
\end{align}
By the Cauchy-Schwarz inequality and Lemmas \ref{Lemma 2-0} and \ref{Lem-trick}, we get
\begin{align}\label{first-term-V-n-001}
\bb{E} \left( t + S_{n}^x; \vartheta_{x, t} > n, \nu_{n,x, t} > n^{1-\ee } \right)
\leq c' e^{- c n^{\ee}} \left( 1 + \max\{t, 0\} \right). 
\end{align}
For the second term in \eqref{bound J1+J2-0}, we decompose it as 
\begin{align} \label{bound J1 002-0}
\bb{E} \left( t + S_{n}^x; \vartheta_{x, t} > n, \nu_{n,x, t} \leq n^{1-\ee } \right)  
= \sum_{k=1}^{ [n^{1-\ee }] }
\bb{E} \left( t + S_{n}^x; \vartheta_{x, t} > n, \nu_{n,x, t} = k \right).
\end{align}
By the Markov property,  we get 
\begin{align} \label{bound E-0}
\bb{E} \left( t + S_{n}^x; \vartheta_{x, t} > n, \nu_{n,x, t} = k \right) 
= \bb E \left[ V_{n-k} \left( g_k \cdots g_1 x, t + S_k^x \right); 
      \vartheta_{x, t}  >k, \nu_{n,x, t} = k  \right].
\end{align}
On the event $\{ \vartheta_{x, t}  >k, \nu_{n,x, t} = k \}$, we have $t + S_k^x \geq n^{1/2 - \ee}$.
Thus, by \eqref{Un-xy-Bound-001-0}, we get that for any $k \in [1, [n^{1-\ee }]]$, 
\begin{align*}
 \bb{E} \left( t + S_{n}^x; \vartheta_{x, t} > n, \nu_{n,x, t} = k \right)
\leq  \left( 1+ \frac{c}{n^{\ee}} \right) \bb E \left(  t + S_k^x; 
      \vartheta_{x, t}  >k, \nu_{n,x, t} = k  \right). 
\end{align*}
Inserting this into \eqref{bound J1 002-0}, we obtain 
\begin{align}\label{equ-n-k-S}
\bb{E} \left( t + S_{n}^x; \vartheta_{x, t} > n, \nu_{n,x, t} \leq n^{1-\ee } \right)
\leq  \left( 1+ \frac{c}{n^{\ee}} \right) 
\sum_{k=1}^{ [n^{1-\ee }] }
 \bb E \left(  t + S_k^x;  \vartheta_{x, t}  >k, \nu_{n,x, t} = k  \right). 
\end{align}

We claim that the sequence $\big( (t + S_k^x) \mathds 1_{\{ \vartheta_{x, t}  \leq k \}} \big)_{k \geq 1}$
is a supermartingale with respect to the natural filtration $(\mathscr A_k)_{k \geq 1}$.
Indeed, for any $k \geq 1$ and $A \in \mathscr A_k$, we have 
\begin{align*}
 \bb E \left(  t + S_{k+1}^x;  \vartheta_{x, t}  \leq  k+1,  A  \right) 
& =  \bb E \left(  t + S_{k+1}^x;  \vartheta_{x, t}  \leq  k,  A  \right)
 +   \bb E \left(  t + S_{k+1}^x;  \vartheta_{x, t}  = k+1,  A  \right)  \notag\\
& \leq  \bb E \left(  t + S_{k}^x;  \vartheta_{x, t}  \leq  k,  A  \right), 
\end{align*}
where we have used \eqref{centering-001} and the fact that, by the definition of $\vartheta_{x, t}$, one has $t + S_{k+1}^x < 0$ on the set $\{ \vartheta_{x, t}  = k+1 \}$. 
Still by \eqref{centering-001}, this implies that the sequence $\big( (t + S_k^x) \mathds 1_{\{ \vartheta_{x, t}  > k \}} \big)_{k \geq 1}$
is a submartingale. 
From this submartingale property, we have that, for any $k\in [1, [n^{1-\ee }] ]$, 
\begin{align}\label{Equ-desired-0}
\bb E \left(  t + S_k^x;  \vartheta_{x, t}  >k, \nu_{n,x, t} = k  \right)  
\leq   \bb E \left(  t + S_{[n^{1-\ee}]}^x;  \vartheta_{x, t}  > [n^{1-\ee}], \nu_{n,x, t} = k  \right). 
 \end{align}
Therefore, using \eqref{equ-n-k-S} and \eqref{Equ-desired-0}, we get
\begin{align*}
\bb{E} \left( t + S_{n}^x; \vartheta_{x, t} > n, \nu_{n,x, t} \leq n^{1-\ee } \right)
& \leq  \left( 1+ \frac{c}{n^{\ee}} \right)  
 \bb E \left(  t + S_{[n^{1-\ee}]}^x;  \vartheta_{x, t}  > [n^{1-\ee }], \nu_{n,x, t} \leq n^{1-\ee }   \right) \notag\\
 & \leq \left( 1+ \frac{c}{n^{\ee}} \right) V_{[n^{1-\ee}]}(x, t). 
\end{align*}
Combining this  with \eqref{bound J1+J2-0} and  \eqref{first-term-V-n-001} 
completes the proof of Proposition \ref{Prop-Vn-001}.
\end{proof}

\subsection{Quasi-monotonicity for sequences of functions} \label{subsec: Quasi-monotonicity bounds for sequences}

We now deduce Proposition \ref{Prop-Vn-m-001} from Proposition \ref{Prop-Vn-001}. 
To this end, we utilize the following more general result 
presented here in a form that will also be useful later. 
The distinct feature of this result is the presence of an additional shift in the argument of the function $V_n$.
This will play a key role in Sections \ref{sec quasi-increasingness} and  \ref{sec-quasi-decreasing property}, 
where we consider the case of random walks with perturbations depending on the future.

In this section, however, we will only need the result of the particular case where the shift is absent. 
Despite this, the techniques developed here will be useful for the more complex case involving perturbations. 
By analyzing this shifted setting, we obtain quasi-monotonicity properties for sequences of functions like 
$V_n$, which will help us understand better the subsequent sections dealing with future-dependent perturbations.

\begin{lemma} \label{lem-iter-proc-001}
Let $(V_n)_{n\geq 1}$ be a sequence of non-negative and non-decreasing functions on $\bb R$.
 Assume that the sequence $(V_n)_{n\geq 1}$ is quasi-increasing, in the  sense that there exist real numbers
$\ee, \beta >0$ and $a, b \geq 0$ such that, for any $n \leq m$ and $t \in \bb R$, 
\begin{align}\label{V-n-m-second}
V_{n} (t)  \leq  V_{m}(t + a n^{-\ee}) 
 + b n^{-\beta} \left( 1 + \max \{t,0\}   \right). 
\end{align}
If, in addition, the sequence $(V_n)_{n\geq 1}$ satisfies the property that for any $t\in \bb R$: 
\begin{align} \label{eq-bound for V_1-001}
V_1(t) \leq b (1 + \max\{t, 0\}) 
\end{align}
and, for any $n\geq 1$, 
\begin{align} \label{V-n-m-first}
V_{n} (t)  \leq \left( 1 + b n^{-\ee} \right)  
   V_{[n^{1-\ee }] } (t + a n^{-\ee})  + b n^{-\beta} (1+\max \{t,0\}), 
\end{align}
then, there exist constants $A, B \geq 0$ depending only on $\ee, \beta, a, b$ such that, for any $1 \leq n \leq m$ and $t \in \bb R$, 
\begin{align*} 
V_{m} (t) \leq  
V_{n} \left( t + A n^{-\ee (1-\ee)} \right) + B n^{- \min \{ \beta (1-\ee), \ee\} } (1+\max \{t,0\}). 
\end{align*}
Moreover, if $a = 0$, we can take $A=0$. 
\end{lemma}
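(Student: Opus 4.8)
The plan is to run a bootstrapping (self-improving) argument on the "quasi-decreasing" estimate, using the quasi-increasing hypothesis \eqref{V-n-m-second} as the engine and the contraction-type inequality \eqref{V-n-m-first} to iterate. First I would fix $1 \le n \le m$ and set $n_0 = n$, $n_{j+1} = [n_j^{1-\ee}]$, so that the sequence $n_j$ decreases from $n$ down to $1$ in roughly $O(\log\log n)$ steps (since $\log n_{j+1} \approx (1-\ee)\log n_j$). Applying \eqref{V-n-m-first} repeatedly along this chain, starting from $V_n$, I would get a telescoping bound of the form
\begin{align*}
V_{n}(t) \le \Big(\prod_{j} (1 + b n_j^{-\ee})\Big) V_{1}\big(t + a\textstyle\sum_j n_j^{-\ee}\big) + \text{(error terms)},
\end{align*}
but actually I want to go the \emph{other} direction, so let me instead reorganize: the real target is to bound $V_m$ in terms of $V_n$. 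The idea is that \eqref{V-n-m-first} lets me replace $V_m$ by (a constant multiple of) $V_{[m^{1-\ee}]}$ up to a small shift and a small additive error; iterating this down until the index drops below $n$, and then using \eqref{V-n-m-second} one final time to land exactly on $V_n$.

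Concretely: define the decreasing sequence $m_0 = m$, $m_{j+1} = [m_j^{1-\ee}]$, and let $J$ be the first index with $m_J \le n$. Apply \eqref{V-n-m-first} at each step $j = 0, \dots, J-1$ to obtain
\begin{align*}
V_m(t) \le \prod_{j=0}^{J-1}(1 + b m_j^{-\ee}) \cdot V_{m_J}\Big(t + a \sum_{j=0}^{J-1} m_j^{-\ee}\Big) + \sum_{j=0}^{J-1} \Big(\prod_{i<j}(1+b m_i^{-\ee})\Big) b m_j^{-\beta}\big(1 + \max\{t',0\}\big),
\end{align*}
where $t'$ denotes the (bounded, controlled) shifted argument at stage $j$. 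The two key analytic facts to check are: (i) $\sum_j m_j^{-\ee} \lesssim m_{J-1}^{-\ee} \approx n^{-\ee(1-\ee)}$ and likewise $\sum_j m_j^{-\beta} \lesssim n^{-\beta(1-\ee)}$ — this holds because the exponents $\log m_j$ shrink geometrically, so the dominant term is the last one, $m_{J-1}$, which is $\gtrsim n^{1-\ee}$; (ii) the product $\prod_j (1 + b m_j^{-\ee})$ is bounded by a constant depending only on $b, \ee$, again because $\sum_j m_j^{-\ee}$ converges (geometric-type series). After this I would pass from $V_{m_J}$ back to $V_n$: since $m_J \le n$, apply \eqref{V-n-m-second} with the pair $(m_J, n)$, giving $V_{m_J}(s) \le V_n(s + a m_J^{-\ee}) + b m_J^{-\beta}(1 + \max\{s,0\})$, and absorb the extra shift ($m_J \ge n^{(1-\ee)^2}$ is still large, or one can use a coarser bound) and the extra error into the existing ones. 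The appearance of $\max\{t,0\}$ rather than $\max\{t',0\}$ on the right is handled by noting $t' = t + O(1)$ at every stage (the total shift is $o(1)$), so $\max\{t',0\} \le \max\{t,0\} + C$.

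The main obstacle I expect is bookkeeping the accumulated shifts and additive errors through the iteration so that (a) the product of multiplicative factors stays bounded by an absolute constant $B$ depending only on $\ee,\beta,a,b$ (not on $n$ or $m$), and (b) the final shift is genuinely $O(n^{-\ee(1-\ee)})$ and the final additive error is $O(n^{-\min\{\beta(1-\ee),\ee\}}(1+\max\{t,0\}))$ — the exponent $1-\ee$ enters precisely because after one iteration step from index $n$ the index is $[n^{1-\ee}]$, and that is the smallest index at which we are still "above" $n$ when going downward from $m$, so it controls the tails of both the shift-sum and the error-sum. The case $a = 0$ is immediate from the construction: every shift term carries a factor $a$, so all shifts vanish and $A = 0$. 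One should also double-check the trivial base case $m = n$ (nothing to do) and the degenerate case where $J = 0$ (i.e. $m \le n$ already contradicts $n \le m$ unless $m = n$), so the iteration always has at least the final \eqref{V-n-m-second} step available; the hypothesis \eqref{eq-bound for V_1-001} is only needed to start the analogous argument in the reverse direction and may in fact not be used here, but I would keep it for safety in normalizing the additive errors.
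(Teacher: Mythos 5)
Your overall architecture coincides with the paper's: the same chain $m_0=m$, $m_{j+1}=[m_j^{1-\ee}]$, the same downward iteration of \eqref{V-n-m-first} until the index drops to some $m_J\leq n$ with $m_{J-1}\geq n$ (hence $m_J\gtrsim n^{1-\ee}$), and the same final application of \eqref{V-n-m-second} to land on $V_n$; your bookkeeping of the shifts and of the sums $\sum_j m_j^{-\ee}$, $\sum_j m_j^{-\beta}$ is also the right one. However, there is a genuine gap in how you treat the multiplicative prefactor. You only claim that $\prod_j(1+b\,m_j^{-\ee})$ is ``bounded by a constant depending only on $b,\ee$.'' That is not enough: the conclusion of the lemma has coefficient exactly $1$ in front of $V_n$, so a bound of the form $V_m(t)\leq C\,V_n(\cdots)+\cdots$ with some constant $C>1$ does not prove the statement. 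You need two further ingredients. First, the sharper estimate $\prod_j(1+b\,m_j^{-\ee})\leq e^{b\sum_j m_j^{-\ee}}\leq 1+c\,n^{-\ee}$, which does follow from your own sum estimate. Second, and more importantly, you must convert the residual term $c\,n^{-\ee}\,V_n(\cdots)$ into an admissible additive error, and for that you need an a priori linear-growth bound $V_n(s)\leq c'(1+\max\{s,0\})$ uniform in $n$. This is obtained precisely by running the already-derived recursive inequality down to index $1$ and invoking \eqref{eq-bound for V_1-001}; it is also exactly where the exponent $\ee$ enters the final error $n^{-\min\{\beta(1-\ee),\,\ee\}}$. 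Your remark that \eqref{eq-bound for V_1-001} ``may in fact not be used here'' is therefore wrong: without it (or some substitute for the linear bound on $V_n$) your argument only yields the conclusion with an extra constant factor in front of $V_n$, which is a strictly weaker statement and would not suffice for the applications in the paper (where the factor $1+c\,n^{-\ee}$ must not accumulate).

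A minor secondary point: the exponent $\ee(1-\ee)$ in the shift does not come from the sum $\sum_j m_j^{-\ee}$ (which is already $O(n^{-\ee})$, since its dominant term is $m_{J-1}^{-\ee}\leq n^{-\ee}$), but solely from the last step, namely the shift $a\,m_J^{-\ee}$ with $m_J\geq n^{1-\ee}-1$ produced by the final application of \eqref{V-n-m-second}. Your write-up conflates these two sources; once separated, the bookkeeping closes correctly.
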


\begin{proof}
Define the sequence $(m_j)_{j\geq 0}$ as follows: with $m_0=m$, and for $j \geq 0$,
\begin{align}\label{def-mj-sequence}
m_{j+1} = [m_j^{1-\ee}]. 
\end{align}
This sequence is non-increasing and converges to $1$. 
Fix $2 \leq n < m$. 
There exists a least integer $\ell \in \bb N$ such that $m_{\ell} \leq n$ and we have $\ell \leq  \alpha \log \frac{\log m}{\log n}$ for some constant $\alpha >0$. 
By our assumption \eqref{V-n-m-first}, it holds that for any $j \in \bb N$ and $t \in \bb R$,
\begin{align*}
V_{m_j} (t) 
 \leq \left( 1 + b m_j^{-\ee} \right)  
   V_{m_{j+1} } \left( t + a m_j^{-\ee} \right)  +  b m_j^{-\beta} (1 + \max \{t, 0 \}).
\end{align*}
By iterating over $j$, we get
\begin{align*} 
V_{m_j} (t) 
& \leq \left( 1 + b m_{j+1}^{-\ee} \right) \left( 1 + b m_j^{- \ee} \right)   
   V_{m_{j+2} } \left( t  + a m_{j}^{-\ee}  + a m_{j+1}^{-\ee} \right)  \notag\\ 
& \quad +  b m_{j+1}^{- \beta}  \left( 1 + b m_j^{-\ee} \right)  \left(1+ \max \{t,0\} + a m_{j}^{-\ee} \right)   
 + b m_j^{-\beta} (1+ \max \{t,0\}  ).
   \end{align*}
After $\ell - 1$ iterations, we obtain
\begin{align} \label{V-A-B-j}
V_{m} (t) 
 \leq A_{\ell} V_{m_{\ell} } \left(t + a B_{\ell}  \right) 
   +  b \sum_{j=0}^{\ell -1}  m_j^{- \beta}  A_{j}  \left(1+\max \{t,0\} + a B_j \right), 
\end{align}
where, for $0 \leq j \leq \ell$, 
\begin{align*}
A_j =  \prod_{k=0}^{j-1} \left( 1 + b m_{k}^{-\ee} \right), 
\qquad 
B_j  =  \sum_{k=0}^{j-1} m_k^{-\ee} 
\end{align*}
with the convention $\prod_{k=0}^{-1} = 1$ and $\sum_{k=0}^{-1} = 0$. 
To give an upper bound for $A_{\ell}$, 
we first use the inequality $1 + x \leq e^x$ for $x \geq 0$ to get
\begin{align} \label{majAm}
A_{\ell} \leq \prod_{k=0}^{\ell -1} e^{b m_k^{-\ee} } = e^{b B_{\ell}}. 
\end{align}
Since $\ell \in \bb N$ is the least integer such that $m_{\ell} \leq n$, we have $m_{\ell - 1} \geq n$, so that
\begin{align} \label{bound-B-ell-sum}
B_{\ell}  =   \sum_{k=0}^{\ell-1} m_k^{-\ee} 
& = \frac{1}{m^{\ee}} + \frac{1}{m_{1}^{\ee}} + \ldots + \frac{1}{m_{\ell-1}^{\ee}}   \notag\\ 
& \leq  \frac{c}{n^{\ee}}  m^{\ee (1-\ee)^{\ell -1} }  \left( \frac{1}{m^{\ee}} + \frac{1}{m^{\ee (1-\ee) }}  
   + \ldots  + \frac{1}{m^{\ee (1-\ee)^{\ell -1} }} \right)  \notag\\ 
& \leq  \frac{c}{n^{\ee}} \left( m^{\ee ((1-\ee)^{\ell -1} -1 )} + m^{\ee (1-\ee) ((1-\ee)^{\ell -2} -1 )}  
   + \ldots  + 1 \right) \notag\\
& =  \frac{c}{n^{\ee}} \sum_{k=0}^{\ell -1} m^{\ee (1-\ee)^{k} ((1-\ee)^{\ell -1- k} -1 ) } \notag\\
& =  \frac{c}{n^{\ee}} \sum_{k=0}^{\ell -1} m^{\ee (1-\ee)^{\ell - 1-k} ((1-\ee)^{k} -1 ) }. 
\end{align}
As $\min_{ 0 \leq k \leq \ell -2 } m_k \geq n$, it holds that $\min_{ 0 \leq k \leq \ell -2 } m^{(1-\ee)^{\ell - 2 -k}} \geq n$
and for any $0 \leq k \leq \ell -2$, 
\begin{align*}
\frac{m^{\ee (1-\ee)^{\ell - 2 -k} ((1-\ee)^{k+1} -1 ) } }{ m^{\ee (1-\ee)^{\ell - 1-k} ((1-\ee)^{k} -1 ) } }
= m^{ - \ee^2 (1 - \ee)^{\ell -2 -k} }
\leq  n^{- \ee^2 }. 
\end{align*}
Hence, we obtain 
\begin{align*}
\sum_{k=0}^{\ell -1} m^{\ee (1-\ee)^{\ell - 1-k} ((1-\ee)^{k} -1 ) }
\leq  \sum_{k=0}^{\ell -1}   n^{- \ee^2 k}
\leq \frac{1}{1 - n^{-\ee^2}} 
\leq \frac{1}{1 - 2^{-\ee^2}}. 
\end{align*}
Substituting this into \eqref{bound-B-ell-sum} and \eqref{majAm}, we get
\begin{align} \label{bound-A-B-ell-24}
B_{\ell}  \leq  \frac{c}{n^{\ee}},  \quad
A_{\ell} \leq  e^{b B_{\ell}}  \leq 1 + \frac{c'}{n^{\ee}}, 
\end{align}
where $c' > 0$ depends on $b$.  
Reasoning as above shows that $\sum_{j=0}^{\ell -1}  m_j^{-\beta} \leq c n^{-\beta}$. 
Therefore, from \eqref{V-A-B-j} and \eqref{bound-A-B-ell-24}, we obtain that there exists a constant $c > 0$ depending on $a$ and $b$ such that 
\begin{align}\label{Bound-Vmt-aaa}
V_{m} (t) 
& \leq  \left( 1 + c n^{-\ee}  \right) V_{m_{\ell} } \left( t + c n^{-\ee}  \right) 
   +  c \left(1+\max \{t,0\}  \right) \sum_{j=0}^{\ell -1}   m_j^{-\beta}  \notag\\
& \leq  \left( 1 + c n^{-\ee}  \right) V_{m_{\ell} } \left( t + c n^{-\ee}  \right) 
   + c n^{- \beta} \left(1+\max \{t,0\} \right). 
\end{align}
As $m_{\ell} \leq n$, by \eqref{V-n-m-second}, we also have 
\begin{align}\label{Bound-Vmt-bbb}
V_{m_{\ell} } \left( t + c n^{-\ee}  \right) 
& \leq V_{n} \left( t + c n^{-\ee} + a m_{\ell}^{-\ee} \right)  
 + c m_{\ell}^{-\beta} \left( 1 + \max \{t, 0\}   \right)  \notag\\
& \leq  V_{n} \left( t +  c n^{-\ee(1-\ee)} \right) 
 + c n^{-\beta(1-\ee)} \left( 1 + \max \{t,0\}   \right), 
\end{align}
where in the last inequality we used the fact that $n \leq m_{\ell -1}$, so that 
$n^{1-\ee} \leq m_{\ell -1}^{1 - \ee} \leq m_{\ell} + 1$ by \eqref{def-mj-sequence}. 
Substituting \eqref{Bound-Vmt-bbb} into \eqref{Bound-Vmt-aaa} gives that for any $1 \leq n \leq m$ and $t \in \bb R$, 
\begin{align}\label{inequ-recursive-001}
V_{m} (t) \leq  \left( 1 + c n^{-\ee} \right)  
V_{n} \left( t + c n^{-\ee (1-\ee)} \right) + c n^{- \beta (1-\ee) } (1+\max \{t,0\}). 
\end{align}
In particular, using the fact that $V_1(t) \leq b (1 + \max\{t, 0\})$,  we get 
\begin{align*}
V_{n} (t) \leq \left( 1 + c \right)   
V_{1} \left( t + c \right) + c (1+\max \{t,0\})
\leq c' (1+\max \{t,0\}). 
\end{align*}
Finally, inserting this into \eqref{inequ-recursive-001} yields the conclusion of the lemma. 
The proof of the fact that $A=0$ when $a=0$ is left to the reader. 
\end{proof}

\begin{proof}[Proof of Proposition \ref{Prop-Vn-m-001}]
It follows from Lemma \ref{Lem-Vn-increasing} and Proposition \ref{Prop-Vn-001}
that the assumptions of Lemma \ref{lem-iter-proc-001} are satisfied. 
Hence the conclusion follows. 
\end{proof}


\subsection{Estimate for the stopping time $\vartheta_{x,t}$}
In this subsection, we give a proof of Proposition \ref{Prop-vartheta}. 

\begin{proof}[Proof of Proposition \ref{Prop-vartheta}]
Since $\bb P \left( \vartheta_{x,t} > n \right) $ is non-decreasing in $t \in \bb R$, 
it suffices to prove the result for $t \geq 0$. 
By Corollary \ref{Corollary-Vn-m-001}, we can find a constant $c>0$ such that for any $n \geq 1$, $x \in \bb X$ and $t \in \bb R$, 
\begin{align*}
V_n(x,t) = \bb E \left( t + S_n^x; \vartheta_{x,t} > n \right) \leq c (1 + \max\{t, 0\}). 
\end{align*}
We fix $\beta >0$, whose value will be determined later.
Note that we can assume $t \leq n^{\beta}$. 
We write 
\begin{align*}
\bb P \left( \vartheta_{x,t} > n \right) 
 = \bb P \left( \vartheta_{x,t} > n, |S_n^x| \leq n^{2\beta} \right) + \bb P \left( \vartheta_{x,t} > n, |S_n^x| > n^{2\beta} \right).  
\end{align*}
Recall that $0 \leq t \leq n^{\beta}$. Then, on the set $\{ \vartheta_{x,t} > n, |S_n^x| > n^{2\beta} \}$
we have $t + S_n^x \geq n^{2\beta}$. From the above, we get that, by Chebyshev's inequality,  
\begin{align*}
\bb P \left( \vartheta_{x,t} > n \right) 
\leq \bb P \left( |S_n^x| \leq n^{2\beta} \right) + n^{-2 \beta} V_n(x, t)
\leq \bb P \left( |S_n^x| \leq n^{2\beta} \right) + c n^{- 2\beta} (1 + \max\{t, 0\}). 
\end{align*}
Now we apply the effective central limit theorem \eqref{BEmart-001} to obtain 
\begin{align*}
\bb P \left( |S_n^x| \leq n^{2\beta} \right) \leq \int_{-n^{2\beta - \frac{1}{2}} }^{ n^{2\beta - \frac{1}{2}} } \phi_{\bf{v}^{2}}(u) du 
+ \frac{c}{n^{\epsilon}}
\leq c \left( n^{2\beta - \frac{1}{2}} + n^{-\epsilon} \right),
\end{align*}
which leads to the desired result as soon as $\beta < \frac{1}{4}$. 
\end{proof}


\section{Quasi-increasing behaviour of the sequence $(U_n^{\mathfrak f})_{n\geq 1}$}\label{sec quasi-increasingness}

The goal of this section is to establish the bound \eqref{bound with m for U-105-01-2} of Theorem \ref{Pro-Appendix-Final2-Inequ},
which can be seen as a generalization of Lemma \ref{Lem-Vn-increasing}. 
The key strategy is based on  finite-size 
approximation of perturbation sequences, which consists in
replacing the sequence of functions $\mathfrak f = (f_n)_{n \geq 0}$  by a sequence of functions depending 
on a finite but increasing number of coordinates in $\Omega$. 
The finite-size approximation is effective because the analysis for such sequences can be tackled using similar techniques to those used in the proof of Lemma \ref{Lem-Vn-increasing}, but within the framework of a suitably defined Markov chain. This approach simplifies the problem by allowing us to focus on a tractable approximation, gradually incorporating more complexity as we increase the number of coordinates considered in the sequence.
The next subsection will focus on the construction of this Markov chain. 
This chain will play a pivotal role not only in establishing the bound \eqref{bound with m for U-105-01-2}
 but also in proving the converse inequality \eqref{bound with m for U-105-01-3} later in Section \ref{sec-quasi-decreasing property}.


\subsection{Construction of the Markov chain}\label{Subsec-Markov-chian}
In this subsection, we consider that the sequence of perturbations depends on finitely many coordinates and 
 we will provide a construction of an appropriate Markov chain which will be used to prove both bounds 
\eqref{bound with m for U-105-01-2} and \eqref{bound with m for U-105-01-3} of Theorem \ref{Pro-Appendix-Final2-Inequ}. 
Besides, this construction will also allow us to derive an associated martingale that plays an important role in the analysis. 

In this subsection we fix $p \geq 1$. 
Assume that $\mathfrak f = \mathfrak f_p=(f_{n})_{n\geq0}$ is a sequence of measurable functions, 
where each $f_{n}: \bb G^{\{1,\ldots,p \}}\times \bb X \to \bb R$ depends only on the first $p$ coordinates. 
Specifically, for any $n\geq 0$ and  $x\in \bb X$,  
the function $f_{n}(\cdot, x)$ 
depends only on the first $p$ coordinates $(g'_{1},\ldots, g'_{p})$, meaning that $f_{n}(\cdot, x)$ is 
$\mathscr A_p$ measurable, where 
 $\mathscr A_p$ is the $\sigma$-algebra on $\Omega$ generated by $g_{1}, \ldots, g_{p}$. 
By abuse of notation,  we shall sometimes use the same notation $f_{n}$ to denote 
the function on $\Omega \times \bb X$
defined by $(\omega,x) \mapsto f_{n}(g_1(\omega),\ldots, g_p(\omega),x)$.
The same remark concerns the function on $\bb G^{\{0,\ldots,p \}} \times \bb X$
defined by $(g'_0,\ldots, g'_p,x) \mapsto f_{n}(g'_1,\ldots, g'_p,x)$,
which does not depend on the coordinate $g'_0$.
This ghost coordinate will be useful below, see \eqref{trans-prob-xi}. 

Recall the definition of $U^{\mathfrak f}_n(t)$ from \eqref{bound with m for U-105-01-2} and \eqref{bound with m for U-105-01-3} 
of Theorem \ref{Pro-Appendix-Final2-Inequ} (see \eqref{def-U-f-n-t-001}): for any 
$t\in \bb R$ and $n\geq 1$, 
\begin{align*} 
U^{\mathfrak f}_n(t)
 = \int_{\bb X} \bb E \left( t + S^x_n + f_{n}\circ T^n(\omega,x) - f_{0}(\omega,x);  \tau^{\mathfrak f}_{x, t} > n \right) \nu(dx), 
\end{align*}
where $S^x_n =\sigma (g_n \cdots g_1,x)$ is defined by \eqref{random walk S^x_n-001}. 
We start by decomposing $U^{\mathfrak f}_n(t)$ into two parts for a clearer analysis: 
\begin{align} \label{MAIN_GOAL-002}
U^{\mathfrak f}_n(t) = I_1 + I_2, 
\end{align}
where
\begin{align} 
I_1 & = \int_{\bb X} \bb E \left( t + S_{n+p}^{x}; \tau^{\mathfrak f}_{x, t} > n \right) \nu(dx) \notag \\  
I_2 &= \int_{\bb X} 
 \bb E \left(  S_{n}^{x} - S_{n+p}^{x} + f_n\circ T^n(\omega,x) - f_0(\omega,x); \tau^{\mathfrak f}_{x, t} > n \right) \nu(dx). \label{MAIN_GOAL-002bbb}
\end{align}
The term $I_1$ will give the main contribution while the term $I_2$ will be negligible.

To handle the main term $I_1$, we introduce below a Markov chain which will play an important role in the subsequent proofs. 
This chain will allow us to effectively manage the sequence of perturbations depending on finitely many coordinates 
and facilitate the analysis of the random walk. 
For any $p \geq 1$, set $\bb A_{p} =\bb G^{\{0,\ldots,p \}} \times \bb X \times \bb N$. 
Consider the family of transition probabilities $\{ P_a: a \in \bb A_p \}$ defined in the following way: for any $a=(g_0,\ldots,g_{p},x,q)\in \bb A_p$
and any nonnegative measurable function $\varphi: \bb A_p \to \bb R$, 
\begin{align} \label{trans-prob-xi}
P_a (\varphi) = \int_{\bb G} \varphi \left( g_{1},\ldots, g_{p}, g, g_1 x, q+1 \right) \mu(dg).  
\end{align}
For any $a\in \bb A_p$, we write $\bb P_a$ for the probability measure 
on $\Omega'_p = \bb A_p^{\bb N}$ associated to the transition probability $P_a$.
The expectation with respect to $\bb P_a$ is denoted by $\bb E_a$.
 For any 
 $\omega' \in \bb A_p^{\bb N}$ and $i \geq 0$, 
let $\xi_i(\omega')$ be the $i$-th coordinate map of $\omega'$ in $\bb A_p^{\bb N}$.
It is straightforward to verify that, under the measure $\bb P_a$,
the sequence $(\xi_i)_{i \geq 0}$ forms an $\bb A_p$-valued Markov chain with transition probabilities $\{P_a, a \in \bb A_p \}$ 
and with initial value $\xi_0 = a$, $\bb P_a$-almost surely. 
Introduce the $\sigma$-algebras $\mathscr G_n= \sigma \{ \xi_0,\ddd,\xi_{n} \}$, $n\geq 0$.
Additionally, we define the function $\sigma_p:  \bb A_p \to  \bb R$ by setting, for any $a=(g_0,\ldots,g_{p},x,q) \in \bb A_p$, 
\begin{align*}
\sigma_p (g_0, \ldots, g_{p}, x,q) = \sigma(g_0,g_0^{-1}x)=-\sigma(g_0^{-1},x) . 
\end{align*}

With the definitions introduced above, we can express $S^x_n:=\sigma (g_n \cdots g_1,x)$ as an additive functional
of the Markov chain $(\xi_i)_{i \geq 0}$ as follows:  
for any $a=(g_0,\ldots,g_{p},x,q) \in \bb A_p$ and $n\geq 1$,
for $\bb P_a$-almost every $\xi=(\xi_0,\xi_1,\ldots)\in \Omega'_p = \bb A_p^{\bb N}$, we have
\begin{align} \label{new representation for S^x_n 001}
S^x_n := \sigma (g_n \cdots g_1,x) =  \sum_{i=1}^{n} \sigma_p(\xi_i),  
\end{align}
where, for  $j>p$, the element $g_j$ is the unique element of $\bb G$ such that 
\begin{align*}
\xi_{j-p} = (g_{j-p},\ldots, g_j, g_{j-p} \cdots g_1 x, q+ j-p).
\end{align*}

Now we define a measurable perturbation function $\tilde f$ on $\bb A_p$ by setting, for any $a=(g_0,\ldots,g_{p},x,q) \in \bb A_p$, 
\begin{align} \label{function f tilde001}
\tilde f(a) = f_q(g_1,\ldots, g_p,x).
\end{align}
For any $k \in \bb N$ and $a \in \bb A_p$, we denote
\begin{align}\label{def-Fka}
\mathcal F_k(a) = \bb E_a e^{\alpha |\tilde f(\xi_k) |},
\end{align}
where $\alpha>0$ is a constant given in \eqref{exp mom for g 002}. 
Note that, by \eqref{exp mom for g 002}, for any $q \in \bb N$, 
we have $\mathcal F_k(g_0, \ldots, g_p, x, q) < \infty$ for $\bb P \otimes \nu$-almost all $g_0, \ldots, g_p, x$.

For any $t \in \bb R$, define the function 
$\tilde \tau^{\mathfrak f}_{t}: \bb A_p^{\bb N} \to \bb N \cup \{\infty \} $ by setting 
\begin{align} \label{def-tau-f-y}
 \tilde\tau^{\mathfrak f}_{t} = \min \left\{ k\geq 1:  t + \sum_{i=1}^{k} \sigma_p(\xi_i) + \tilde f(\xi_k)-\tilde f(\xi_0) < 0 \right\}.
\end{align}
It is straightforward to verify that $\tilde\tau^{\mathfrak f}_{t}$ is a $(\mathscr G_n)_{n\geq 0}$-stopping time on the space $\bb A_p^{\bb N}$. 

With this notation,  for any $t \in \bb R$ and $n\geq 1$, we have
\begin{align} \label{Expect-E_x001} 
I_1
& = \int_{\bb X}  \bb E \left( t +S_{n+p}^{x} ; \tau^{\mathfrak f}_{x, t}>n \right) \nu(dx)  \notag\\
& =  \int_{\bb X}  \int _{\bb G^{\{0,\ldots, p\}} } \bb E_{(g_0, \ldots, g_{p}, x, 0)} 
   \bigg( t +  \sum_{i=1}^{n+p} \sigma_p(\xi_i); \tilde\tau^{\mathfrak f}_{t} >n \bigg)   
     \mu(dg_{0}) \ldots \mu(dg_{p})  \nu(dx)   \notag\\
& = \int_{\bb X}  \int _{\bb G^{\{0,\ldots, p\}} }  W^{\mathfrak f}_{n} ((g_{0},\ldots,g_{p}, x, 0), t)  \mu(dg_{0}) \ldots \mu(dg_{p})
\nu(dx),
\end{align}
where, for $a  \in \bb A_p$, $t \in \bb R$ and $n \geq 1$, we denote 
\begin{align} \label{EXPECT-E_x-001}
W^{\mathfrak f}_{n}(a, t)
 = \bb E_a \bigg( t + \sum_{i=1}^{n+p} \sigma_p(\xi_i); \tilde \tau^{\mathfrak f}_{t} >n \bigg). 
\end{align}

We will make use of the following lemma, which is a consequence of the 
Markov property of the sequence $\xi_0, \xi_1, \ldots$.

\begin{lemma} \label{Lemma-Markov}
For any $p\geq 1$,  $a\in \bb A_p$,  $n > k \geq 1$, $t \in \bb R$, 
and any $\mathscr G_k$-measurable event $B_k$,  
we have 
\begin{align*} 
&\bb E_a \left( t +  \sum_{i=1}^{n+p} \sigma_p(\xi_i); \tilde \tau^{\mathfrak f}_{t} > n, B_k \right) \\
&\quad = \bb E_a \left[ W^{\mathfrak f}_{n-k}\left( \xi_{k}, t + \sum_{i=1}^{k} \sigma_p(\xi_i)+\tilde f(\xi_k)-\tilde f(\xi_0) \right);  \tilde \tau^{\mathfrak f}_{t} >k, B_k  \right].
\end{align*}
\end{lemma}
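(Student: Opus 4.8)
\textbf{Proof plan for Lemma \ref{Lemma-Markov}.}

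The plan is to derive the identity from the Markov property of the chain $(\xi_i)_{i\geq 0}$ under $\bb P_a$, together with the additivity of the functional $\sum_{i=1}^{n+p}\sigma_p(\xi_i)$ and the structure of the perturbed exit time $\tilde\tau^{\mathfrak f}_{t}$. The essential point is that, although $\tilde\tau^{\mathfrak f}_{t}$ is defined by a condition involving $\tilde f(\xi_j)$ for all $1\leq j\leq n$ (so it depends on the future beyond time $k$), the perturbation $\tilde f(\xi_j)$ depends \emph{only} on the single coordinate $\xi_j$ and not on infinitely many future coordinates of $\omega'$; hence the event $\{\tilde\tau^{\mathfrak f}_{t}>n\}$ can be split cleanly at time $k$ into a $\mathscr G_k$-measurable part and a part that, conditionally on $\xi_k$, is a copy of the corresponding event for the shifted chain.

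First I would write $\{\tilde\tau^{\mathfrak f}_{t}>n\}=\{\tilde\tau^{\mathfrak f}_{t}>k\}\cap E$, where, using the cocycle-type additivity $\sum_{i=1}^{j}\sigma_p(\xi_i)=\sum_{i=1}^{k}\sigma_p(\xi_i)+\sum_{i=k+1}^{j}\sigma_p(\xi_i)$ and the telescoping $\tilde f(\xi_j)-\tilde f(\xi_0)=\big(\tilde f(\xi_j)-\tilde f(\xi_k)\big)+\big(\tilde f(\xi_k)-\tilde f(\xi_0)\big)$, the event $E$ is expressed entirely in terms of $\xi_k,\xi_{k+1},\ldots,\xi_{k+n}$ and the quantity $s:=t+\sum_{i=1}^{k}\sigma_p(\xi_i)+\tilde f(\xi_k)-\tilde f(\xi_0)$: namely $E=\{\,s+\sum_{i=k+1}^{k+j}\sigma_p(\xi_i)+\tilde f(\xi_{k+j})-\tilde f(\xi_k)\geq 0,\ 1\leq j\leq n\,\}$. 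Likewise the integrand $t+\sum_{i=1}^{n+p}\sigma_p(\xi_i)$ equals $s-\tilde f(\xi_k)+\tilde f(\xi_0)+\sum_{i=k+1}^{k+n+p}\sigma_p(\xi_i)$; since $\tilde\tau^{\mathfrak f}_{t}>k$ forces $s\geq 0$ and $\tilde f(\xi_k)-\tilde f(\xi_0)$ is $\mathscr G_k$-measurable while the remaining sum starts at index $k+1$, I can rewrite the whole integrand over $E$ in terms of $\xi_k$ and the increments after time $k$. Then I would apply the Markov property at time $k$ under $\bb P_a$: conditionally on $\mathscr G_k$, the process $(\xi_{k+i})_{i\geq 0}$ has the law $\bb P_{\xi_k}$, so the conditional expectation of $\big(\text{integrand}\big)\mathds 1_E$ given $\mathscr G_k$ is exactly $W^{\mathfrak f}_{n-k}(\xi_k,s)$ by the definition \eqref{EXPECT-E_x-001} (after re-indexing $i\mapsto i+k$, noting that the count of $\sigma_p$-terms is $n-k+p$ and the exit condition runs over $1\leq j\leq n-k$). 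Finally, intersecting with the $\mathscr G_k$-measurable event $\{\tilde\tau^{\mathfrak f}_{t}>k\}\cap B_k$ and taking $\bb E_a$ of the conditional expectation yields the claimed formula.

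The main obstacle — really the only thing requiring care — is the bookkeeping of indices and the verification that the coordinate-translation $\xi_k\mapsto$ "new $\xi_0$" correctly shifts both the summation bounds of $\sum\sigma_p(\xi_i)$ and the range of the exit condition, so that one lands precisely on $W^{\mathfrak f}_{n-k}$ as defined, with the correct shifted starting level $s=t+\sum_{i=1}^{k}\sigma_p(\xi_i)+\tilde f(\xi_k)-\tilde f(\xi_0)$. One must also check measurability: $\tilde f(\xi_k)$ is $\mathscr G_k$-measurable, each $\sigma_p(\xi_i)$ for $i\leq k$ is $\mathscr G_k$-measurable, and the finiteness of the expectations is guaranteed by the exponential moment bound \eqref{exp mom for g 002} via $\mathcal F_k$ in \eqref{def-Fka}, so no integrability issue arises when splitting the expectation. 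Since $\tilde\tau^{\mathfrak f}_{t}$ is a $(\mathscr G_n)$-stopping time and $B_k\in\mathscr G_k$, the event $\{\tilde\tau^{\mathfrak f}_{t}>k\}\cap B_k$ is indeed $\mathscr G_k$-measurable, which legitimizes conditioning on $\mathscr G_k$; this completes the argument.
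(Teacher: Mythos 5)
Your proposal is correct and follows essentially the same route as the paper: you split $\{\tilde\tau^{\mathfrak f}_{t}>n\}$ into $\{\tilde\tau^{\mathfrak f}_{t}>k\}$ intersected with an event depending only on the post-$k$ trajectory and the shifted level $s=t+\sum_{i=1}^{k}\sigma_p(\xi_i)+\tilde f(\xi_k)-\tilde f(\xi_0)$, then apply the Markov property at time $k$ to identify the conditional expectation as $W^{\mathfrak f}_{n-k}(\xi_k,s)$, exactly as in the paper's proof. The only blemish is the index range ``$1\leq j\leq n$'' in your displayed expression for $E$, which should read $1\leq j\leq n-k$; you state the correct range later, so this is merely a typo.
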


\begin{proof} 
For brevity, we denote for $n > k \geq 1$ and $t \in \bb R$,  
\begin{align*} 
A_{k,n}(t) = \bigg\{ t + \sum_{i=1}^{j} \sigma_p(\xi_{i}) + \tilde f(\xi_j)-\tilde f(\xi_0)\geq 0, \  k+1 \leq j\leq n \bigg\}.
\end{align*}
By \eqref{def-tau-f-y}, it holds that $\{\tilde \tau^{\mathfrak f}_{t}>n\} =\{ \tilde \tau^{\mathfrak f}_{t} >k\} \bigcap A_{k,n}(t)$. 
Hence, by taking the conditional expectation with respect to $\mathscr G_k$, 
we get that for any $a\in \bb A_p$,  $n > k \geq 1$ and $t \in \bb R$, 
\begin{align} \label{PPPPR001}
\bb E_a \bigg( t +  \sum_{i=1}^{n+p} \sigma_p(\xi_i); \tilde \tau^{\mathfrak f}_{t} > n \bigg | 
\mathscr G_k \bigg) 
 = \mathds 1_{\{\tilde \tau^{\mathfrak f}_{t} > k\} } \bb E_a \bigg( t +  \sum_{i=1}^{n+p} \sigma_p(\xi_i); A_{k,n}(t) \bigg | 
\mathscr G_k \bigg).
\end{align}
We shall use the  following conditioning formula: 
for any bounded Borel measurable function $h: \bb A_p ^{n+p+1} \to \bb R$ and $\eta_0,\ddd,\eta_{k} \in \bb A_p$,
 denote by $h_{\eta_0,\ddd,\eta_{k}} $ the bounded Borel measurable function 
on ${\bb A}_p^{n+p-k+1}$ defined by 
$h_{\eta_0,\ddd,\eta_{k}} (\xi_0,\ddd,\xi_{n+p-k})= h(\eta_0,\ddd,\eta_{k},\xi_1,\ddd,\xi_{n+p-k}),$
then it holds
\begin{align} \label{PPPPR002}
\bb E_a (h(\xi_0,\ddd,\xi_{n+p} )\big | \xi_0=\eta_0,\ddd,\xi_{k}=\eta_{k} ) 
= \bb E_{\eta_{k}} h_{\eta_0,\ddd,\eta_{k}}(\xi_{0},\ddd, \xi_{n+p-k}). 
\end{align}
Choose $\eta_0,\ddd,\eta_{k} \in \bb A_p$ and set 
$t' = t +  \sum_{i=1}^{k} \sigma_p(\eta_i) +f(\eta_k)-f(\eta_0)$. 
Then, by \eqref{PPPPR002}, we get 
 \begin{align} \label{PPPPR003}
&\bb E_a \bigg( t +  \sum_{i=1}^{n+p} \sigma_p(\xi_i); A_{k,n}(t) \bigg | 
\xi_0=\eta_0,\ddd,\xi_{k}=\eta_{k} \bigg) \notag \\ 
&= \bb E_a \bigg( t +  \sum_{i=1}^{k} \sigma_p(\xi_i) +   \sum_{i=k+1}^{n+p} \sigma_p(\xi_i); A_{k,n}(t) \bigg |  
\xi_0=\eta_0,\ddd,\xi_{k}=\eta_{k} \bigg)  \notag \\ 
&= \bb E_{\eta_{k}} \bigg( t'  +  \sum_{i=1}^{n+p-k} \sigma_p(\xi_{i}); \tilde \tau^{\mathfrak f}_{t'} > n-k \bigg) 
  = W^{\mathfrak f}_{n-k} (\eta_{k}, t').
\end{align}
Since $\{ \tilde \tau^{\mathfrak f}_{t} >k\}$ and $B_k$ are $\mathscr G_k$-measurable, 
the assertion of the lemma now follows from \eqref{PPPPR001} and \eqref{PPPPR003}. 
\end{proof}

We now justify a martingale representation of the expectation $W^{\mathfrak f}_n(a,t)$. 
We define the function $h_p:  \bb A_p \to  \bb R$ by setting, for any $a=(g_{0}, \ldots, g_{p}, x,q) \in \bb A_p$, 
\begin{align} \label{def of func h-001}
h_p (a)=h_p (g_{0}, \ldots, g_{p}, x,q) = \sigma(g_{p}, g_{p-1} \cdots  g_1 x). 
\end{align}
In view of the definition of the 
probability measure $\bb P_a$ defined on $\bb A_p^{\bb N}$, 
for any $i \geq 0$ and $a\in \bb A_p$, we have $h_p(\xi_{i}) = \sigma_p(\xi_{i+p})$, $\bb P_a$-almost surely. Consequently, 
the quantity $W^{\mathfrak f}_n(a,t)$ defined by \eqref{EXPECT-E_x-001} can be rewritten as follows: 
for any $a = (g_{0},\ldots,g_{p},x,q) \in \bb A_p$,  $t \in \bb R$ and $n \geq 1$,
\begin{align} \label{EXPECT-E_x-002aa}
W^{\mathfrak f}_n(a,t)
= \bb E_a \bigg( t + \sigma(g_{p}\cdots g_{1}, x) +  \sum_{i=1}^{n} h_p(\xi_i); \tilde \tau^{\mathfrak f}_{t} >n \bigg). 
\end{align}
Let $a \in \bb A_p$. 
Set $M_0=0$ and, for $n\geq 1$, $p \geq 1$, $\bb P_a$-almost surely, 
\begin{align} \label{def-Mn-martingel-001}
M_n = \sum_{i=p+1}^{n+p} \sigma_p(\xi_{i})=\sum_{i=1}^{n} \sigma_p(\xi_{i+p}) = \sum_{i=1}^{n} h_p(\xi_i).
\end{align}
With this notation, the expectation $W^{\mathfrak f}_n(a,t)$ becomes: 
\begin{align} \label{EXPECT-E_x-002}
W^{\mathfrak f}_n(a,t)
= \bb E_a \left( t + \sigma(g_{p}\cdots g_{1}, x) + M_n; \tilde \tau^{\mathfrak f}_{t} >n \right).
\end{align}
The useful property related to this representation is that the sequence $(M_n)_{n\geq 0}$ is a 
$(\mathscr G_n)_{n\geq 0}$-martingale. 
This property is formally stated in the following lemma. 

\begin{lemma} \label{Lemma-Martingale001}
For any $p \geq 1$ and  $a\in \bb A_p,$ the sequence $(M_n)_{n\geq 0}$ is a $(\mathscr G_n)_{n\geq 0}$-martingale under $\bb P_a$.
\end{lemma}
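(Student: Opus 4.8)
## Plan for the proof of Lemma \ref{Lemma-Martingale001}

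The plan is to show directly that $M_n$ is integrable and that $\bb E_a(M_{n+1} \mid \mathscr G_n) = M_n$ for every $n \geq 0$, by unwinding the definition of the transition probabilities $P_a$ in \eqref{trans-prob-xi} and exploiting the centering assumption \eqref{centering-001} on the cocycle $\sigma$. Recall from \eqref{def-Mn-martingel-001} that $M_n = \sum_{i=1}^n h_p(\xi_i)$ with $h_p$ defined in \eqref{def of func h-001}; since $M_{n+1} - M_n = h_p(\xi_{n+1})$, the martingale property reduces to the single identity
\begin{align*}
\bb E_a\big( h_p(\xi_{n+1}) \mid \mathscr G_n \big) = 0, \qquad \bb P_a\text{-a.s.}
\end{align*}

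First I would record the integrability. From \eqref{trans-prob-xi}, under $\bb P_a$ the conditional law of $\xi_{n+1}$ given $\mathscr G_n$ depends only on $\xi_n = (g_{n-p}, \ldots, g_n, g_{n-p}\cdots g_1 x, q+n)$ (writing out coordinates as in the discussion after \eqref{new representation for S^x_n 001}), and under that conditional law the last "new" coordinate is an independent sample $g \sim \mu$, with $h_p(\xi_{n+1}) = \sigma(g, g_n \cdots g_1 x)$. Then $\bb E_a|h_p(\xi_{n+1})| = \int_{\bb G} |\sigma(g, g_n\cdots g_1 x)|\,\mu(dg)$, which is finite (indeed uniformly bounded over the state) by the exponential moment assumption \eqref{exp mom for f 001}; summing over $i \le n$ gives $\bb E_a |M_n| < \infty$.

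Next, the Markov property of $(\xi_i)_{i\ge 0}$ under $\bb P_a$ gives $\bb E_a(h_p(\xi_{n+1}) \mid \mathscr G_n) = (P_{\xi_n} h_p)$. Applying \eqref{trans-prob-xi} with $\varphi = h_p$ and $\xi_n = (g_{n-p},\ldots,g_n, g_{n-p}\cdots g_1 x, q+n)$,
\begin{align*}
P_{\xi_n} h_p = \int_{\bb G} h_p\big( g_{n-p+1}, \ldots, g_n, g, g_{n-p+1}\cdots g_1 x, q+n+1 \big)\, \mu(dg)
= \int_{\bb G} \sigma\big( g, g_n \cdots g_1 x \big)\, \mu(dg),
\end{align*}
where the last equality uses the definition \eqref{def of func h-001} of $h_p$: the $(p+1)$-st group coordinate of the shifted tuple is $g$ and the $\bb X$-coordinate, after peeling off one factor, is $g_n\cdots g_1 x$, so that $h_p$ evaluates to $\sigma(g, g_n\cdots g_1 x)$. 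By the strong centering hypothesis \eqref{centering-001} applied at the point $x' = g_n\cdots g_1 x \in \bb X$, this integral is $0$. Hence $\bb E_a(h_p(\xi_{n+1})\mid \mathscr G_n) = 0$ and therefore $\bb E_a(M_{n+1}\mid\mathscr G_n) = M_n$, which is the claim.

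I do not anticipate a genuine obstacle here; the only point requiring care is the bookkeeping of indices in the state $\xi_n \in \bb A_p$ — i.e. correctly tracking which $\bb G$-coordinate is the freshly sampled one and verifying that the $\bb X$-coordinate entering $h_p$ after the one-step shift is exactly $g_n\cdots g_1 x$, so that $h_p(\xi_{n+1}) = \sigma_p(\xi_{n+1+p})$ is consistent with $h_p(\xi_i) = \sigma_p(\xi_{i+p})$ noted before \eqref{EXPECT-E_x-002aa}. Once the coordinates are matched up, the martingale property is an immediate consequence of \eqref{centering-001}.
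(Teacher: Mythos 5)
Your proof is correct and is precisely the argument the paper leaves implicit (the lemma is stated without proof): by the Markov property the increment $M_{n+1}-M_n = h_p(\xi_{n+1})$ equals $\sigma(g,x')$ with $g$ a fresh $\mu$-sample and $x'$ a $\mathscr G_n$-measurable point of $\bb X$, so its conditional expectation vanishes by the strong centering \eqref{centering-001}, while integrability follows from \eqref{exp mom for f 001}. Note only that your labelling of the coordinates of $\xi_n$ is shifted by $p$ relative to the paper's convention (there $\xi_i = (g_i,\ldots,g_{i+p}, g_i\cdots g_1 x, q+i)$, so the base point is $g_{n+p}\cdots g_1 x$ rather than $g_n\cdots g_1 x$), but since \eqref{centering-001} holds at every point of $\bb X$ this relabelling does not affect the argument.
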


\subsection{A priori control on the exit times} \label{sec-control-exit times}

In this subsection, we assume the same setting as in the previous subsection, 
where the functions $f_n$  depend only on the first $p$ coordinates in $\Omega$, for some fixed $p \geq 1$. 
We will provide a bound for $I_2$ in \eqref{MAIN_GOAL-002} using a control on the probability $\mathbb{P}_a ( \tilde \tau^{\mathfrak f}_{t} > n )$. 
 
The following lemma gives an upper bound for $\mathbb{P}_a ( \tilde \tau^{\mathfrak f}_{t} > n )$. 
In particular, it shows that, for any $a \in \bb A_p$, the stopping time $\tilde \tau^{\mathfrak f} _{t}$ is $\mathbb{P}_a$-almost surely finite.
Recall that $\mathcal F_k(a)$ is defined by \eqref{def-Fka}.

\begin{lemma}\label{Lem-tau-prior}
There exist constants $c, \beta > 0$ such that for any $a= (g_0,\ldots,g_{p},x,q) \in \bb A_p$, $t \in \bb R$, $n\geq 1$ and $1\leq p < n $,  
\begin{align*}
\mathbb{P}_a \left( \tilde \tau^{\mathfrak f}_{t} = n \right) 
&\leq \mathbb{P}_a \left( \tilde \tau^{\mathfrak f}_{t} \geq n \right)  \\
& \leq  c \frac{\max \{t,0\} +  |\sigma(g_p\cdots g_1,x)|   +|\tilde{f}(a)|+ \log n}{(n-p)^{\beta}}
 + \frac{c }{n^8} \sum_{k = p+1}^n \mathcal F_k(a).   
\end{align*}
\end{lemma}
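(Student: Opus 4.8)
The plan is to discard, on the event $\{\tilde\tau^{\mathfrak f}_t\ge n\}$, the first $p$ of the stopping constraints, to recognize that what remains is, \emph{in law}, the unperturbed walk started from a translated base point, and then to quote the a priori bound of Proposition~\ref{Prop-vartheta}. Throughout one works under $\mathbb P_a$ with $a=(g_0,\ldots,g_p,x,q)$; one may assume $n-p\ge 2$, since if $n=p+1$ then $(n-p)^\beta=1$ and the term $c\log n$ in the claimed bound already exceeds $1$ once $c$ is chosen large (recall $n\ge 2$), so nothing is to prove.

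\emph{Reduction to a plain walk.} For $p\le k$ one has $\sum_{i=1}^k\sigma_p(\xi_i)=\sigma(g_p\cdots g_1,x)+M_{k-p}$, where $M_j=\sum_{i=1}^j h_p(\xi_i)$ is the martingale of Lemma~\ref{Lemma-Martingale001} (cf.\ \eqref{new representation for S^x_n 001}); moreover, under $\mathbb P_a$ the coordinates $g_{p+1},g_{p+2},\ldots$ driving the chain are i.i.d.\ of law $\mu$, so $(M_j)_{j\ge0}$ has the same law as the unperturbed walk $(S^{x'}_j)_{j\ge 0}$ started from $x'=g_p\cdots g_1 x\in\bb X$. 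Keeping from the definition of $\{\tilde\tau^{\mathfrak f}_t\ge n\}$ only the constraints with $p\le k\le n-1$ (which can only enlarge the event) and using $\tilde f(\xi_0)=\tilde f(a)$, we get
\begin{align*}
\{\tilde\tau^{\mathfrak f}_t\ge n\}\subseteq\bigcap_{j=1}^{n-1-p}\Big\{M_j\ge-\big(\max\{t,0\}+|\sigma(g_p\cdots g_1,x)|+|\tilde f(a)|+|\tilde f(\xi_{j+p})|\big)\Big\}.
\end{align*}

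\emph{Truncating the perturbation and applying Proposition~\ref{Prop-vartheta}.} Fix $C=C(\alpha)$ with $\alpha C\ge 8$, set $E=\{|\tilde f(\xi_k)|\le C\log n\text{ for all }p<k\le n\}$ and $\bar t=\max\{t,0\}+|\sigma(g_p\cdots g_1,x)|+|\tilde f(a)|+C\log n\ge0$. By Markov's inequality and \eqref{def-Fka}, $\mathbb P_a(|\tilde f(\xi_k)|>C\log n)\le n^{-\alpha C}\mathcal F_k(a)\le n^{-8}\mathcal F_k(a)$, whence $\mathbb P_a(E^c)\le n^{-8}\sum_{k=p+1}^n\mathcal F_k(a)$. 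On $E\cap\{\tilde\tau^{\mathfrak f}_t\ge n\}$ the display above gives $M_j\ge-\bar t$ for $1\le j\le m:=n-1-p$, so, since $(M_j)$ and $(S^{x'}_j)$ have the same law, $\mathbb P_a(E\cap\{\tilde\tau^{\mathfrak f}_t\ge n\})\le\mathbb P(\vartheta_{x',\bar t}>m)\le c(1+\bar t)/m^\beta$ by Proposition~\ref{Prop-vartheta}. Because $n-p\ge 2$ we have $m\ge(n-p)/2$, hence $m^\beta\ge 2^{-\beta}(n-p)^\beta$, and $1+\bar t\le c(\max\{t,0\}+|\sigma(g_p\cdots g_1,x)|+|\tilde f(a)|+\log n)$ for $n\ge2$. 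Combining this with the bound on $\mathbb P_a(E^c)$ via $\mathbb P_a(\tilde\tau^{\mathfrak f}_t\ge n)\le\mathbb P_a(E^c)+\mathbb P_a(E\cap\{\tilde\tau^{\mathfrak f}_t\ge n\})$ yields the asserted inequality; the bound $\mathbb P_a(\tilde\tau^{\mathfrak f}_t=n)\le\mathbb P_a(\tilde\tau^{\mathfrak f}_t\ge n)$ is trivial.

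The main obstacle is the reduction step: one must observe that deleting the first $p$ stopping constraints leaves a process that is \emph{exactly}, in distribution, a plain cocycle random walk from the translated base point $x'=g_p\cdots g_1x$, so that the perturbation-free estimate of Proposition~\ref{Prop-vartheta} applies as a black box, and that all the ``future'' data carried by $a$ (namely $g_1,\ldots,g_p$, which are frozen, and the number $\tilde f(a)$) enters the final bound only through the explicit quantities $|\sigma(g_p\cdots g_1,x)|$ and $|\tilde f(a)|$. The remaining ingredients — calibrating the truncation level $C\log n$ against the exponential moment $\mathcal F_k$ so as to produce a fixed negative power of $n$, and passing between $(n-p)^\beta$ and $(n-1-p)^\beta$ — are routine.
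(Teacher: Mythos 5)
Your proof is correct and follows essentially the same route as the paper: truncate the perturbation on the event $\{|\tilde f(\xi_k)|\le C\log n,\ p<k\le n\}$, bound its complement by Markov's inequality via $\mathcal F_k(a)$, and on the good event reduce to the unperturbed exit time $\vartheta_{x',\cdot}$ of the plain walk from $x'=g_p\cdots g_1x$ (the paper phrases this as $\mathbb P_a(\tilde\tau^{\mathfrak f}_t\ge n, B_n)\le\mathbb P(\vartheta_{x',t'+c_1\log n}\ge n-p)$) and apply Proposition~\ref{Prop-vartheta}. The only cosmetic differences are your explicit handling of the off-by-one between $n-p$ and $n-1-p$ and of the degenerate case $n=p+1$, which the paper leaves implicit.
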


\begin{proof}
We fix $a=(g_0,\ldots,g_{p},x,q) \in \bb A_p$ and $t\in \bb R$.
Set $x'=g_p\cdots g_1x$ and $t' = t + \sigma(g_p\cdots g_1, x) - \tilde f(a) $.
Consider the event 
\begin{align*}
B_{n}=\left\{  \max_{p < k\leq n}\left\vert  \tilde f(\xi_k) \right\vert \leq c_1 \log n \right\}, 
\end{align*}
where the constant $c_1>0$ will be chosen later.
Then 
\begin{align}\label{decom-probab-tauft}
\mathbb{P}_a \left( \tilde \tau^{\mathfrak f}_{t} \geq n \right) 
\leq  \mathbb{P}_a \left( \tilde \tau^{\mathfrak f}_{t} \geq n,  B_n \right)  
  +  \mathbb{P}_a \left(  B_n^c \right). 
\end{align}
By Proposition \ref{Prop-vartheta}, there exist constants $c, \beta>0$ such that for any $n \geq 1$, $x \in \bb X$ and $t \in \bb R$, 
\begin{align*} 
\bb P (\vartheta_{x,t}  \geq n) \leq c \frac{1+\max \{t,0\}}{n^{\beta}},
\end{align*}
where  $\vartheta_{x,t}$ is a stopping time defined by \eqref{stopping time theta 001}. 
Using this bound, for the first term in \eqref{decom-probab-tauft},  we get
\begin{align}\label{probab-tauft-Bn}
\mathbb{P}_a \left( \tilde \tau^{\mathfrak f}_{t} \geq n,  B_n \right)
&\leq  \mathbb{P} \left( \vartheta_{x' , t' + c_1 \log n} \geq n - p  \right)   \notag\\
&\leq  c \frac{\max \{t,0\} + | \sigma(g_p\cdots g_1, x) | +|\tilde{f}(a)| + \log n}{(n -p)^{\beta}}, 
\end{align}
with $c>0$ not depending on $\tilde{f}$. 
For the second term in \eqref{decom-probab-tauft}, by Markov's inequality and \eqref{def-Fka}, we obtain 
\begin{align} \label{PAbar bound}
\mathbb{P}_a\left( B_n^c\right) \leq
     \sum_{k = p+1}^n
		\mathbb{P}_a\left( \left\vert \tilde{f}(\xi_k)\right\vert  >  c_1 \log n \right) 
\leq   \frac{c}{n^{\alpha c_1}} \sum_{k = p+1}^n \mathcal F_k(a)  \leq   \frac{c }{n^8} \sum_{k = p+1}^n \mathcal F_k(a),
\end{align}
where in the last inequality we take $c_1>0$ to be sufficiently large.  
Combining the bounds \eqref{decom-probab-tauft}, \eqref{probab-tauft-Bn} and \eqref{PAbar bound}
completes the proof of the lemma. 
\end{proof}

As a first consequence of Lemma \ref{Lem-tau-prior}, we obtain an extension of Proposition \ref{Prop-vartheta}, 
which will be used in the proof of Corollary \ref{Lem-bound-tau-p-000aa}.

\begin{corollary}\label{Lem-bound-tau-p-000}
There exist constants $\ee, \beta, c > 0$ 
such that for any $ n \geq 1$, $p\leq n^{\ee}$, $x\in \bb X$ and $t\in \bb R$ 
and any sequence $\mathfrak f = (f_n)_{n \geq 0}$ of $\mathscr A_p$-measurable functions, 
\begin{align*}
\int_{\bb X} \mathbb{P} \left( \tau^{\mathfrak f}_{x, t}  > n \right) \nu(dx)
\leq c \frac{  \max \{t,0\} +1 }{n^{\beta}} C_{\alpha}(\mathfrak f). 
\end{align*}
\end{corollary}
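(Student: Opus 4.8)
\textbf{Plan of proof of Corollary \ref{Lem-bound-tau-p-000}.}
The strategy is to integrate the bound of Lemma \ref{Lem-tau-prior} over the randomness and apply the exponential moment hypotheses on $\mathfrak f$. First I would use the identity \eqref{Expect-E_x001}, which expresses $\int_{\bb X}\mathbb P(\tau^{\mathfrak f}_{x,t}>n)\,\nu(dx)$ as the average over $(g_0,\ldots,g_p,x)$ (distributed as $\mu^{\otimes(p+1)}\otimes\nu$) of $\mathbb P_{(g_0,\ldots,g_p,x,0)}(\tilde\tau^{\mathfrak f}_t>n)$, so that Lemma \ref{Lem-tau-prior} applies with $a=(g_0,\ldots,g_p,x,0)$ and $q=0$. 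Then the quantity $\int_{\bb X}\mathbb P(\tau^{\mathfrak f}_{x,t}>n)\,\nu(dx)$ is bounded by
\[
c\,(n-p)^{-\beta}\,\Big(\max\{t,0\}+\log n + \mathbb E\otimes\nu\big(|\sigma(g_p\cdots g_1,x)|\big)+\mathbb E\otimes\nu\big(|\tilde f(a)|\big)\Big)
+ c\,n^{-8}\sum_{k=p+1}^{n}\mathbb E\otimes\nu\big(\mathcal F_k(a)\big).
\]

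Next I would estimate each of the averaged quantities. For $\mathbb E\otimes\nu\,|\sigma(g_p\cdots g_1,x)|$, the cocycle property \eqref{def-cocycle} gives $|\sigma(g_p\cdots g_1,x)|\le\sum_{i=1}^p|\sigma(g_i,g_{i-1}\cdots g_1 x)|$, and the exponential moment \eqref{exp mom for f 001} (hence a finite first moment of $\sup_x|\sigma(g,x)|$) yields a bound of the form $c\,p\le c\,n^{\ee}$. For $\mathbb E\otimes\nu\,|\tilde f(a)|$, the definition \eqref{function f tilde001} with $q=0$ gives $\tilde f(a)=f_0(g_1,\ldots,g_p,x)$, whose absolute moment is bounded by $C_\alpha(\mathfrak f)$ via \eqref{exp mom for g 002} (the function $x\mapsto x$ being dominated by $\frac1\alpha e^{\alpha|x|}$). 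For the last sum, using the Markov property of $(\xi_i)$ and the fact that under $\mathbb P\otimes\nu$ the element $\xi_{k-p}$ (for $k>p$) has distribution $\mu^{\otimes(p+1)}\otimes(\text{image of }\nu)$, the stationarity of $\nu$ gives $\mathbb E\otimes\nu\,\mathcal F_k(a)=\mathbb E_{a}[\ldots]$ averaged down to $\sup_{n}\int\int e^{\alpha|f_n|}\,\le C_\alpha(\mathfrak f)$ for each $k$; summing over $p<k\le n$ yields a factor $n\,C_\alpha(\mathfrak f)$, so $c\,n^{-8}\cdot n\,C_\alpha(\mathfrak f)=c\,n^{-7}C_\alpha(\mathfrak f)$, which is absorbed into the $n^{-\beta}$ term after shrinking $\beta$.

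Finally, since $p\le n^{\ee}$ we have $(n-p)^{-\beta}\le c\,n^{-\beta}$ once $\ee<1$, and $\log n\le c\,n^{\beta'}$ for any $\beta'>0$; collecting all contributions gives a bound of the form
\[
\int_{\bb X}\mathbb P(\tau^{\mathfrak f}_{x,t}>n)\,\nu(dx)\le c\,n^{-\beta}\big(\max\{t,0\}+1\big)\,C_\alpha(\mathfrak f)
\]
after replacing $\beta$ by a smaller positive constant (to absorb the $n^{\ee}$ and $\log n$ factors, which forces $\ee$ to be taken small compared to the exponent $\beta$ coming from Lemma \ref{Lem-tau-prior}). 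The main obstacle, and the step requiring the most care, is the bookkeeping in the third estimate: one must correctly identify the law of $\xi_{k-p}$ under $\mathbb P\otimes\nu$ and invoke the $\mu$-stationarity of $\nu$ to reduce $\mathbb E\otimes\nu\,\mathcal F_k(a)$ to the uniform constant $C_\alpha(\mathfrak f)$ uniformly in $k$; everything else is a routine application of the moment hypotheses and the elementary inequality $|u|\le\alpha^{-1}e^{\alpha|u|}$.
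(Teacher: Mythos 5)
Your proposal is correct and follows essentially the same route as the paper: reduce to the Markov-chain formulation so that Lemma \ref{Lem-tau-prior} applies with $a=(g_0,\ldots,g_p,x,0)$, then integrate its bound, the key point being that $\mu$-stationarity of $\nu$ reduces $\int\!\!\int \mathcal F_k(g_0,\ldots,g_p,x,0)\,\mu(dg_0)\cdots\mu(dg_p)\,\nu(dx)$ to $C_\alpha(\mathfrak f)$ uniformly in $k$ (the paper's display \eqref{stationary-nu-001}). Your additional bookkeeping for the $|\sigma(g_p\cdots g_1,x)|$, $|\tilde f(a)|$ and $\log n$ terms, absorbed by shrinking $\beta$ and taking $\ee$ small, is exactly what the paper leaves implicit.
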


\begin{proof}
By construction, we have 
\begin{align*}
\int_{\bb X} \mathbb{P} \left( \tau^{\mathfrak f}_{x, t}  > n \right) \nu(dx)
 =  \int_{\bb X} \int _{\bb G^{\{0,\ldots, p\}} }  \mathbb{P}_{(g_0,\ldots, g_p, x, 0)} 
\left(   \tilde \tau^{\mathfrak f}_{t}  > n \right)   \mu(dg_0) \ldots \mu(dg_p) \nu(dx). 
\end{align*}
The conclusion follows from Lemma \ref{Lem-tau-prior} 
since, for every $k \in \bb N$, we have 
\begin{align}\label{stationary-nu-001}
& \int_{\bb X}  \int_{\bb G^{p+1} }  \mathcal F_k(g_0,\ldots, g_p, x, 0)  \mu(dg_0) \ldots \mu(dg_p) \nu(dx) \notag\\
& =  \int_{\bb X}  \int_{\bb G^k} \int_{\Omega} e^{\alpha |f_k(\omega, g_k \cdots g_1 x)|}  \bb P(d\omega) 
 \mu(dg_1) \ldots \mu(dg_k)  \nu(dx) \notag\\
& =   \int_{\bb X}  \int_{\Omega}  e^{\alpha |f_k(\omega, x)|} \bb P(d\omega)   \nu(dx) 
 \leq C_{\alpha}(\mathfrak f),
\end{align}
where we have used the fact that the measure $\nu$ is $\mu$-stationary 
and $C_{\alpha}(\mathfrak f)$ is the finite constant appearing in \eqref{exp mom for g 002}. 
\end{proof}

As a second consequence of Lemma \ref{Lem-tau-prior}, 
we get the following bound which will be used below to bound the term $I_2$ defined in \eqref{MAIN_GOAL-002bbb}. 

\begin{lemma}\label{Lem-bound-tau_p-new}
There exist constants $\beta, c > 0$ such that for any $ n \geq 1$, $p\leq n^{\beta}$, $a=(g_0,\ldots,g_{p},x, q) \in \bb A_p$ and $t\in \bb R$, 
\begin{align*}
J & = \mathbb{E}_a \bigg( \bigg| - \sum_{i=n+1}^{n+p} \sigma_p(\xi_i) + \tilde f(\xi_n) - \tilde f(\xi_0) \bigg|;  \tilde \tau^{\mathfrak f}_{t}  > n \bigg) \notag\\
&\leq c \frac{  \max \{t,0\} + \Big|\sigma(g_p\cdots g_1,x) \Big| + |\tilde f(a)|}{n^{\beta}} 
\left(  \mathcal F_n(a)  + |\tilde f(a)| \right)
 +  |\tilde f(a)| \frac{ \sum_{k = p+1}^n \mathcal F_k(a) }{n^3}.
\end{align*}
\end{lemma}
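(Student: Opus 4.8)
\textbf{Proof strategy for Lemma \ref{Lem-bound-tau_p-new}.}
The plan is to split the random variable
$- \sum_{i=n+1}^{n+p} \sigma_p(\xi_i) + \tilde f(\xi_n) - \tilde f(\xi_0)$
inside $J$ into three separate contributions and bound each of them on the event $\{ \tilde\tau^{\mathfrak f}_{t} > n \}$, using the a priori control on the exit time given by Lemma \ref{Lem-tau-prior}. Write $J \leq J_1 + J_2 + J_3$ with
\begin{align*}
J_1 &= \mathbb{E}_a \bigg( \bigg| \sum_{i=n+1}^{n+p} \sigma_p(\xi_i) \bigg|; \tilde\tau^{\mathfrak f}_{t} > n \bigg), \quad
J_2 = \mathbb{E}_a \big( |\tilde f(\xi_n)|; \tilde\tau^{\mathfrak f}_{t} > n \big), \quad
J_3 = |\tilde f(a)| \, \mathbb{P}_a \big( \tilde\tau^{\mathfrak f}_{t} > n \big).
\end{align*}
The term $J_3$ is already of the right form: by Lemma \ref{Lem-tau-prior}, $\mathbb{P}_a(\tilde\tau^{\mathfrak f}_{t} \geq n) \leq c (n-p)^{-\beta}(\max\{t,0\} + |\sigma(g_p\cdots g_1,x)| + |\tilde f(a)| + \log n) + c n^{-8} \sum_{k=p+1}^n \mathcal F_k(a)$, and since $p \leq n^{\beta}$ we have $(n-p)^{-\beta} \leq c n^{-\beta'}$ for a slightly smaller exponent, while $\log n$ is absorbed into a power of $n$ at the cost of adjusting $\beta$; multiplying through by $|\tilde f(a)|$ gives exactly the two terms appearing in the statement (with the $\mathcal F_k$ sum divided by $n^3$, which is weaker than $n^8$ and hence fine).

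For $J_2$, the key point is that $\tilde f(\xi_n)$ is $\mathscr G_n$-measurable whereas the event $\{\tilde\tau^{\mathfrak f}_t \geq n\}$ is $\mathscr G_{n-1}$-measurable, so we cannot simply pull $|\tilde f(\xi_n)|$ out. Instead I would condition on $\mathscr G_{n-1}$, or more precisely cut at a threshold: split according to whether $|\tilde f(\xi_n)| \leq \lambda$ or not, take $\lambda$ a power of $n$ (say $\lambda = c_1 \log n$ exactly as in the proof of Lemma \ref{Lem-tau-prior}, or a small power of $n$). On the part where $|\tilde f(\xi_n)| \leq \lambda$ one bounds $J_2 \leq \lambda \, \mathbb{P}_a(\tilde\tau^{\mathfrak f}_t \geq n)$ and applies Lemma \ref{Lem-tau-prior} again; on the complementary part one uses Cauchy--Schwarz or a direct Markov/Chebyshev estimate together with the definition $\mathcal F_n(a) = \mathbb{E}_a e^{\alpha|\tilde f(\xi_n)|}$ to show $\mathbb{E}_a(|\tilde f(\xi_n)|; |\tilde f(\xi_n)| > \lambda) \leq c n^{-\alpha c_1} \mathcal F_n(a)$, which is $\leq c n^{-3} \mathcal F_n(a)$ for $c_1$ large. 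This produces the factor $\mathcal F_n(a)$ multiplying $\max\{t,0\} + |\sigma(g_p\cdots g_1,x)| + |\tilde f(a)|$ over a power of $n$, matching the first term of the claimed bound; the residual $n^{-3}\mathcal F_n(a)$ is dominated by $n^{-3}\sum_{k=p+1}^n \mathcal F_k(a)$. The term $J_1$ is handled the same way but is easier: $\sum_{i=n+1}^{n+p}\sigma_p(\xi_i)$ has, conditionally, at most $p$ summands each with a uniform exponential moment \eqref{exp mom for f 001}; truncating each $|\sigma_p(\xi_i)|$ at a power of $n$ and using $p \leq n^{\beta}$ gives, on the truncated event, $|\sum| \leq p n^{\beta} \leq n^{2\beta}$, which times $\mathbb{P}_a(\tilde\tau^{\mathfrak f}_t \geq n)$ and Lemma \ref{Lem-tau-prior} is again of the required form after readjusting $\beta$, while the untruncated event has probability $\leq c n^{-\alpha_0 n^{\beta}} p \leq c n^{-10}$ by \eqref{exp mom for f 001} and Markov's inequality, and carries bounded $L^2$-contribution by Minkowski (as in \eqref{Inequ-Minkowski-01-0}).

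The main obstacle, and the place where care is needed, is the measurability mismatch between $\tilde f(\xi_n)$ (respectively the increments $\sigma_p(\xi_i)$ for $i > n$) and the exit event $\{\tilde\tau^{\mathfrak f}_t \geq n\}$: one must not assume independence of these from the stopping event. The truncation-plus-exponential-moment argument above is precisely designed to decouple them without invoking independence, paying only a negligible polynomial price. A secondary bookkeeping issue is keeping all the exponents consistent — the various $\beta$'s from Lemma \ref{Lem-tau-prior}, from the constraint $p \leq n^{\beta}$, and from $(n-p)^{-\beta} \asymp n^{-\beta}$ — but since the statement only claims existence of constants $\beta, c > 0$, it suffices to pass to the minimum of all the exponents involved at the end. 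Finally one should double-check that the factor structure in the claimed bound, namely $(\mathcal F_n(a) + |\tilde f(a)|)$ in the first term and $|\tilde f(a)| \, n^{-3}\sum_{k} \mathcal F_k(a)$ in the second, is respected: the $|\tilde f(a)|$ in the first parenthesis comes from $J_3$ (the $|\tilde f(a)| \cdot |\tilde f(a)|$ piece, bounded by $|\tilde f(a)|$ times the first term of Lemma \ref{Lem-tau-prior}), and the $\mathcal F_n(a)$ from the truncation tails in $J_1, J_2$.
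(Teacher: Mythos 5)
Your proposal is sound in its ingredients but takes a genuinely different, and considerably more laborious, route than the paper. The paper's entire proof is: apply the Cauchy--Schwarz inequality once to the whole expectation, $J \leq \bb E_a^{1/2}\big(|{-}\sum_{i=n+1}^{n+p}\sigma_p(\xi_i)+\tilde f(\xi_n)-\tilde f(\xi_0)|^2\big)\,\bb P_a^{1/2}(\tilde\tau^{\mathfrak f}_t>n)$, then Minkowski to split the $L^2$ norm into $c\sqrt{p}+c\,\mathcal F_n(a)^{1/2}+|\tilde f(a)|$ (using \eqref{exp mom for f 001} for the cocycle increments and $u^2\leq C e^{\alpha u}$ for $\tilde f(\xi_n)$), and finally Lemma \ref{Lem-tau-prior} for the probability factor. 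Your termwise decomposition $J\leq J_1+J_2+J_3$ with truncation at level $c_1\log n$ (resp.\ a small power of $n$) is a valid alternative decoupling device — it never assumes independence, which is the right instinct — and it even yields a slightly better power of $n$ since you retain the full probability $\bb P_a(\tilde\tau^{\mathfrak f}_t>n)$ rather than its square root; but it costs you three separate truncation arguments where the paper needs one inequality. What Cauchy--Schwarz buys here is precisely that the factor $\bb P_a^{1/2}(\tilde\tau^{\mathfrak f}_t>n)$, and hence the dependence on $\max\{t,0\}+|\sigma(g_p\cdots g_1,x)|+|\tilde f(a)|$ coming from Lemma \ref{Lem-tau-prior}, stays attached to every piece simultaneously. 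One bookkeeping caveat: your truncation tails, e.g.\ $\bb E_a(|\tilde f(\xi_n)|;|\tilde f(\xi_n)|>c_1\log n)\lesssim n^{-3}\mathcal F_n(a)$, carry neither the factor $\max\{t,0\}+|\sigma(g_p\cdots g_1,x)|+|\tilde f(a)|$ nor the factor $|\tilde f(a)|$, so they do not slot literally into either term of the displayed bound; the same is true of the additive $\log n$ from Lemma \ref{Lem-tau-prior}. This is really an imprecision of the statement itself (an implicit ``$1+$'' in the numerator, as in the neighboring Lemmas \ref{Lemma martingale-001} and \ref{Lemma martingale-002}; the paper's own ``follows immediately'' glosses over exactly the same point), so it is not a substantive gap in your argument, but you should be aware that your accounting, as written, does not close exactly onto the stated inequality without that adjustment.
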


\begin{proof}
By H\"older's inequality and Lemma \ref{Lem-tau-prior}, we get
\begin{align*}
J & \leq  \bigg[ \bb E_a^{1/2} \bigg( \bigg|\sum_{i=n+1}^{n+p} \sigma_p(\xi_i) \bigg|^2  \bigg)  
                +  \mathbb{E}_a^{1/2}\left(  |\tilde f (\xi_n)|^2 \right) +|\tilde f(\xi_0)|  \bigg]  
\bb P_a^{1/2} \left(\tilde \tau^{\mathfrak f}_{t}  > n\right)  \notag\\
& \leq c\left( \sqrt{p}  +   \mathcal F_n(a)^{1/2} + |\tilde f(a)|  \right) \notag\\
& \qquad \times \left[\frac{\max \{t,0\} +  |\sigma(g_p\cdots g_1,x)|   +|\tilde f(a)|+ \log n}{(n-p)^{3\beta}}
+   \frac{ \sum_{k = p+1}^n \mathcal F_k(a) }{n^8} \right]^{1/2},
\end{align*}
from which the assertion follows immediately.
\end{proof}

Now, using Lemma \ref{Lem-bound-tau_p-new} we give an estimate for the term $I_2$ in \eqref{MAIN_GOAL-002bbb}.

\begin{corollary}\label{Lem-bound-tau-p}
There exist constants $\ee, \beta, c > 0$ 
such that for any $ n \geq 1$, $p\leq n^{\ee}$, $x\in \bb X$ and $t\in \bb R$ 
and any sequence $\mathfrak f = (f_n)_{n \geq 0}$ of $\mathscr A_p$-measurable functions, 
\begin{align*}
 |I_2| = \left| \int_{\bb X} \mathbb{E}\left( S_{n}^x - S_{n+p}^x  + f_n\circ T^n - f_0; \ \tau^{\mathfrak f}_{x, t}  > n \right) \nu(dx) \right|
\leq c \frac{  \max \{t,0\} +1 }{n^{\beta}} C_{\alpha}(\mathfrak f). 
\end{align*}
\end{corollary}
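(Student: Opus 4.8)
\textbf{Proof proposal for Corollary \ref{Lem-bound-tau-p}.}

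The plan is to express $I_2$ in the language of the Markov chain $(\xi_i)_{i\ge 0}$ built in Subsection \ref{Subsec-Markov-chian}, and then apply Lemma \ref{Lem-bound-tau_p-new} termwise after integrating against $\bb P\otimes\nu$. First I would rewrite $I_2$ using the same disintegration as in \eqref{Expect-E_x001}: by the definitions \eqref{new representation for S^x_n 001}, \eqref{function f tilde001} and \eqref{def-tau-f-y}, for each $x\in\bb X$ we have
\begin{align*}
\bb E\left(S^x_n - S^x_{n+p} + f_n\circ T^n - f_0;\ \tau^{\mathfrak f}_{x,t}>n\right)
= \int_{\bb G^{\{0,\ldots,p\}}} \bb E_{(g_0,\ldots,g_p,x,0)}\!\left(-\sum_{i=n+1}^{n+p}\sigma_p(\xi_i)+\tilde f(\xi_n)-\tilde f(\xi_0);\ \tilde\tau^{\mathfrak f}_t>n\right)\mu(dg_0)\cdots\mu(dg_p),
\end{align*}
so that $|I_2|\le \int_{\bb X}\int_{\bb G^{\{0,\ldots,p\}}} J\, \mu(dg_0)\cdots\mu(dg_p)\,\nu(dx)$, with $J$ exactly the quantity bounded in Lemma \ref{Lem-bound-tau_p-new} (for $a=(g_0,\ldots,g_p,x,0)$, noting $\tilde f(a)=f_0(g_1,\ldots,g_p,x)$ and $\sigma(g_p\cdots g_1,x)=S^x_p$). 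This reduces everything to integrating the bound of Lemma \ref{Lem-bound-tau_p-new}.

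Next I would integrate that bound over $(g_0,\ldots,g_p,x)$ against $\mu^{\otimes(p+1)}\otimes\nu$. The key observations are: (i) the stationarity computation \eqref{stationary-nu-001}, which gives $\int\int \mathcal F_k(g_0,\ldots,g_p,x,0)\,\mu^{\otimes(p+1)}(dg)\,\nu(dx)\le C_\alpha(\mathfrak f)$ for every $k$, and in particular bounds $\int\int\mathcal F_n$ and $\sum_{k=p+1}^n\int\int\mathcal F_k\le n\,C_\alpha(\mathfrak f)$; (ii) since $f_0$ is $\mathscr A_p$-measurable and $C_\alpha(\mathfrak f)\le B$ in the hypothesis of Theorem \ref{Pro-Appendix-Final2-Inequ} — but here we only need the $L^2$ control — we have $\int_{\bb X}\int_{\bb G^{p+1}} |\tilde f(a)|^2\,d(\mu^{\otimes(p+1)}\otimes\nu)\le \int_{\bb X}\int_\Omega |f_0(\omega,x)|^2\,d(\bb P\otimes\nu)\le C\, C_\alpha(\mathfrak f)$ by \eqref{exp mom for g 002} and the elementary bound $r^2\le Ce^{\alpha r}$; (iii) similarly $\int_{\bb X}\int_{\bb G^{p+1}}|\sigma(g_p\cdots g_1,x)|^2\,d(\mu^{\otimes(p+1)}\otimes\nu)\le Cp$ by Minkowski's inequality as in \eqref{Inequ-Minkowski-01-0}; and (iv) products such as $|\tilde f(a)|\,\mathcal F_n(a)$ or $|\tilde f(a)|^2$ are controlled by Cauchy–Schwarz followed by (i) and (ii), using $e^{\alpha|f_0|}\cdot\mathcal F_n$ and $e^{2\alpha|f_0|}$ both being integrable after shrinking $\alpha$ if necessary (or, more cleanly, using the uniform bound $C_\alpha(\mathfrak f)\le B$). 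Collecting the worst power of $n$ appearing (the $1/(n-p)^\beta$ terms, which for $p\le n^\ee$ with $\ee$ small give $\le c/n^{\beta'}$ for a smaller $\beta'$), and absorbing the $\sqrt p\le n^{\ee/2}$ and $\log n$ factors into the power of $n$, yields $|I_2|\le c\,n^{-\beta}(1+\max\{t,0\})\,C_\alpha(\mathfrak f)$ after renaming constants.

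The main obstacle I anticipate is bookkeeping the interaction between the various powers of $n$ and the requirement $p\le n^\ee$: the bound of Lemma \ref{Lem-bound-tau_p-new} carries denominators $(n-p)^\beta$ and $n^3$ together with numerator factors $\sqrt p$, $\log n$, and sums $\sum_{k=p+1}^n\mathcal F_k(a)$ of length $\sim n$, so one must choose $\ee$ small enough (relative to the $\beta$ from Proposition \ref{Prop-vartheta}) that $(n-p)^{-\beta}\le 2^\beta n^{-\beta}$ and that $\sqrt p\,\log n\cdot n^{-\beta}$ is still $O(n^{-\beta'})$; the factor $n^{-3}\sum_{k=p+1}^n\mathcal F_k(a)$ integrates to $O(n^{-2}C_\alpha(\mathfrak f))$, which is harmless. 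A minor subtlety is that Cauchy–Schwarz applied to products like $|\tilde f(a)|\mathcal F_k(a)$ requires an exponential moment of $|f_0|$ with a slightly larger constant than $\alpha$; this is not an issue because in the application (Theorem \ref{Pro-Appendix-Final2-Inequ}) we may always shrink $\alpha$, or simply invoke the uniform bound $C_\alpha(\mathfrak f)\le B$ and use that $\mathcal F_k\le$ const$\cdot\mathcal F_k$, keeping all estimates linear in $C_\alpha(\mathfrak f)$. Once these constants are fixed the conclusion follows by direct substitution.
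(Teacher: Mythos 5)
Your proposal matches the paper's proof: the paper likewise rewrites $I_2$ via the disintegration of \eqref{Expect-E_x001} as an integral of $\mathbb{E}_{(g_0,\ldots,g_p,x,0)}\big({-}\sum_{i=n+1}^{n+p}\sigma_p(\xi_i)+\tilde f(\xi_n)-\tilde f(\xi_0);\tilde\tau^{\mathfrak f}_t>n\big)$, applies Lemma \ref{Lem-bound-tau_p-new} (with $\alpha/2$ in place of $\alpha$ in $\mathcal F_k$, which is exactly your ``shrink $\alpha$ for the Cauchy--Schwarz products'' remark), and concludes by the stationarity computation \eqref{stationary-nu-001}. The argument is correct and essentially identical to the paper's.
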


\begin{proof}
Similarly to \eqref{Expect-E_x001}, by construction we have 
\begin{align*}
I_2 & = \int_{\bb X} \int _{\bb G^{\{0,\ldots, p\}} }  \mathbb{E}_{(g_0,\ldots, g_p, x, 0)} 
\bigg( -\sum_{i=n+1}^{n+p} \sigma_p(\xi_i) + \tilde f(\xi_n) - \tilde f(\xi_0);  \tilde \tau^{\mathfrak f}_{t}  > n  \bigg)  \notag\\
& \qquad\qquad \mu(dg_0) \ldots \mu(dg_p) \nu(dx). 
\end{align*}
The expectation inside the above integral is bounded 
by using Lemma \ref{Lem-bound-tau_p-new} with $\alpha/2$ instead of $\alpha$ in $\mathcal F_k$. 
The conclusion now follows by using \eqref{stationary-nu-001}. 
\end{proof}

As a further consequence, we get the following bound which will be utilized in the next section,
specifically in the proof of Lemma \ref{Lemma 4}. 

\begin{lemma}\label{Lemma 1} 
There exist constants $c, \ee_0 > 0$ such that, for any $\ee \in (0, \ee_0)$,   
$1\leq p < n$, $t \geq  n^{1/2-\ee }$ and $a=(g_0,\ldots,g_{p},x, q) \in \bb A_p$, 
\begin{align*}
& \mathbb{E}_a \bigg( \bigg\vert t + \sigma(g_p\cdots g_1,x) + \sum_{i=1}^{\tilde \tau^{\mathfrak f}_{t}} h_p(\xi_i) \bigg\vert; 
 \ p < \tilde \tau^{\mathfrak f}_{t} \leq n\bigg) \notag\\ 
& \leq \frac{t}{n^{\ee }} + c n  e^{- \frac{\alpha}{2(p+3)} n^{1/2-2\ee} } 
 \Big(\Big|t + \sigma(g_p\cdots g_1,x) \Big|+ n^{1/2}\Big)  \sum_{k = p+1}^n \mathcal F_k(a). 
\end{align*}
\end{lemma}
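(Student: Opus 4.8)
The statement is a quantitative refinement of Lemma \ref{Lemma 1-0} adapted to the Markov chain $(\xi_i)_{i\geq 0}$, so the plan is to mimic the proof of Lemma \ref{Lemma 1-0}, splitting according to whether the perturbation $\tilde f$ along the trajectory stays small and whether the increments of the additive functional stay moderate. First I would introduce, for a constant $c_1>0$ to be fixed, the ``good'' event
\begin{align*}
A_{n}=\Big\{\max_{p< k\leq n}\big|h_p(\xi_k)\big|\leq n^{1/2-2\ee}\Big\}\cap\Big\{\max_{p< k\leq n}\big|\tilde f(\xi_k)\big|\leq c_1\log n\Big\},
\end{align*}
and decompose the expectation in the statement as the sum of its restriction to $A_n$ and to $A_n^c$. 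Recall that by \eqref{def-Mn-martingel-001} we have $\sum_{i=1}^{k}h_p(\xi_i)=M_k$, and the increments $h_p(\xi_k)=\sigma_p(\xi_{k+p})$ satisfy the uniform exponential bound coming from \eqref{exp mom for f 001}; likewise $\tilde f(\xi_k)$ is controlled in the $L^1$-exponential sense through $\mathcal F_k(a)$.

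\textbf{Main term (on $A_n$).} On $A_n$, since $\tilde\tau^{\mathfrak f}_{t}$ is by \eqref{def-tau-f-y} the first time the quantity $t+\sum_{i=1}^{k}\sigma_p(\xi_i)+\tilde f(\xi_k)-\tilde f(\xi_0)$ becomes negative, at the exit time this quantity has absolute value at most the size of the last jump of $\sum_{i=1}^{k}\sigma_p(\xi_i)$ plus a fluctuation of $\tilde f$; but note the expression under study is $t+\sigma(g_p\cdots g_1,x)+M_{\tilde\tau^{\mathfrak f}_{t}}=t+\sigma(g_p\cdots g_1,x)+\sum_{i=1}^{\tilde\tau}h_p(\xi_i)$, which is a \emph{shifted} version: on $A_n$ all the increments $h_p(\xi_k)$ for $k>p$ are bounded by $n^{1/2-2\ee}$, so on the event $\{p<\tilde\tau^{\mathfrak f}_{t}\leq n\}\cap A_n$ the value $|t+\sigma(g_p\cdots g_1,x)+\sum_{i=1}^{\tilde\tau}h_p(\xi_i)|$ is bounded by $cn^{1/2-2\ee}+c_1\log n$ together with the discrepancy between $S^{x}_{\cdot}$ indexed at $\tilde\tau$ and the martingale $M_{\cdot}$. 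Using the hypothesis $t\geq n^{1/2-\ee}$, this is $\leq t/n^{\ee}$ for $n$ large, exactly as in \eqref{eq-lemma1-R1-0}. This uses $\mathbb P_a(A_n)\leq 1$ trivially.

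\textbf{Remainder (on $A_n^c$).} On $A_n^c$ I would use Cauchy--Schwarz: the expectation over $\{p<\tilde\tau^{\mathfrak f}_{t}\leq n, A_n^c\}$ of $\big|t+\sigma(g_p\cdots g_1,x)+M_{\tilde\tau}\big|$ is bounded, via the optional-stopping/Doob $L^2$ control of the martingale $(M_k)$ from Lemma \ref{Lemma-Martingale001} (Minkowski as in \eqref{Inequ-Minkowski-01-0}, giving $\mathbb E_a^{1/2}(M_k^2)\leq cn^{1/2}$), by $c\,(|t+\sigma(g_p\cdots g_1,x)|+n^{1/2})\,\mathbb P_a(A_n^c)^{1/2}$ summed over the at most $n$ possible values of $\tilde\tau$; and $\mathbb P_a(A_n^c)$ is estimated by a union bound over $p<k\leq n$: the bad event for $h_p(\xi_k)$ has probability $\leq ce^{-\alpha n^{1/2-2\ee}/(p+3)}$ by the conditioning structure of $\bb P_a$ (the shift by at most $p+1$ coordinates is responsible for the $1/(p+3)$ in the exponent, exactly as stated), and the bad event for $\tilde f(\xi_k)$ has probability $\leq cn^{-\alpha c_1}\mathcal F_k(a)$ by Markov's inequality and \eqref{def-Fka}; choosing $c_1$ large absorbs this into the declared $\sum_{k=p+1}^n\mathcal F_k(a)$ factor. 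Combining the main term and the remainder gives the claimed inequality.

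\textbf{Expected main obstacle.} The delicate point is bookkeeping the exponent $\alpha/(2(p+3))$: one must carefully use the Markov structure of $\bb P_a$ on $\bb A_p=\bb G^{\{0,\ldots,p\}}\times\bb X\times\bb N$ to see that $h_p(\xi_k)$, which depends on the ``window'' of coordinates $g_{k},\ldots,g_{k+p}$, inherits from \eqref{exp mom for f 001} only an exponential moment with parameter roughly $\alpha_0/(p+1)$ after one applies Hölder across the $p+1$ overlapping coordinates — hence the $(p+3)$ in the denominator — while still retaining enough to sum the resulting geometric-type bound over $k$. The rest is a routine Cauchy--Schwarz and union-bound computation modeled verbatim on the proof of Lemma \ref{Lemma 1-0}.
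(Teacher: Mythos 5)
Your overall skeleton (split on a good event $A_n$, small-jump argument on $A_n$, Cauchy--Schwarz plus a union bound on $A_n^c$) is the same as the paper's, but your choice of thresholds in $A_n$ does not actually deliver the stated inequality, and this is precisely the delicate point of the lemma. The quantity being bounded is $t'+M_{\tilde\tau^{\mathfrak f}_t}=t'+\sum_{i=1}^{\tilde\tau^{\mathfrak f}_t}h_p(\xi_i)$, whereas the stopping-time definition only controls $t'+\sum_{i=1}^{\tilde\tau^{\mathfrak f}_t-p}h_p(\xi_i)+\tilde f(\xi_{\tilde\tau^{\mathfrak f}_t})-\tilde f(\xi_0)$; the discrepancy is the $p$ extra increments $\sum_{i=\tilde\tau^{\mathfrak f}_t-p+1}^{\tilde\tau^{\mathfrak f}_t}h_p(\xi_i)$. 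With your threshold $|h_p(\xi_k)|\leq n^{1/2-2\ee}$, this discrepancy can be as large as $p\,n^{1/2-2\ee}$, which for $p$ of order $n^{\ee}$ (the regime actually used later, and permitted by the hypothesis $p<n$) is of order $n^{1/2-\ee}\sim t$, not $t/n^{\ee}$. The paper's fix is to take the threshold $\tfrac{1}{p+3}n^{1/2-2\ee}$ for \emph{both} $|h_p(\xi_k)|$ and $|\tilde f(\xi_k)|$: then the $p$ discrepancy terms contribute at most $\tfrac{p}{p+3}n^{1/2-2\ee}$ and the (at most three) jump terms at most $\tfrac{3}{p+3}n^{1/2-2\ee}$, summing to $n^{1/2-2\ee}\leq t/n^{\ee}$. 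Your $c_1\log n$ threshold for $\tilde f$ is likewise too weak: Markov's inequality then gives $\mathbb P_a(A_n^c)\lesssim n^{-\alpha c_1}\sum_k\mathcal F_k(a)$, a polynomial decay in $n$, which after Cauchy--Schwarz cannot reproduce the exponential factor $e^{-\frac{\alpha}{2(p+3)}n^{1/2-2\ee}}$ appearing in the statement; again the uniform threshold $\tfrac{1}{p+3}n^{1/2-2\ee}$ is what produces it.

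The ``expected obstacle'' you describe is also misdiagnosed. The $1/(p+3)$ in the exponent does not arise from a weakened exponential moment of $h_p(\xi_k)$ obtained by Hölder across $p+1$ overlapping coordinates: by \eqref{def of func h-001}, under $\bb P_a$ one has $h_p(\xi_k)=\sigma(g_{p+k},g_{p+k-1}\cdots g_1 x)$, a single cocycle increment enjoying the full exponential moment of \eqref{exp mom for f 001}. The factor $1/(p+3)$ is simply the threshold in the definition of $A_n$, forced by the discrepancy accounting above; Markov's inequality applied to $e^{\alpha|h_p(\xi_k)|}$ with that threshold gives $e^{-\frac{\alpha}{p+3}n^{1/2-2\ee}}$, and the halving of the exponent comes from the square root in Cauchy--Schwarz. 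Once you replace your event by $A_n=\{\max_{p<k\leq n}|h_p(\xi_k)|\leq\tfrac{1}{p+3}n^{1/2-2\ee},\ \max_{p<k\leq n}|\tilde f(\xi_k)|\leq\tfrac{1}{p+3}n^{1/2-2\ee}\}$, the rest of your argument (splitting the expectation, Minkowski to get $\mathbb E_a^{1/2}(M_k^2)\leq cn^{1/2}$, union bound over $k$) does go through and reproduces the paper's proof.
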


\begin{proof}
Consider the event 
\begin{align*}
A_{n}=\left\{ \max_{p < k \leq n}\left\vert h_p(\xi_k) \right\vert \leq \frac{1}{p + 3} n^{1/2-2\ee },  
          \max_{p < k \leq n}\left\vert \tilde f(\xi_k) \right\vert \leq  \frac{1}{p + 3} n^{1/2-2\ee } \right\}. 
\end{align*}
We denote $t'  = t + \sigma(g_p\cdots g_1,x)$ and write 
\begin{align}\label{eq-lemma1-000}
 \bb{E}_a \bigg( \bigg\vert t' + \sum_{i=1}^{\tilde \tau^{\mathfrak f}_{t}} h_p(\xi_i) \bigg\vert; \  
 p < \tilde \tau^{\mathfrak f}_{t} \leq n \bigg)  
& = \bb{E}_a \bigg( \bigg\vert t' + \sum_{i=1}^{\tilde \tau^{\mathfrak f}_{t}} h_p(\xi_i) \bigg\vert; \  
 p < \tilde \tau^{\mathfrak f}_{t} \leq n, A_{n} \bigg)  \notag \\
& \quad  
+  \bb{E}_a \bigg( \bigg\vert t' + \sum_{i=1}^{\tilde \tau^{\mathfrak f}_{t}} h_p(\xi_i)  \bigg\vert; \  
 p < \tilde \tau^{\mathfrak f}_{t} \leq n,   A_n^c \bigg). 
\end{align}
For the first term, on the event $\{p < \tilde \tau^{\mathfrak f}_{t}\}$, since $\tilde \tau^{\mathfrak f}_{t}$ is defined as the first integer $k > p$
when $t' + \sum_{i=1}^{k -p} h_p(\xi_i) + \tilde f(\xi_{k}) - \tilde f(\xi_0)$ 
becomes negative (see \eqref{def-tau-f-y}), 
and the size of the jump is bounded by 
$|h_p(\xi_{\tilde \tau^{\mathfrak f}_{t} -p})| + |\tilde f(\xi_{\tilde \tau^{\mathfrak f}_{t}})| + |\tilde f(\xi_{\tilde \tau^{\mathfrak f}_{t}-1})|$
which in turn does not exceed $\frac{3}{p +3}n^{1/2-2\ee }$ on the event $A_{n}$, it follows that
\begin{align}\label{eq-lemma1-R1}
\bb{E}_a \bigg( \bigg\vert t' + \sum_{i=1}^{\tilde \tau^{\mathfrak f}_{t}} h_p(\xi_i) \bigg\vert; \  
 p < \tilde \tau^{\mathfrak f}_{t} \leq n, A_{n} \bigg)
& \leq  \bb{E}_a \bigg( \bigg\vert t' + \sum_{i=1}^{\tilde \tau^{\mathfrak f}_{t} -p} h_p(\xi_i) \bigg\vert; \  p < \tilde \tau^{\mathfrak f}_{t} \leq n, A_{n} \bigg)   \notag\\
& \quad   + \bb{E}_a \bigg( \bigg\vert  \sum_{i=\tilde \tau^{\mathfrak f}_{t} -p + 1}^{\tilde \tau^{\mathfrak f}_{t}} h_p(\xi_i) \bigg\vert;\  p < \tilde \tau^{\mathfrak f}_{t} \leq n, A_{n} \bigg)  \notag\\
& \leq \frac{3}{p+3} n^{1/2-2\ee } + \frac{p}{p +3} n^{1/2-2\ee }  \notag\\
& \leq  n^{1/2-2\ee } \leq {t\over n^{\ee }},
\end{align}
where the last inequality holds since $t\geq n^{1/2-\ee }$.

We proceed to give an upper bound for the second term on the right-hand side of \eqref{eq-lemma1-000}.
By Markov's inequality, 
the exponential moment assumption \eqref{exp mom for f 001} and the fact that $\mathcal F_k(a) \geq 1$ (see \eqref{def-Fka}), we get
\begin{align*}
\mathbb{P}_a (A_n^c) 
& \leq 
 \mathbb{P}_a\left(  \max_{p < k\leq n} |h_p(\xi_k)| > \frac{1}{p+3} n^{1/2-2\ee }   \right)
   +  \mathbb{P}_a\left(   \max_{p < k\leq n} |\tilde f(\xi_{k})| >  \frac{1}{p+3} n^{1/2-2\ee }     \right)  \notag \\
& \leq 
(n-p) \sup_{x\in \bb X}  \mathbb{P}\left( |\sigma(g_{1}, x)| > \frac{1}{p+3} n^{1/2-2\ee }   \right)
   + \sum_{k = p+1}^n   \mathbb{P}_a \left( |\tilde f(\xi_k)| >  \frac{1}{p+3} n^{1/2-2\ee }     \right)  \notag \\
& \leq   c (n-p) e^{- \frac{\alpha}{p+3} n^{1/2-2\ee } } +  e^{- \frac{\alpha}{p+3} n^{ 1/2 -2\ee }} 
 \sum_{k = p+1}^n \mathcal F_k(a) \notag \\  
& \leq  c  e^{- \frac{\alpha}{p+3} n^{ 1/2 - 2\ee}} \sum_{k = p+1}^n \mathcal F_k(a).  
\end{align*}
Therefore, by H\"older's inequality, 
\begin{align}\label{eq-lemma1-R2}
\bb{E}_a \bigg( \bigg\vert t' + \sum_{i=1}^{\tilde \tau^{\mathfrak f}_{t}} h_p(\xi_i) \bigg\vert; \  
 p < \tilde \tau^{\mathfrak f}_{t} \leq n,   A_n^c  \bigg)
& =  \sum_{k =p+ 1}^n \bb{E}_a \bigg( \bigg\vert t' + \sum_{i=1}^{\tilde \tau^{\mathfrak f}_{t}} h_p(\xi_i) \bigg\vert;
\  \tilde \tau^{\mathfrak f}_{t} = k, A_n^c  \bigg)  \notag\\
& \leq  \sum_{k =p+ 1}^n \bb{E}_a \bigg( \bigg\vert t' + \sum_{i=1}^{k} h_p(\xi_i) \bigg\vert; \  A_n^c \bigg)   \notag\\
& \leq     \sum_{k = p+1}^n  \bb{E}_a^{1/2} \bigg( \bigg\vert t' + \sum_{i=1}^{k} h_p(\xi_i) \bigg\vert^2 \bigg)  
             \bb P_a^{1/2}(A_n^c)  \notag\\
& \leq 
c n (|t'|+ n^{1/2})  e^{- \frac{\alpha}{2(p+3)} n^{1/2-2\ee } } \sum_{k = p+1}^n \mathcal F_k(a),
\end{align}
where in the last inequality we used the bound
\begin{align}\label{Inequ-Minkowski-01}
\bb{E}_a^{1/2} \bigg( \bigg\vert t' + \sum_{i=1}^{k} h_p(\xi_i)  \bigg\vert^2  \bigg)
\leq |t'|+ \sup_{x\in \bb X} \sqrt{\mathbb{E}  (S_{k}^{x} )^{ 2 }    }
\leq |t'|+c n^{1/2}, 
\end{align}
which holds by Minkowski's inequality. 
Combining \eqref{eq-lemma1-000}, \eqref{eq-lemma1-R1} and \eqref{eq-lemma1-R2} completes the proof of the lemma. 
\end{proof}

\subsection{Approximation through finite-size perturbations} \label{sec: approximation by finite range perturb}
Let $\mathfrak f = (f_n)_{n \geq 0}$ be  a sequence of perturbations, where
the measurable functions $f_n: \Omega\times \bb X \to \bb R$ depends on the entire sequence $\omega=(g_{i})_{i \geq 1} \in \Omega$.
Assume that the finite-size approximation property \eqref{approxim rate for gp-002} holds, where, for any $p \geq 1$, 
$\mathfrak f_p = (f_{n, p})_{n \geq 0}$ is the sequence 
of perturbation functions $f_{n,p}$ depending only on a finite number of coordinates, as defined by \eqref{def-approxi-fnp}.
For any $p \geq 1$, corresponding to the sequence $\mathfrak f_p$, we can define the stopping time $\tau_{x,t}^{\mathfrak f_p}$ and the expectation $U^{\mathfrak f_p}_n(t)$ by 
 \eqref{def-stop time with preturb-001} and \eqref{def-U-f-n-t-001} with $\mathfrak f$ replaced by $\mathfrak f_p$. 
Specifically, for any $p \geq 1$, $x\in \bb X$ and $t\in \bb R$, 
\begin{align}\label{def-stop time with preturb-001-p}
\tau_{x,t}^{\mathfrak f_p} = \min \left\{ k \geq 1: t+S^x_{k} + f_{k,p}\circ T^k(\omega,x) - f_{0,p}(\omega,x) < 0\right\} 
\end{align}
and 
\begin{align}\label{def-U-f-n-t-001-p}
U^{\mathfrak f_p}_n(t) 
= \int_{\bb X} \bb E \left( t + S^{x}_n + f_{n,p}\circ T^n(\omega,x) - f_{0,p}(\omega,x);  
\tau^{\mathfrak f_p}_{x,t} > n \right) \nu(dx). 
\end{align} 
Hereafter, $f_0(x)$ stands for the random variable $\omega \mapsto f(\omega,x)$ 
and $f_{n,p}\circ T^n(x)$ stands for the random variable $\omega \mapsto f_{n,p}\circ T^n(\omega,x)$.
Our goal is to establish an approximation of $U_n^{\mathfrak f}$ 
in terms of $U_n^{\mathfrak f_p}$. This is provided by the following lemma.

\begin{proposition} \label{Prop g approx 002}
Assume that $\mathfrak f = (f_n)_{n \geq 0}$ is a sequence of  measurable functions on $\Omega \times \bb X$ 
satisfying the moment condition \eqref{exp mom for g 002} and the approximation property \eqref{approxim rate for gp-002}.
Then, for any $\delta,\gamma > 0$, 
there exists a constant $c>0$ such that for any $n \geq 1$, $p \geq n^{\delta}$ and $t \in \bb R,$ 
\begin{align}\label{AA-bound 002-001}
& U^{\mathfrak f_p}_n (t- n^{-\gamma}) - c   \left( \max \{t,0\} + C_{\alpha}(\mathfrak f) \right) e^{ -\beta n^{\delta}} 
D_{\alpha,\beta}(\mathfrak f)
  \notag \\
&\qquad\qquad\quad
\leq  U^{\mathfrak f}_n (t)   \leq U^{\mathfrak f_p}_n (t+ n^{-\gamma}) 
+ c \left( \max \{t,0\} + C_{\alpha}(\mathfrak f) \right) e^{ -\beta n^{\delta}} D_{\alpha,\beta}(\mathfrak f),
\end{align}
where $\alpha$ and $\beta$ are positive constants from conditions \eqref{exp mom for g 002} and \eqref{approxim rate for gp-002}. 
\end{proposition}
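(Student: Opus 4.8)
The goal is to compare the perturbed quantity $U_n^{\mathfrak f}(t)$ with its finite-size analogue $U_n^{\mathfrak f_p}(t)$, and the natural route is a direct estimate of the difference, splitting it according to where the two perturbation sequences $\mathfrak f$ and $\mathfrak f_p$ disagree. First I would write
\begin{align*}
U^{\mathfrak f}_n(t) - U^{\mathfrak f_p}_n(t)
&= \int_{\bb X} \bb E\left( (t + \widetilde S_n^{x,\mathfrak f});\ \tau^{\mathfrak f}_{x,t} > n \right)\nu(dx)
- \int_{\bb X} \bb E\left( (t + \widetilde S_n^{x,\mathfrak f_p});\ \tau^{\mathfrak f_p}_{x,t} > n \right)\nu(dx),
\end{align*}
and control it in three contributions: (i) the difference coming from the endpoint terms $f_n\circ T^n - f_{n,p}\circ T^n$ and $f_0 - f_{0,p}$; (ii) the difference coming from the discrepancy between the events $\{\tau^{\mathfrak f}_{x,t}>n\}$ and $\{\tau^{\mathfrak f_p}_{x,t}>n\}$; (iii) a trade-off that accounts for the shift $t \mapsto t \pm n^{-\gamma}$ in the argument. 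The shift is what makes the comparison clean: on the event where $\sup_{0\le k\le n}|f_k\circ T^k(\omega,x) - f_{k,p}\circ T^k(\omega,x)| \le n^{-\gamma}$, the trajectory $t + \widetilde S_k^{x,\mathfrak f}$ stays non-negative whenever $t + n^{-\gamma} + \widetilde S_k^{x,\mathfrak f_p}$ does, so $\{\tau^{\mathfrak f}_{x,t}>n\} \supseteq \{\tau^{\mathfrak f_p}_{x,t+n^{-\gamma}}>n\}$ up to that bad event, and the endpoint terms differ by at most $n^{-\gamma}$ as well. This yields one side of the sandwich; the other side is symmetric.

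The core estimate is therefore on the "bad event"
\begin{align*}
B_{n,p}^x = \left\{ \sup_{0\le k\le n} \left| f_k\circ T^k(\omega,x) - f_{k,p}\circ T^k(\omega,x) \right| > n^{-\gamma} \right\}.
\end{align*}
By a union bound over $k \in \{0,\dots,n\}$, Markov's inequality applied to the exponential moment, and the shift-invariance of $\bb P\otimes\nu$ under $T$ (using that $\nu$ is $\mu$-stationary, exactly as in \eqref{stationary-nu-001}), one gets
\begin{align*}
\int_{\bb X} \bb P(B_{n,p}^x)\,\nu(dx)
\le \sum_{k=0}^{n} e^{-\alpha n^{-\gamma}} \int_{\bb X}\int_\Omega e^{\alpha|f_k - f_{k,p}|}\,\bb P(d\omega)\nu(dx)
\le (n+1)\, e^{-\alpha n^{-\gamma}}\big(1 + e^{-\beta p} D_{\alpha,\beta}(\mathfrak f)\big).
\end{align*}
This bound is not yet good enough because of the factor $e^{-\alpha n^{-\gamma}} \to 1$; the point is that one must instead use $p \ge n^\delta$ and the approximation rate \eqref{approxim rate for gp-002} \emph{directly}: $\int e^{\alpha|f_k - f_{k,p}|}\,\bb P\,\nu - 1 \le e^{-\beta p} D_{\alpha,\beta}(\mathfrak f) \le e^{-\beta n^\delta}D_{\alpha,\beta}(\mathfrak f)$, and by Markov's inequality $\int_{\bb X}\bb P(|f_k - f_{k,p}| > \eta)\nu(dx) \le (e^{\alpha\eta}-1)^{-1}\big(e^{-\beta n^\delta}D_{\alpha,\beta}(\mathfrak f)\big)$, which for $\eta$ bounded below is $\le c\, e^{-\beta n^\delta} D_{\alpha,\beta}(\mathfrak f)$. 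Choosing the threshold in $B_{n,p}^x$ to be a small constant rather than $n^{-\gamma}$ — or, better, running the argument at level $n^{-\gamma}$ but absorbing $(e^{\alpha n^{-\gamma}}-1)^{-1} \asymp n^{\gamma}$ into the polynomial prefactor, which is harmless against $e^{-\beta n^\delta}$ — gives $\int_{\bb X}\bb P(B_{n,p}^x)\nu(dx) \le c\, n^{\gamma+1} e^{-\beta n^\delta} D_{\alpha,\beta}(\mathfrak f) \le c'\, e^{-\beta' n^\delta} D_{\alpha,\beta}(\mathfrak f)$ for a slightly smaller $\beta'$, which I will simply rename $\beta$.

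On the complement of $B_{n,p}^x$ the two walks are $n^{-\gamma}$-close at every time $\le n$, so the inclusion of events described above applies and the endpoint terms are handled. On $B_{n,p}^x$ itself I bound the contribution crudely by Cauchy--Schwarz:
\begin{align*}
\bb E\left( \left| t + \widetilde S_n^{x,\mathfrak f} \right|;\ \tau^{\mathfrak f}_{x,t}>n,\ B_{n,p}^x \right)
\le \bb E^{1/2}\left( |t + \widetilde S_n^{x,\mathfrak f}|^2 \right) \bb P(B_{n,p}^x)^{1/2},
\end{align*}
and the first factor is $\le \max\{t,0\} + c\sqrt n + \bb E^{1/2}(|f_n\circ T^n|^2) + |f_0|$, which after integrating against $\nu$ and using \eqref{exp mom for g 002} (hence a uniform $L^2$ bound on the $f_k$) is $\le c(\max\{t,0\} + \sqrt n + C_\alpha(\mathfrak f))$; combined with the $\bb P(B_{n,p}^x)^{1/2} \le c\, e^{-\frac\beta2 n^\delta}(D_{\alpha,\beta}(\mathfrak f))^{1/2}$ bound, and noting $\sqrt n\, e^{-\frac\beta2 n^\delta}$ is itself $O(e^{-\beta' n^\delta})$, this is absorbed into the claimed error term $c(\max\{t,0\} + C_\alpha(\mathfrak f))e^{-\beta n^\delta}D_{\alpha,\beta}(\mathfrak f)$. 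Assembling the three contributions and repeating the argument with the roles of $\mathfrak f$ and $\mathfrak f_p$ interchanged (to get the lower bound with $t - n^{-\gamma}$) completes the proof.

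\textbf{Main obstacle.} The delicate point is \emph{not} any single estimate but the bookkeeping of the event-discrepancy term: one must make sure that the shift $n^{-\gamma}$ in the argument of $U_n^{\mathfrak f_p}$ genuinely dominates the perturbation discrepancy on $(B_{n,p}^x)^c$ \emph{uniformly over the whole trajectory} $k\le n$, not just at the endpoint — this is why the bad event is defined with a supremum over $k$ and why the union bound costs a factor $n$ (harmless). A secondary subtlety is that $|t + \widetilde S_n^{x,\mathfrak f}|$ must be controlled in $L^2(\bb P\otimes\nu)$ uniformly, which forces the use of the uniform moment constant $C_\alpha(\mathfrak f)$ and the stationarity of $\nu$ to move the $T^n$-shift off the $f_n$ term; once those two points are set up correctly, the rest is Cauchy--Schwarz and Markov.
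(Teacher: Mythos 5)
Your proposal is correct and follows essentially the same route as the paper's proof: both define a good/bad event according to whether $\sup_{0\le k\le n}|f_k\circ T^k - f_{k,p}\circ T^k|$ exceeds a threshold of order $n^{-\gamma}$, show the inclusion of stopping-time events on the good event after shifting $t$ by $n^{-\gamma}$, bound $\int_{\bb X}\bb P(B_{n,x}^c)\,\nu(dx)$ via a union bound, Chebyshev, the $\mu$-stationarity of $\nu$ to remove the $T^k$ shift, and the approximation property \eqref{approxim rate for gp-002}, and finally control the bad-event contribution by Cauchy--Schwarz plus a uniform $L^2$ bound built from \eqref{exp mom for f 001}, \eqref{centering-001}, \eqref{exp mom for g 002} via Minkowski. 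The only cosmetic difference is that the paper uses the threshold $\tfrac12 n^{-\gamma}$ so that the needed shift is exactly $n^{-\gamma}$ (your threshold $n^{-\gamma}$ would force a shift of $2n^{-\gamma}$, which changes nothing substantive), and like the paper you correctly note that the polynomial prefactors $n^{1+\gamma}$ and $\sqrt n$ are absorbed into $e^{-\beta n^\delta}$ after a harmless renaming of $\beta$.
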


\begin{proof}
For $x\in \bb X$ and $n \geq 1$, consider the event 
\begin{align*}
B_{n,x}=\left\{ \omega :  \max_{0 \leq k\leq n}\left\vert  f_k\circ T^k (\omega,x) - f_{k,p}\circ T^k(\omega,x) \right\vert \leq \frac{1}{2}n^{-\gamma} \right\}. 
\end{align*}
Since
\begin{align*}
& \bb E \left( t + S_n^x + f_n \circ T^n (\omega,x)-f_0(\omega,x); \tau^{\mathfrak f}_{x, t} > n, B_{n,x} \right)  \notag\\
& \leq  \bb E \left( t + n^{-\gamma} + S_n^x + f_{n,p} \circ T^n(\omega,x) -  f_{0, p}(\omega,x);  \tau^{\mathfrak f}_{x, t} > n,  B_{n,x} \right)  \notag\\ 
& \leq \bb E \left( t + n^{-\gamma} + S_n^x + f_{n,p} \circ T^n(\omega,x) - f_{0, p}(\omega,x);  \tau^{\mathfrak f_p}_{x, t+n^{-\gamma}} > n  \right), 
\end{align*}
where $\tau^{\mathfrak f_p}_{x, t+n^{-\gamma}}$ is defined by \eqref{def-stop time with preturb-001-p}, 
we get 
\begin{align*}
&\bb E \left( t + S_n^x + f_n \circ T^n (\omega,x)-f_0(\omega,x); \tau^{\mathfrak f}_{x, t} > n \right)  \notag\\
&= \bb E \left( t + S_n^x + f_n \circ T^n (\omega,x)-f_0(\omega,x); \tau^{\mathfrak f}_{x, t} > n, B_{n,x} \right)  \notag\\
&\qquad\qquad\qquad +  \bb E \left( t + S_n^x + f_n \circ T^n (\omega,x)-f_0(\omega,x); \tau^{\mathfrak f}_{x, t} > n, B_{n,x}^c \right) \notag\\  
&\leq  \bb E \left( t + n^{-\gamma} + S_n^x + f_{n,p} \circ T^n(\omega,x)  -  f_{0, p}(\omega,x);  \tau^{\mathfrak f_p}_{x, t+n^{-\gamma}} > n  \right)  \notag\\
&\qquad\qquad\qquad +  \bb E \left( t + S_n^x + f_n \circ T^n (\omega,x)-f_0(\omega,x); \tau^{\mathfrak f}_{x, t} > n, B_{n,x}^c \right).
\end{align*}
By integrating over $x\in\bb X$, 
and using \eqref{def-U-f-n-t-001} and \eqref{def-U-f-n-t-001-p}, we obtain
\begin{align*}
U_n^{\mathfrak f}(t) \leq U_n^{\mathfrak f_p}(t+n^{-\gamma}) 
+ \int_{\bb X} \bb E \left( t + S_n^x + f_n \circ T^n (\omega,x)-f_0(\omega,x); \tau^{\mathfrak f}_{x, t} > n, B_{n,x}^c \right)    \nu(dx).
\end{align*}
By the Cauchy-Schwarz inequality, we see that 
\begin{align*} 
&\int_{\bb X} \bb E \left( t + S_n^x + f_n \circ T^n (\omega,x)-f_0(\omega,x); \tau^{\mathfrak f}_{x, t} > n, 
B_{n,x}^c \right) \nu(dx) \notag\\
&= t \int_{\bb X} \bb P \left( \tau^{\mathfrak f}_{x, t}>n, B_{n,x}^c \right) \nu(dx)   \notag\\
& \quad + \int_{\bb X} \bb E \left(  S_n^x + f_n \circ T^n (\omega,x)-f_0(\omega,x); \tau^{\mathfrak f}_{x, t} > n, 
B_{n,x}^c \right) \nu(dx) \notag\\
&\leq  \max \{t,0\} \int_{\bb X} \bb P ( B_{n,x}^c ) \nu(dx)  \notag\\
& \quad + \left(\int_{\bb X} \bb E \left( \big| S_n^x + f_n \circ T^n (\omega,x)-f_0(\omega,x) \big|^2 \right) \nu(dx) \right)^{1/2} 
\left(\int_{\bb X} \bb P (B_{n,x}^c) \nu(dx)\right)^{1/2}.
\end{align*}
By the definition of $T$ on the space $\Omega\times \bb X$ (cf.\ \eqref{def-T-Omega-X}), 
the $\mu$-stationarity of the measure $\nu$, Chebyshev's inequality and the approximation property \eqref{approxim rate for gp-002}, we obtain
\begin{align} \label{proba-Bnx-001}
\int_{\bb X} \bb P(B_{n,x}^c) \nu(dx)
&\leq  \sum_{k=0}^n  \int_{\bb X} \bb P\left( e^{\alpha \left| f_k \circ T^k(\omega, x) - f_{k,p} \circ T^k(\omega, x)  \right|    } - 1 \geq 
  e^{\frac{\alpha}{2} n^{-\gamma}}-1\right) \nu(dx)  \notag\\
 &=  \sum_{k=0}^n  \int_{\bb X} 
 \bb P\left( e^{\alpha \left\vert   f_k(\omega,  x) - f_{k,p}(\omega, x)  \right\vert    } - 1 \geq e^{\frac{\alpha}{2} n^{-\gamma}}-1\right)  \nu(dx) \notag\\ 
&\leq  \frac{1}{e^{\frac{\alpha}{2} n^{-\gamma}}-1} \sum_{k=0}^n \int_{\bb X}   
 \bb E\left( e^{\alpha \left\vert   f_k(\omega,  x) - \bb E( f_{k}(\cdot,  x) | \scr A_p)(\omega) \right\vert    } - 1 \right) \nu(dx)  \notag\\
&\leq c  n^{1+\gamma} e^{ -\beta p}  D_{\alpha,\beta}(\mathfrak f) 
\leq 
c  n^{1+\gamma} e^{ -\beta n^{\delta}}  D_{\alpha,\beta}(\mathfrak f). 
\end{align}
Using Minkowski's inequality and the assumptions \eqref{exp mom for f 001}, \eqref{centering-001} and \eqref{exp mom for g 002}, we get
\begin{align} \label{bound-Snx-fn-001}
& \left(\int_{\bb X} \bb E\left( \big| S_n^x + f_n \circ T^n (\omega,x)-f_0(\omega,x) \big|^2 \right) \nu(dx) \right)^{1/2} \notag\\
&\leq   \left(\int_{\bb X} \bb E \big( |S_n^x|^2 \big) \nu(dx) \right)^{1/2}  + 
 \left(\int_{\bb X} \bb E\left( \big| f_n \circ T^n (\omega,x)-f_0(\omega,x) \big|^2 \right) \nu(dx) \right)^{1/2}  \notag\\
&\leq c \left( n + C_{\alpha}(\mathfrak f) \right).
\end{align}
Combining these bounds, we derive the upper bound in \eqref{AA-bound 002-001}. The lower bound can be established similarly. 
\end{proof}

By applying the same technique, we can further extend Proposition \ref{Prop-vartheta} to the case where the functions $(f_n)_{n \geq 0}$
may depend on infinitely many coordinates.

\begin{corollary}\label{Lem-bound-tau-p-000aa}
For any $\alpha,\beta>0$ and $B \geq 1$, there exist constants $\ee, c>0$ with the following property. 
Assume that $\mathfrak f = (f_n)_{n \geq 0}$ is a sequence of  measurable functions on $\Omega \times \bb X$ 
satisfying the moment condition \eqref{exp mom for g 002} and the approximation property \eqref{approxim rate for gp-002}
with $C_{\alpha}(\mathfrak f) \leq B$ and $D_{\alpha,\beta}(\mathfrak f) \leq B$. 
Then, for any $n \geq 1$ and $t\in \bb R$, we have
\begin{align*}
\int_{\bb X} \mathbb{P} \left( \tau^{\mathfrak f}_{x, t}  > n \right) \nu(dx)
\leq c \frac{  \max \{t,0\} +1 }{n^{\ee}}. 
\end{align*}
\end{corollary}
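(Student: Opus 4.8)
The plan is to reduce the case of a general perturbation sequence $\mathfrak f = (f_n)_{n\geq0}$ depending on infinitely many coordinates to the finite-size case already handled by Corollary \ref{Lem-bound-tau-p-000}, using the approximation estimate of Proposition \ref{Prop g approx 002} together with the comparison of the stopping times $\tau^{\mathfrak f}_{x,t}$ and $\tau^{\mathfrak f_p}_{x,t}$. First I would fix $\alpha,\beta>0$ and $B\geq1$, and let $\ee_0,\beta_0,c_0>0$ be the constants produced by Corollary \ref{Lem-bound-tau-p-000} (so that for $p\leq n^{\ee_0}$ and any sequence of $\mathscr A_p$-measurable functions with exponential moment $\leq B$ one has the bound $c_0\frac{\max\{t,0\}+1}{n^{\beta_0}}C_\alpha(\cdot)$). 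I would then choose $p=p(n)=\lceil n^{\delta}\rceil$ for a small $\delta<\ee_0$ to be fixed, so that $p\leq n^{\ee_0}$ for $n$ large; for the finitely many small $n$ the stated bound is trivial after enlarging $c$.

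The core comparison is the following: on the event $B_{n,x}=\{\max_{0\leq k\leq n}|f_k\circ T^k(\omega,x)-f_{k,p}\circ T^k(\omega,x)|\leq 1\}$ (or with any fixed threshold), the event $\{\tau^{\mathfrak f}_{x,t}>n\}$ is contained in $\{\tau^{\mathfrak f_p}_{x,t+2}>n\}$, because replacing $f_k$ by $f_{k,p}$ shifts the perturbed walk by at most $1$ at each time and a perturbation of the starting level by $2$ absorbs this. Hence
\begin{align*}
\int_{\bb X}\bb P(\tau^{\mathfrak f}_{x,t}>n)\,\nu(dx)
&\leq \int_{\bb X}\bb P(\tau^{\mathfrak f_p}_{x,t+2}>n)\,\nu(dx)
+\int_{\bb X}\bb P(B_{n,x}^c)\,\nu(dx).
\end{align*}
The first term on the right is controlled by Corollary \ref{Lem-bound-tau-p-000} applied to the $\mathscr A_p$-measurable sequence $\mathfrak f_p$: its exponential moment satisfies $C_\alpha(\mathfrak f_p)\leq C_\alpha(\mathfrak f)\leq B$ by Jensen's inequality applied to the conditional expectation in \eqref{def-approxi-fnp}, so this term is at most $c\frac{\max\{t,0\}+1}{n^{\beta_0}}$. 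The second term is exactly the quantity estimated in \eqref{proba-Bnx-001} of the proof of Proposition \ref{Prop g approx 002}: it is bounded by $c\,n\,e^{-\beta p}D_{\alpha,\beta}(\mathfrak f)\leq cB\,n\,e^{-\beta n^{\delta}}$, which decays faster than any negative power of $n$ and in particular is $\leq \frac{c}{n^{\beta_0}}$. Combining these two bounds and renaming $\ee=\beta_0$ gives the claim, with the constants $\ee,c$ depending only on $\alpha,\beta,B$.

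The main obstacle is bookkeeping the stopping-time comparison cleanly: one must verify that a fixed additive perturbation of the level $t$ genuinely dominates the effect of swapping $f_k$ for $f_{k,p}$ uniformly over $\omega\in B_{n,x}$ and over $k\leq n$ — this uses that $\widetilde S^x_n=S^x_n+f_n\circ T^n-f_0$ and $\widetilde S^{x,\mathfrak f_p}_n=S^x_n+f_{n,p}\circ T^n-f_{0,p}$ differ by $(f_n\circ T^n-f_{n,p}\circ T^n)-(f_0-f_{0,p})$, whose absolute value is at most $2$ on $B_{n,x}$ by the triangle inequality, so that $t+\widetilde S^x_k\geq0$ for all $k\leq n$ forces $t+2+\widetilde S^{x,\mathfrak f_p}_k\geq0$ for all $k\leq n$. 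Everything else is a direct invocation of already-proved estimates, so there is no serious analytic difficulty beyond this elementary but slightly delicate step.
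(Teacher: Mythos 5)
Your proof is correct and follows essentially the same route as the paper: split over the event $B_{n,x}$, absorb the discrepancy between $\mathfrak f$ and $\mathfrak f_p$ into a shift of the level $t$ so that Corollary \ref{Lem-bound-tau-p-000} applies to the $\mathscr A_p$-measurable sequence $\mathfrak f_p$, and control $\bb P(B_{n,x}^c)$ by the Chebyshev computation of \eqref{proba-Bnx-001}. The only cosmetic difference is your fixed threshold $1$ and shift $t\mapsto t+2$ in place of the paper's $\tfrac12 n^{-\gamma}$ and $t\mapsto t+n^{-\gamma}$, which is immaterial for a bound of the form $c(\max\{t,0\}+1)n^{-\ee}$.
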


\begin{proof}
We use the notation from the proof of Proposition \ref{Prop g approx 002} and we write 
\begin{align*}
\int_{\bb X} \mathbb{P} \left( \tau^{\mathfrak f}_{x, t}  > n \right) \nu(dx)
& = \int_{\bb X} \mathbb{P} \left( \tau^{\mathfrak f}_{x, t}  > n, B_{n,x}  \right) \nu(dx) 
 + \int_{\bb X} \mathbb{P} \left( \tau^{\mathfrak f}_{x, t}  > n, B_{n,x}^c \right) \nu(dx)   \notag\\
 & \leq \int_{\bb X} \mathbb{P} \left( \tau^{\mathfrak f_p}_{x, t + n^{-\gamma}}  > n   \right) \nu(dx)  
  + \int_{\bb X} \mathbb{P} \left(  B_{n,x}^c \right) \nu(dx). 
\end{align*}
By Corollary \ref{Lem-bound-tau-p-000}, the first term is bounded by $c \frac{  \max \{t,0\} +1 }{n^{\ee}} C_{\alpha}(\mathfrak f)$.
By \eqref{proba-Bnx-001}, the second term is dominated by $c  n^{1+\gamma} e^{ -\beta n^{\delta}} D_{\alpha,\beta}(\mathfrak f)$. 
Combining these bounds gives the desired result. 
\end{proof}

To obtain Theorem \ref{Pro-Appendix-Final2-Inequ}, 
we actually have to work with a modified version of Proposition \ref{Prop g approx 002}. 
Specifically, we need to account for the twisted expectation involving $\theta_p$, 
where $\theta_p = \bb E (\theta | \scr A_p)$ for $\theta \in L^{\infty}(\Omega, \bb P)$, as defined by \eqref{def-theta-p}. 
According to \eqref{def-U-f-n-t-theta-001}, we have 
\begin{align*}
U^{\mathfrak f_p, \theta_p}_n(t) 
= \int_{\bb X} \bb E \left( \left( t + S^{x}_n + f_{n,p}\circ T^n(\omega,x) - f_{0,p}(\omega,x) \right) \theta_p(\omega);  \tau^{\mathfrak f_p}_{x,t} > n \right) \nu(dx). 
\end{align*}


\begin{proposition} \label{Prop UfA g approx 002-app}
For any $\gamma, \delta > 0$ and $B \geq 1$, there exist $b, c>0$ with the following property. 
Assume that $\mathfrak f = (f_n)_{n \geq 0}$ is a sequence of  measurable functions on $\Omega \times \bb X$ 
satisfying the moment condition \eqref{exp mom for g 002} and the approximation property \eqref{approxim rate for gp-002}
with $C_{\alpha}(\mathfrak f) \leq B$ and $D_{\alpha,\beta}(\mathfrak f) \leq B$. 
Assume that $\theta \in L^{\infty}(\Omega, \bb P)$ is non-negative
 with $\|\theta\|_{\infty} \leq B$ and $N_{\beta}(\theta) \leq B$. 
Then, for any $n \geq 1$, $p \geq n^{\delta}$ and $t \in \bb R,$ we have 
\begin{align}\label{bound UfA 002-001-app}
& U^{\mathfrak f_p, \theta_p}_n (t- n^{-\gamma}) - c \left(1+ \max \{t,0\}\right) e^{ -b n^{\delta}}  \notag \\
& \qquad\qquad \leq  U^{\mathfrak f,\theta}_n (t)  \leq U^{\mathfrak f_p,\theta_p}_n (t+ n^{-\gamma}) 
+ c \left(1+ \max \{t,0\}\right) e^{ -b n^{\delta}}.  
\end{align}
\end{proposition}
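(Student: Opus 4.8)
The plan is to mimic the proof of Proposition \ref{Prop g approx 002} but now carry the bounded twist function $\theta$ along with the expectations, using its own finite-size approximation property \eqref{approx property of theta-001} to replace $\theta$ by $\theta_p$ at an exponentially small cost. Concretely, fix $n\geq 1$, $p\geq n^{\delta}$ and $t\in\bb R$, and introduce the event
\begin{align*}
B_{n,x}=\Big\{ \omega:  \max_{0\leq k\leq n}\big| f_k\circ T^k(\omega,x) - f_{k,p}\circ T^k(\omega,x) \big| \leq \tfrac12 n^{-\gamma} \Big\},
\end{align*}
exactly as in the proof of Proposition \ref{Prop g approx 002}. On $B_{n,x}$ one has the inclusion $\{\tau^{\mathfrak f}_{x,t}>n\}\cap B_{n,x}\subseteq \{\tau^{\mathfrak f_p}_{x,t+n^{-\gamma}}>n\}$ together with the pointwise bound $t+\tilde S^x_n \leq t+n^{-\gamma}+S^x_n + f_{n,p}\circ T^n -f_{0,p}$ for the perturbed increment. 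Since $\theta\geq 0$ and $\|\theta\|_\infty\leq B$, multiplying the integrand of $U^{\mathfrak f,\theta}_n(t)$ by $\theta$ and splitting according to $B_{n,x}$ and $B_{n,x}^c$ gives
\begin{align*}
U^{\mathfrak f,\theta}_n(t) \leq \int_{\bb X}\bb E\Big( (t+n^{-\gamma}+S^x_n+f_{n,p}\circ T^n-f_{0,p})\,\theta;\ \tau^{\mathfrak f_p}_{x,t+n^{-\gamma}}>n\Big)\nu(dx) + R_1,
\end{align*}
where $R_1$ is the remainder on $B_{n,x}^c$. The first integral is not yet $U^{\mathfrak f_p,\theta_p}_n(t+n^{-\gamma})$ because it features $\theta$ rather than $\theta_p$; here I would write $\theta=\theta_p+(\theta-\theta_p)$, keep the $\theta_p$ part (which is exactly the target quantity, up to the sign of $\tilde S^x_n$ which is controlled as in Proposition \ref{Prop g approx 002}), and estimate the $(\theta-\theta_p)$ part.

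The key new estimate is the control of the term $\int_{\bb X}\bb E\big( |t+n^{-\gamma}+\tilde S^{x}_n|\,|\theta-\theta_p|;\ \tau^{\mathfrak f_p}_{x,t+n^{-\gamma}}>n\big)\nu(dx)$. I would bound $|\theta-\theta_p|$ using the Cauchy–Schwarz inequality together with $\|\theta-\theta_p\|_\infty\leq 2B$, so that $\bb E|\theta-\theta_p|\cdot(\cdots)\leq (2B)^{1/2}(\bb E|\theta-\theta_p|)^{1/2}(\cdots)$, and invoke $N_{\beta}(\theta)\leq B$ from \eqref{approx property of theta-001} to get $\bb E|\theta-\theta_p|\leq B e^{-\beta p}\leq Be^{-\beta n^{\delta}}$. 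The companion factor, namely $\int_{\bb X}\bb E\big(|t+n^{-\gamma}+\tilde S^{x}_n|^2;\ \tau^{\mathfrak f_p}_{x,t+n^{-\gamma}}>n\big)\nu(dx)$, is polynomial in $n$ and $t$: one bounds $|\tilde S^x_n|\leq |S^x_n|+|f_{n,p}\circ T^n|+|f_{0,p}|$ by Minkowski and uses the moment assumptions \eqref{exp mom for f 001}, \eqref{centering-001}, \eqref{exp mom for g 002} (so $C_{\alpha}(\mathfrak f)\leq B$), exactly as in \eqref{bound-Snx-fn-001}. This gives a contribution of order $(1+\max\{t,0\}+n)^{1/2}\cdot e^{-\beta n^{\delta}/2}$, which, after absorbing the polynomial factor $n$ into a slightly smaller exponent $b<\beta/2$, is $\leq c(1+\max\{t,0\})e^{-bn^{\delta}}$. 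Meanwhile the remainder $R_1$ is handled exactly as in the proof of Proposition \ref{Prop g approx 002}: $R_1\leq \|\theta\|_\infty\int_{\bb X}\bb E(|t+\tilde S^x_n|;\ \tau^{\mathfrak f}_{x,t}>n, B_{n,x}^c)\nu(dx)$, and the probability $\int_{\bb X}\bb P(B_{n,x}^c)\nu(dx)\leq c n^{1+\gamma}e^{-\beta n^{\delta}}D_{\alpha,\beta}(\mathfrak f)$ is given by \eqref{proba-Bnx-001}, which combined with the Cauchy–Schwarz/Minkowski bound on $|t+\tilde S^x_n|$ yields again a term of the desired form.

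Finally, to match the statement exactly I need to check that after replacing $\theta$ by $\theta_p$, the quantity obtained is precisely $U^{\mathfrak f_p,\theta_p}_n(t+n^{-\gamma})$ up to the already-controlled error: on the event $\{\tau^{\mathfrak f_p}_{x,t+n^{-\gamma}}>n\}$ the perturbed increment $t+n^{-\gamma}+\tilde S^{x,p}_n\geq 0$ and, since $\theta_p\geq 0$ (as a conditional expectation of a non-negative function), one has $(t+n^{-\gamma}+S^x_n+f_{n,p}\circ T^n-f_{0,p})\theta_p \leq (t+n^{-\gamma}+\tilde S^{x,p}_n)\theta_p + (\text{the discrepancy between }\tilde S^x_n\text{ on }B_{n,x}\text{ and }\tilde S^{x,p}_n)$, and the discrepancy is $\leq n^{-\gamma}$ on $B_{n,x}$, contributing at most $n^{-\gamma}\|\theta\|_\infty \bb P(\tau^{\mathfrak f_p}>n)$, which by Corollary \ref{Lem-bound-tau-p-000aa} (or directly Corollary \ref{Lem-bound-tau-p-000}) is $\leq c n^{-\gamma}(1+\max\{t,0\})$ — this is subsumed in the final $c(1+\max\{t,0\})e^{-bn^{\delta}}$ only if we instead absorb it into the shift, i.e.\ we can simply enlarge the shift slightly; cleaner is to note it merely changes $U^{\mathfrak f_p,\theta_p}_n(t+n^{-\gamma})$ into $U^{\mathfrak f_p,\theta_p}_n(t+n^{-\gamma})$ by adjusting constants, which is harmless as $\gamma$ is arbitrary. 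The lower bound in \eqref{bound UfA 002-001-app} is proved symmetrically, using the reverse inclusion on $B_{n,x}$ and the same three error estimates. The main obstacle, as in Proposition \ref{Prop g approx 002}, is bookkeeping: ensuring that every error term — the $B_{n,x}^c$ remainder, the $\theta-\theta_p$ replacement, and the shift discrepancy — is uniformly of the form $c(1+\max\{t,0\})e^{-bn^{\delta}}$ with $b$ depending only on $\beta,\delta,\gamma$ and the uniform bound $B$, which requires carefully choosing $b<\beta$ small enough to swallow all polynomial-in-$n$ prefactors.
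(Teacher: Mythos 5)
Your proposal is correct and follows essentially the same route as the paper: the paper first swaps $\theta$ for $\theta_p$ in $U^{\mathfrak f,\theta}_n(t)$ via the Cauchy--Schwarz inequality, Lemma \ref{Lem-trick}, the bound \eqref{bound-Snx-fn-001} and $N_{\beta}(\theta)\leq B$, obtaining an error $c'e^{-\beta n^{\delta}/4}(1+\max\{t,0\})$, and then applies the argument of Proposition \ref{Prop g approx 002} verbatim with the fixed non-negative $\mathscr A_p$-measurable twist $\theta_p$. Your interleaving of the two replacements (doing the $B_{n,x}$ splitting first and writing $\theta=\theta_p+(\theta-\theta_p)$ inside) is an immaterial reordering, and the shift-discrepancy worry in your last paragraph is already absorbed by the factor $\tfrac12$ in the definition of $B_{n,x}$, exactly as in Proposition \ref{Prop g approx 002}.
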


\begin{proof}
Using the Cauchy-Schwarz inequality, Lemma \ref{Lem-trick}, the bounds \eqref{bound-Snx-fn-001} and \eqref{approx property of theta-001}, 
we get that for any $n\geq 1$ and $p\geq n^\delta$,
\begin{align*} 
&\left| U^{\mathfrak f,\theta}_n (t)-U^{\mathfrak f, \theta_p}_n (t) \right| \\ 
&\leq \int_{\bb X} \bb E \left(\left| t + S^{x}_n + f_n \circ T^n (\omega,x)- f_0(\omega,x) \right| \left| \theta - \theta_p \right|;  \tau^{\mathfrak f}_{x, t} > n \right) \nu(dx) \\
&\leq \left(  \int_{\bb X} \bb E \left( \left| t + S^{x}_n + f_n \circ T^n (\omega,x)- f_0(\omega,x) \right|^2  ;  \tau^{\mathfrak f}_{x, t} > n  \right)  \nu(dx)  \right)^{1/2}
  \bb E^{1/2} \left(\theta - \theta_p \right)^2 \\
&\leq c \left( \max \{t, 0\} + n + B^{1/2} \right) \| \theta - \theta_p \|_{\infty}^{1/2} \; 
\bb E^{1/2} |\theta - \theta_p |  \\
&\leq c  \left( \max \{t, 0\} + n + B \right) B  e^{-\beta n^{\delta}/2}  \\   
&\leq  c' e^{-\beta n^{\delta}/4} \left(1+ \max \{t, 0\}  \right). 
\end{align*}
By applying the same techniques used in the proof of Proposition \ref{Prop g approx 002}, we obtain \eqref{bound UfA 002-001-app}.  
\end{proof}

\subsection{Proof of the quasi-increasing behaviour without twist function} 

In this subsection, we give a proof of inequality \eqref{bound with m for U-105-01-2} 
for the case where the twist function $\theta$ in the definition \eqref{def-U-f-n-t-theta-001}  equals $1$. 
The case with an arbitrary twist function will be addressed in Subsection \ref{subsec:quasi-increasing with twist-001}. 
We begin with the following technical lemma, where we recall that 
$\xi_k$ is an element of the set $ \bb A_p  = \bb G^{\{0,\ldots, p\}}\times \bb X \times \bb N$. 
\begin{lemma}\label{Exponential-Y-tau}
There exist constants $c, \beta >0$ such that for any $1 \leq p \leq  n$, $t \in \bb R$, 
$a=(g_{0},\ldots,g_{p},x, q) \in \bb A_p$  
and any sequence of $\mathscr A_p$-measurable functions $\mathfrak f = (f_n)_{n \geq 0}$
satisfying the moment condition \eqref{exp mom for g 002}, we have 
\begin{align*} 
&\bb{E}_a \Bigg( |\tilde f(\xi_{ \tilde \tau^{\mathfrak f}_{t} })| + \sum_{j=\tilde \tau^{\mathfrak f}_{t} -p+1}^{\tilde \tau^{\mathfrak f}_{t}} | h_p(\xi_j) |;
\  \tilde \tau^{\mathfrak f}_{t} >n \Bigg)  \notag\\
&\quad \leq  
c p \left( \max \{t,0\}  + |\sigma(g_p\cdots g_1,x)| + |\tilde f(a)|    \right) (n-p)^{-\beta} 
+  c p  n^{-2} \sum_{k=p+1}^{\infty} \frac{1}{k^2} \mathcal F_k(a).
\end{align*}
\end{lemma}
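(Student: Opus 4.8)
The statement bounds the expectation of the "overshoot-type" terms $|\tilde f(\xi_{\tilde\tau^{\mathfrak f}_t})|$ and $\sum_{j=\tilde\tau^{\mathfrak f}_t-p+1}^{\tilde\tau^{\mathfrak f}_t}|h_p(\xi_j)|$ on the event that the exit has not yet occurred before time $n$. The natural approach is to decompose over the value of $\tilde\tau^{\mathfrak f}_t$ and to split this value into two regimes: the range $p<\tilde\tau^{\mathfrak f}_t\le n$ (where we can use the a priori control on the exit time from Lemma~\ref{Lem-tau-prior}) and, where needed, the contribution of small indices $\le p$. Actually, since we are on $\{\tilde\tau^{\mathfrak f}_t>n\}$ and $p\le n$, we always have $\tilde\tau^{\mathfrak f}_t>n\ge p$, so the sum $\sum_{j=\tilde\tau^{\mathfrak f}_t-p+1}^{\tilde\tau^{\mathfrak f}_t}$ involves indices all exceeding $1$; the key is that it has at most $p$ terms, which explains the factor $p$ in the bound.

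First I would write
\begin{align*}
\bb E_a\Bigg(|\tilde f(\xi_{\tilde\tau^{\mathfrak f}_t})|+\sum_{j=\tilde\tau^{\mathfrak f}_t-p+1}^{\tilde\tau^{\mathfrak f}_t}|h_p(\xi_j)|;\ \tilde\tau^{\mathfrak f}_t>n\Bigg)
=\sum_{k=n+1}^{\infty}\bb E_a\Bigg(|\tilde f(\xi_k)|+\sum_{j=k-p+1}^{k}|h_p(\xi_j)|;\ \tilde\tau^{\mathfrak f}_t=k\Bigg).
\end{align*}
For each $k$, apply the Cauchy--Schwarz inequality to separate the integrand from the indicator of $\{\tilde\tau^{\mathfrak f}_t=k\}$, or better, bound $\{\tilde\tau^{\mathfrak f}_t=k\}\subseteq\{\tilde\tau^{\mathfrak f}_t\ge k\}$ and use Lemma~\ref{Lem-tau-prior} together with the Markov property at a time slightly before $k$. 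Concretely, I would condition on $\mathscr G_{k-p}$: on $\{\tilde\tau^{\mathfrak f}_t\ge k\}$ we have in particular $\{\tilde\tau^{\mathfrak f}_t>k-p\}$ (using $k-p>n-p\ge 0$ when $p<n$; the boundary case $p=n$ needs a separate trivial argument), so
\begin{align*}
\bb E_a\Bigg(|\tilde f(\xi_k)|+\sum_{j=k-p+1}^{k}|h_p(\xi_j)|;\ \tilde\tau^{\mathfrak f}_t\ge k\Bigg)
\le\bb E_a\Bigg(\mathds 1_{\{\tilde\tau^{\mathfrak f}_t>k-p\}}\,\bb E_{\xi_{k-p}}\Big(|\tilde f(\xi_p)|+\sum_{j=1}^{p}|h_p(\xi_j)|\Big)\Bigg),
\end{align*}
where the inner expectation is controlled by $C_\alpha(\mathfrak f)$-type quantities (Jensen plus the moment bound \eqref{exp mom for g 002}, or more precisely by $\mathcal F_p$ evaluated at $\xi_{k-p}$ for the first term and by $cp$ for the $h_p$-sum using \eqref{exp mom for f 001}). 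Then I would bound $\bb P_a(\tilde\tau^{\mathfrak f}_t>k-p)$ using Lemma~\ref{Lem-tau-prior}, which contributes the $(n-p)^{-\beta}$-type decay (summable-ish in $k$ after one more Cauchy--Schwarz or after separating the $\log k/k^\beta$ tail), and the term $\frac{1}{n^8}\sum_{\ell}\mathcal F_\ell$ accounting for the exceptional events; summing the geometric-type tails in $k$ and using $\sum_k k^{-2}$ for the $\mathcal F_k$-contribution yields the stated bound with the factor $p$ coming from the $p$ summands in the $h_p$-block and the number of $\mathcal F_k$ terms picked up.

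The main obstacle I anticipate is the bookkeeping around the $\mathcal F_k(a)$ terms: Lemma~\ref{Lem-tau-prior} produces $\frac{c}{n^8}\sum_{k=p+1}^{n}\mathcal F_k(a)$ with a \emph{fixed} base point $a$, but here we are conditioning at the random time $k-p$, so we must propagate these moment quantities back to $a$ via the Markov property and the tower property for $\mathcal F$, i.e. use that $\bb E_a(\mathcal F_j(\xi_{k-p}))$ relates to $\mathcal F_{j+k-p}(a)$ (or is dominated by it up to constants), together with the uniform-in-$n$ moment assumption. Combining this with the summation over $k\ge n+1$ to extract the convergent series $\sum_{k=p+1}^\infty k^{-2}\mathcal F_k(a)$ and the polynomial factor $(n-p)^{-\beta}$ requires care, but each individual step is a variant of estimates already carried out in the proofs of Lemma~\ref{Lem-tau-prior}, Lemma~\ref{Lem-bound-tau_p-new} and Lemma~\ref{Lemma 1}, so no genuinely new idea is needed. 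I would finish by absorbing all universal constants and relabelling $\beta$.
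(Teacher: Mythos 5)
Your decomposition over $k=\tilde\tau^{\mathfrak f}_t$ and your explanation for the factor $p$ are right, but the central step — replacing $\{\tilde\tau^{\mathfrak f}_t=k\}$ by $\{\tilde\tau^{\mathfrak f}_t\geq k\}$ (or $\{\tilde\tau^{\mathfrak f}_t>k-p\}$), decoupling via the Markov property at $\mathscr G_{k-p}$, and then invoking Lemma~\ref{Lem-tau-prior} — does not close. Lemma~\ref{Lem-tau-prior} gives only a polynomial tail of order $k^{-\beta}$ with the small exponent $\beta$ coming from Proposition~\ref{Prop-vartheta} (and in the end from the Berry--Esseen input, so one should think of $\beta<1$). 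Once you have enlarged the disjoint events $\{\tilde\tau^{\mathfrak f}_t=k\}$ to overlapping tails, the resulting series $\sum_{k\geq n+1}\bb P_a(\tilde\tau^{\mathfrak f}_t>k-p)$ is comparable to $\bb E_a(\tilde\tau^{\mathfrak f}_t-n;\ \tilde\tau^{\mathfrak f}_t>n)$ and simply diverges under this tail bound; Cauchy--Schwarz or H\"older halve the exponent and make matters worse, and there are no ``geometric-type tails'' anywhere in this problem. So as sketched the argument would produce an infinite upper bound, not the claimed $(n-p)^{-\beta}$ estimate.

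The idea you are missing is a logarithmic truncation that lets one keep the disjoint events $\{\tilde\tau^{\mathfrak f}_t=k\}$ and therefore avoid summing tail probabilities at all. The paper splits
$\bb E_a\big(|\tilde f(\xi_k)|;\ \tilde\tau^{\mathfrak f}_t=k\big)$ according to whether $|\tilde f(\xi_k)|>\gamma\log k$ or not. On the large-$\tilde f$ event one discards the exit condition entirely, and Chebyshev plus the definition of $\mathcal F_k(a)$ give a bound of order $k^{-\alpha\gamma/2}\mathcal F_k(a)$, which for $\gamma>8/\alpha$ is dominated by $n^{-2}k^{-2}\mathcal F_k(a)$ and sums to the second term in the lemma. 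On the complementary event one bounds $|\tilde f(\xi_k)|\leq\gamma\log k$ and uses that the events $\{\tilde\tau^{\mathfrak f}_t=k\}$ are disjoint: the sum collapses to $\gamma\,\bb E_a\big(\log\tilde\tau^{\mathfrak f}_t;\ \tilde\tau^{\mathfrak f}_t>n\big)$, which is then estimated via the layer-cake identity $\bb E(\phi(Y);Y>n)=\phi(n)\bb P(Y>n)+\int_n^\infty \phi'(u)\bb P(Y>u)\,du$ with $\phi=\log$, feeding in Lemma~\ref{Lem-tau-prior}; the extra factor $u^{-1}$ from $\phi'$ is exactly what makes the integral $\int_n^\infty u^{-1-\beta}\,du$ converge. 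The $h_p$-block is handled the same way, yielding the factor $p$. Your proposal contains neither the $\log k$ cutoff nor the crucial observation that one must work with $\{\tilde\tau^{\mathfrak f}_t=k\}$ rather than its tail, so it needs a genuinely new idea, not just bookkeeping.
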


\begin{proof}
Let $\beta$ be as in Lemma \ref{Lem-tau-prior} and choose $\gamma > 8/\alpha$. 
Note that, by \eqref{def-Fka}, for any $k > p$ and $a \in \bb A_p$, we have
\begin{align}\label{BOUND XI-002-001}
 \bb E_a |\tilde f(\xi_k)| \leq c \mathcal F_k(a). 
\end{align}
Now we handle the term involving $\tilde f$.
Note that 
\begin{align} \label{HHH-0001-001-OO2}
\bb E_a \left(  |\tilde f(\xi_{\tilde \tau^{\mathfrak f}_{t}})| ;  \tilde \tau^{\mathfrak f}_{t} > n \right)  
& = \sum_{k=n+1}^{\infty}  
\bb E_a \left( |\tilde f(\xi_{\tilde \tau^{\mathfrak f}_{t}})|;   \tilde \tau^{\mathfrak f}_{t} = k \right)  \notag\\
& = \sum_{k=n+1}^{\infty}  \bb E_a \left(|\tilde f(\xi_{k})|;  |\tilde f(\xi_{k})| > \gamma \log k,  \tilde \tau^{\mathfrak f}_{t} = k \right) \notag\\
& \quad + \sum_{k=n+1}^{\infty}   \bb E_a \left( |\tilde f(\xi_{k})|;  |\tilde f(\xi_{k})| \leq \gamma \log k ,  \tilde \tau^{\mathfrak f}_{t} = k \right).
\end{align}
For the first term in \eqref{HHH-0001-001-OO2},  
by Chebyshev's inequality, \eqref{BOUND XI-002-001} and \eqref{def-Fka}, we get
\begin{align*} 
\bb E_a \left(|\tilde f(\xi_{k})|;  |\tilde f(\xi_{k})| > \gamma \log k,  \tilde \tau^{\mathfrak f}_{t} = k \right)
&\leq  \bb E_a \left(|\tilde f(\xi_{k})|;  |\tilde f(\xi_{k})| > \gamma \log k \right) \\
& \leq k^{-\alpha \gamma/2}   \bb E_a \left( |\tilde f(\xi_{k})|  e^{ \frac{\alpha}{2}  |\tilde f(\xi_{k})| } \right) \\ 
& \leq c k^{-\alpha \gamma/2} \bb E_a  e^{ \alpha  |\tilde f(\xi_{k})| }  \\ 
& = c k^{-\alpha \gamma/2} \mathcal F_k(a). 
\end{align*}
As $\gamma > 8/\alpha$, summing over $k$ gives the following bound for the first term on the right-hand side of \eqref{HHH-0001-001-OO2}: 
\begin{align} \label{BOUND XI-002-002}
 \sum_{k=n+1}^{\infty}  
\bb E_a \left(|\tilde f(\xi_{k})|;  |\tilde f(\xi_{k})| > \gamma \log k,  \tilde \tau^{\mathfrak f}_{t} = k \right)
& \leq c n^{2-\alpha \gamma/2} \sum_{k=n+1}^{\infty} \frac{1}{k^2} \mathcal F_k(a) \notag\\
&  \leq c n^{-2} \sum_{k=n+1}^{\infty} \frac{1}{k^2} \mathcal F_k(a).     
\end{align}
For the second term on the right-hand side of \eqref{HHH-0001-001-OO2}, we have 
\begin{align} \label{HHH-0001-002-O2}
\sum_{k=n+1}^{\infty}  \bb E_a \left( |\tilde f(\xi_{k})|;  |\tilde f(\xi_{k})| \leq \gamma \log k ,  \tilde \tau^{\mathfrak f}_{t} = k \right)
& \leq \gamma \sum_{k=n+1}^{\infty}  \log k \ \bb P_a \big( \tilde \tau^{\mathfrak f}_{t} = k \big) \notag\\
&= \gamma \bb E_a \left( \log  \tilde \tau^{\mathfrak f}_{t};  \tilde \tau^{\mathfrak f}_{t} >n   \right).
\end{align}
Recall that, for a random variable $Y$ without atoms and a smooth and integrable function $\phi$ on $\bb R$,
and any $t \in \bb R$,  we have 
\begin{align*}
\bb E \left( \phi (Y) \mathds 1_{ \{ Y > t \} } \right) = \phi(t) \bb P (Y > t) + \int_t^{\infty} \phi'(u) \bb P (X > u) du. 
\end{align*}
Applying this identity gives 
\begin{align*}
\bb E_a \left( \log  \tilde \tau^{\mathfrak f}_{t};  \tilde \tau^{\mathfrak f}_{t} >n   \right) 
= (\log n)  \bb P_a \left( \tilde \tau^{\mathfrak f}_{t}  > n \right) 
 +  \int_{n}^{\infty}  u^{-1}  \bb P_a \left( \tilde \tau^{\mathfrak f}_{t}  > u \right) du. 
\end{align*}
Hence, using Lemma \ref{Lem-tau-prior},  we get
\begin{align}\label{BOUND XI-002-003}
&\bb E_a \left( \log  \tilde \tau^{\mathfrak f}_{t};  \tilde \tau^{\mathfrak f}_{t} >n   \right) \notag\\ 
&\leq 
  (\log n) \bigg(  c \frac{\max \{t,0\} +  |\sigma(g_p\cdots g_1,x)|   +|\tilde{f}(a)|+ \log n}{(n-p)^{\beta}}
 + \frac{c }{n^8} \sum_{k = p+1}^{n} \mathcal F_k(a)   \bigg)  \notag\\ 
& \quad + \int_{n+1}^{\infty}  u^{-1} \bigg(  c \frac{\max \{t,0\} +  |\sigma(g_p\cdots g_1,x)|   +|\tilde{f}(a)|+ \log u}{(u-p)^{\beta}}
 + \frac{c }{u^8} \sum_{k = p+1}^{[u]} \mathcal F_k(a)   \bigg)  du  \notag\\
& \leq  c \left( \max \{t,0\}  + |\sigma(g_p\cdots g_1,x)| + |\tilde f(a)|    \right) (n-p)^{-\beta} +  c  n^{-2} \sum_{k=p+1}^{\infty} \frac{1}{k^2} \mathcal F_k(a).
\end{align}
Combining \eqref{HHH-0001-001-OO2}, \eqref{BOUND XI-002-002}, \eqref{HHH-0001-002-O2} and \eqref{BOUND XI-002-003} gives
\begin{align} \label{HHH-0001-005-OO2}
& \bb E_a \left(  |\tilde f(\xi_{\tilde \tau^{\mathfrak f}_{t}})| ;  \tilde \tau^{\mathfrak f}_{t} > n \right)  \notag \\  
&\leq  c \left( \max \{t,0\}  + |\sigma(g_p\cdots g_1,x)| + |\tilde f(a)|    \right) (n-p)^{-\beta} 
+  c  n^{-2} \sum_{k=p+1}^{\infty} \frac{1}{k^2} \mathcal F_k(a).
\end{align}
The term involving $h_p$ is dominated in a similar manner.
\end{proof}

Recall that, by  \eqref{EXPECT-E_x-002} and \eqref{def-Mn-martingel-001}, 
for $a=(g_{0},\ldots,g_{p},x, q) \in \bb A_p$ and $t \in \bb R$, we have
\begin{align*} 
W^{\mathfrak f}_n(a, t) = \bb E_a \left( t + \sigma(g_{p}\cdots g_{1}, x) + M_n; \tilde \tau^{\mathfrak f}_{t} >n \right), 
\end{align*}
where $M_n=  \sum_{i=p+1}^{n+p} \sigma_p(\xi_i)$.
We apply Lemma \ref{Exponential-Y-tau} to show that the sequence $(W^{\mathfrak f}_n(a,t))_{n \geq 1}$
is quasi-increasing.

\begin{lemma} \label{lem-U is increasing-000}
There exist constants $c, \beta>0$ such that for any  $1 \leq p \leq  n \leq m$, 
 $a=(g_{0},\ldots,g_{p},x,q) \in \bb A_p$, $t \in \bb R$ and
  any sequence of $\mathscr A_p$-measurable functions $\mathfrak f = (f_n)_{n \geq 0}$
satisfying the moment condition \eqref{exp mom for g 002}, 
\begin{align} \label{bound-Wn-at-increase}
W^{\mathfrak f}_n(a, t) 
& \leq  W^{\mathfrak f}_{m}(a,t) 
 + c p (1+|\tilde f(a)|) \left( \max \{t,0\}  + |\sigma(g_p\cdots g_1,x)| + |\tilde f(a)|    \right) (n-p)^{-\beta}  \notag\\ 
&\quad +  c p (1+|\tilde f(a)|)  n^{-2} \sum_{k=p+1}^{\infty} \frac{1}{k^2} \mathcal F_k(a). 
\end{align}
\end{lemma}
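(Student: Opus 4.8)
The plan is to prove \eqref{bound-Wn-at-increase} by the same optional-stopping/truncation strategy used for Lemma \ref{Lem-Vn-increasing}, but now executed inside the Markov chain $(\xi_i)_{i\geq 0}$ and with the perturbation terms carefully controlled. First I would recall from Lemma \ref{Lemma-Martingale001} that $(M_n)_{n\geq 0}$ is a $(\mathscr G_n)_{n\geq 0}$-martingale under $\bb P_a$, and apply the optional stopping theorem at the bounded stopping time $\tilde\tau^{\mathfrak f}_t\wedge n$ to rewrite
\begin{align*}
W^{\mathfrak f}_n(a,t)
&= \bb E_a\left(t+\sigma(g_p\cdots g_1,x)+M_n;\ \tilde\tau^{\mathfrak f}_t>n\right)\\
&= t+\sigma(g_p\cdots g_1,x) - \bb E_a\left(t+\sigma(g_p\cdots g_1,x)+M_{\tilde\tau^{\mathfrak f}_t\wedge n};\ \tilde\tau^{\mathfrak f}_t\leq n\right).
\end{align*}
Comparing the expressions for $W^{\mathfrak f}_n(a,t)$ and $W^{\mathfrak f}_m(a,t)$ then gives
\begin{align*}
W^{\mathfrak f}_m(a,t)-W^{\mathfrak f}_n(a,t) = \bb E_a\left(t+\sigma(g_p\cdots g_1,x)+M_{\tilde\tau^{\mathfrak f}_t\wedge n};\ n<\tilde\tau^{\mathfrak f}_t\leq m\right) - \bb E_a\left(t+\sigma(g_p\cdots g_1,x)+M_m;\ n<\tilde\tau^{\mathfrak f}_t\leq m\right),
\end{align*}
so that the negativity one would want (as in the unperturbed case) is spoiled only by the gap between $M_{\tilde\tau^{\mathfrak f}_t\wedge n}=M_n$ on $\{\tilde\tau^{\mathfrak f}_t>n\}$ and the true exit value of the \emph{perturbed} walk at time $\tilde\tau^{\mathfrak f}_t$, together with the coboundary offset coming from $\tilde f(\xi_k)-\tilde f(\xi_0)$ and from the fact that $M$ lags the cocycle by $p$ coordinates ($M_n=\sum_{i=1}^n\sigma_p(\xi_{i+p})$).

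The key step is to make this error explicit. On the event $\{\tilde\tau^{\mathfrak f}_t>n\}$ the perturbed walk $t+\sum_{i=1}^{k}\sigma_p(\xi_i)+\tilde f(\xi_k)-\tilde f(\xi_0)$ stays $\geq 0$ for $k\leq n$; writing this as $(t+\sigma(g_p\cdots g_1,x)+M_{n-p})+\big(\text{tail }\sigma_p\text{ terms}\big)+\tilde f(\xi_n)-\tilde f(\xi_0)$ and using $\tilde f(\xi_0)=\tilde f(a)$, one sees that
\begin{align*}
\left|\ \big(t+\sigma(g_p\cdots g_1,x)+M_n\big)\ -\ \big(t+\textstyle\sum_{i=1}^{n}\sigma_p(\xi_i)+\tilde f(\xi_n)-\tilde f(a)\big)\ \right| \leq |\tilde f(a)| + |\tilde f(\xi_n)| + \sum_{j=n-p+1}^{n}|h_p(\xi_j)| + \sum_{j=n+1}^{n+p}|\sigma_p(\xi_j)|,
\end{align*}
and more importantly that the discrepancy between $W^{\mathfrak f}_m(a,t)-W^{\mathfrak f}_n(a,t)$ and a manifestly non-positive quantity is bounded by
\begin{align*}
(1+|\tilde f(a)|)\cdot\bb E_a\left(|\tilde f(\xi_{\tilde\tau^{\mathfrak f}_t})| + \sum_{j=\tilde\tau^{\mathfrak f}_t-p+1}^{\tilde\tau^{\mathfrak f}_t}|h_p(\xi_j)| + \sum_{j=\tilde\tau^{\mathfrak f}_t+1}^{\tilde\tau^{\mathfrak f}_t+p}|\sigma_p(\xi_j)|\ ;\ \tilde\tau^{\mathfrak f}_t>n\right),
\end{align*}
the factor $(1+|\tilde f(a)|)$ absorbing the shift $t\mapsto t'=t+\sigma(g_p\cdots g_1,x)-\tilde f(a)$ needed to match $W^{\mathfrak f}$ against the unperturbed quantities near the boundary. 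I would bound this expectation precisely by Lemma \ref{Exponential-Y-tau} (its statement gives exactly the first two of the three sums; the last one, $\sum_{j=\tilde\tau^{\mathfrak f}_t+1}^{\tilde\tau^{\mathfrak f}_t+p}|\sigma_p(\xi_j)|$, is handled by the identical Hölder-plus-Lemma \ref{Lem-tau-prior} argument indicated at the end of that lemma's proof, since the $p$ future increments are independent of $\mathscr G_{\tilde\tau^{\mathfrak f}_t}$ and have uniformly bounded $L^2$ norms). Putting these together yields the right-hand side of \eqref{bound-Wn-at-increase}.

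The main obstacle I anticipate is purely bookkeeping rather than conceptual: one must be scrupulous about the $p$-coordinate lag built into the chain (the triple $a=(g_0,\ldots,g_p,x,q)$, the relation $h_p(\xi_i)=\sigma_p(\xi_{i+p})$, and the convention $\xi_0=a$), and about which "time" the stopping occurs at — $\tilde\tau^{\mathfrak f}_t$ is defined by the perturbed sum $t+\sum_{i=1}^k\sigma_p(\xi_i)+\tilde f(\xi_k)-\tilde f(\xi_0)$, whereas $M_n$ and $W^{\mathfrak f}_n$ involve $\sum_{i=1}^n\sigma_p(\xi_{i+p})$, so the alignment of indices in the error terms must be tracked exactly to land the $(n-p)^{-\beta}$ rate and the $n^{-2}\sum_k k^{-2}\mathcal F_k(a)$ tail with the clean linear-in-$t$ and linear-in-$|\tilde f(a)|^2$ dependence claimed. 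Once the decomposition above is written correctly, the estimate is a direct consequence of Lemmas \ref{Lem-tau-prior} and \ref{Exponential-Y-tau} together with Minkowski's and Hölder's inequalities.
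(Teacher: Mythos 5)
Your proposal follows essentially the same route as the paper: use the martingale property of $(M_n)_{n\geq 0}$ (via optional stopping, which the paper carries out as an unrolled induction over $k\in[n+1,m]$) to reduce the comparison of $W^{\mathfrak f}_n$ and $W^{\mathfrak f}_m$ to the overshoot of the \emph{perturbed} walk at its exit time on $\{n<\tilde\tau^{\mathfrak f}_t\leq m\}$, identify that overshoot as at most $\sum_{j=\tilde\tau^{\mathfrak f}_t-p+1}^{\tilde\tau^{\mathfrak f}_t}|h_p(\xi_j)|+|\tilde f(\xi_{\tilde\tau^{\mathfrak f}_t})|+|\tilde f(a)|$, and then invoke Lemma \ref{Exponential-Y-tau} together with Lemma \ref{Lem-tau-prior} for the $|\tilde f(a)|\,\bb P_a(\tilde\tau^{\mathfrak f}_t>n)$ contribution — exactly the two ingredients the paper uses. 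Two small remarks on the bookkeeping: your displayed identity for $W^{\mathfrak f}_m-W^{\mathfrak f}_n$ is off (the correct statement is $W^{\mathfrak f}_n=W^{\mathfrak f}_m+\bb E_a\bigl(t'+M_{\tilde\tau^{\mathfrak f}_t};\,n<\tilde\tau^{\mathfrak f}_t\leq m\bigr)$ with $t'=t+\sigma(g_p\cdots g_1,x)$; your version retains a spurious $\bb E_a\bigl(t'+M_n;\,n<\tilde\tau^{\mathfrak f}_t\leq m\bigr)$, which is not negligible if taken literally), and your third error sum $\sum_{j=\tilde\tau^{\mathfrak f}_t+1}^{\tilde\tau^{\mathfrak f}_t+p}|\sigma_p(\xi_j)|$ is identical to the second one since $h_p(\xi_j)=\sigma_p(\xi_{j+p})$, so it is a harmless double count rather than an extra term. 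Since the final error term you write down is the correct one and is bounded exactly as in the paper, these slips do not affect the substance of the argument.
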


\begin{proof}
Let $a=(g_{0},\ldots,g_{p},x,q) \in \bb A_p$ and $t'=t+ \sigma(g_{p} \cdots g_{1}, x).$
In view of Lemma \ref{Lem-tau-prior}, 
to obtain inequality \eqref{bound-Wn-at-increase}, it is sufficient to prove the result with $W^{\mathfrak f}_n(a, t)$ replaced by 
$ \bb{E}_a ( t'+M_{n} - \tilde f(\xi_0);\ \tilde \tau^{\mathfrak f}_{t} > n )$.
 We first show that for any $1 \leq p \leq  n \leq m$, 
 $a \in \bb A_p$ and $t \in \bb R$, 
\begin{align}\label{Equ-desired-O2}
& \bb{E}_a\left( t'+M_{n}- \tilde f(\xi_0);\ \tilde \tau^{\mathfrak f}_{t} > n \right)   
 \leq \bb{E}_a\left( t'+M_{ m}- \tilde f(\xi_0);\ \tilde \tau^{\mathfrak f}_{t} > m\right)   \notag\\
& \qquad\qquad\qquad\qquad +  
\bb{E}_a \Bigg(  \sum_{j=\tilde \tau^{\mathfrak f}_{t} -p+1}^{\tilde \tau^{\mathfrak f}_{t}} h_p(\xi_j) - \tilde f(\xi_{ \tilde \tau^{\mathfrak f}_{t} });
     \  n + 1 \leq \tilde \tau^{\mathfrak f}_{t} \leq  m \Bigg). 
 \end{align}
Indeed, since $\bb E_a (M_{m}) = \bb E_a (M_{ n})$, it holds that 
\begin{align*}
& \bb{E}_a \left( t'+M_{m}- \tilde f(\xi_0);\ \tilde \tau^{\mathfrak f}_{t} \leq m \right) 
-  \bb{E}_a \left( t'+M_{n}- \tilde f(\xi_0);\ \tilde \tau^{\mathfrak f}_{t} \leq n \right)   \notag\\
& =   \bb{E}_a \left( t'+M_{n}- \tilde f(\xi_0);\ \tilde \tau^{\mathfrak f}_{t} > n \right)
  - \bb{E}_a\left( t'+M_{ m}- \tilde f(\xi_0);\ \tilde \tau^{\mathfrak f}_{t} > m  \right), 
\end{align*}
so that  \eqref{Equ-desired-O2} is equivalent to the following inequality:
\begin{align}\label{Equ-00a-O2}
& \bb{E}_a\left( t'+M_{m}- \tilde f(\xi_0);\ \tilde \tau^{\mathfrak f}_{t} \leq m \right)  
 \leq  \bb{E}_a\left( t'+M_{n}- \tilde f(\xi_0);\ \tilde \tau^{\mathfrak f}_{t} \leq n \right)  \notag\\
& \qquad\qquad\qquad\qquad  
 +  \bb{E}_a \Bigg(  \sum_{j=\tilde \tau^{\mathfrak f}_{t} -p+1}^{\tilde \tau^{\mathfrak f}_{t}} h_p(\xi_j) - \tilde f(\xi_{ \tilde \tau^{\mathfrak f}_{t} });\  
 n + 1 \leq \tilde \tau^{\mathfrak f}_{t} \leq m \Bigg). 
\end{align}
We shall prove \eqref{Equ-00a-O2} using an induction argument. 
Let $k \in [n + 1, m]$ and we write
\begin{align}\label{E-Mh-tau-ft-001}
& \bb{E}_a \left( t' + M_k - \tilde f(\xi_0);\ \tilde \tau^{\mathfrak f}_{t} \leq k \right)   \notag\\
& =  \bb{E}_a \left( t' + M_k - \tilde f(\xi_0);\ \tilde \tau^{\mathfrak f}_{t} \leq k - 1 \right)
   + \bb{E}_a \left( t' + M_k - \tilde f(\xi_0);\ \tilde \tau^{\mathfrak f}_{t} = k \right). 
\end{align}
As the event $\{ \tilde \tau^{\mathfrak f}_{t} \leq k -1 \}$ is in $\mathscr G_{k-1}$,
by the martingale property, we get
\begin{align}\label{E-Mh-tau-ft-002}
\bb{E}_a \left( t' + M_k - \tilde f(\xi_0);\ \tilde \tau^{\mathfrak f}_{t} \leq k -1 \right)
= \bb{E}_a \left( t' + M_{k -1} - \tilde f(\xi_0);\ \tilde \tau^{\mathfrak f}_{t} \leq k -1\right). 
\end{align}
Besides, 
by the definition of $\tilde \tau^{\mathfrak f}_{t}$ and $M_n$ (cf.\ \eqref{def-tau-f-y} and \eqref{def-Mn-martingel-001}),
on the set $\{\tilde \tau^{\mathfrak f}_{t} = k \}$, we have $t' + M_{k-p} + \tilde f(\xi_k) - \tilde f(\xi_0)<0$, which leads to
\begin{align}\label{E-Mh-tau-ft-003}
\bb{E}_a \left( t' + M_k - \tilde f(\xi_0);\ \tilde \tau^{\mathfrak f}_{t} = k \right)
\leq \bb{E}_a \Bigg( \sum_{j=k-p+1}^k h_p(\xi_j) - \tilde f(\xi_k)  ;\ \tilde \tau^{\mathfrak f}_{t} = k \Bigg).
\end{align} 
Hence, combining \eqref{E-Mh-tau-ft-001}, \eqref{E-Mh-tau-ft-002} and \eqref{E-Mh-tau-ft-003} gives that for any $k \in [n+1, m]$,
\begin{align*}
& \bb{E}_a \left( t' + M_k - \tilde f(\xi_0);\ \tilde \tau^{\mathfrak f}_{t} \leq k \right)   \notag\\
& \leq \bb{E}_a \left( t'+M_{k -1}-\tilde f(\xi_0);\ \tilde \tau^{\mathfrak f}_{t} \leq k -1 \right) 
   +  \bb{E}_a \Bigg( \sum_{j=k-p+1}^k h_p(\xi_j) - \tilde f(\xi_k) ;\ \tilde \tau^{\mathfrak f}_{t} = k \Bigg).
\end{align*}
Summing over $k \in [n+1, m]$, we get
\begin{align*}
& \bb{E}_a \left( t'+M_{m}- \tilde f(\xi_0);\ \tilde \tau^{\mathfrak f}_{t} \leq m \right)   \notag\\
& \leq  \bb{E}_a \left( t'+M_{n}- \tilde f(\xi_0);\ \tilde \tau^{\mathfrak f}_{t} \leq n \right)
+  \sum_{k = n+1}^{ m }   
\bb{E}_a \Bigg(  \sum_{j = k - p + 1}^k h_p(\xi_j) - \tilde f(\xi_k) ;\ \tilde \tau^{\mathfrak f}_{t} = k \Bigg)  \notag\\
& =  \bb{E}_a \left( t'+M_{n}- \tilde f(\xi_0);\ \tilde \tau^{\mathfrak f}_{t} \leq n \right)  
 +   \bb{E}_a \Bigg( \sum_{j=\tilde \tau^{\mathfrak f}_{t}-p+1}^{\tilde \tau^{\mathfrak f}_{t}} h_p(\xi_j) - \tilde f(\xi_{ \tilde \tau^{\mathfrak f}_{t} });
             \  n + 1 \leq \tilde \tau^{\mathfrak f}_{t} \leq  m  \Bigg), 
\end{align*}
thus proving \eqref{Equ-00a-O2}. 
Since \eqref{Equ-desired-O2} is equivalent to \eqref{Equ-00a-O2},
the conclusion of the lemma follows by Lemma \ref{Exponential-Y-tau}. 
\end{proof}

By integration over $(g_{0},\ldots,g_{p})$ such that $a=(g_{0},\ldots,g_{p},x,q) \in \bb A_p$ in Lemma \ref{lem-U is increasing-000},
 we obtain the following bound for $U^{\mathfrak f}_n(t)$. 

\begin{lemma} \label{lem-U is increasing-1001}
There exist constants $\ee, \beta, c>0$ such that for any $1\leq n\leq m$, $1 \leq p \leq  n^{\ee}$, 
 $t \in \bb R$ and any sequence of $\mathscr A_p$-measurable functions $\mathfrak f = (f_n)_{n \geq 0}$
satisfying the moment condition \eqref{exp mom for g 002}, we have 
\begin{align*}
U^{\mathfrak f}_n(t) \leq  U^{\mathfrak f}_{m}(t) 
 + c \left( 1 + \max \{t,0\}  \right) n^{-\beta} C_{\alpha}(\mathfrak f), 
\end{align*}
where $\alpha$ is the exponent from the moment condition \eqref{exp mom for g 002}. 
\end{lemma}

\begin{proof}
This is obtained from Lemma \ref{lem-U is increasing-000} together with \eqref{MAIN_GOAL-002}, \eqref{Expect-E_x001} and 
Corollary \ref{Lem-bound-tau-p}.
Recall that $\bb E \sigma(g_{p} \cdots g_{1}, x)^2 \leq c p$ by the moment assumption \eqref{exp mom for f 001} 
and the centering assumption \eqref{centering-001}.  
Since $a=(g_0,\ldots,g_{p},x,0) \in \bb A_p$, 
integrating both sides of \eqref{bound-Wn-at-increase}
 in Lemma \ref{lem-U is increasing-000} (with $\alpha/4$ instead of $\alpha$) with respect to the measure $\mu(dg_0) \ldots \mu(dg_{p})  \nu(dx)$ and applying  \eqref{Expect-E_x001} 
yields that for any $t \in \bb R$, $n\geq 1$ and $1\leq p \leq n^{\ee/2}$, 
\begin{align} \label{Expect-E_x001-002-a}  
\int_{\bb X} \bb  E \left( t+S_{n+p}^x ; \tau^{\mathfrak f}_{x, t}>n \right) \nu(dx) 
& \leq  
   \int_{\bb X} \bb E \left( t+S_{m+p}^x ; \tau^{\mathfrak f}_{x, t}> m \right) \nu(dx)  \notag  \\
& \quad + \frac{c_{\ee}}{n^{\ee/2}}    \left(1+ \max \{t,0\} \right)  C_{\alpha}(\mathfrak f),
\end{align}
where we have used the fact that $\nu$ is $\mu$-stationary, as in \eqref{stationary-nu-001}. 
Using Lemma \ref{Lem-bound-tau-p}, we can slightly modify the expectations in both sides of \eqref{Expect-E_x001-002-a},
leading to 
\begin{align*}
U_n^{\mathfrak f}(t)
&= \int_{\bb X} \bb E \left( t+S_{n}^x + f_n( T^n (\omega,x) ) - f_0(\omega,x) ; \tau^{\mathfrak f}_{x, t}>n \right) \nu(dx) \notag  \\
& \leq    U_{m}^{\mathfrak f}(t) + \frac{c_{\ee}}{n^{\ee/2}}    \left(1+ \max \{t,0\} \right)  C_{\alpha}(\mathfrak f).
\end{align*}
This completes the proof of Lemma \ref{lem-U is increasing-1001}.
\end{proof}

We conclude this subsection by proving the following quasi-increasing behaviour of the sequence $(U^{\mathfrak f}_n)_{n\geq 1}$, 
even when the sequence $\mathfrak f = (f_n)_{n \geq 0}$ is not necessarily $\mathscr A_p$-measurable.

\begin{proposition} \label{lem-U is increasing-1002}
Suppose that the cocycle $\sigma$ admits finite exponential moments \eqref{exp mom for f 001}
and is centered \eqref{centering-001}.  
We also suppose that the effective central limit theorem \eqref{BEmart-001} is satisfied. 
Assume that $\mathfrak f = (f_n)_{n \geq 0}$ is a sequence of  measurable functions on $\Omega \times \bb X$ 
satisfying the moment condition \eqref{exp mom for g 002} and the approximation property \eqref{approxim rate for gp-002}.
Then, there exists a constant $\ee>0$ with the following property:  
for any $\gamma >0$, 
there exists a constant $c>0$ such that, for any $1 \leq n\leq m$ and $t \in \bb R$,  
\begin{align*}
U^{\mathfrak f}_n(t) \leq  U^{\mathfrak f}_{m}(t + c n^{-\gamma}) 
 + c n^{-\ee} \left( \max \{t,0\}  + C_{\alpha}(\mathfrak f)  \right)   C_{\alpha}(\mathfrak f) D_{\alpha,\beta}(\mathfrak f), 
\end{align*}
where $\alpha$ is from the moment condition \eqref{exp mom for g 002}. 
\end{proposition}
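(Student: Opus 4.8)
The plan is to derive Proposition \ref{lem-U is increasing-1002} by combining the finite-size approximation from Proposition \ref{Prop g approx 002} with the quasi-increasing estimate for finitely-dependent perturbations from Lemma \ref{lem-U is increasing-1001}. The idea is a standard ``sandwich'' argument: replace the ideal perturbation sequence $\mathfrak f$ by its finite-size truncation $\mathfrak f_p$ for a well-chosen $p = p(n)$, apply the estimate that is already available for $\mathscr A_p$-measurable perturbations, and then undo the truncation, controlling the errors on both passes.

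More precisely, first I would fix the constant $\ee_0 > 0$ coming from Lemma \ref{lem-U is increasing-1001} (the exponent governing the admissible range $p \leq n^{\ee_0}$), and choose $p = \lceil n^{\delta} \rceil$ for some $\delta \in (0, \ee_0)$ to be pinned down at the end. For $1 \leq n \leq m$ and any $t \in \bb R$, the chain of inequalities would read: by the upper bound in Proposition \ref{Prop g approx 002} (applied with exponent $\delta$ in the role of ``$\delta$'' and with any $\gamma' > 0$),
\begin{align*}
U^{\mathfrak f}_n(t) \leq U^{\mathfrak f_p}_n(t + n^{-\gamma'}) + c\left( \max\{t,0\} + C_{\alpha}(\mathfrak f) \right) e^{-\beta n^{\delta}} D_{\alpha,\beta}(\mathfrak f);
\end{align*}
then, since $\mathfrak f_p$ is $\mathscr A_p$-measurable with $C_{\alpha}(\mathfrak f_p) \leq C_{\alpha}(\mathfrak f)$ (the conditional expectation defining $f_{n,p}$ does not increase the exponential moment, by Jensen's inequality) and $p \leq n^{\ee_0}$, Lemma \ref{lem-U is increasing-1001} gives
\begin{align*}
U^{\mathfrak f_p}_n(t + n^{-\gamma'}) \leq U^{\mathfrak f_p}_m(t + n^{-\gamma'}) + c\left( 1 + \max\{t,0\} \right) n^{-\beta} C_{\alpha}(\mathfrak f);
\end{align*}
finally, the lower bound in Proposition \ref{Prop g approx 002} (now applied at level $m$, noting $m \geq n$ so $p = \lceil n^{\delta}\rceil \geq m^{-}$ is not automatic --- here one uses $p \geq n^{\delta}$ and that Proposition \ref{Prop g approx 002} only requires $p \geq (\cdot)^{\delta}$ relative to the running index, so one should instead re-truncate at level $m$ or simply use that the stated estimate holds for all $p \geq n^\delta$ with $n$ the smaller index; cleanest is to observe $U^{\mathfrak f_p}_m(s) \leq U^{\mathfrak f}_m(s + m^{-\gamma'}) + \text{error}$ requires $p \geq m^\delta$, which fails, so the correct route is to bound $U^{\mathfrak f_p}_m$ by $U^{\mathfrak f}_m$ directly via a separate application of the approximation lemma with $p$ large in terms of $n$ only, exploiting that the error term $c n^{1+\gamma'} e^{-\beta n^\delta}$ in \eqref{proba-Bnx-001} depends on the truncation parameter $p = n^\delta$ and not on $m$). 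Concretely, re-examining the proof of Proposition \ref{Prop g approx 002}, the only place $m$ enters the error is through the factor $n + C_\alpha(\mathfrak f)$ in \eqref{bound-Snx-fn-001}, which for index $m$ becomes $m + C_\alpha(\mathfrak f)$; multiplying by $e^{-\beta n^\delta}$ still yields a term $\leq c\, e^{-\beta n^\delta/2}$ after absorbing the polynomial factor $m$ into the exponential as long as $m \leq e^{c n^\delta}$, and for $m > e^{c n^\delta}$ one simply invokes that $U^{\mathfrak f}_m(s) \geq U^{\mathfrak f}_n(s) - (\text{bounded error})$ is vacuous in that regime since $n^{-\ee}$ dominates. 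I would therefore handle the two regimes $m \leq e^{cn^\delta}$ and $m > e^{cn^\delta}$ separately, but this is routine bookkeeping.

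Assembling the three inequalities, with $A := c n^{-\gamma'}$ the accumulated shift in $t$ and the combined error bounded by
\begin{align*}
c n^{-\beta} C_{\alpha}(\mathfrak f)\left(1 + \max\{t,0\}\right) + c\left(\max\{t,0\} + C_{\alpha}(\mathfrak f)\right) e^{-\beta n^{\delta}/2} D_{\alpha,\beta}(\mathfrak f),
\end{align*}
one sees that both error terms are dominated by $c n^{-\ee}\left(\max\{t,0\} + C_{\alpha}(\mathfrak f)\right) C_{\alpha}(\mathfrak f) D_{\alpha,\beta}(\mathfrak f)$ for a suitable $\ee = \min\{\beta, \ldots\} > 0$, since the exponential term $e^{-\beta n^{\delta}/2}$ decays faster than any polynomial. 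Given a target $\gamma > 0$, one picks $\gamma'$ and reruns the argument so that the final shift $c n^{-\gamma'}$ satisfies $\gamma' = \gamma$; the constant $c$ absorbs everything. This produces exactly the claimed inequality.

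The main obstacle I anticipate is the careful bookkeeping of the dependence of the error terms on the \emph{second} index $m$ in the approximation step, since Proposition \ref{Prop g approx 002} is stated with a single running index $n$ and its error term \eqref{bound-Snx-fn-001} scales linearly in that index. One must verify that truncating at level $p = n^{\delta}$ and then evaluating $U^{\mathfrak f_p}$ at index $m$ still yields an error that is $o(1)$ relative to $n^{-\ee}(\max\{t,0\} + C_\alpha(\mathfrak f))$ uniformly in $m \geq n$ --- as sketched above this works because the $e^{-\beta n^\delta}$ factor beats the polynomial growth in $m$, but one has to split into the two regimes in $m$ to make this rigorous, or alternatively re-derive a version of Proposition \ref{Prop g approx 002} in which the truncation parameter and the running index are decoupled. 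Everything else (the Jensen bound $C_\alpha(\mathfrak f_p) \leq C_\alpha(\mathfrak f)$, the continuity-of-$t$-shift absorption, the choice of exponents) is mechanical.
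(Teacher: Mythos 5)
Your high-level plan --- combining Proposition~\ref{Prop g approx 002} with Lemma~\ref{lem-U is increasing-1001} --- is on the right track, and you have correctly located the central difficulty: to pass from $U^{\mathfrak f_p}_m$ back to $U^{\mathfrak f}_m$ via Proposition~\ref{Prop g approx 002} one needs $p\geq m^\delta$, while Lemma~\ref{lem-U is increasing-1001} requires $p\leq n^\ee$, and these are incompatible when $m\gg n$. However, your proposed fix --- split into $m\leq e^{cn^\delta}$ and $m> e^{cn^\delta}$, declaring the latter regime ``vacuous'' --- does not work. The target inequality
\begin{align*}
U^{\mathfrak f}_n(t) \leq U^{\mathfrak f}_m(t + cn^{-\gamma}) + c\,n^{-\ee}\bigl(\max\{t,0\} + C_\alpha(\mathfrak f)\bigr)\,C_\alpha(\mathfrak f)\,D_{\alpha,\beta}(\mathfrak f)
\end{align*}
is not vacuous for $m> e^{cn^\delta}$: since $U^{\mathfrak f}_m\geq 0$ is the only a priori lower bound available, discharging the claim would require $U^{\mathfrak f}_n(t)\leq cn^{-\ee}(\max\{t,0\}+C_\alpha)\,C_\alpha D_{\alpha,\beta}$, which already fails for $t$ large and $n$ large (where $U^{\mathfrak f}_n(t)\sim t$ but $cn^{-\ee}\to 0$). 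The regime with $m$ exponentially larger than $n$ is exactly where the claim carries content, and your single-truncation argument does not reach it.

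The paper avoids the issue entirely by a multi-scale telescoping argument rather than a single truncation. It builds a chain $n=m_{h-1}<m_h=h^k<m_{h+1}=(h+1)^k<\cdots<m_{l-1}=(l-1)^k<m_l=m$ with $k$ a large fixed integer, so that consecutive terms $m_{i-1},m_i$ are comparable (the ratio is bounded by a constant depending only on $k$). At each step it truncates at $p_i=[m_{i-1}^\ee]$, which simultaneously satisfies the upper constraint from Lemma~\ref{lem-U is increasing-1001} at index $m_{i-1}$ and, since $m_i\asymp m_{i-1}$, the lower constraint from Proposition~\ref{Prop g approx 002} at index $m_i$. This yields a one-step inequality
\begin{align*}
U^{\mathfrak f}_{m_{i-1}}(t) \leq U^{\mathfrak f}_{m_i}\bigl(t+2m_{i-1}^{-\gamma}\bigr) + c\bigl(e^{-\beta m_{i-1}^\ee}+m_{i-1}^{-\beta}\bigr)\bigl(\max\{t,0\}+C_\alpha\bigr)C_\alpha D_{\alpha,\beta},
\end{align*}
which is then iterated along the chain. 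The accumulated shift $\sum_{i\geq h-1} i^{-\gamma k}$ and accumulated error $\sum_{i\geq h-1} i^{-\beta k}$ are controlled by choosing $k>\max\{\gamma^{-1},\beta^{-1}\}$, since $h\asymp n^{1/k}$ forces both tails to be of size $n^{-\gamma+1/k}$ and $n^{-\beta+1/k}$ respectively. This is the missing idea in your sketch: without the intermediate scales, there is no valid way to reconcile the two constraints on $p$ when $m$ is much larger than $n$.
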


\begin{proof}
Fix $k$ to be a large integer whose value will be determined later.   
Let $h$ be the least integer with the property that $h^{k} > n$ and
let $l$ be the least integer with the property that $l^{k} > m$.
We set $m_{h-1}=n$, $m_i = i^{k}$ for $h \leq i \leq l-1$ and  $m_l = m$. 
By Lemma \ref{lem-U is increasing-1001}, there exist constants $\ee, \beta_0, c>0$ such that, whenever $p\leq m_{i-1}^{\ee}$, 
the following inequality holds: 
\begin{align*}
U^{\mathfrak f_{p}}_{m_{i-1}}(t) 
\leq  U^{\mathfrak f_{p}}_{m_{i}}(t) 
 + c \left( 1 + \max \{t,0\}  \right) m_{i-1}^{-\beta_0} C_{\alpha}(\mathfrak f), 
\end{align*}
where $\alpha$ is from the moment condition \eqref{exp mom for g 002}. 
In particular, setting $p_i = [m_{i-1}^{\ee}]$ for $h\leq i\leq l$, we get 
\begin{align} \label{-thbound with m for U-200-001}
U^{\mathfrak f_{p_{i}}}_{m_{i-1}}(t) 
\leq  U^{\mathfrak f_{p_i}}_{m_{i}}(t) 
 + c \left( 1 + \max \{t,0\}  \right) m_{i-1}^{-\beta_0} C_{\alpha}(\mathfrak f). 
\end{align}
The approximation property \eqref{approxim rate for gp-002} implies that there exists $\beta_1$ such that $D_{\alpha,\beta_1}(\mathfrak f) <\infty.$ 
Since the approximation property
also holds with $\beta=\min\{\beta_0,\beta_1\}$,  applying Proposition \ref{Prop g approx 002} 
 with $\delta=\ee$, there exists a constant $c>0$ 
 such that, for any  $ h\leq i \leq l$ and any $\gamma>0$,
\begin{align}\label{BBBB002-001}
 U^{\mathfrak f}_{m_{i-1}} (t) 
 \leq U^{\mathfrak f_{p_i}}_{m_{i-1}} (t+ m_{i-1}^{-\gamma}) 
+ c \left(\max \{t,0\} + C_{\alpha}(\mathfrak f) \right) e^{ -\beta  m_{i-1}^{\ee}} D_{\alpha,\beta}(\mathfrak f) 
\end{align}
and 
\begin{align}\label{BBBB002-002}
U^{\mathfrak f_{p_i}}_{m_{i}} (t) 
\leq  U^{\mathfrak f}_{m_{i}} (t+ m_{i}^{-\gamma}) 
+ c \left(\max \{t,0\} + C_{\alpha}(\mathfrak f) \right) e^{ -\beta m_{i}^{\ee}} D_{\alpha,\beta}(\mathfrak f),
\end{align}
Combining inequalities \eqref{-thbound with m for U-200-001}, \eqref{BBBB002-001} and \eqref{BBBB002-002}, 
and noting that  $m_{i} \geq m_{i-1}$, we get
\begin{align*}
 U^{\mathfrak f}_{m_{i-1}} (t) 
 \leq U^{\mathfrak f}_{m_{i}} (t+ 2 m_{i-1}^{-\gamma}) 
 + c  \left(e^{ -\beta  m_{i-1}^{\ee}} + m_{i-1}^{-\beta} \right) 
\left( \max \{t,0\} + C_{\alpha}(\mathfrak f)\right) 
C_{\alpha}(\mathfrak f) D_{\alpha,\beta}(\mathfrak f).  
\end{align*}
By consecutively applying these inequalities and using the definition of $m_i$, we obtain
\begin{align*}
 U^{\mathfrak f}_{n} (t) 
 \leq  U^{\mathfrak f}_{m} \bigg(t+ 2 \sum_{i=h-1}^{l-1} i^{-\gamma k} \bigg) 
+ c \left(\max \{t,0\} + C_{\alpha}(\mathfrak f) \right)  
\sum_{i=h-1}^{l-1}\left(e^{ -\beta  i^{\ee k}} + i^{-\beta k}\right) C_{\alpha}(\mathfrak f) D_{\alpha,\beta}(\mathfrak f).  
\end{align*}
Finally, letting $\gamma>0$ be arbitrary  
and choosing  $k>\max \{ \gamma^{-1}, \beta^{-1}\}$, we obtain 
\begin{align*}
 U^{\mathfrak f}_{n} (t) 
\leq  U^{\mathfrak f}_{m} \left(t+ c n^{- \gamma + k^{-1}} \right) + c n^{-\beta + k^{-1}}  \left( \max \{t,0\} + C_{\alpha}(\mathfrak f) \right) 
 C_{\alpha}(\mathfrak f) D_{\alpha,\beta}(\mathfrak f). 
\end{align*}
The assertion of the proposition follows.  
\end{proof}

\subsection{Proof of the quasi-increasing behaviour with twist function} \label{subsec:quasi-increasing with twist-001}

We actually need a version of Proposition \ref{lem-U is increasing-1002} in the presence of a non-negative twist function
$\theta \in L^{\infty}(\Omega, \bb P)$. 
We assume that $\theta$ satisfies the property \eqref{approx property of theta-001}, 
and we have denoted $\|\theta\|_{\infty} = {\rm esssup} |\theta|$. 
Recall that, for $t \in \bb R$ and $n \geq 1$, 
\begin{align*}
U^{\mathfrak f, \theta}_n(t)
= \int_{\bb X} \bb E \left((t + S^{x}_n + f_n(T^n (\omega,x))- f_0(\omega,x))  \theta(\omega) ;  \tau^{\mathfrak f}_{x,t} > n \right) \nu(dx). 
\end{align*}

We now prove the following lemma, which is a version of Lemma \ref{lem-U is increasing-1001} incorporating the twist function.
\begin{lemma} \label{lem-U is increasing-2001}
There exist constants $\ee, \beta, c>0$ such that for any $1\leq n\leq m$, $1\leq p \leq  n^{\ee}$, 
 $t \in \bb R$, any sequence $\mathfrak f = (f_n)_{n \geq 0}$ of $\mathscr A_p$-measurable functions $f_n:\Omega\times \bb X \to \bb R$ satisfying 
 the moment condition \eqref{exp mom for g 002}, 
 and any non-negative  $\mathscr A_p$-measurable function  $\theta\in L^{\infty}(\Omega, \bb P)$, we have
 \begin{align*}
U^{\mathfrak f, \theta}_n(t) \leq  U^{\mathfrak f, \theta}_{m}(t) 
 + c n^{-\beta}  \left( 1 + \max \{t,0\}  \right) \|\theta\|_{\infty} C_{\alpha}(\mathfrak f). 
\end{align*}
\end{lemma}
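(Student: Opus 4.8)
The statement is the analogue of Lemma \ref{lem-U is increasing-1001} with a twist function $\theta$ inserted into the expectation, and since $\theta$ is assumed $\mathscr A_p$-measurable (so it only depends on the first $p$ coordinates $g_1,\ldots,g_p$), it can be absorbed into the Markov-chain machinery of Subsection \ref{Subsec-Markov-chian} as an additional $\xi_0$-measurable weight. The plan is therefore to mimic the proof of Lemma \ref{lem-U is increasing-1001} step by step, carrying the factor $\theta$ along, and to observe that $\theta$ being bounded and depending only on $\xi_0$ does not interfere with any of the martingale manipulations.

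\textbf{Step 1: incorporate $\theta$ into the Markov chain.} Since $\theta$ is $\mathscr A_p$-measurable, define $\tilde\theta$ on $\bb A_p$ by $\tilde\theta(g_0,\ldots,g_p,x,q)=\theta(g_1,\ldots,g_p)$ (in analogy with \eqref{function f tilde001} for $\tilde f$). Then, exactly as in \eqref{Expect-E_x001}, for any $t\in\bb R$ and $n\geq1$,
\begin{align*}
\int_{\bb X}\bb E\left((t+S^x_{n+p})\theta(\omega);\tau^{\mathfrak f}_{x,t}>n\right)\nu(dx)
=\int_{\bb X}\int_{\bb G^{\{0,\ldots,p\}}} W^{\mathfrak f,\tilde\theta}_n((g_0,\ldots,g_p,x,0),t)\,\mu(dg_0)\cdots\mu(dg_p)\nu(dx),
\end{align*}
where $W^{\mathfrak f,\tilde\theta}_n(a,t)=\bb E_a\big((t+\sigma(g_p\cdots g_1,x)+M_n)\tilde\theta(\xi_0);\tilde\tau^{\mathfrak f}_{t}>n\big)$. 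Because $\tilde\theta(\xi_0)$ is $\mathscr G_0$-measurable and bounded by $\|\theta\|_\infty$, the martingale identity \eqref{intermediate ident-001-O2} in the proof of Lemma \ref{lem-U is increasing-000} carries through verbatim with the extra bounded multiplicative factor $\tilde\theta(\xi_0)$: one still has $\bb E_a(M_m\tilde\theta(\xi_0)\mathds 1_B)=\bb E_a(M_n\tilde\theta(\xi_0)\mathds 1_B)$ for any $\mathscr G_n$-measurable $B$ with $n\leq m$, since $(M_k\tilde\theta(\xi_0))_{k\geq 0}$ is again a $(\mathscr G_k)$-martingale.

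\textbf{Step 2: repeat the monotonicity argument with the weight.} Run the induction of Lemma \ref{lem-U is increasing-000} on $W^{\mathfrak f,\tilde\theta}_n$: at the boundary step, on $\{\tilde\tau^{\mathfrak f}_t=k\}$ one has $t'+M_{k-p}+\tilde f(\xi_k)-\tilde f(\xi_0)<0$ and $\tilde\theta(\xi_0)\geq 0$, so multiplying by $\tilde\theta(\xi_0)$ preserves the inequality direction in \eqref{E-Mh-tau-ft-003}; the non-negativity of $\theta$ is used exactly here. This yields the analogue of \eqref{bound-Wn-at-increase}, namely
\begin{align*}
W^{\mathfrak f,\tilde\theta}_n(a,t)\leq W^{\mathfrak f,\tilde\theta}_m(a,t)
+ c\,\|\theta\|_\infty\, p(1+|\tilde f(a)|)\big(\max\{t,0\}+|\sigma(g_p\cdots g_1,x)|+|\tilde f(a)|\big)(n-p)^{-\beta}
+ c\,\|\theta\|_\infty\, p(1+|\tilde f(a)|) n^{-2}\sum_{k=p+1}^\infty k^{-2}\mathcal F_k(a),
\end{align*}
where the bound on the overshoot term $\bb E_a(|\tilde f(\xi_{\tilde\tau})|+\sum_{j}|h_p(\xi_j)|;\tilde\tau>n)$ is Lemma \ref{Exponential-Y-tau} (used with $\alpha/4$), multiplied by $\|\theta\|_\infty$. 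Then integrate over $(g_0,\ldots,g_p)$ and $x$ as in \eqref{Expect-E_x001-002-a}, using the $\mu$-stationarity of $\nu$ as in \eqref{stationary-nu-001} to control $\int\mathcal F_k\leq C_\alpha(\mathfrak f)$, to pass from $W^{\mathfrak f,\tilde\theta}$ back to $\int_{\bb X}\bb E((t+S^x_{n+p})\theta;\tau^{\mathfrak f}_{x,t}>n)\nu(dx)$.

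\textbf{Step 3: replace $S^x_{n+p}$ by $S^x_n$ and fix the endpoints.} Finally, pass from the auxiliary quantity $\int_{\bb X}\bb E((t+S^x_{n+p})\theta;\tau^{\mathfrak f}_{x,t}>n)\nu(dx)$ to $U^{\mathfrak f,\theta}_n(t)$ as in the proof of Lemma \ref{lem-U is increasing-1001}: the error is controlled by Lemma \ref{Lem-bound-tau_p-new} (hence Corollary \ref{Lem-bound-tau-p}), again multiplied by the bounded factor $\|\theta\|_\infty$, giving an extra term of order $n^{-\beta}(1+\max\{t,0\})\|\theta\|_\infty C_\alpha(\mathfrak f)$. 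Collecting all error terms yields the claimed bound $U^{\mathfrak f,\theta}_n(t)\leq U^{\mathfrak f,\theta}_m(t)+cn^{-\beta}(1+\max\{t,0\})\|\theta\|_\infty C_\alpha(\mathfrak f)$.

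\textbf{Main obstacle.} The only genuinely new point, and the one requiring care, is checking that inserting $\tilde\theta(\xi_0)$ does not break the martingale/optional-stopping manipulations: one must verify that $\tilde\theta(\xi_0)$ is $\mathscr G_0\subseteq\mathscr G_k$-measurable for all $k$ (immediate from $\mathscr A_p$-measurability of $\theta$ and the construction \eqref{trans-prob-xi} of the chain, in which the first $p$ coordinates of $\xi_0$ persist), so that it factors out of all conditional expectations, and that its non-negativity is exactly what is needed to keep the sign in the overshoot estimate \eqref{E-Mh-tau-ft-003}. Everything else is a routine repetition of the arguments already developed for the untwisted case, with each error term acquiring a harmless multiplicative constant $\|\theta\|_\infty$.
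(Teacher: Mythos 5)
Your proof is correct and follows essentially the same route as the paper: since $\theta$ is $\mathscr A_p$-measurable, under $\bb P_a$ with $a=(g_0,\ldots,g_p,x,0)$ the weight $\tilde\theta(\xi_0)=\theta(g_1,\ldots,g_p)$ is almost surely constant, so it simply multiplies $W^{\mathfrak f}_n(a,t)$, and one multiplies the untwisted inequality of Lemma \ref{lem-U is increasing-000} by this non-negative constant before integrating as in Lemma \ref{lem-U is increasing-1001} (which is exactly the paper's argument via formula \eqref{integral-formula-app}). Your Steps 1--2, which re-run the martingale manipulations with the weight carried inside $\bb E_a$, are therefore valid but slightly more laborious than necessary, since the factor $\tilde\theta(\xi_0)$ pulls out of every $\bb E_a$-expectation from the start.
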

\begin{proof}
We write the following extension of formula \eqref{Expect-E_x001}: for any $x \in \bb X$ and $t \in \bb R$,
\begin{align}\label{integral-formula-app}
 &\int_{\bb X} \bb E \left( \left(t + S^{x}_{n+p}\right) \theta(\omega) ;  \tau^{\mathfrak f}_{x, t} > n \right) \nu(dx) \notag\\
 & = \int_{\bb X}  \int _{\bb G^{\{0,\ldots, p\}} }  W^{\mathfrak f}_{n} ((g_{0},\ldots,g_{p}, x, 0), t) \theta(g_1,\ldots,g_p)  \mu(dg_{0}) \ldots \mu(dg_{p}) \nu(dx), 
\end{align}
where $W^{\mathfrak f}_{n}$ is defined by \eqref{EXPECT-E_x-001}.
Then Lemma \ref{lem-U is increasing-2001} is obtained from Lemma \ref{lem-U is increasing-000} in the same way as
 Lemma \ref{lem-U is increasing-1001}.
\end{proof}

We now proceed with an extension of Lemma \ref{lem-U is increasing-2001} 
to sequences $\mathfrak f = (f_n)_{n \geq 0}$ that are not necessarily $\mathscr A_p$-measurable.

\begin{proposition} \label{lem-UfA is increasing-1002-app}
Suppose that the cocycle $\sigma$ admits finite exponential moments \eqref{exp mom for f 001}
and is centered \eqref{centering-001}.  
Then, for any $\alpha,\beta >0$ and $B \geq 1$, there exist constants $\ee, \gamma, c > 0$ with the following property. 
Assume that $\mathfrak f = (f_n)_{n \geq 0}$ is a sequence of  measurable functions on $\Omega \times \bb X$ 
satisfying the moment condition \eqref{exp mom for g 002} and the approximation property \eqref{approxim rate for gp-002}
with $C_{\alpha}(\mathfrak f) \leq B$ and $D_{\alpha,\beta}(\mathfrak f) \leq B$. 
Assume that $\theta \in L^{\infty}(\Omega, \bb P)$ with $\|\theta\|_{\infty} \leq B$ and $N_{\beta}(\theta) \leq B$. 
Then, for any  $1\leq n\leq m$ and $t \in \bb R$, we have
\begin{align*}
U^{\mathfrak f, \theta}_n(t) \leq  U^{\mathfrak f, \theta}_{m}(t + c n^{-\gamma}) 
 + c n^{-\ee} \left( 1 + \max \{t,0\}   \right). 
\end{align*}
\end{proposition}

\begin{proof}[Proof of the first part of Theorem \ref{Pro-Appendix-Final2-Inequ}]
Inequality \eqref{bound with m for U-105-01-2} is the conclusion of Proposition \ref{lem-UfA is increasing-1002-app}. 
\end{proof}


\section{Quasi-decreasing behaviour of the sequence $(U_n^{\mathfrak f})_{n\geq 1}$}\label{sec-quasi-decreasing property}

In this concluding section, we derive inequality \eqref{bound with m for U-105-01-3} of Theorem \ref{Pro-Appendix-Final2-Inequ}.
The proof of this second bound 
is more intricate than that of the bound \eqref{bound with m for U-105-01-2}, established in Section \ref{sec quasi-increasingness}.
While the proof of  \eqref{bound with m for U-105-01-2} relied primarily on the straightforward  
submartingale property of the martingale $(M_n)_{n\geq 0}$ killed at the stopping time $\tau_{x,t}$,
the proof of the converse bound \eqref{bound with m for U-105-01-3} requires a more subtle approach. 
This approach is built upon the proposition stated below, which can be seen as an extension of Proposition \ref{Prop-Vn-001}.

\begin{proposition} \label{AA-Prop iterative 002-app}
Suppose that the cocycle $\sigma$ admits finite exponential moments \eqref{exp mom for f 001}
and is centered \eqref{centering-001}. 
We also suppose that the effective central limit theorem \eqref{BEmart-001} is satisfied.  
Then, for any $\beta > 0$ and $B \geq 1$, there exist constants $\ee, c>0$ with the following property. 
Assume that $\mathfrak f = (f_n)_{n\geq0}$ is a sequence of  measurable functions on $\Omega \times \bb X$ 
satisfying the moment condition \eqref{exp mom for g 002} and the approximation property \eqref{approxim rate for gp-002}
with $C_{\alpha}(\mathfrak f) \leq B$ and $D_{\alpha,\beta}(\mathfrak f) \leq B$. 
Assume that $\theta \in L^{\infty}(\Omega, \bb P)$ is non-negative with $\|\theta\|_{\infty} \leq B$ and $N_{\beta}(\theta) \leq B$. 
We have that, for any $n \geq 1$ and $t \in \bb R,$
\begin{align}\label{AA-bound-M_n-002-app}
U^{\mathfrak f, \theta}_{n} (t) 
  \leq \left( 1+\frac{c }{n^{\ee }}\right)  
   U^{\mathfrak f, \theta}_{[n^{1-\ee }] } (t+ 2 n^{-\ee})    + c n^{-\ee/2} ( 1 + \max \{t,0\}).  
\end{align}
\end{proposition}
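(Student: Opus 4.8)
Throughout, I follow the scheme of the proof of Proposition~\ref{Prop-Vn-001}, but now inside the Markov chain framework of Subsection~\ref{Subsec-Markov-chian}, and I handle the dependence of $\mathfrak f$ on the whole future by a preliminary finite-size reduction. Fix a small $\ee>0$ (to be determined), set $p=[n^{\ee}]$, and apply Proposition~\ref{Prop UfA g approx 002-app} twice, with a large parameter $\gamma$ and with $\delta$ slightly smaller than $\ee$ (so that $p\geq n^{\delta}$ for $n$ large): this gives
\begin{align*}
U^{\mathfrak f,\theta}_n(t)&\leq U^{\mathfrak f_p,\theta_p}_n(t+n^{-\gamma})+c(1+\max\{t,0\})e^{-b n^{\delta}},\\
U^{\mathfrak f_p,\theta_p}_{[n^{1-\ee}]}(t')&\leq U^{\mathfrak f,\theta}_{[n^{1-\ee}]}\bigl(t'+[n^{1-\ee}]^{-\gamma}\bigr)+c(1+\max\{t',0\})e^{-b[n^{1-\ee}]^{\delta}}.
\end{align*}
Hence it is enough to prove \eqref{AA-bound-M_n-002-app} for the $\mathscr A_p$-measurable sequence $\mathfrak f_p$ and the $\mathscr A_p$-measurable, non-negative, bounded twist $\theta_p$, and moreover without any shift on the right-hand side: taking $\gamma$ large, the shifts $n^{-\gamma}$ and $[n^{1-\ee}]^{-\gamma}$ are dominated by $n^{-\ee}$ so the total shift stays below $2n^{-\ee}$, and the exponentially small errors, as well as the bounded range of $n$, are absorbed into $c\,n^{-\ee/2}(1+\max\{t,0\})$.

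So assume now that $\mathfrak f=(f_n)_{n\geq0}$ is $\mathscr A_p$-measurable with $p=[n^{\ee}]$ and that $\theta$ is $\mathscr A_p$-measurable, non-negative and bounded. Using the decomposition \eqref{MAIN_GOAL-002} in its twisted form together with the identity \eqref{integral-formula-app}, and bounding the twisted analogue of $I_2$ by Corollary~\ref{Lem-bound-tau-p} and $\|\theta\|_\infty\leq B$, the contribution of $I_2$ is at most $c\,n^{-\ee/2}(1+\max\{t,0\})$ provided $\ee$ is small enough. It therefore remains to treat the main term
\begin{align*}
I_1^{\theta}=\int_{\bb X}\int_{\bb G^{\{0,\ldots,p\}}}W^{\mathfrak f}_n\bigl((g_0,\ldots,g_p,x,0),t\bigr)\,\theta(g_1,\ldots,g_p)\,\mu(dg_0)\cdots\mu(dg_p)\,\nu(dx),
\end{align*}
and since $\theta\geq0$ it suffices to establish a pointwise bound, for every $a=(g_0,\ldots,g_p,x,0)\in\bb A_p$ and $t\in\bb R$,
\begin{align}\label{eq-plan-W}
W^{\mathfrak f}_n(a,t)\leq\Bigl(1+\frac{c}{n^{\ee}}\Bigr)W^{\mathfrak f}_{[n^{1-\ee}]}(a,t)+R_n(a,t),
\end{align}
where $R_n\geq0$ is such that, after multiplication by $\theta(g_1,\ldots,g_p)$, integration against $\mu^{\otimes(p+1)}\otimes\nu$, and use of the $\mu$-stationarity of $\nu$ as in \eqref{stationary-nu-001} together with $C_\alpha(\mathfrak f)\leq B$, one obtains a quantity at most $c\,n^{-\ee/2}(1+\max\{t,0\})$.

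The proof of \eqref{eq-plan-W} follows the three steps of Proposition~\ref{Prop-Vn-001}. The first and key step is the perturbed analogue of \eqref{Un-xy-Bound-001-0}: combining the optional stopping theorem for the martingale $(M_n)_{n\geq0}$ of Lemma~\ref{Lemma-Martingale001} with Lemma~\ref{Lemma 1} (which controls the overshoot $\bb E_a(|t+\sigma(g_p\cdots g_1,x)+M_{\tilde\tau^{\mathfrak f}_t}|;p<\tilde\tau^{\mathfrak f}_t\leq n)$ of the killed perturbed walk when it starts above level $n^{1/2-\ee}$), one shows that whenever the effective starting level exceeds $n^{1/2-\ee}$ one has $W^{\mathfrak f}_n(a,t)\leq(1+cn^{-\ee})(t+\sigma(g_p\cdots g_1,x))+(\text{errors in }\mathcal F_k(a),\ |\sigma(g_p\cdots g_1,x)|,\ |\tilde f(a)|)$. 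Second, one stops the walk at $\nu_{n,x,t}$, the first time $|t+S^x_k|$ exceeds $2n^{1/2-\ee}$, and splits $W^{\mathfrak f}_n(a,t)$ according to $\{\nu_{n,x,t}>n^{1-\ee}\}$ and $\{\nu_{n,x,t}=k\}$, $1\leq k\leq n^{1-\ee}$; on the good event where $\max_k|\tilde f(\xi_k)|\leq n^{1/2-\ee}/2$ (whose complement is controlled via Lemma~\ref{Lem-tau-prior} and the $\mathcal F_k(a)$), the first piece is exponentially small by Lemma~\ref{Lemma 2-0}, the Cauchy--Schwarz inequality and the moment bounds, and on $\{\nu_{n,x,t}=k\}$ the strong Markov property (Lemma~\ref{Lemma-Markov}) restarts the chain at $\xi_k$ with effective level $t+\sum_{i=1}^{k}\sigma_p(\xi_i)+\tilde f(\xi_k)-\tilde f(\xi_0)\geq n^{1/2-\ee}$, so the first step applies and produces the factor $1+cn^{-\ee}$. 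Third, a martingale-monotonicity argument within each level $\{\nu_{n,x,t}=k\}$, in the spirit of \eqref{Equ-desired-0}--\eqref{Equ-00a-0} and of its perturbed version \eqref{Equ-desired-O2}--\eqref{Equ-00a-O2} (using $\bb E_a(M_{[n^{1-\ee}]}-M_k;\cdot)=0$ and the control of the $p$-lag and perturbation terms provided by Lemma~\ref{Exponential-Y-tau}), replaces the horizon $n$ by $[n^{1-\ee}]$ and reconstitutes $W^{\mathfrak f}_{[n^{1-\ee}]}(a,t)$. Collecting the error terms — each carrying a negative power of $n$ or an exponentially small prefactor, and, since $p=[n^{\ee}]$, with the $\sqrt p$ and $p$-lag contributions of order $n^{\ee/2+o(1)}$ beaten by the $n^{-1/2+O(\ee)}$ factors coming from Lemma~\ref{Lemma 1} — and integrating against $\theta\,\mu^{\otimes(p+1)}\otimes\nu$ yields \eqref{eq-plan-W} with the required remainder, and hence \eqref{AA-bound-M_n-002-app}.

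The main obstacle is the first step, i.e. the high-level estimate $W^{\mathfrak f}_n(a,t)\leq(1+cn^{-\ee})(t+\sigma(g_p\cdots g_1,x))+\cdots$ for perturbed walks started above $n^{1/2-\ee}$. In the unperturbed case this was immediate from the optional stopping identity \eqref{optional-stp-iden} and the overshoot bound of Lemma~\ref{Lemma 1-0}; here the overshoot of the killed walk also contains the perturbation increment $\tilde f(\xi_{\tilde\tau^{\mathfrak f}_t})-\tilde f(\xi_0)$ and the $p$ trailing terms $\sum_{j=\tilde\tau^{\mathfrak f}_t-p+1}^{\tilde\tau^{\mathfrak f}_t}h_p(\xi_j)$, and one must ensure that all the resulting error terms remain integrable with the available exponential moments $C_\alpha(\mathfrak f)\leq B$. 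This is precisely what Lemmas~\ref{Exponential-Y-tau}, \ref{Lem-tau-prior}, \ref{Lem-bound-tau_p-new} and \ref{Lemma 1} are designed to supply, so the remaining work consists in assembling these estimates and checking that, for $p$ a small power of $n$, their combined contribution, after integration, is $O\bigl(n^{-\ee/2}(1+\max\{t,0\})\bigr)$.
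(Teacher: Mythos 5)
Your proposal is correct and follows essentially the same route as the paper: the bound is obtained by combining the finite-size approximation with twist (Proposition \ref{Prop UfA g approx 002-app}, which is exactly where the $2n^{-\ee}$ shift comes from) with the quasi-decreasing bound for $\mathscr A_p$-measurable perturbations and twist (Proposition \ref{Prop 001-app2}, i.e.\ Lemma \ref{Lemma 4} multiplied by $\theta(g_1,\ldots,g_p)$ and integrated via \eqref{integral-formula-app}). Your sketch of the pointwise bound on $W^{\mathfrak f}_n(a,t)$ reproduces the actual three-step proof of Lemma \ref{Lemma 4} (high-level optional-stopping estimate, restart at the exit time via Lemma \ref{Lemma-Markov}, martingale filling-up argument), so there is nothing genuinely different to report.
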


The proof of this proposition will be presented at the end of this section. 
We will employ a methodology based on the finite-size approximation of perturbations. 
Specifically, we use the approximation sequence $\mathfrak f_p=(f_{n,p})_{n\geq0}$, as defined in \eqref{def-approxi-fnp}, to substitute for the original sequence $\mathfrak f=(f_n)_{n\geq0}$. This substitution enabled the introduction of the Markov chain $(\xi_i)_{i \geq 0}$ 
in Subsection \ref{Subsec-Markov-chian}. 
We handle this Markov chain using an approach akin to the one 
employed for the Markov chain on the space $\bb X$ in the proof of Proposition \ref{Prop-Vn-001}.
 
\subsection{Properties of the random walk near the level $n^{1/2}$} \label{sec-control-high-level}
In this subsection, let $p\geq 1$ be an integer and let $\mathfrak f=(f_{n})_{n\geq0}$ be a sequence of measurable functions
$f_{n}: \bb G^{\{1,\ldots,p \}}\times \bb X \to \bb R$. 
We retain the notation from Section \ref{Subsec-Markov-chian}.
In particular, by \eqref{new representation for S^x_n 001}, we have denoted
\begin{align*}
S^x_n := \sigma (g_n \cdots g_1,x) =  \sum_{i=1}^{n} \sigma_p(\xi_i).  
\end{align*}
For $\ee >0$, $n \geq 1$, $x\in \bb X$ and $t\in \bb R$, 
on the measurable space $\Omega= \bb G^{\bb N^*}$,
consider the exit time (introduced in \eqref{nu n}) 
\begin{align*}
\nu_{n,x,t}=\min \left\{ k\geq 1: \left|t + S_{k}^x  \right|  \geq 2 n^{1/2-\ee }\right\}.  
\end{align*}
For any $a \in \bb A_p$, $n \geq 1$ and $t\in \bb R$, on $\Omega'_p =\bb A_p^{\bb N}$, 
introduce the following exit time 
\begin{align} \label{nu n02}
\tilde \nu_{n,t}=\min \left\{ k\geq p+1: \left| t  + \sum_{i=1}^k \sigma_p(\xi_i) -\tilde f(\xi_0)  \right|  \geq 2 n^{1/2-\ee }\right\},   
\end{align}
where $\tilde f$ is defined by \eqref{function f tilde001}. 
It is clear that $\tilde \nu_{n,t}$ is a $(\mathscr G_n)_{n\geq 0}$-stopping time on $\Omega'_p =\bb A_p^{\bb N}$.
The two exit times $\nu_{n,x,t}$ and $\tilde \nu_{n,t}$ are related as follows.

\begin{remark}\label{Remark-001}
For any $a=(g_0,\ldots,g_{p},x, q) \in \bb A_p$ and $t\in \bb R$, the law of $\tilde \nu_{n,t}-p$ with respect to $\bb P_a$ 
is equal to the law of $\nu_{n,x',t'}$ with respect to $\bb P$. Specifically, 
for any $k \geq 0$,
\begin{align*}
\bb P_a ( \tilde \nu_{n,t} = k + p ) =  \bb P ( \nu_{n,x',t'} = k ),
\end{align*}
where $x' = g_p \cdots g_1 x$ and $t' = t + \sigma(g_p \cdots g_1, x) - f_q(g_1, \ldots, g_p, x)$, 
and we use the fact that 
$\tilde f(\xi_0)= \tilde f(a) = f_q(g_1, \ldots, g_p, x)$ almost surely under the probability measure $\bb P_a$.
\end{remark}

Below, we compile a set of lemmas, all related to the stopping time $\tilde \nu_{n,t}$. 
These lemmas will assist in establishing the important monotonicity bound detailed in the subsequent subsection.

\begin{lemma} \label{Lemma 2-app}
There exists a constant $\beta >0$ 
such that for any $\ee \in (0,\frac{1}{2})$,  $n\geq 1$, $p\geq 1$, $\ell \geq 1$,  
$a \in \bb A_p$ and $t\in \bb R$, 
\begin{align*}
\mathbb{P}_a\left( \tilde \nu_{n,t}> \ell \right) 
\leq  2 \exp \left(- \frac{\beta (\ell -p)}{n^{1-2\ee}} \right).
\end{align*}
In particular, when $0\leq p \leq n^{1-2\ee}$, 
for any $a \in \bb A_p$ and $t \in \bb R$, we have
\begin{align*}
\mathbb{P}_a\left( \tilde \nu_{n,t}>n^{1-\ee }\right) 
\leq  2 \exp \left(- \beta n^{\ee}\right).
\end{align*}
\end{lemma}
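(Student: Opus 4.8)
\textbf{Proof plan for Lemma \ref{Lemma 2-app}.}

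The statement concerns the exit time $\tilde\nu_{n,t}$ of the shifted additive functional $t+\sum_{i=1}^k\sigma_p(\xi_i)-\tilde f(\xi_0)$ from the interval $[-2n^{1/2-\ee},2n^{1/2-\ee}]$, and the key point is that Remark \ref{Remark-001} identifies the law (under $\bb P_a$) of $\tilde\nu_{n,t}-p$ with the law (under $\bb P$) of the already-studied exit time $\nu_{n,x',t'}$ for suitable $x'=g_p\cdots g_1x$ and $t'=t+\sigma(g_p\cdots g_1,x)-f_q(g_1,\ldots,g_p,x)$. The plan is therefore to reduce everything to the unperturbed estimate already available in Lemma \ref{Lemma 2-0}. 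Concretely, for $\ell>p$ I would write $\mathbb{P}_a(\tilde\nu_{n,t}>\ell)=\mathbb{P}_a(\tilde\nu_{n,t}-p>\ell-p)$, apply Remark \ref{Remark-001} to rewrite this as $\mathbb{P}(\nu_{n,x',t'}>\ell-p)$ for the (random, but fixed once $a$ is fixed) data $x',t'$ determined by $a$, and then invoke Lemma \ref{Lemma 2-0}, which gives the bound $2\exp(-\beta(\ell-p)/n^{1-2\ee})$ uniformly in the starting point $x'\in\bb X$ and the level $t'\in\bb R$. Since the constant $\beta$ in Lemma \ref{Lemma 2-0} is uniform over all $x',t'$, the resulting estimate does not depend on the particular values of $x',t'$ produced by $a$, which is exactly what is needed. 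For $\ell\le p$ the right-hand side $2\exp(-\beta(\ell-p)/n^{1-2\ee})\ge 2>1$, so the inequality is trivially true; thus no case distinction beyond this trivial remark is necessary.

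For the "in particular" clause, I would specialize $\ell=[n^{1-\ee}]$ (or $n^{1-\ee}$, interpreting it as $\lfloor n^{1-\ee}\rfloor$ in the event $\{\tilde\nu_{n,t}>n^{1-\ee}\}$ as elsewhere in the paper) and use the hypothesis $0\le p\le n^{1-2\ee}$. Then $\ell-p\ge n^{1-\ee}-n^{1-2\ee}-1$, and for $n$ large this is at least $\tfrac12 n^{1-\ee}$ (for small $n$ the bound can be absorbed by enlarging the multiplicative constant or simply noting the probability is $\le 1\le 2e^{-\beta n^{\ee}}$ after shrinking $\beta$), so the exponent $\beta(\ell-p)/n^{1-2\ee}$ is bounded below by a constant multiple of $n^{\ee}$; replacing $\beta$ by this new constant (still called $\beta$) yields $\mathbb{P}_a(\tilde\nu_{n,t}>n^{1-\ee})\le 2\exp(-\beta n^{\ee})$ uniformly in $a\in\bb A_p$ and $t\in\bb R$.

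There is essentially no serious obstacle here: the lemma is a bookkeeping translation of Lemma \ref{Lemma 2-0} through the distributional identity of Remark \ref{Remark-001}. The only mild point requiring care is making sure the uniformity in $x'$ and $t'$ in Lemma \ref{Lemma 2-0} is genuinely uniform (it is, as stated there for all $x\in\bb X$ and $t\in\bb R$), so that the random data $(x',t')$ determined by $a$ cause no difficulty, and handling the range $\ell\le p$ (and small $n$) by the trivial bound. I would also double-check the convention for $n^{1/2-\ee}$ and $n^{1-\ee}$ (floor vs.\ not) so that the event $\{\tilde\nu_{n,t}>n^{1-\ee}\}$ matches the one used later in Proposition \ref{AA-Prop iterative 002-app}, but this is purely notational.
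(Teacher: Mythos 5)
Your proposal is correct and follows essentially the same route as the paper: reduce via the distributional identity of Remark~\ref{Remark-001} to the unperturbed exit time $\nu_{n,x',t'}$, then apply the concentration estimate of Lemma~\ref{Lemma 2-0} uniformly in $x'$ and $t'$. Your explicit handling of the trivial range $\ell\leq p$ and the constant adjustment needed for the ``in particular'' bound (absorbing the $n^{\ee}-1$ versus $n^{\ee}$ discrepancy by shrinking $\beta$ and using the trivial bound when $n^{\ee}$ is small) is correct and slightly more detailed than the paper, which leaves that step implicit.
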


\begin{proof}
Using the same notation as in Remark  \ref{Remark-001}, we have  
\begin{align*}
\mathbb{P}_a\left( \tilde \nu_{n,t}>\ell \right) =  \mathbb{P} \left( \nu_{n,x',t'}> \ell-p\right) 
= \mathbb{P} \bigg( \sup_{ 1\leq k \leq \ell - p} |t' + S_k^{x'} |  \leq  2 n^{1/2-\ee  }  \bigg).
\end{align*}
By Lemma \ref{lemma-sup  S_k -001}, there exists a constant $\beta>0$ such that, for any $x'\in \bb X$ and $t'\in \bb R$,
\begin{align*}
\mathbb{P} \bigg( \sup_{ 1\leq k \leq \ell -p} |t' + S_k^{x'} |  \leq  2 n^{1/2-\ee  }  \bigg)
\leq 2 \exp \left(-\beta \frac{\ell- p}{4 n^{1-2\ee}}\right). 
\end{align*}
The assertion follows.
\end{proof}

\begin{lemma}\label{Lem-nu-less-n}
For any $\ee \in (0,\frac{1}{4})$, there exist constants $c, c_\epsilon >0$ 
such that for any $p\geq 1$, $a= (g_0,\ldots, g_{p},x, q) \in \bb A_p$ and $t\in \bb R$
satisfying $t + \sigma(g_{p} \cdots g_{1}, x) - \tilde f(a) \geq -n^{\ee}$,  
\begin{align*}
\mathbb{P}_a \left( \tilde \nu_{n,t} \leq n^{1/2-\ee } -  t - \sigma(g_{p} \cdots g_{1},x)  + \tilde f(a) - p \right) 
\leq c \exp \left( -c_{\ee }n^{\ee/2}   \right).
\end{align*}
\end{lemma}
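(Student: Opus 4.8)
The statement is essentially the analogue, for the Markov chain $(\xi_i)_{i\geq 0}$ and the shifted exit time $\tilde\nu_{n,t}$, of Lemma \ref{Lem-nu-less-n-0}, which concerns the original random walk $(S_k^x)_{k\geq 1}$ and the exit time $\nu_{n,x,t}$. So the first thing I would do is translate everything into the language of the ordinary random walk via Remark \ref{Remark-001}. Set $x' = g_p\cdots g_1 x$ and $t' = t + \sigma(g_p\cdots g_1, x) - \tilde f(a)$; note that the hypothesis of the lemma says exactly $t'\geq -n^{\ee}$, which is the hypothesis of Lemma \ref{Lem-nu-less-n-0}. By Remark \ref{Remark-001}, under $\bb P_a$ the random variable $\tilde\nu_{n,t}-p$ has the same law as $\nu_{n,x',t'}$ under $\bb P$. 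Since the quantity $n^{1/2-\ee} - t - \sigma(g_p\cdots g_1,x) + \tilde f(a) - p$ is precisely $n^{1/2-\ee} - t' - p$, the event in question is
\[
\{ \tilde\nu_{n,t} \leq n^{1/2-\ee} - t' - p \} = \{ \tilde\nu_{n,t} - p \leq n^{1/2-\ee} - t' \},
\]
and its $\bb P_a$-probability equals $\bb P(\nu_{n,x',t'} \leq n^{1/2-\ee} - t')$.

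The second and final step is simply to invoke Lemma \ref{Lem-nu-less-n-0}, applied with the point $x'\in\bb X$ and the level $t'\geq -n^{\ee}$: it gives constants $c, c_{\ee}>0$, not depending on $x'$ or $t'$, such that
\[
\bb P\left( \nu_{n,x',t'} \leq n^{1/2-\ee} - t' \right) \leq c\exp\left(-c_{\ee} n^{\ee/2}\right).
\]
Combining the two displays yields the claimed bound for $\bb P_a$, uniformly in $p\geq 1$, $a\in\bb A_p$ and $t$ subject to the stated constraint. I would also record, at the start, that the identity $\tilde f(\xi_0) = \tilde f(a)$ holds $\bb P_a$-a.s., which is already part of Remark \ref{Remark-001}, so that the deterministic substitution $t\mapsto t'$ in the exit-time inequality is legitimate.

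\textbf{Main obstacle.} There is essentially no serious obstacle here: the content is entirely in Lemma \ref{Lem-nu-less-n-0}, and the only thing to be careful about is the bookkeeping of the shift — making sure the two ``$n^{1/2-\ee}$-minus-(level)'' expressions on the two sides of Remark \ref{Remark-001} match exactly, including the $-p$ coming from the fact that $\tilde\nu_{n,t}$ starts counting at $p+1$ rather than $1$. The one point worth double-checking is that the hypothesis as stated, $t + \sigma(g_p\cdots g_1,x) - \tilde f(a) \geq -n^{\ee}$, translates verbatim into $t'\geq -n^{\ee}$, so that Lemma \ref{Lem-nu-less-n-0} applies with no loss; this is immediate from the definition of $t'$. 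Thus the proof is a two-line reduction, and I would present it as such.
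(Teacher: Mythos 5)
Your proof is correct and matches the paper's own argument essentially verbatim: both reduce the statement via Remark \ref{Remark-001} to the ordinary random walk with $x'=g_p\cdots g_1 x$ and $t'=t+\sigma(g_p\cdots g_1,x)-\tilde f(a)$, and then invoke Lemma \ref{Lem-nu-less-n-0}. No comments beyond that.
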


\begin{proof}
By Remark \ref{Remark-001}, it holds that, 
with $x'=g_p\cdots g_1 x$ and $t' = t + \sigma(g_p\cdots g_1, x)  - \tilde f(a)$, 
\begin{align*}
\mathbb{P}_a \left( \tilde \nu_{n,t} \leq n^{1/2-\ee } - t' - p \right) 
=  \mathbb{P} \left( \nu_{n,x',t'} \leq n^{1/2-\ee } - t' \right). 
\end{align*}
Now the assertion follows from Lemma \ref{Lem-nu-less-n-0}. 
\end{proof}

\begin{lemma} \label{Lemma-nu-tau}
There exist constants $c, \beta, \ee > 0$  such that, 
for any $a=(g_0,\ldots, g_{p}, x, q) \in \bb A_p$, $t \in \bb R$, $n\geq 1$ and $p\leq n^{\ee} $,  
\begin{align*}
\bb P_a \left( \tilde \tau^{\mathfrak f}_{t} > \tilde \nu_{n,t}   \right) 
\leq 
\frac{c}{n^{\beta }} \bigg( \max \{t,0\} + | \sigma(g_p\cdots g_1, x)|  + |\tilde f(a)| + \frac{1}{n} \sum_{k = p+1}^n \mathcal F_k(a) \bigg). 
\end{align*}
\end{lemma}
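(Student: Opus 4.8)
The statement to be proven is Lemma~\ref{Lemma-nu-tau}, which asserts that the probability that the perturbed exit time $\tilde \tau^{\mathfrak f}_{t}$ exceeds the level-crossing time $\tilde \nu_{n,t}$ is small. The key observation is that on the event $\{\tilde \tau^{\mathfrak f}_{t} > \tilde \nu_{n,t}\}$, at the time $\tilde \nu_{n,t}$ the perturbed walk $t + \sum_{i=1}^{\tilde\nu_{n,t}}\sigma_p(\xi_i) + \tilde f(\xi_{\tilde\nu_{n,t}}) - \tilde f(\xi_0)$ is still non-negative, while by the definition \eqref{nu n02} of $\tilde\nu_{n,t}$ the \emph{unperturbed} quantity $t + \sum_{i=1}^{\tilde\nu_{n,t}}\sigma_p(\xi_i) - \tilde f(\xi_0)$ has modulus at least $2n^{1/2-\ee}$. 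Combining these, the walk must have crossed \emph{upward}, i.e. $t + \sum_{i=1}^{\tilde\nu_{n,t}}\sigma_p(\xi_i) - \tilde f(\xi_0) \geq 2n^{1/2-\ee}$, \emph{unless} $|\tilde f(\xi_{\tilde\nu_{n,t}})|$ is large (at least $2n^{1/2-\ee}$ minus a jump term). So I would split the event according to whether $\tilde f(\xi_{\tilde\nu_{n,t}})$ is controlled by, say, $n^{1/2-2\ee}$ or not.

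\textbf{Main steps.} First, on the sub-event where $|\tilde f(\xi_{\tilde\nu_{n,t}})| \leq n^{1/2-2\ee}$ and the jump $|\sigma_p(\xi_{\tilde\nu_{n,t}})|$ is also at most $n^{1/2-2\ee}$ (controlled via the exponential moment \eqref{exp mom for f 001} applied along the Markov chain, analogous to the proof of Lemma~\ref{Lem-tau-prior} and Lemma~\ref{Lemma 1}), the walk $t + \sum_{i=1}^{\tilde\nu_{n,t}}\sigma_p(\xi_i) - \tilde f(\xi_0)$ must land in $[2n^{1/2-\ee}, 2n^{1/2-\ee} + n^{1/2-2\ee}]$ when it first crosses level $2n^{1/2-\ee}$ (it cannot cross the lower barrier $-2n^{1/2-\ee}$ since that would contradict non-negativity of the perturbed walk when $\tilde f$ is small). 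Then I would apply the Markov property at time $\tilde\nu_{n,t}$: conditionally, the perturbed walk restarted from a level of size $\approx 2n^{1/2-\ee}$ has exit time $\tilde\tau^{\mathfrak f}$ that is finite a.s., but more precisely I need the survival probability $\bb P_a(\tilde\tau^{\mathfrak f}_{t} > \tilde\nu_{n,t})$ to be bounded by the probability that a walk started near level $2n^{1/2-\ee}$ survives at least, say, one more step or reaches the relevant configuration — this is where I would invoke Lemma~\ref{Lem-tau-prior} (with a shifted starting point) to get a factor like $n^{1/2-\ee}/(\text{something})^\beta$, which after adjusting $\ee$ gives the $n^{-\beta}$ decay. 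For the complementary sub-event where $|\tilde f(\xi_{\tilde\nu_{n,t}})|$ is large, I would use a union bound over $k = p+1, \dots$ together with Markov's inequality and the definition \eqref{def-Fka} of $\mathcal F_k(a)$, picking up the term $\frac{1}{n}\sum_{k=p+1}^{n}\mathcal F_k(a)$ (the tail beyond $n$ being handled by the fact that $\tilde\nu_{n,t}$ is typically at most $n^{1-\ee} \leq n$ via Lemma~\ref{Lemma 2-app}, so I can restrict to $k \leq n$ up to an exponentially small error).

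\textbf{Execution and the main obstacle.} Concretely, I would write $\bb P_a(\tilde\tau^{\mathfrak f}_{t} > \tilde\nu_{n,t}) \leq \bb P_a(\tilde\nu_{n,t} > n) + \sum_{k=p+1}^{n} \bb P_a(\tilde\tau^{\mathfrak f}_{t} > \tilde\nu_{n,t}, \tilde\nu_{n,t} = k)$; the first term is $\leq 2e^{-\beta n^{\ee}}$ by Lemma~\ref{Lemma 2-app} once $p \leq n^{1-2\ee}$. For each $k$ in the sum I would decompose according to the size of $\tilde f(\xi_k)$ and $\sigma_p(\xi_k) = h_p(\xi_{k-p})$, use the Markov property at time $k$, and bound the conditional survival probability by Lemma~\ref{Lem-tau-prior} applied with the shifted data $\xi_k$ whose ``$t$-coordinate'' is $\approx 2n^{1/2-\ee}$. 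The main technical obstacle is the bookkeeping: the bound from Lemma~\ref{Lem-tau-prior} involves $|\sigma(g_p'\cdots g_1', x')|$, $|\tilde f(\xi_k)|$ and $\log(n-k)$ at the new starting configuration $\xi_k$, and these must be integrated/summed against the distribution of $\xi_k$ on $\{\tilde\nu_{n,t} = k\}$ while keeping the final dependence only on the original $a = (g_0,\dots,g_p,x,q)$ — this requires carefully tracking the moment bounds $\bb E_a e^{\alpha|\tilde f(\xi_k)|} = \mathcal F_k(a)$ and the Minkowski-type estimate $\bb E_a^{1/2}|S^{x}_k|^2 \leq c\sqrt{k}$ through the Cauchy–Schwarz splits, exactly as done in the proofs of Lemmas~\ref{Lem-bound-tau_p-new} and~\ref{Lemma 1}. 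Choosing $\ee$ small enough that $p \leq n^{\ee}$ is compatible with $p \leq n^{1-2\ee}$ and that all the exponential-in-$n^{\ee}$ error terms are absorbed into $n^{-\beta}$, the claimed bound follows.
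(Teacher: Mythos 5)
There is a genuine gap in your argument: you never identify the actual source of the polynomial decay $n^{-\beta}$. The event $\{\tilde \tau^{\mathfrak f}_{t} > \tilde \nu_{n,t}\}$ imposes \emph{no} constraint on the walk after time $\tilde \nu_{n,t}$, so applying the Markov property at $\tilde \nu_{n,t}$ and bounding a ``conditional survival probability'' of a walk restarted from level $\approx 2n^{1/2-\ee}$ cannot produce smallness — there is nothing left to survive, and in any case a walk started at height $\sim n^{1/2-\ee}$ survives for a long time with probability close to $1$, so Lemma~\ref{Lem-tau-prior} applied at that shifted starting point gives a bound that is large, not small. Likewise, your discussion of upward versus downward crossing and of the landing interval $[2n^{1/2-\ee}, 2n^{1/2-\ee}+n^{1/2-2\ee}]$ is machinery for a different estimate (it is what is needed to restart $W^{\mathfrak f}_{n-k}$ at a high level in the proof of Lemma~\ref{Lemma 4}); it plays no role in bounding this probability. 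Summing $\bb P_a(\tilde\tau^{\mathfrak f}_t > k,\ \tilde\nu_{n,t}=k)$ over $k$ with only the upper-tail control of $\tilde\nu_{n,t}$ from Lemma~\ref{Lemma 2-app} does not yield any power of $n$.

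The missing idea is the \emph{lower} tail of $\tilde\nu_{n,t}$: since the increments have exponential moments, the unperturbed walk cannot reach level $2n^{1/2-\ee}$ in fewer than roughly $K:= n^{1/2-\ee}-t'+\tilde f(a)$ steps except on an event of probability $O(e^{-c_\ee n^{\ee/2}})$ (Lemma~\ref{Lem-nu-less-n}). Hence, up to that exponentially small error, $\{\tilde\tau^{\mathfrak f}_{t} > \tilde\nu_{n,t}\}$ forces $\tilde\tau^{\mathfrak f}_{t} \geq K \gtrsim n^{1/2-\ee}$, and it is the a priori survival bound of Lemma~\ref{Lem-tau-prior} applied at this \emph{deterministic} time (with the original starting data $a$) that produces the factor $n^{-\beta}$. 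Concretely one uses the inclusion $\{\tilde\nu_{n,t} > K\}\cap\{K > \tilde\tau^{\mathfrak f}_{t}\} \subseteq \{\tilde\tau^{\mathfrak f}_{t} \leq \tilde\nu_{n,t}\}$, so that $\bb P_a(\tilde\tau^{\mathfrak f}_{t} > \tilde\nu_{n,t}) \leq \bb P_a(\tilde\nu_{n,t}\leq K) + \bb P_a(\tilde\tau^{\mathfrak f}_{t}\geq K)$. Your write-up also omits the regime $t'-\tilde f(a) < -n^{\ee}$, where $K$ is useless (it may exceed $n$) and $\tilde\nu_{n,t}$ really can be small; there one must argue separately that the perturbed walk is already negative at time $p+1$ except on an event of probability $O(e^{-\alpha n^{\ee}/2})\mathcal F_{p+1}(a)$, so that $\tilde\tau^{\mathfrak f}_{t}\leq p+1\leq \tilde\nu_{n,t}$ almost surely on the complementary good event.
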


\begin{proof}
Let $x'=g_p\cdots g_1 x$ and $t' = t +  \sigma(g_p\cdots g_1, x)$, where 
$a=(g_0,\ldots, g_{p}, x,q) \in \bb A_p$ and $t \in \bb R$. 
We also fix $\ee>0$, with its value to be determined later.

Let us first assume that $t' - \tilde f(a) \geq -n^{\ee}$, so that we are able to apply Lemma \ref{Lem-nu-less-n}.
In this case, since the intersection 
$\{ \tilde \nu_{n,t} > n^{1/2-\ee } - t' + \tilde f(a) \} \cap \{ n^{1/2-\ee } - t' + \tilde f(a) > \tilde \tau^{\mathfrak f}_{t} \}$
is contained in the set $\{ \tilde \tau^{\mathfrak f}_{t} \leq \tilde \nu_{n,t} \}$, 
we have 
\begin{align}\label{a001}
\bb P_a( \tilde \tau^{\mathfrak f}_{t} > \tilde \nu_{n,t}  ) 
\leq \bb P_a ( \tilde \nu_{n,t} \leq n^{1/2-\ee } - t' + \tilde f(a) ) + \bb P_a ( n^{1/2-\ee } - t' + \tilde f(a) \leq \tilde \tau^{\mathfrak f}_{t}). 
\end{align}
For the first term, using Remark \ref{Remark-001} and Lemma \ref{Lem-nu-less-n}, we get
\begin{align}\label{a002}
\bb P_a ( \tilde \nu_{n,t} \leq n^{1/2-\ee } - t' + \tilde f(a) )
= \bb P ( \nu_{n,x',t'} \leq n^{1/2-\ee } - t' + \tilde f(a) - p )
\leq c \exp \left( -c_{\ee }n^{\ee/2}   \right). 
\end{align}
Now we handle the second term in \eqref{a001}. Assume that $t' - \tilde f(a)\in [-n^{\ee},  \frac{1}{2} n^{1/2-\ee}]$. 
Then, by Lemma \ref{Lem-tau-prior}, there  exist constants $\beta>0$ and $c>0$ such that, 
for $p\leq \frac{1}{4}n^{1/2-\ee}$,
\begin{align*}
&\bb P_a ( n^{1/2-\ee } - t' + \tilde f(a) \leq \tilde \tau^{\mathfrak f}_{t})
\leq \bb P_a \bigg( \tilde \tau^{\mathfrak f}_{t} \geq \frac{1}{2} n^{1/2-\ee }  \bigg)  \notag\\
&\leq c \frac{\max \{t,0\} +  |\sigma(g_p\cdots g_1,x)|  +|\tilde f(a)| + \log( \frac{1}{2} n^{1/2-\ee }) }{(\frac{1}{2} n^{1/2-\ee }-p)^{\beta}}
+  \frac{c }{n^{4 - 8\ee}} \sum_{k = p+1}^n \mathcal F_k(a)  \notag\\
&\leq  
\frac{c \log n}{\left(n^{1/2-\ee }-2p\right)^{\beta}}  \left( \max \{t,0\} +  |\sigma(g_p\cdots g_1,x)| +|\tilde f(a)| \right) 
+  \frac{c }{n^{4 - 8\ee}} \sum_{k = p+1}^n \mathcal F_k(a) \notag\\
&\leq  
\frac{c}{n^{\beta/4 }}  \left( \max \{t,0\} + |\sigma(g_p\cdots g_1,x)| +|\tilde f(a)| \right) 
+  \frac{c }{n^{4 - 8\ee}} \sum_{k = p+1}^n \mathcal F_k(a).  
\end{align*}
If $t' - \tilde f(a) > \frac{1}{2} n^{1/2-\ee}$, then we have 
\begin{align}\label{a003}
\bb P_a ( n^{1/2-\ee } - t' +  \tilde f(a) \leq \tilde \tau^{\mathfrak f}_{t}) \leq 1 
\leq 2 \frac{ t' - \tilde f(a) }{n^{1/2-\ee}} \leq 2 \frac{ \max \{t,0\} + |\sigma(g_p\cdots g_1,x)|  +|\tilde f(a)| }{n^{1/2-\ee}}. 
\end{align}
Therefore, in the case $t' - \tilde f(a) \geq -n^{\ee}$, the assertion follows from 
\eqref{a001}, \eqref{a002} and \eqref{a003} by taking $\ee$ small enough.

It remains to deal with the case $t' - \tilde f(a) < -n^{\ee}$.
In this case, we set 
\begin{align*}
A = \left\{ \xi\in \Omega_p : |\tilde f(\xi_{p+1})| \leq \frac{n^{\ee}}{2}, \  |\sigma_p\left(\xi_{p+1} \right)| \leq \frac{n^{\ee}}{2}
\right\}.
\end{align*}
In view of the definition \eqref{trans-prob-xi} of the transition kernel $\{ P_a: a \in \bb A_p \}$, 
we have, for $p\leq n^{\ee/2}$,
\begin{align} \label{aboundAc001}
\bb P_a\left(A^c\right) 
&\leq \bb P_a \left( |\tilde f(\xi_{p+1})| > \frac{n^{\ee}}{2} \right) 
+  \sup_{x\in \bb X} \bb P \left(|\sigma (g_1,x)| \geq \frac{n^{\ee}}{2} \right) \notag\\
&\leq  e^{- \alpha \frac{n^{\ee}}{2}} \mathcal F_{p+1}(a) + c e^{- \alpha \frac{n^{\ee}}{2}}
\leq c' e^{- \alpha \frac{n^{\ee}}{2}} \mathcal F_{p+1}(a), 
\end{align}
where we have used \eqref{exp mom for f 001}.
On the set $A$, it holds, $\bb P_a$-almost surely, 
\begin{align*} 
t +\sum_{i=1}^{p+1}  \sigma_p (\xi_i) + \tilde f(\xi_{p+1})  - \tilde f(\xi_0)
= t' -  \tilde f(a)+ \sigma_p (\xi_{p+1}) + \tilde f(\xi_{p+1}) 
<  -n^{\ee} + n^{\ee} =0. 
\end{align*}
Hence, by the definitions \eqref{def-tau-f-y} and \eqref{nu n02} of the stopping times $\tilde \tau_t^{\mathfrak f}$ and $\tilde\nu_{n,t}$,  
it holds that $\tilde \tau_t^{\mathfrak f} \leq p+1 \leq \tilde\nu_{n,t}$, $\bb P_a$-almost surely on the set $A$. 
Therefore, when $t' - \tilde f(a) < -n^{\ee}$, we obtain
\begin{align*}
\bb P_a \left( \tilde\nu_{n,t} <  \tilde \tau_t^{\mathfrak f}   \right) \leq \bb P_a \left( A^c  \right), 
\end{align*}
and the conclusion follows from \eqref{aboundAc001}.
\end{proof}

\begin{lemma} \label{Lemma martingale-001}
There exist constants $c, \beta, \ee > 0$  such that, 
for any $a=(g_0,\ldots, g_{p}, x, q) \in \bb A_p$, $t \in \bb R$, $n\geq 1$ and $p\leq n^{\ee} $,  
\begin{align}\label{sum-hp-nu-nt-001}
&\bb E_a \bigg( \bigg| \sum_{i=\tilde \nu_{n,t}-p+1}^{\tilde \nu_{n,t}} h_p(\xi_i) \bigg|; \; 
 \tilde \tau^{\mathfrak f}_{t} >\tilde \nu_{n,t}, \tilde \nu_{n,t} \leq n  \bigg)  \notag\\
&\leq 
\frac{c}{n^{\beta }}   \bigg( \max \{t,0\} + | \sigma(g_p\cdots g_1, x)| +|\tilde f(a)| +   \frac{1}{n} \sum_{k = p+1}^n \mathcal F_k(a) \bigg) 
\end{align}
and
\begin{align}\label{sum-hp-nu-nt-002}
&\bb E_a \bigg( |\tilde f(\xi_{\tilde \nu_{n,t}})|; \; \tilde \tau^{\mathfrak f}_{t} > \tilde \nu_{n,t},  \tilde \nu_{n,t} \leq n  \bigg) 
 \notag\\
&\leq 
\frac{c}{n^{1 + \beta }}  \left( 1 + \max \{t,0\} + | \sigma(g_p\cdots g_1, x)| +|\tilde f(a)|  \right)  \sum_{k = p+1}^n \mathcal F_k(a). 
\end{align}
\end{lemma}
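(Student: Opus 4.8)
The two bounds concern the value of the martingale increment $\sum h_p(\xi_i)$ over the last block of $p$ steps, respectively the perturbation $\tilde f(\xi_{\tilde\nu_{n,t}})$, evaluated on the event that the perturbed exit time $\tilde\tau^{\mathfrak f}_t$ survives past the "large-deviation" exit time $\tilde\nu_{n,t}$ which is required to occur before $n$. The natural strategy is a Cauchy--Schwarz split against the probability of the conditioning event, for which Lemma \ref{Lemma-nu-tau} already supplies a sharp bound of order $n^{-\beta}(\max\{t,0\}+|\sigma(g_p\cdots g_1,x)|+|\tilde f(a)|+n^{-1}\sum_k\mathcal F_k(a))$. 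So the key is to control the $L^2$-norm of the two quantities (with the constraint $\tilde\nu_{n,t}\le n$), and then multiply by $\bb P_a^{1/2}(\tilde\tau^{\mathfrak f}_t>\tilde\nu_{n,t})$.

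For \eqref{sum-hp-nu-nt-001}: first apply the Cauchy--Schwarz inequality to get
$$
\bb E_a\Bigl(\Bigl|\textstyle\sum_{i=\tilde\nu_{n,t}-p+1}^{\tilde\nu_{n,t}} h_p(\xi_i)\Bigr|;\,\tilde\tau^{\mathfrak f}_t>\tilde\nu_{n,t},\,\tilde\nu_{n,t}\le n\Bigr)
\le \bb E_a^{1/2}\Bigl(\textstyle\sum_{i=\tilde\nu_{n,t}-p+1}^{\tilde\nu_{n,t}} h_p(\xi_i)\Bigr)^2\cdot\bb P_a^{1/2}\bigl(\tilde\tau^{\mathfrak f}_t>\tilde\nu_{n,t}\bigr).
$$
The first factor is bounded by a constant: decomposing over $\{\tilde\nu_{n,t}=k\}$ for $p<k\le n$, on each such event one has a sum of $p$ cocycle increments $h_p(\xi_i)=\sigma(g_i,g_{i-1}\cdots g_1\cdot)$, which have a uniform exponential moment by \eqref{exp mom for f 001}; since $p\le n^{\ee}$ and we can afford a crude bound like $\sum_{k\le n}\ldots$ absorbed by the negligible factor from the stopping-time probability, I would either (i) bound $\bb E_a\bigl(\sum_{i=\tilde\nu_{n,t}-p+1}^{\tilde\nu_{n,t}} h_p(\xi_i)\bigr)^2\le \sum_{k=p+1}^n\bb E_a\bigl((\sum_{i=k-p+1}^{k}h_p(\xi_i))^2\bigr)\le cnp$, or better (ii) use the optional-stopping/martingale structure to get the genuinely bounded estimate $cp$ directly. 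Either way the product with Lemma \ref{Lemma-nu-tau} gives a bound of the required shape $n^{-\beta'}(\ldots)$, after renaming $\beta$ (and possibly shrinking $\ee$ so that the extra powers of $n$ from $p$ and from the crude sum are swallowed into the decay). This is essentially the same computation as in the proofs of Lemmas \ref{Lemma 1} and \ref{Lem-bound-tau_p-new}, so I expect it to be routine.

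For \eqref{sum-hp-nu-nt-002}: again Cauchy--Schwarz,
$$
\bb E_a\bigl(|\tilde f(\xi_{\tilde\nu_{n,t}})|;\,\tilde\tau^{\mathfrak f}_t>\tilde\nu_{n,t},\,\tilde\nu_{n,t}\le n\bigr)
\le \Bigl(\textstyle\sum_{k=p+1}^n\bb E_a|\tilde f(\xi_k)|^2\Bigr)^{1/2}\cdot\bb P_a^{1/2}\bigl(\tilde\tau^{\mathfrak f}_t>\tilde\nu_{n,t}\bigr),
$$
where the bound $\bb E_a|\tilde f(\xi_k)|^2\le c\,\mathcal F_k(a)$ follows from the definition \eqref{def-Fka} of $\mathcal F_k$ and the elementary inequality $u^2\le c\,e^{\alpha u}$; hence the first factor is $\le c(\sum_{k=p+1}^n\mathcal F_k(a))^{1/2}$. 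Multiplying by $\bb P_a^{1/2}(\tilde\tau^{\mathfrak f}_t>\tilde\nu_{n,t})^{1/2}$ from Lemma \ref{Lemma-nu-tau} (used with $\alpha/2$ in place of $\alpha$ in $\mathcal F_k$, exactly as done in Corollary \ref{Lem-bound-tau-p}) gives
$$
\le \frac{c}{n^{\beta/2}}\Bigl(\textstyle\sum_{k=p+1}^n\mathcal F_k(a)\Bigr)^{1/2}\Bigl(\max\{t,0\}+|\sigma(g_p\cdots g_1,x)|+|\tilde f(a)|+\tfrac1n\sum_{k}\mathcal F_k(a)\Bigr)^{1/2}.
$$
Using $\sqrt{ab}\le \tfrac12(a+b)$ and $\mathcal F_k(a)\ge 1$, this is bounded by $c\,n^{-\beta'}(1+\max\{t,0\}+|\sigma(g_p\cdots g_1,x)|+|\tilde f(a)|)\sum_{k=p+1}^n\mathcal F_k(a)$; since $\sum_{k=p+1}^n\mathcal F_k(a)\le n\cdot(\text{average})$, absorbing one power of $n$ into the decay $n^{-\beta'}$ and renaming constants yields the claimed form with the extra factor $n^{-1}$ in front of $\sum_k\mathcal F_k(a)$. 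The main obstacle, and the point where care is needed, is bookkeeping the interplay between the polynomial factor $n^{-\beta}$ from Lemma \ref{Lemma-nu-tau}, the factor $\sqrt p$ (or $p$) from the block length, and the sum $\sum_{k=p+1}^n\mathcal F_k(a)$: one must choose $\ee$ small enough (so that $p\le n^{\ee}$) to guarantee that after all these manipulations a strictly positive power of $n$ survives in the denominator, and one must be careful to keep the $\mathcal F_k$-term appearing with the weight $n^{-1}$ (respectively $n^{-\beta}$) stated in the lemma rather than a worse weight, which is why I would track the Cauchy--Schwarz split so that the $\mathcal F_k$ sum is paired with the square root of the small probability in \eqref{sum-hp-nu-nt-002} and is entirely outside the square root in \eqref{sum-hp-nu-nt-001}.
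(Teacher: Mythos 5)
Your skeleton is the same as the paper's (isolate the factor $\bb P_a(\tilde \tau^{\mathfrak f}_{t} >\tilde \nu_{n,t})$ and feed in Lemma \ref{Lemma-nu-tau}), but the choice of Cauchy--Schwarz as the splitting inequality is too lossy, and neither stated bound comes out. For \eqref{sum-hp-nu-nt-001}, your option (i) gives $\bb E_a^{1/2}\big(\big|\sum_{i=\tilde\nu_{n,t}-p+1}^{\tilde\nu_{n,t}}h_p(\xi_i)\big|^2;\tilde\nu_{n,t}\le n\big)\le c\sqrt{np^2}$, and the $\sqrt{n}$ here comes from summing over the $n$ possible values of $\tilde\nu_{n,t}$, not from $p$; since the exponent $\beta$ delivered by Lemma \ref{Lemma-nu-tau} (ultimately by Proposition \ref{Prop-vartheta}) is smaller than $1/2$, the product $\sqrt{n}\cdot n^{-\beta/2}$ diverges, and shrinking $\ee$ cannot repair this. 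Your option (ii) is not justified as stated: optional stopping does not yield a second-moment bound of order $p$ or $p^2$ for $M_{\tilde\nu_{n,t}}-M_{\tilde\nu_{n,t}-p}$ on $\{\tilde\nu_{n,t}\le n\}$, because $\{\tilde\nu_{n,t}=\ell\}$ is only $\mathscr G_\ell$-measurable and the terms cannot be conditioned away; one would need an extra device such as bounding by $\max_{k\le n}|h_p(\xi_k)|$ via the exponential moment, which you do not supply. For \eqref{sum-hp-nu-nt-002} the problem is quantitative: since $\sum_{k=p+1}^n\mathcal F_k(a)\ge (n-p)\ge n/2$, your chain $\big(\sum_k\mathcal F_k\big)^{1/2}\cdot n^{-\beta/2}\big(A+\frac1n\sum_k\mathcal F_k\big)^{1/2}$ (with $A=\max\{t,0\}+|\sigma(g_p\cdots g_1,x)|+|\tilde f(a)|$) only gives $c\,n^{-1/2-\beta/2}(1+A)\sum_k\mathcal F_k(a)$, which is short of the claimed $n^{-1-\beta}$ prefactor by a factor $n^{1/2}$; this is not a cosmetic loss, since the lemma is invoked in the proof of Lemma \ref{Lemma 4} precisely to produce the weight $\frac1n\sum_k\mathcal F_k(a)$.

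The missing idea is to replace Cauchy--Schwarz by H\"older with conjugate exponents $(\eta,\eta')$ and $\eta'$ close to $1$ (hence $\eta$ large). Then in \eqref{sum-hp-nu-nt-001} the first factor becomes $(c\,np^{\eta})^{1/\eta}=c\,n^{1/\eta}p$, which is $o(n^{\beta/\eta'})$ once $1/\eta+\ee<\beta/\eta'$, while the probability enters with the nearly full power $n^{-\beta/\eta'}$; and in \eqref{sum-hp-nu-nt-002} the sum $\sum_k\mathcal F_k(a)$ enters the first factor only through the harmless power $1/\eta$, whereas the term $\frac1n\sum_k\mathcal F_k(a)$ inside the probability is raised to the power $1/\eta'\approx 1$, so that essentially a whole factor $n^{-1}$ survives. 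This exponent bookkeeping is exactly what the paper's proof does and is what your argument is missing.
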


\begin{proof} 
We first prove \eqref{sum-hp-nu-nt-001}. 
By H\"older's inequality, with $\eta >1$ and $\frac{1}{\eta} +\frac{1}{\eta'}=1$,
\begin{align} \label{app-lem-10-001}
 & \bb E_a \Bigg( \Bigg| \sum_{i=\tilde \nu_{n,t}-p+1}^{\tilde \nu_{n,t}} h_p(\xi_i) \Bigg|; \; \tilde \tau^{\mathfrak f}_{t} >\tilde \nu_{n,t}, \tilde \nu_{n,t} \leq n  \Bigg) \notag\\
 & \leq 
  \bb E_a^{1/\eta} \Bigg( \Bigg| \sum_{i=\tilde \nu_{n,t}-p+1}^{\tilde \nu_{n,t}} h_p(\xi_i) \Bigg|^\eta; \tilde \nu_{n,t} \leq n  \Bigg) 
   \bb P_a^{1/\eta'} \left(  \tilde \tau^{\mathfrak f}_{t} >\tilde \nu_{n,t}  \right).
\end{align}
For the first expectation on the right-hand side of \eqref{app-lem-10-001}, in view of \eqref{nu n02}, we write
\begin{align*}
\bb E_a \Bigg( \Bigg| \sum_{i=\tilde \nu_{n,t}-p+1}^{\tilde \nu_{n,t}} h_p(\xi_i) \Bigg|^\eta; \tilde \nu_{n,t} \leq n  \Bigg)
= \sum_{\ell =p+1}^{n} 
\bb E_a \Bigg( \Bigg| \sum_{i=\ell-p+1}^{\ell} h_p(\xi_i)  \Bigg|^\eta; \tilde \nu_{n,t}=\ell  \Bigg)
\leq  c p^{\eta} n, 
\end{align*}
where the last inequality holds due to the moment assumption \eqref{exp mom for f 001}. 
Substituting this into \eqref{app-lem-10-001} and using Lemma \ref{Lemma-nu-tau}, 
we get that there exist constants $c, \beta, \ee > 0$  such that, 
for any $a \in \bb A_p$, $t \in \bb R$, $n\geq 1$ and $p\leq n^{\ee} $,  
\begin{align*}
& \bb E_a \Bigg( \Bigg| \sum_{i=\tilde \nu_{n,t}-p+1}^{\tilde \nu_{n,t}} h_p(\xi_i) \Bigg|; \; \tilde \tau^{\mathfrak f}_{t} >\tilde \nu_{n,t}, \tilde \nu_{n,t} \leq n  \Bigg)\\
 &\leq  c \left( p^{\eta} n    \right)^{1/\eta}  \bb P_a^{1/\eta'} \left(  \tilde \tau^{\mathfrak f}_{t} >\tilde \nu_{n,t}  \right)\\
 &\leq      \frac{ c \left(   p^{\eta} n   \right)^{1/\eta} }{n^{\beta \eta'}}
 \bigg( \max \{t,0\} + | \sigma(g_p\cdots g_1, x)| + |\tilde f(a)| + \frac{1}{n} \sum_{k = p+1}^n \mathcal F_k(a) \bigg)^{1/\eta'}   \notag\\
 &\leq  
\frac{c}{n^{\beta/2} }  \bigg( \max \{t,0\} + | \sigma(g_p\cdots g_1, x)| + |\tilde f(a)| + \frac{1}{n} \sum_{k = p+1}^n \mathcal F_k(a) \bigg), 
\end{align*}
where $\mathcal F_{\ell}(a)$ is defined by \eqref{def-Fka}, 
and in the last inequality we take $\eta' > 1$ to be sufficiently close to $1$. 
This concludes the proof of \eqref{sum-hp-nu-nt-001}.

We next prove \eqref{sum-hp-nu-nt-002}. 
By H\"older's inequality, with $\eta >1$ and $\frac{1}{\eta} +\frac{1}{\eta'}=1$,
\begin{align} \label{app-lem-second-ine001}
 \bb E_a \left( |\tilde f(\xi_{\tilde \nu_{n,t}})|; \; \tilde \tau^{\mathfrak f}_{t} >\tilde \nu_{n,t}, \tilde \nu_{n,t} \leq n  \right)
 \leq  \bb E_a^{1/\eta} \left( |\tilde f(\xi_{\tilde \nu_{n,t}})|^{\eta};  \tilde \nu_{n,t} \leq n   \right)
   \bb P_a^{1/\eta'} \left(  \tilde \tau^{\mathfrak f}_{t} >\tilde \nu_{n,t}  \right).
\end{align}
In view of \eqref{nu n02}, we write
\begin{align*}
\bb E_a \left( |\tilde f(\xi_{\tilde \nu_{n,t}})|^\eta; \tilde \nu_{n,t} \leq n  \right)
= \sum_{\ell =p+1}^{n} 
\bb E_a \left( |\tilde f(\xi_{\ell})|^\eta; \tilde \nu_{n,t} = \ell  \right)
\leq \sum_{\ell =p+1}^{n}  \mathcal F_{\ell}(a),  
\end{align*}
where $\mathcal F_{\ell}(a)$ is defined by \eqref{def-Fka}. 
Then, by \eqref{app-lem-second-ine001} and Lemma \ref{Lemma-nu-tau},
\begin{align*}
& \bb E_a \left( |\tilde f(\xi_{\tilde \nu_{n,t}})|; \; \tilde \tau^{\mathfrak f}_{t} >\tilde \nu_{n,t},  \tilde \nu_{n,t} \leq n  \right) \notag\\
 &\leq   \bigg( \sum_{\ell =p+1}^{n}  \mathcal F_{\ell}(a)   \bigg)^{1/\eta}  
  \bb P_a^{1/\eta'} \left(  \tilde \tau^{\mathfrak f}_{t} >\tilde \nu_{n,t}  \right)  \notag\\
 &\leq   \bigg(  \sum_{\ell =p+1}^{n}  \mathcal F_{\ell}(a)   \bigg)^{1/\eta}  
  \frac{c}{n^{\beta \eta'}} \bigg( \max \{t,0\} + | \sigma(g_p\cdots g_1, x)| + |\tilde f(a)| + \frac{1}{n} \sum_{k = p+1}^n \mathcal F_k(a) \bigg)^{1/\eta'}.
\end{align*}
By taking $\eta'$ to be sufficiently close to $1$, we get
\begin{align*}
& \bb E_a \bigg( |\tilde f(\xi_{\tilde \nu_{n,t}})|; \; \tilde \tau^{\mathfrak f}_{t} >\tilde \nu_{n,t}, \tilde \nu_{n,t} \leq n  \bigg)  \notag\\
 &  \leq  
\frac{c}{n^{1+\beta/2} }  \left( 1 + \max \{t,0\} + | \sigma(g_p\cdots g_1, x)| + |\tilde f(a)|  \right)  
   \sum_{k = p+1}^n \mathcal F_k(a). 
\end{align*}
This ends the proof of the second inequality \eqref{sum-hp-nu-nt-002} of the lemma.
\end{proof}

\begin{lemma} \label{Lemma martingale-002}
There exist constants $c, \beta, \ee > 0$  such that, 
for any $a=(g_0,\ldots, g_{p}, x, q) \in \bb A_p$, $t \in \bb R$, $n\geq 1$ and $p\leq n^{\ee} $,  
\begin{align*}
I: & = \bb{E}_a  \Bigg( \Bigg| \sum_{j=\tilde \tau^{\mathfrak f}_{t} -p+1}^{\tilde \tau^{\mathfrak f}_{t}} h_p(\xi_j) \Bigg| 
 + |\tilde f(\xi_{ \tilde \tau^{\mathfrak f}_{t} })|; 
\  \tilde \nu_{n,t}  < \tilde \tau^{\mathfrak f}_{t} \leq  [n^{1-\ee }]   \Bigg) \\ 
&  \leq   \frac{c}{n^{1 + \beta }} \left( 1 + \max \{t,0\} + | \sigma(g_p\cdots g_1, x)| + |\tilde f(a)|  \right) \sum_{k = p+1}^n \mathcal F_k(a). 
\end{align*}
\end{lemma}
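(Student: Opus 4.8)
\textbf{Proof plan for Lemma \ref{Lemma martingale-002}.}

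The plan is to split the event $\{\tilde\nu_{n,t}<\tilde\tau^{\mathfrak f}_t\le [n^{1-\ee}]\}$ according to the value of $\tilde\nu_{n,t}=:\ell$, where $p+1\le\ell\le[n^{1-\ee}]$, and then apply the strong Markov property of the chain $(\xi_i)_{i\ge0}$ at the $(\mathscr G_n)$-stopping time $\tilde\nu_{n,t}$. On the event $\{\tilde\nu_{n,t}=\ell<\tilde\tau^{\mathfrak f}_t\}$ the shifted chain started from $\xi_\ell$ carries a starting level $t''=t+\sum_{i=1}^{\ell}\sigma_p(\xi_i)-\tilde f(\xi_0)$ which, by the definition \eqref{nu n02} of $\tilde\nu_{n,t}$, satisfies $|t''|\ge 2n^{1/2-\ee}$; since the walk has not yet exited $\mathbb R_+$ we in fact have $t''+\tilde f(\xi_\ell)\ge 0$, so $t''\ge -|\tilde f(\xi_\ell)|$, and, using that on $\{\tilde\nu_{n,t}=\ell\}$ the size of the last jumps is controlled, $t''$ is comparable to $2n^{1/2-\ee}$ up to an additive error of order $n^{1/2-2\ee}+|\tilde f(\xi_\ell)|$. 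First I would therefore write
\begin{align*}
I=\sum_{\ell=p+1}^{[n^{1-\ee}]}\bb E_a\Bigg[\,\bb E_{\xi_\ell}\Bigg(\Bigg|\sum_{j=\tilde\tau^{\mathfrak f}_{t''}-p+1}^{\tilde\tau^{\mathfrak f}_{t''}}h_p(\xi_j)\Bigg|+|\tilde f(\xi_{\tilde\tau^{\mathfrak f}_{t''}})|;\ \tilde\tau^{\mathfrak f}_{t''}\le [n^{1-\ee}]-\ell\Bigg)\ ;\ \tilde\nu_{n,t}=\ell<\tilde\tau^{\mathfrak f}_t\Bigg],
\end{align*}
using the conditioning formula \eqref{PPPPR002} exactly as in Lemma \ref{Lemma-Markov}, with the bookkeeping that after the shift the perturbation data and the remaining horizon are both of size at most $n^{1-\ee}$.

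The inner expectation is precisely of the type controlled by Lemma \ref{Exponential-Y-tau} (applied with time horizon $[n^{1-\ee}]-\ell$, which we may as well bound below by $n^{1-\ee}/2$ on the relevant range of $\ell$, say $\ell\le \frac12 n^{1-\ee}$; the complementary range $\ell>\frac12 n^{1-\ee}$ is negligible because Lemma \ref{Lemma 2-app} gives $\bb P_a(\tilde\nu_{n,t}>\tfrac12 n^{1-\ee})\le 2e^{-\beta n^{\ee}}$ and the integrand is at most polynomial after Cauchy--Schwarz). Lemma \ref{Exponential-Y-tau} bounds the inner conditional expectation by
$$c\,p\,(1+|\tilde f(\xi_\ell)|)\Big(\max\{t'',0\}+\big|\sigma(\cdot)\big|+|\tilde f(\xi_\ell)|\Big)(n^{1-\ee})^{-\beta_0}+c\,p\,(1+|\tilde f(\xi_\ell)|)\,n^{-2}\!\!\sum_{k}\!\frac{1}{k^2}\mathcal F_k(\xi_\ell),$$
and the crucial point is that $t''$ contributes a factor of order $n^{1/2-\ee}$; combined with the prefactor $(n^{1-\ee})^{-\beta_0}$ and one extra power of $n^{-1}$ coming from the fact that $\tilde\tau^{\mathfrak f}_t>\tilde\nu_{n,t}$ forces $t''\ge -|\tilde f(\xi_\ell)|$ which is a rare event of probability $O(n^{-\beta})$ by Lemma \ref{Lemma-nu-tau}, the total estimate will exhibit the claimed $n^{-1-\beta}$ decay. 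Concretely I would pull $|\tilde f(\xi_\ell)|$ and $\mathcal F_k(\xi_\ell)$ out through Cauchy--Schwarz against $\bb P_a(\tilde\tau^{\mathfrak f}_t>\tilde\nu_{n,t})^{1/2}$ (Lemma \ref{Lemma-nu-tau}) and against the uniform exponential moment bound \eqref{exp mom for g 002} (which controls $\bb E_{\xi_\ell}\mathcal F_k(\xi_\ell)$ in expectation via the stationarity identity \eqref{stationary-nu-001}), exactly the pattern used in Lemmas \ref{Lem-bound-tau_p-new} and \ref{Lemma martingale-001}. The $\ell$-sum over $\le n^{1-\ee}$ terms then costs one power of $n$, which is absorbed by choosing $\ee$ small relative to $\beta_0$, $\beta$.

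The main obstacle I anticipate is purely technical bookkeeping rather than conceptual: one must keep careful track of how the ``range of dependence'' parameter $p$ and the remaining horizon interact with the shift by $\tilde\nu_{n,t}$, since Lemma \ref{Exponential-Y-tau} is stated for $\mathscr A_p$-measurable perturbations, and after shifting by $\tilde\nu_{n,t}$ one must verify that the perturbation $\tilde f$ seen by the shifted chain is still $\mathscr A_p$-measurable in the appropriate sense and that its exponential moment $\mathcal F_k$ at the shifted starting point integrates correctly against $\bb P_a$. The key estimate that makes the powers balance is the extra factor $n^{1/2-\ee}/n^{\beta_0}$ from the high level combined with the $n^{-\beta}$ from Lemma \ref{Lemma-nu-tau}; I would make sure $\beta_0>\tfrac12$ (which we may assume, shrinking $\beta_0$ being harmless only in the wrong direction — in fact Lemma \ref{Lem-tau-prior}'s $\beta$ can be taken $<\tfrac14$, so instead the gain must come entirely from the product of the two small probabilities and I would instead split $t''$ as $t''=\Theta(n^{1/2-\ee})$ and note $\max\{t'',0\}\le t''\le 2n^{1/2}$, so the $n^{1/2}$ is paid for by requiring the combined exponent $\beta+\beta_0(1-\ee)-\tfrac12>1$, which forces taking $\beta_0$ from Lemma \ref{Exponential-Y-tau} — whose $\beta$ equals that of Lemma \ref{Lem-tau-prior} — too small; the correct fix is to use instead the refined bound on $\bb P_a(\tilde\tau^{\mathfrak f}_t>\tilde\nu_{n,t})$ from Lemma \ref{Lemma-nu-tau} which already carries $n^{-\beta}$ with the \emph{good} $\beta$, and to observe that after the shift the starting level relative to the boundary is $t''+\tilde f(\xi_\ell)$ which is only $O(n^{1/2-2\ee}+|\tilde f(\xi_\ell)|)$, not $O(n^{1/2})$, because exiting at $\tilde\nu_{n,t}$ on the negative side is what is excluded). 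Resolving this exponent accounting — i.e.\ recognising that the relevant ``$t$'' fed into Lemma \ref{Exponential-Y-tau} after the shift is the small quantity $t''+\tilde f(\xi_\ell)=O(n^{1/2-2\ee})$ and not $t''$ itself — is the step I expect to require the most care.
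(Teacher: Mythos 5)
Your plan diverges from the paper and, as written, would not close. The paper's proof of Lemma~\ref{Lemma martingale-002} is a one-shot H\"older argument, with no decomposition over the value of $\tilde\nu_{n,t}$ and no use of the Markov property. One writes
\begin{align*}
I \le \bb E_a^{1/\eta}\Bigg(\Big| \textstyle\sum_{j=\tilde\tau^{\mathfrak f}_t-p+1}^{\tilde\tau^{\mathfrak f}_t} h_p(\xi_j)\Big|^{\eta}+|\tilde f(\xi_{\tilde\tau^{\mathfrak f}_t})|^{\eta};\ p+1<\tilde\tau^{\mathfrak f}_t\le[n^{1-\ee}]\Bigg)\,\bb P_a^{1/\eta'}\!\left(\tilde\tau^{\mathfrak f}_t>\tilde\nu_{n,t}\right),
\end{align*}
bounds the first factor by summing crudely over the possible values $\ell$ of $\tilde\tau^{\mathfrak f}_t$ (giving roughly $n^{1/\eta}\big(p+\frac1n\sum_{k}\mathcal F_k(a)\big)^{1/\eta}$, using \eqref{exp mom for f 001} and \eqref{def-Fka}), and bounds the second by Lemma~\ref{Lemma-nu-tau}; taking $\eta'$ close to $1$ keeps essentially all of the $n^{-\beta}$ from Lemma~\ref{Lemma-nu-tau}, and taking $\eta$ large makes $n^{1/\eta}\,p^{1/\eta}$ of size $n^{o(1)}$, after which the $n^{-1}$ in the statement is produced automatically because $\sum_{k=p+1}^n\mathcal F_k(a)\ge (n-p)$, so $\frac1n\sum_k\mathcal F_k(a)\ge\frac12$ and one trades $\frac1n\sum\mathcal F$ for $\sum\mathcal F$ at the cost of $n^{-1}$.

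Against this, your proposal has several concrete problems. First, after the strong Markov decomposition at $\tilde\nu_{n,t}=\ell$, the inner expectation you write is over the event $\{\tilde\tau^{\mathfrak f}_{t''}\le[n^{1-\ee}]-\ell\}$, a \emph{bulk} constraint, while Lemma~\ref{Exponential-Y-tau} is a \emph{tail} estimate: it controls $\bb E_a(\cdots;\tilde\tau^{\mathfrak f}_t>n)$. It does not apply to your inner expectation, nor does one get it by subtraction, because the lemma gives no handle on the unrestricted total. Second, the claim that ``the relevant $t$ fed into the lemma after the shift is the small quantity $t''+\tilde f(\xi_\ell)=O(n^{1/2-2\ee})$'' is wrong. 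On $\{\tilde\nu_{n,t}=\ell<\tilde\tau^{\mathfrak f}_t\}$ you have $|t''|\ge 2n^{1/2-\ee}$ and $t''+\tilde f(\xi_\ell)\ge 0$. Typically $\tilde f(\xi_\ell)$ is $O(1)$, so $t''\ge 2n^{1/2-\ee}-|\tilde f(\xi_\ell)|$ is positive and of order $n^{1/2-\ee}$, \emph{not} $n^{1/2-2\ee}$, and $t''+\tilde f(\xi_\ell)$ is of the same large order; nothing caps it from above. Your ``fix'' therefore removes a factor of $n^{1/2-\ee}$ that is genuinely present, which would break the balance of exponents you were trying to salvage. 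Third, the accounting ``the $\ell$-sum costs one power of $n$'' is not how the disjoint decomposition works: the events $\{\tilde\nu_{n,t}=\ell\}$ are disjoint, so summing their contributions recovers the total probability $\bb P_a(\tilde\tau^{\mathfrak f}_t>\tilde\nu_{n,t})$ at most once, and you cannot use Lemma~\ref{Lemma-nu-tau} a second time as an ``extra power of $n^{-1}$'' on top of this. The combination of these three points leaves your exponent budget short, as your own closing paragraph already half-concedes. The fix is to abandon the Markov decomposition and instead mimic Lemma~\ref{Lemma martingale-001}: one direct H\"older step, with the probability factor $\bb P_a(\tilde\tau^{\mathfrak f}_t>\tilde\nu_{n,t})^{1/\eta'}$ supplied by Lemma~\ref{Lemma-nu-tau}.
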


\begin{proof} 
By H\"older's inequality, with $\eta,\eta' >1$ and $\frac{1}{\eta} +\frac{1}{\eta'}=1$, we have 
\begin{align} \label{CSH-LLL-001}
I \leq 
   \bb E_a^{1/\eta} \Bigg( \Bigg| \sum_{j=\tilde \tau^{\mathfrak f}_{t} -p+1}^{\tilde \tau^{\mathfrak f}_{t}} h_p(\xi_j) \Bigg|^{\eta} + |\tilde f(\xi_{ \tilde \tau^{\mathfrak f}_{t} })|^\eta; 
  p+1< \tilde \tau^{\mathfrak f}_{t} \leq  [n^{1-\ee }] \Bigg)
   \bb P_a^{1/\eta'} \left(  \tilde \tau^{\mathfrak f}_{t} >\tilde \nu_{n,t}  \right).
\end{align}
Applying \eqref{def of func h-001} and the moment assumption \eqref{exp mom for f 001}, we get
\begin{align*}
\bb E_a \Bigg( \Bigg| \sum_{j = \tilde \tau^{\mathfrak f}_{t}-p+1}^{\tilde \tau^{\mathfrak f}_{t}} h_p(\xi_j) \Bigg|^\eta; \ p+1<  \tilde \tau^{\mathfrak f}_{t} \leq  [n^{1-\ee }]  \Bigg) 
& =  \sum_{\ell =p+2}^{ [n^{1-\ee}] } \bb E_a \Bigg( \Bigg| \sum_{j = \ell-p+1}^{\ell} h_p(\xi_j)  \Bigg|^\eta; \tilde \tau^{\mathfrak f}_{t}=\ell  \Bigg)\\
&  \leq \sum_{\ell =p+2}^{ [n^{1-\ee}] } \bb E_a \Bigg( \Bigg| \sum_{j = \ell-p+1}^{\ell} h_p(\xi_j)  \Bigg|^\eta  \Bigg)
\leq c  n p^{\eta}. 
\end{align*}
In view of \eqref{def-Fka}, it holds that
\begin{align*}
\bb E_a \left( |\tilde f(\xi_{ \tilde \tau^{\mathfrak f}_{t} })|^\eta; \ p +1 < \tilde \tau^{\mathfrak f}_{t} \leq  [n^{1-\ee }]  \right)
= \sum_{\ell =p+2}^{ [n^{1-\ee}] } \bb E_a \Bigg( |\tilde f(\xi_{ \ell })|^\eta; \tilde \tau^{\mathfrak f}_{t}=\ell  \Bigg)
\leq c \sum_{\ell =p+1}^{n} \mathcal F_{\ell}(a).
\end{align*}
The last two bounds give
\begin{align*}
& \bb E_a^{1/\eta} \Bigg( \Bigg|  \sum_{j=\tilde \tau^{\mathfrak f}_{t} -p+1}^{\tilde \tau^{\mathfrak f}_{t}} h_p(\xi_j)  \Bigg|^{\eta} 
+ |\tilde f(\xi_{ \tilde \tau^{\mathfrak f}_{t} })|^\eta; \ p +1 < \tilde \tau^{\mathfrak f}_{t} \leq  [n^{1-\ee }]   \Bigg)  \notag\\
&\leq  c \bigg(  n p^{\eta}    +  \sum_{\ell =p+1}^{n} \mathcal F_{\ell}(a)    \bigg)^{1/\eta}  
\leq  c n^{1/\eta}  \bigg( p  + \frac{1}{n} \sum_{\ell =p+1}^{n} \mathcal F_{\ell}(a)  \bigg)^{1/\eta}.
\end{align*}
Using Lemma \ref{Lemma-nu-tau} and taking $\eta'$ to be sufficiently close to $1$, we get
\begin{align*}
\bb P_a^{1/\eta'} \left( \tilde \tau^{\mathfrak f}_{t} >\tilde \nu_{n,t}  \right)
 \leq \frac{c}{n^{\beta/2 }} \bigg( \max \{t,0\} + | \sigma(g_p\cdots g_1, x)| + |\tilde f(a)| + \frac{1}{n} \sum_{k = p+1}^n \mathcal F_k(a) \bigg)^{1/\eta'} . 
\end{align*}
Therefore, by \eqref{CSH-LLL-001} and taking into account that $\eta > 1$ is sufficiently large, we obtain that for $p\leq n^{\ee} $,  
\begin{align*}
I &\leq c n^{1/\eta}  \bigg( p  + \frac{1}{n} \sum_{\ell =p+1}^{n} \mathcal F_{\ell}(a)  \bigg)^{1/\eta} \\
&\quad \times \frac{1}{n^{\beta/2 }}  \bigg( \max \{t,0\} + | \sigma(g_p\cdots g_1, x)| + |\tilde f(a)| + \frac{1}{n} \sum_{k = p+1}^n \mathcal F_k(a)  \bigg)^{1/\eta'}  \\
& \leq \frac{c}{n^{1 + \beta/4 }} \left( \max \{t,0\} + | \sigma(g_p\cdots g_1, x)|  + |\tilde f(a)|  +  1 \right) \sum_{k = p+1}^n \mathcal F_k(a),
\end{align*}
which concludes the proof of the lemma.
\end{proof}

\begin{lemma} \label{Lemma 3}
There exists a constant $\beta > 0$ with the following property. 
For any $\ee \in (0, \frac{1}{2})$, there is a constant $c>0$ such that for any $n \geq 1$, $p\leq n^{\ee}$, $a=(g_0,\ldots,g_{p},x,q) \in \bb A_p$ and $t \in \bb R$,
\begin{align*}
&  \bb{E}_a \left( \left\vert t + \sigma(g_{p} \cdots g_{1}, x) + \sum_{i=1}^n h_p(\xi_i) \right\vert; 
\tilde \tau^{\mathfrak f}_{t} >n, \tilde \nu_{n,t} >n^{1-\ee }\right)   \notag\\
& \leq  
c \frac{  \max \{t,0\} + \Big|\sigma(g_p\cdots g_1,x) \Big| + |\tilde f(a)|}{n^{\beta}} 
\left(  \mathcal F_n(a)  + |\tilde f(a)| \right)
 +  |\tilde f(a)| \frac{ \sum_{k = p+1}^n \mathcal F_k(a) }{n^3}. 
\end{align*}
\end{lemma}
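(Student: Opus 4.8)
The plan is to reproduce, in the Markov chain setting, the argument used in Lemma \ref{Lem-bound-tau_p-new} (and its consequence near the level $\sqrt n$), but now restricting to the event $\{\tilde\nu_{n,t}>n^{1-\ee}\}$, on which the walk stays within a window of size $2n^{1/2-\ee}$ up to time $n^{1-\ee}$. On this event the expectation of interest is genuinely small: the walk is confined, so the increments beyond time $n$ plus the perturbation at time $n$ contribute only a small correction. Concretely, I would denote $t' = t + \sigma(g_p\cdots g_1,x)$ and split the martingale $M_n = \sum_{i=1}^n h_p(\xi_i)$ using the optional stopping identity $\bb E_a(M_n; \tilde\tau^{\mathfrak f}_t>n) = -\bb E_a(M_n; \tilde\tau^{\mathfrak f}_t\le n)$, so that on the event $\{\tilde\tau^{\mathfrak f}_t>n\}$ the value $t'+M_n$ can be controlled by the overshoot at the stopping time together with the a priori bound $\bb P_a(\tilde\tau^{\mathfrak f}_t>n)$ of Lemma \ref{Lem-tau-prior}.

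\textbf{Key steps.} First I would introduce the good event $A_n = \{\max_{p<k\le n}|h_p(\xi_k)|\le n^{1/2-2\ee}\} \cap \{\max_{p<k\le n}|\tilde f(\xi_k)|\le n^{1/2-2\ee}\}$, exactly as in the proof of Lemma \ref{Lemma 1}, and estimate $\bb P_a(A_n^c) \le c\, e^{-\frac{\alpha}{2} n^{1/2-2\ee}} \sum_{k=p+1}^n \mathcal F_k(a)$ using Markov's inequality, the exponential moment \eqref{exp mom for f 001}, and \eqref{def-Fka}. Second, on $A_n \cap \{\tilde\tau^{\mathfrak f}_t>n, \tilde\nu_{n,t}>n^{1-\ee}\}$, the quantity $|t'+\sum_{i=1}^n h_p(\xi_i)|$ is bounded in the same spirit as in Lemma \ref{Lemma 2-0}/Lemma \ref{Lemma 1}: since $\tilde\nu_{n,t}>n^{1-\ee}$ forces $|t'+M_k-\tilde f(a)|<2n^{1/2-\ee}$ for all $p<k\le n^{1-\ee}$, and $n\le n$ (so for $n\le n^{1-\ee}$ this is direct), one gets $|t'+M_n| \le 2n^{1/2-\ee}+|\tilde f(a)| \le t/n^{\ee} + |\tilde f(a)|$ when $t\ge n^{1/2-\ee}$; for $t<n^{1/2-\ee}$ the whole contribution is bounded by $cn^{1/2-\ee}+|\tilde f(a)|$ times $\bb P_a(\tilde\tau^{\mathfrak f}_t>n)$, and Lemma \ref{Lem-tau-prior} supplies the extra $(n-p)^{-\beta}$ decay. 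Third, on $A_n^c$, apply H\"older's inequality together with the $L^2$ bound $\bb E_a^{1/2}(|t'+M_k|^2) \le |t'| + cn^{1/2}$ from Minkowski (as in \eqref{Inequ-Minkowski-01}) and the bound on $\bb P_a(A_n^c)$ just obtained, summing over $k\le n$ to pick up a factor $n$. Assembling the three pieces — and noting that the restriction to $\{\tilde\nu_{n,t}>n^{1-\ee}\}$ is what makes the first piece small rather than merely $O(1)$ — gives a bound of the advertised shape; the $\mathcal F_n(a)$ term arises from estimating $|\tilde f(\xi_n)|$ via $\bb E_a(|\tilde f(\xi_n)|; \tilde\tau^{\mathfrak f}_t>n) \le \mathcal F_n(a)^{1/2}\,\bb P_a^{1/2}(\tilde\tau^{\mathfrak f}_t>n)$ plus the Lemma \ref{Lem-tau-prior} tail.

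\textbf{Main obstacle.} The delicate point is matching the precise power $n^{\beta}$ and the exact combination of factors $(\mathcal F_n(a)+|\tilde f(a)|)$ and $|\tilde f(a)| n^{-3}\sum \mathcal F_k(a)$ claimed in the statement; this requires carefully choosing $\eta,\eta'$ in the H\"older splittings (as in Lemmas \ref{Lemma martingale-001} and \ref{Lemma martingale-002}) so that the polynomial gains from Lemma \ref{Lem-tau-prior} dominate the $\sqrt p$ and $\sqrt n$ losses, and relabelling $\beta$ at the end. I expect the confinement argument under $\{\tilde\nu_{n,t}>n^{1-\ee}\}$ to go through smoothly since it mirrors the unperturbed case in Lemma \ref{Lemma 1}; the real bookkeeping is in tracking the $\mathcal F_k(a)$-dependence through the H\"older estimates so as to land on exactly the right-hand side as written. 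One should also be slightly careful that $\tilde\tau^{\mathfrak f}_t$ is not a stopping time adapted in the naive way, but since we are only using $\bb P_a$-probabilities of events $\{\tilde\tau^{\mathfrak f}_t>n\}$ and $\{\tilde\tau^{\mathfrak f}_t=\ell\}$, which are $\mathscr G_\ell$-measurable, the martingale manipulations via $M_n$ (which \emph{is} $(\mathscr G_n)$-adapted) remain valid.
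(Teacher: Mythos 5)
Your treatment of the main term does not work. You propose to bound $|t'+M_n|$ pointwise on the event $A_n\cap\{\tilde\tau^{\mathfrak f}_{t}>n,\ \tilde\nu_{n,t}>n^{1-\ee}\}$ by $2n^{1/2-\ee}+|\tilde f(a)|$, arguing that $\{\tilde\nu_{n,t}>n^{1-\ee}\}$ confines the walk. But by the definition \eqref{nu n02}, this event only constrains $|t+\sum_{i=1}^k\sigma_p(\xi_i)-\tilde f(\xi_0)|$ for $k\leq n^{1-\ee}$; it says nothing about the walk between times $n^{1-\ee}$ and $n+p$, which is where the quantity $t'+\sum_{i=1}^n h_p(\xi_i)=t+\sum_{i=1}^{n+p}\sigma_p(\xi_i)$ lives. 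Your parenthetical ``$n\le n^{1-\ee}$'' is false for $n\geq 2$, and without it the pointwise bound collapses. The actual mechanism in the lemma is the opposite one: the event $\{\tilde\nu_{n,t}>n^{1-\ee}\}$ is exponentially \emph{rare} (Lemma \ref{Lemma 2-app} gives $\bb P_a(\tilde\nu_{n,t}>n^{1-\ee})\leq 2e^{-\beta n^{\ee}}$ once $p\leq n^{\ee}\leq n^{1-2\ee}$), and one exploits this via the Cauchy--Schwarz inequality against the polynomial $L^2$ bound on the walk, rather than any confinement of $t'+M_n$ itself. Even granting your pointwise bound, multiplying $cn^{1/2-\ee}$ by the $(n-p)^{-\beta}$ tail of Lemma \ref{Lem-tau-prior} (whose $\beta$ is small, below $1/4$) would leave a growing factor that does not fit the claimed right-hand side.

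The paper's proof proceeds in two steps that your outline misses. First, on $\{\tilde\tau^{\mathfrak f}_{t}>n\}$ the perturbed value $t+\sigma(g_n\cdots g_1,x)+\tilde f(\xi_n)-\tilde f(\xi_0)$ is non-negative, and Lemma \ref{Lem-bound-tau_p-new} bounds the expectation of the discrepancy $|-\sum_{i=n+1}^{n+p}\sigma_p(\xi_i)+\tilde f(\xi_n)-\tilde f(\xi_0)|$ on that event; this single application is what produces \emph{exactly} the error terms $(\mathcal F_n(a)+|\tilde f(a)|)$ and $|\tilde f(a)|n^{-3}\sum_k\mathcal F_k(a)$ in the statement, whose origin you were unsure about. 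Second, for the remaining non-negative quantity one applies Cauchy--Schwarz to split off $\bb P_a^{1/2}(\tilde\nu_{n,t}>n^{1-\ee})$, then Lemma \ref{Lem-trick} (which needs the non-negativity secured in step one, so that $\max\{t,0\}$ rather than $|t|$ appears for $t<0$) and Minkowski to bound the $L^2$ factor by $c(\max\{t,0\}+n+|\sigma(g_p\cdots g_1,x)|+\mathcal F_n(a)+|\tilde f(a)|)$; the product is $O(e^{-\beta n^{\ee}})$ times a polynomial and is absorbed into the first error term. Your event $A_n$ and the Hölder estimate on $A_n^c$ are not needed here at all.
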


\begin{proof}
Using Lemma \ref{Lem-bound-tau_p-new}, we get 
\begin{align*}
&  \bb{E}_a \left( \left\vert t + \sigma(g_{p} \cdots g_{1}, x) + \sum_{i=1}^n h_p(\xi_i) \right\vert; 
\tilde \tau^{\mathfrak f}_{t} >n, \tilde \nu_{n,t} >n^{1-\ee }\right)   \notag\\
&\leq  \bb{E}_a \left(  t + \sigma(g_{n} \cdots g_{1}, x) + \tilde f(\xi_n) - \tilde f(\xi_0); 
\tilde \tau^{\mathfrak f}_{t} >n, \tilde \nu_{n,t} >n^{1-\ee }\right)   \notag\\
&\quad + c \frac{  \max \{t,0\} + \Big|\sigma(g_p\cdots g_1,x) \Big| + |\tilde f(a)|}{n^{\beta}} 
\left(  \mathcal F_n(a)  + |\tilde f(a)| \right)
 +  |\tilde f(a)| \frac{ \sum_{k = p+1}^n \mathcal F_k(a) }{n^3}.
\end{align*}
Then, by  the Cauchy-Schwarz inequality, we get that, for any $t \in \bb R,$
\begin{align*}
& \bb{E}_a \left(  t + \sigma(g_{n} \cdots g_{1}, x) + \tilde f(\xi_n) - \tilde f(\xi_0); 
\tilde \tau^{\mathfrak f}_{t} >n, \tilde \nu_{n,t} >n^{1-\ee }\right) \notag\\
& \leq \bb{E}_a^{1/2} \left[ \Big(  t + \sigma(g_{n} \cdots g_{1}, x) + \tilde f(\xi_n) - \tilde f(\xi_0) \Big) ^{2};  \tilde \tau^{\mathfrak f}_{t} >n  \right] 
     \mathbb{P}_a^{1/2}\left( \tilde \nu_{n,t}>n^{1-\ee }\right). 
\end{align*}%
By Lemma \ref{Lem-trick}, we have
\begin{align*}
& \bb{E}_a \left(  t + \sigma(g_{n} \cdots g_{1}, x) + \tilde f(\xi_n) - \tilde f(\xi_0); 
\tilde \tau^{\mathfrak f}_{t} >n, \tilde \nu_{n,t} >n^{1-\ee }\right)   \notag\\
&  \leq \bb{E}_a^{1/2}\left( \Big(  t + \sigma(g_{n} \cdots g_{1}, x) + \tilde f(\xi_n) - \tilde f(\xi_0) \Big) ^{2};  \tilde \tau^{\mathfrak f}_{t} >n  \right) 
     \mathbb{P}_a^{1/2}\left( \tilde \nu_{n,t}>n^{1-\ee }\right) \notag\\ 
& \leq \left[ \max \{t,0\} +  \bb{E}_a^{1/2}\left( \Big( \sigma(g_{n} \cdots g_{1}, x) + \tilde f(\xi_n) - \tilde f(\xi_0) \Big) ^{2}\right) 
   \right]
     \mathbb{P}_a^{1/2}\left( \tilde \nu_{n,t}>n^{1-\ee }\right) \notag \\ 
& \leq  c \left( \max \{t,0\} +  n + \sigma(g_{p} \cdots g_{1}, x) + \mathcal F_n(a)  + | \tilde f(a) |  \right)  
    e^{- \beta n^{\ee}}, 
\end{align*}
where in the last bound we used Minkowski's inequality and Lemma \ref{Lemma 2-app}. 
\end{proof}


\subsection{A monotonicity bound for the killed martingale $(M_{n})_{n\geq 1}$} \label{sec-bound for MC}

In this subsection, the functions $f_n$  depend only on the first $p$ coordinates. 
We aim to establish a monotonicity property for the following sequence, which was introduced previously in  
Subsection \ref{Subsec-Markov-chian} (see \eqref{EXPECT-E_x-002} and \eqref{EXPECT-E_x-001}): 
\begin{align} \label{recall-Wfn-at001}
W^{\mathfrak f}_{n} (a, t) 
= \bb{E}_a \left( t'+M_{n};\ \tilde \tau^{\mathfrak f}_{t} >n\right)
= \bb E_a \bigg( t + \sum_{i=1}^{n+p} \sigma_p(\xi_i); \tilde \tau^{\mathfrak f}_{t} >n \bigg), 
\end{align}
where $a = (g_0,\ldots, g_{p}, x, q) \in \bb A_p$ and $t' = t + \sigma(g_{p} \cdots g_{1}, x)$.
The lemma presented below plays a key role in establishing the main results of this paper. 
Specifically, it plays a crucial role in proving Propositions \ref{Prop 001-app} and \ref{Prop 001-app2}. 
Although we will apply a conceptual approach similar to that employed in establishing Proposition \ref{Prop-Vn-001}, 
it is important to note that this proof is significantly more complex and technically demanding.

\begin{lemma} \label{Lemma 4}
There exists a constant $\ee_0 > 0$  such that  
for any $\ee \in (0, \ee_0)$, the following holds:
there exists a constant $c_{\ee} > 0$  such that for any $n \geq 1$, $1 \leq p \leq n^{\ee}$, 
$a=(g_0,\ldots, g_{p}, x, q) \in \bb A_p$, $t \in \bb R$ 
and any sequence $\mathfrak f = (f_n)_{n \geq 0}$ of $\scr A_p$-measurable functions, 
\begin{align}\label{bound-M_n-001}
W^{\mathfrak f}_{n} (a, t) 
 &\leq \left( 1+\frac{c_{\ee}}{n^{\ee }}\right)  
   W^{\mathfrak f}_{[n^{1-\ee }]} (a, t)  + \frac{c_{\ee}}{n^{\ee}}  ( 1+  |\sigma(g_{p} \cdots g_{1}, x)| )
   \bigg( |\tilde f(a)| + \frac{1}{n} \sum_{j = p+1}^n \mathcal F_j(a)  \bigg)
   \notag  \\
& \qquad\qquad\qquad  \times  
\bigg(\max \{t,0\}  +  |\sigma(g_{p} \cdots g_{1}, x)| + |\tilde f(a)| + \frac{1}{n} \sum_{j = p+1}^n \mathcal F_j(a)  \bigg).
\end{align}
\end{lemma}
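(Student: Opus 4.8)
The strategy mirrors the proof of Proposition \ref{Prop-Vn-001}: stop the martingale $(M_n)_{n\geq 0}$ at the exit time $\tilde\nu_{n,t}$ from the strip $[-2n^{1/2-\ee},2n^{1/2-\ee}]$, apply the Markov property via Lemma \ref{Lemma-Markov}, and use that the killed process behaves like the unperturbed one near the level $n^{1/2}$. First I would decompose, according to whether $\tilde\nu_{n,t}$ exceeds $n^{1-\ee}$ or not,
\begin{align*}
W^{\mathfrak f}_{n}(a,t)
&= \bb E_a\Big( t'+M_n; \tilde\tau^{\mathfrak f}_{t}>n, \tilde\nu_{n,t}>n^{1-\ee}\Big)
+ \sum_{k=p+1}^{[n^{1-\ee}]} \bb E_a\Big( t'+M_n; \tilde\tau^{\mathfrak f}_{t}>n, \tilde\nu_{n,t}=k\Big),
\end{align*}
with $t'=t+\sigma(g_p\cdots g_1,x)$. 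The first term is controlled directly by Lemma \ref{Lemma 3}, which gives a bound of the required form with an extra $e^{-\beta n^{\ee}}$ (absorbed into $n^{-\ee}$). For each summand in the second term, Lemma \ref{Lemma-Markov} rewrites it as $\bb E_a[W^{\mathfrak f}_{n-k}(\xi_k, t+\sum_{i\le k}\sigma_p(\xi_i)+\tilde f(\xi_k)-\tilde f(\xi_0)); \tilde\tau^{\mathfrak f}_t>k, \tilde\nu_{n,t}=k]$, and on the event $\{\tilde\tau^{\mathfrak f}_t>k=\tilde\nu_{n,t}\}$ the argument of $W^{\mathfrak f}_{n-k}$ is of size $\asymp n^{1/2-\ee}$, in particular positive and large.

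Next I would establish the analogue of the a priori bound \eqref{Un-xy-Bound-001-0}, namely that for $s\geq n^{1/2-\ee}$ one has $W^{\mathfrak f}_{m}(b,s)\leq (1+cn^{-\ee})(s+\sigma(g_p\cdots g_1,x_b))$ plus error terms governed by $\mathcal F$; this is exactly what Lemma \ref{Lemma 1} provides, since by the optional stopping theorem applied to the martingale $M$,
\[
W^{\mathfrak f}_{m}(b,s) = s' - \bb E_b\Big( s' + \textstyle\sum_{i=1}^{\tilde\tau^{\mathfrak f}_s}h_p(\xi_i); \tilde\tau^{\mathfrak f}_s\le m\Big) \le s' + \bb E_b\Big( \big|s'+\textstyle\sum_{i=1}^{\tilde\tau^{\mathfrak f}_s}h_p(\xi_i)\big|; p<\tilde\tau^{\mathfrak f}_s\le m\Big) + (\text{boundary/early terms}),
\]
where $s'=s+\sigma(g_p\cdots g_1,x_b)$ and the last expectation is bounded by $s/n^{\ee}$ plus an exponentially small remainder via Lemma \ref{Lemma 1}. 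Feeding this into the Markov decomposition and resumming over $k$, the leading contributions reassemble into $(1+c_\ee n^{-\ee})\,\bb E_a(t'+M_{[n^{1-\ee}]}; \tilde\tau^{\mathfrak f}_t>[n^{1-\ee}], \tilde\nu_{n,t}\le n^{1-\ee})\le (1+c_\ee n^{-\ee})W^{\mathfrak f}_{[n^{1-\ee}]}(a,t)$; here one uses the same martingale/centering manipulation as in the proof of Proposition \ref{Prop-Vn-001} (the identities around \eqref{Equ-desired-0}) to pass from $M_n$ back to $M_k$ under the constraint $\tilde\nu_{n,t}=k$. The discrepancy between $M_{[n^{1-\ee}]}$ and the true stopping level — coming from the perturbation terms $\tilde f(\xi_k)-\tilde f(\xi_0)$ and the jump overshoot $\sum_{j=\tilde\nu_{n,t}-p+1}^{\tilde\nu_{n,t}}h_p(\xi_j)$ — is precisely what Lemmas \ref{Lemma martingale-001} and \ref{Lemma martingale-002} estimate, producing the terms $\frac{1}{n}\sum_{j=p+1}^n\mathcal F_j(a)$ and the factor $(1+|\sigma(g_p\cdots g_1,x)|)(|\tilde f(a)|+\cdots)$ in \eqref{bound-M_n-001}.

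The main obstacle I anticipate is bookkeeping the propagation of the perturbation-induced shift through the Markov step: when we restart at $\xi_k$, the new ``$t$'' is shifted by $\tilde f(\xi_k)-\tilde f(\xi_0)$, and $\tilde f(\xi_k)$ is not bounded — only exponentially integrable through $\mathcal F_k(a)$ — so every application of the a priori estimate must be split on whether $|\tilde f(\xi_k)|$ is comparable to $n^{1/2-2\ee}$, with the large-deviation tail handled by Markov's inequality against $\mathcal F_k(a)$ and Hölder (exactly the pattern in Lemmas \ref{Lem-tau-prior}, \ref{Lemma 1}, \ref{Lemma martingale-002}). Keeping all the $\mathcal F_j(a)$ contributions, the geometric factors $|\sigma(g_p\cdots g_1,x)|$ from $\bb E_a(\sigma(g_p\cdots g_1,x)^2)\lesssim p\le n^\ee$, and the telescoping of the error terms $m_j^{-\beta}$ in line with the recursion, while ensuring the final bound has exactly the multiplicative structure $(\cdots)(\max\{t,0\}+\cdots)$ claimed, is the delicate part; the probabilistic inputs themselves are all supplied by the lemmas of this subsection. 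Once \eqref{bound-M_n-001} is in hand for $\scr A_p$-measurable $\mathfrak f$, it combines with Lemma \ref{Lem-tau-prior} (to pass from $W^{\mathfrak f}_n$ back to the genuinely relevant quantity with the $\tilde f$ corrections) and, after integration over $(g_0,\dots,g_p)$ and the finite-size approximation of Proposition \ref{Prop UfA g approx 002-app}, yields Proposition \ref{AA-Prop iterative 002-app}.
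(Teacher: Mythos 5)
Your proposal follows essentially the same route as the paper's proof: the decomposition on $\tilde\nu_{n,t}\lessgtr n^{1-\ee}$, the application of Lemma \ref{Lemma 3} to the bad event, the Markov restart at $\tilde\nu_{n,t}=k$ via Lemma \ref{Lemma-Markov}, the a priori bound at level $\asymp n^{1/2-\ee}$ obtained from optional stopping and Lemma \ref{Lemma 1}, the martingale reassembly modeled on \eqref{Equ-desired-0}, and the use of Lemmas \ref{Lemma martingale-001} and \ref{Lemma martingale-002} for the overshoot and perturbation corrections. You also correctly anticipate the key technical point the paper has to handle carefully — that $\tilde f(\xi_k)$ is only exponentially integrable, forcing a truncation on $|\tilde f(\xi_{\tilde\nu_{n,t}})|$ (the $I_1$/$I_2$ split in the paper), with the tail controlled by $\mathcal F_k(a)$ via Markov's inequality and H\"older.
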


\begin{proof}
We shall first prove that there exist constants $c, c', \ee_0 > 0$ such that for any 
$n\geq 1$, $\ee \in \left( 0,\ee _{0}\right) $, $1\leq p\leq n^{\ee_0}$, 
 $t\geq n^{1/2-\ee }$ and $a=(g_0,\ldots, g_{p}, x, q) \in \bb A_p$,  
\begin{align}\label{Un-xy-Bound-001}
W^{\mathfrak f}_{n} (a, t) 
&\leq \left( 1+ \frac{c}{n^{\ee}} \right) t  +  |\sigma(g_{p} \cdots g_{1}, x)| \notag \\
& \quad + c e^{- c' n^{1/3} } \Big(1+ t + |\sigma(g_{p} \cdots g_{1}, x)| \Big)   
\sum_{k = p+1}^n \mathcal F_k(a). 
\end{align}
Let $a \in \bb A_p$ and $t' = t + \sigma(g_p \cdots g_1, x)$. 
By \eqref{def-Mn-martingel-001} and Lemma \ref{Lemma-Martingale001}, the sequence of functions 
\begin{align*}
M_0: = 0,  \quad 
 M_n : \xi \mapsto  \sum_{i=1}^n h_p(\xi_i),  \quad n \geq 1
\end{align*}
is a martingale with respect to the probability measure $\bb P_a$ and the natural sequence of  $\sigma$-algebras $(\mathscr G_n)_{n \geq 0}$. 
By \eqref{recall-Wfn-at001} and the optional stopping theorem, we get
\begin{align}\label{bound G 000}
W^{\mathfrak f}_{n} (a, t) 
&=\bb{E}_a \left(t'+M_{n}\right) -\bb{E}_a \left( t'+M_{n};\ \tilde \tau^{\mathfrak f}_{t} \leq n\right)  \notag \\
&=t'-\bb{E}_a \left( t'+M_{\tilde \tau^{\mathfrak f}_{t}};\  \tilde \tau^{\mathfrak f}_{t}  \leq n\right)  \notag \\
& =  t' - \bb{E}_a \left(  t'+M_{\tilde \tau^{\mathfrak f}_{t}};\ \tilde \tau^{\mathfrak f}_{t} \leq p \right)
   - \bb{E}_a \left(  t'+M_{\tilde \tau^{\mathfrak f}_{t}};\ p < \tilde \tau^{\mathfrak f}_{t} \leq n \right)  \notag\\
& \leq t' -  \bb{E}_a \left(  t'+M_{p};\ \tilde \tau^{\mathfrak f}_{t} \leq p \right)
  + \bb{E}_a \left( \left\vert t' + M_{\tilde \tau^{\mathfrak f}_{t}} \right\vert ;\ p < \tilde \tau^{\mathfrak f}_{t} \leq n \right). 
\end{align}
For the first expectation on the right-hand side of \eqref{bound G 000}, we have 
\begin{align}\label{Second-Expect}
 t' - \bb{E}_a \left(  t'+M_{p};\ \tilde \tau^{\mathfrak f}_{t} \leq p \right)
& =   \bb{E}_a \left(  t'+M_{p};\ \tilde \tau^{\mathfrak f}_{t} > p \right)   \notag\\
& =  t' \bb P_a \left( \tilde \tau^{\mathfrak f}_{t} > p \right) +  \bb{E}_a \left(  M_{p};\ \tilde \tau^{\mathfrak f}_{t} > p \right)  \notag\\
& \leq \max\{t', 0\} + c p  \leq  \left| t + \sigma(g_{p} \cdots g_{1}, x) \right| + cp. 
\end{align}
For the second expectation on the right-hand side of \eqref{bound G 000},
by Lemma \ref{Lemma 1}, 
there exist constants $c, c', \ee_0 >0$
such that for any $\ee\in (0,\ee_0)$, $a \in \bb A_p$ and $t\geq  n^{1/2-\ee },$
\begin{align}\label{First-Expect}
\bb{E}_a \left( \left\vert t' + M_{\tilde \tau^{\mathfrak f}_{t}} \right\vert ;\ p < \tilde \tau^f_{t} \leq n \right)
\leq \frac{t}{n^{\ee }} + 
c e^{- c' n^{1/3} } \Big(1+ \Big| t + \sigma(g_p\cdots g_1,x) \Big|\Big)   \sum_{k = p+1}^n \mathcal F_k(a), 
\end{align}
where we have used the fact that $p\leq n^{\ee_0}$.
By substituting \eqref{Second-Expect} and \eqref{First-Expect} into \eqref{bound G 000}, we obtain the desired inequality \eqref{Un-xy-Bound-001}.

Now we proceed to \eqref{bound-M_n-001}. To do this, we will apply the Markov property 
and the bound \eqref{Un-xy-Bound-001}. 
First, we note that, by \eqref{recall-Wfn-at001}, for any $a\in \bb A_p$, $t \in \bb R$, $n \geq 1$ and $\ee >0,$ 
\begin{align}\label{bound J1+J2}
W^{\mathfrak f}_{n} (a, t)
  = \bb{E}_a \Big( t'+M_{n};\ \tilde \tau^{\mathfrak f}_{t} >n, \tilde \nu_{n,t} >  n^{1-\ee } \Big)  
 + \bb{E}_a \Big( t'+M_{n};\ \tilde \tau^{\mathfrak f}_{t} >n,  \tilde \nu_{n,t}  \leq n^{1-\ee }  \Big), 
\end{align}
where $\tilde \nu_{n,t}$ is defined by \eqref{nu n02}. 
For the first term, by Lemma \ref{Lemma 3},
for any $\ee \in \left( 0,\ee_{0}\right)$,  
\begin{align}\label{bound-J2-ay}
& \bb{E}_a \left( |t'+M_{n}|;\ \tilde \tau^{\mathfrak f}_{t} >n,   \tilde \nu_{n,t} > n^{1-\ee }\right)   \notag\\
&\leq c \frac{  \max \{t,0\} + \left| \sigma(g_p\cdots g_1,x) \right| + |\tilde f(a)|}{n^{\beta}} 
\left(  \mathcal F_n(a)  + |\tilde f(a)| \right)
 +  |\tilde f(a)| \frac{ \sum_{k = p+1}^n \mathcal F_k(a) }{n^3}.
\end{align}
For the second term on the right-hand side of \eqref{bound J1+J2}, we decompose it into two parts: 
\begin{align}\label{bound J1 001}
& \bb{E}_a \left( t'+M_{n};\ \tilde \tau^{\mathfrak f}_{t} > n,\ \tilde \nu_{n,t} \leq n^{1-\ee } \right)  \notag\\
& =  \bb{E}_a \left( t'+M_{n};\ \tilde \tau^{\mathfrak f}_{t} > n,\ \tilde \nu_{n,t} \leq n^{1-\ee },  |\tilde f(\xi_{\tilde \nu_{n,t}} )| >  n^{1/2-\ee} \right) \notag\\
& \quad +   \bb{E}_a \left( t'+M_{n};\ \tilde \tau^{\mathfrak f}_{t} > n,\ \tilde \nu_{n,t} \leq n^{1-\ee },  |\tilde f(\xi_{\tilde \nu_{n,t}} )| \leq  n^{1/2-\ee} \right)  \notag\\
& =:  I_1 (a, t) + I_2 (a, t). 
\end{align}
For the first term $I_1 (a, t)$,  by the Cauchy-Schwarz inequality, \eqref{def-Fka} and Markov's inequality, we have 
\begin{align} \label{bound J12 001}
I_1 (a, t)
& = \sum_{k=p+1}^{ [n^{1-\ee }] }  
 \bb{E}_a \left( t'+M_{n};\ \tilde \tau^{\mathfrak f}_{t} > n,\ \tilde \nu_{n,t} = k,  |\tilde f(\xi_{k} )| >   n^{1/2-\ee} \right) \notag\\
& \leq \sum_{k=p+1}^{ [n^{1-\ee }] }  
 \bb{E}_a^{1/2} \left( |t'+M_{n}|^2 \right)    
  \mathbb{P}_a^{1/2} \left(  |\tilde f(\xi_{k} )| >   n^{1/2-\ee} \right) \notag\\
 & \leq c'  \left( \max\{t, 0\} + |\sigma(g_{p} \cdots g_{1}, x)| + \sqrt{n} \right) e^{-\frac{\alpha}{2} n^{1/2-\ee}} 
  \bigg(  \sum_{k=p+1}^{ n }  \mathcal F_k(a) \bigg)^{1/2} \notag\\
 & \leq c' \left( 1+\max\{t, 0\} + |\sigma(g_{p} \cdots g_{1}, x)| \right) e^{-c n^{1/2-\ee}} 
  \bigg(  \sum_{k=p+1}^{ n }  \mathcal F_k(a) \bigg)^{1/2} . 
\end{align}
For the second term $I_2 (a, t)$ in \eqref{bound J1 001}, 
we note that, by \eqref{def-Mn-martingel-001} 
and the fact that $t' = t + \sigma(g_p \cdots g_1, x) = t + \sum_{i=1}^p \sigma_p (\xi_i)$, 
it holds that $t' + M_n = t +  \sum_{i=1}^{n+p} \sigma_p(\xi_i)$
and 
\begin{align}\label{equality-hp-sigmap}
t' + \sum_{i=1}^{k-p} h_p(\xi_i) = t + \sum_{i=1}^{k} \sigma_p(\xi_i). 
\end{align}
Hence, applying the Markov property (see Lemma \ref{Lemma-Markov}), we get 
\begin{align} \label{bound J1 002}
I_2 (a, t) 
& = \sum_{k=p+1}^{ [n^{1-\ee }] }
\bb{E}_a \left( t' + M_n; \ \tilde \tau^{\mathfrak f}_{t} > n,\ \tilde \nu_{n,t}=k,  |\tilde f(\xi_k )| \leq  n^{1/2-\ee} \right)  \notag\\
& = \sum_{k=p+1}^{ [n^{1-\ee }] }
  \bb E_a \bigg[ W^{\mathfrak f}_{n-k} \bigg( \xi_k, t' + \sum_{i=1}^{k-p} h_p(\xi_i) + \tilde f(\xi_k)- \tilde f(\xi_0) \bigg); 
      A_{n,k} \bigg], 
\end{align}
where 
\begin{align}\label{def-Ank}
A_{n,k} = \left\{ \tilde \tau^{\mathfrak f}_{t} >k, \ \tilde \nu_{n,t}=k, |\tilde f(\xi_k )| \leq  n^{1/2-\ee}  \right\}. 
\end{align}
Using the definition of the stopping times $\tilde \nu_{n, t}$ and $\tilde\tau^{\mathfrak f}_{t}$
(see \eqref{nu n02} and \eqref{def-tau-f-y})
and \eqref{equality-hp-sigmap}, 
one can verify that on the event $A_{n, k}$, 
we have simultaneously
$| t' + \sum_{i=1}^{k-p} h_p(\xi_i)  - \tilde f(\xi_0) | \geq 2 n^{1/2-\ee}$,
$ t' + \sum_{i=1}^{k-p} h_p(\xi_i) + \tilde f(\xi_k ) - \tilde f(\xi_0)\geq 0 $
and $|\tilde f(\xi_k )| \leq n^{1/2-\ee}$. All these bounds imply that
$ t' + \sum_{i=1}^{k-p} h_p(\xi_i) + \tilde f(\xi_k)- \tilde f(\xi_0)   \geq  n^{1/2-\ee}$.
Thus, we have the inclusion: 
\begin{align*}
A_{n,k }  \subseteq \bigg\{  t' + \sum_{i=1}^{k-p} h_p(\xi_i) + \tilde f(\xi_k)- \tilde f(\xi_0) \geq   n^{1/2-\ee }\bigg\}.
\end{align*}
On the event $A_{n,k}$, 
applying \eqref{Un-xy-Bound-001}, there exist constants $c, c', \ee_0 > 0$ such that for any 
$n\geq 1$, $\ee \in \left( 0,\ee _{0}\right) $, $1\leq p\leq n^{\ee_0}$, $k \in [p + 1, [n^{1-\ee }]]$, 
 $t\geq n^{1/2-\ee }$ and $a \in \bb A_p$, 
\begin{align*}
& W^{\mathfrak f}_{n-k} \bigg( \xi_k, t' + \sum_{i=1}^{k-p} h_p(\xi_i) + \tilde f(\xi_k)- \tilde f(\xi_0)  \bigg)  \notag\\
&\leq \bigg( 1+ \frac{c}{(n-k)^{\ee}} \bigg) \bigg(t' + \sum_{i=1}^{k-p} h_p(\xi_i)+ \tilde f(\xi_k)- \tilde f(\xi_0)  \bigg)  + |\sigma(g_{p} \cdots g_{1}, x)| \notag\\ 
& \quad + c e^{- c' (n-k)^{1/3} } 
 \bigg(1+t' + \sum_{i=1}^{k-p} h_p(\xi_i)+ \tilde f(\xi_k)- \tilde f(\xi_0)+ |\sigma(g_{p} \cdots g_{1}, x)|  \bigg) 
   \sum_{j = p+1}^{n} \mathcal F_j(a)  \notag\\
&\leq \bigg( 1+ \frac{c}{n^{\ee}} \bigg)  \bigg(t' + \sum_{i=1}^{k-p} h_p(\xi_i)+ \tilde f(\xi_k)- \tilde f(\xi_0) \bigg)  + |\sigma(g_{p} \cdots g_{1}, x)| \notag\\
& \quad + c e^{- c' n^{1/4} } 
\bigg(1+t' + \sum_{i=1}^{k-p} h_p(\xi_i)+ \tilde f(\xi_k)- \tilde f(\xi_0)+ |\sigma(g_{p} \cdots g_{1}, x)|  \bigg)  
  \sum_{j = p+1}^{n} \mathcal F_j(a), 
\end{align*}
which, together with the definition of $A_{n, k}$ (cf.\ \eqref{def-Ank}), implies that 
\begin{align}\label{bound E001}
& \bb E_a \bigg[ W^{\mathfrak f}_{n-k} \bigg( \xi_k, t' + \sum_{i=1}^{k-p} h_p(\xi_i) + \tilde f(\xi_k)- \tilde f(\xi_0)  \bigg); 
      A_{n,k}  \bigg]  \notag\\ 
& \leq \bigg( 1+ \frac{c}{n^{\ee}} \bigg) 
\bb E_a \bigg( t' + \sum_{i=1}^{k-p} h_p(\xi_i)- \tilde f(\xi_0); \;  A_{n,k}  \bigg) 
 + c \bb E_a  \left( |\tilde f(\xi_k)|; \tilde \tau^{\mathfrak f}_{t} >k, \tilde \nu_{n,t}=k  \right)   \notag \\
& \quad +  |\sigma(g_{p} \cdots g_{1}, x)|  \bb P_a \Big( \tilde \tau^{\mathfrak f}_{t} >k, \tilde \nu_{n,t}=k  \Big) 
\notag \\
& \quad + c  e^{- c' n^{1/4} } \bigg(  \sum_{j = p+1}^{n} \mathcal F_j(a) \bigg) \notag\\
& \qquad \times \bb E_a \bigg(  
  1 + t' + \sum_{i=1}^{k-p} | h_p(\xi_i) | +|\tilde f(\xi_0)|+|\tilde f(\xi_k)|  +  |\sigma(g_{p} \cdots g_{1}, x)|; \;  A_{n,k}  \bigg).
\end{align} 
Substituting \eqref{bound E001} into \eqref{bound J1 002}, we obtain 
\begin{align}\label{inequality-J11-hh}
I_2 (a, t)
&\leq \left( 1+ \frac{c}{n^{\ee}} \right)   \sum_{k=p+1}^{ [ n^{1-\ee }] }
 \bb E_a \bigg( t' + \sum_{i=1}^{k-p} h_p(\xi_i)- \tilde f(\xi_0); \; \tilde \tau^{\mathfrak f}_{t} >k, \tilde \nu_{n,t}=k,  |\tilde f(\xi_k )| \leq  n^{1/2-\ee}  \bigg) \notag \\
& \quad  
+ c \bb E_a \Big(|f(\xi_{\tilde \nu_{n,t}})|; \tilde \tau^{\mathfrak f}_{t} >\tilde \nu_{n,t}, \tilde \nu_{n,t} \leq n^{1-\ee}  \Big) 
+   |\sigma(g_{p} \cdots g_{1}, x)|  \bb P_a \left( \tilde \tau^{\mathfrak f}_{t} > \tilde \nu_{n,t}  \right) 
\notag \\
&\quad + c   e^{- c' n^{1/4} } \bigg(  \sum_{j = p+1}^{n} \mathcal F_j(a) \bigg)  \notag\\
& \qquad \times \sum_{k=p+1}^{ [ n^{1-\ee }] }  
 \bb E_a \bigg(  1+ t' + \sum_{i=1}^{k - p} | h_p(\xi_i) |  
  + |\tilde f(\xi_0)|+|\tilde f(\xi_k)| +  |\sigma(g_{p} \cdots g_{1}, x)|; \;  A_{n,k} \bigg).
\end{align}
For the second term on the right-hand side of \eqref{inequality-J11-hh}, 
by the second inequality \eqref{sum-hp-nu-nt-002} of Lemma \ref{Lemma martingale-001}, 
there exist constants $c, \beta, \ee > 0$  such that, 
for any $a \in \bb A_p$, $t \in \bb R$, $n\geq 1$ and $p\leq n^{\ee}$,  
\begin{align} \label{xi_nu bound 001}
&\bb E_a \left( |\tilde f(\xi_{\tilde \nu_{n,t}})|; \; \tilde \tau^{\mathfrak f}_{t} > \tilde \nu_{n,t},  \tilde \nu_{n,t} \leq n  \right) \notag\\
&\leq 
c  \left( 1 + \max \{t,0\} + | \sigma(g_p\cdots g_1, x)| +|\tilde f(a)|  \right) \frac{1}{n^{1 + \beta }} \sum_{j = p+1}^n \mathcal F_j(a). 
\end{align}
For the third term on the right-hand side of \eqref{inequality-J11-hh}, by Lemma \ref{Lemma-nu-tau},
we get that for any $1\leq p\leq n^{\ee} $ and $t \in \bb R$, 
\begin{align} \label{SecondinJ11-001}
\bb P_a \left( \tilde \tau^{\mathfrak f}_{t} > \tilde \nu_{n,t}   \right) 
\leq 
\frac{c}{n^{\beta }} \bigg( \max \{t,0\} + | \sigma(g_p\cdots g_1, x)|  + |\tilde f(a)| + \frac{1}{n} \sum_{j = p+1}^n \mathcal F_j(a) \bigg). 
\end{align}
For the last term on the right-hand side of \eqref{inequality-J11-hh}, 
using the definition of $A_{n, k}$ (cf.\ \eqref{def-Ank}), 
the fact that $t' = t + \sigma(g_p \cdots g_1, x)$ and 
the moment assumption \eqref{exp mom for f 001}, 
we obtain
\begin{align}  \label{ThirdinJ11-001}
& \sum_{k=p+1}^{ [ n^{1-\ee }] }  
 \bb E_a \bigg(  1+ t' + \sum_{i=1}^{k - p} | h_p(\xi_i) |  
  + |\tilde f(\xi_0)|+|\tilde f(\xi_k)| +  |\sigma(g_{p} \cdots g_{1}, x)|; \;  A_{n,k} \bigg)  \notag\\
 & \leq \sum_{k=p+1}^{ [ n^{1-\ee }] }  
 \bb E_a \bigg(  1+\max \{t,0\} + \sum_{i=1}^{k - p} | h_p(\xi_i) |+|\tilde f(\xi_0)|+|\tilde f(\xi_k)| + |\sigma(g_{p} \cdots g_{1}, x)|;  \notag\\
& \qquad\qquad\qquad \tilde \tau^{\mathfrak f}_{t} > k , \tilde \nu_{n,t}=k \bigg) \notag \\
& \leq 
 \bb E_a \bigg(  1+ \max \{t,0\} + \sum_{i=1}^{\tilde \nu_{n,t}-p} | h_p(\xi_i) |+|\tilde f(\xi_0)| 
  + |\tilde f(\xi_{\tilde \nu_{n,t}})| 
 + |\sigma(g_{p} \cdots g_{1}, x)|; \; \tilde \nu_{n,t} \leq n^{1-\ee} \bigg) \notag  \\
& \leq \bigg(  1  + \max \{t,0\} + cn 
  + |\tilde f(a)|  + c \sum_{j = p+1}^n \mathcal F_j(a) + |\sigma(g_{p} \cdots g_{1}, x)|  \bigg) \notag\\
& \leq \bigg( \max \{t,0\} + |\tilde f(a)|  + |\sigma(g_{p} \cdots g_{1}, x)|  + c \sum_{j = p+1}^n \mathcal F_j(a) \bigg),  
\end{align}
where in the last inequality we used the fact that $p \in [1, n^{\ee}]$ and $\mathcal F_j(a) > 1$ for any $j \geq 1$. 
Substituting \eqref{xi_nu bound 001}, \eqref{SecondinJ11-001} and \eqref{ThirdinJ11-001} 
into \eqref{inequality-J11-hh},  
we obtain
\begin{align}\label{inequality-J11-hh-002}
I_2 (a, t)
&\leq \left( 1+ \frac{c}{n^{\ee}} \right)   \sum_{k=p+1}^{ [ n^{1-\ee }] }
 \bb E_a \bigg( t' + \sum_{i=1}^{k-p} h_p(\xi_i) - \tilde f(\xi_0); \; \tilde \tau^{\mathfrak f}_{t} >k, \tilde \nu_{n,t}=k,  |\tilde f(\xi_k )| \leq  n^{1/2-\ee}  \bigg) \notag \\
& \quad  
+  \frac{c}{n^{1+\beta}}   \left( 1 + |\sigma(g_p \cdots g_1, x)| \right)   \bigg(\sum_{j = p+1}^n \mathcal F_j(a) \bigg) \notag \\ 
& \qquad
 \times \bigg( |\tilde f(a)|+\max \{t,0\} +  |\sigma(g_{p} \cdots g_{1}, x)|  + \frac{1}{n}\sum_{j = p+1}^n \mathcal F_j(a)  \bigg). 
\end{align}
Now we proceed by completing the sum $t' + \sum_{i=1}^{k-p} h_p(\xi_i) - \tilde f(\xi_0)$ in
the first term on the right-hand side of \eqref{inequality-J11-hh-002} to obtain the full martingale expression
$t' + \sum_{i=1}^{k} h_p(\xi_i) - \tilde f(\xi_0)$.  
To achieve this, we will address the difference between the two sums, which is exactly 
$\sum_{i=k-p+1}^{k} h_p(\xi_i).$ 
Additionally, we want to transition from the event $\{|\tilde f(\xi_k )| \leq  n^{1/2-\ee}\}$ to its complement,  thus introducing an extra term. 
This leads us to decompose the first term on the right-hand side of \eqref{inequality-J11-hh-002} as follows: 
\begin{align} \label{decom-J11-123}
& \bb E_a \bigg( t' + \sum_{i=1}^{k-p} h_p(\xi_i) - \tilde f(\xi_0); \; \tilde \tau^{\mathfrak f}_{t} >k, \tilde \nu_{n,t}=k,  |\tilde f(\xi_k )| \leq  n^{1/2-\ee}  \bigg) \notag \\
& = \bb E_a \bigg( t' + \sum_{i=1}^{k} h_p(\xi_i) - \tilde f(\xi_0); \; \tilde \tau^{\mathfrak f}_{t} >k, \tilde \nu_{n,t}=k \bigg) \notag \\
& \quad - \bb E_a \bigg( \sum_{i=k-p+1}^{k} h_p(\xi_i); \; \tilde \tau^{\mathfrak f}_{t} >k, \tilde \nu_{n,t}=k \bigg) \notag \\
& \quad -  \bb E_a \bigg( t' + \sum_{i=1}^{k-p} h_p(\xi_i) - \tilde f(\xi_0); \; \tilde \tau^{\mathfrak f}_{t} >k, \tilde \nu_{n,t}=k, 
  |\tilde f(\xi_k )| >  n^{1/2-\ee}  \bigg) \notag \\
& =: J_1(k) - J_2(k) - J_3(k).
\end{align}
The first term $J_1(k)$ contributes directly to one part of the desired result. 
For the second term $J_2(k)$, by the first inequality \eqref{sum-hp-nu-nt-001} of Lemma \ref{Lemma martingale-001}, we have
\begin{align} \label{bound-J-112-001}
\sum_{k= p+1}^{ [ n^{1-\ee }] }  |J_2(k)|  
\leq  \frac{c}{n^{\beta }}  \left( \max \{t,0\} + | \sigma(g_p\cdots g_1, x)| +  |\tilde f (a)| +   \frac{1}{n}\sum_{j = p+1}^n \mathcal F_j(a) \right). 
 \end{align}
For the third term $J_3(k)$, 
in view of \eqref{def-tau-f-y} and \eqref{equality-hp-sigmap}, 
it holds that $\tilde \tau^{\mathfrak f}_{t} >k$ implies
$t' + \sum_{i=1}^{k-p} h_p(\xi_i) + \tilde f(\xi_k ) - \tilde f(\xi_0) \geq 0$, 
so that
\begin{align*}
- J_3(k) 
& \leq \bb E_a \left( \tilde f(\xi_k); \; \tilde \tau^{\mathfrak f}_{t} >k, \tilde \nu_{n,t}=k,  |\tilde f(\xi_k )| >  n^{1/2-\ee}  \right)  \notag\\
& \leq  \bb E_a \left( |\tilde f(\xi_k)|; \;   |\tilde f(\xi_k )| >  n^{1/2-\ee}  \right) 
  \notag\\
& \leq  c e^{ - \frac{\alpha}{2} n^{1/2-\ee} } \mathcal F_k(a). 
\end{align*}
Hence
\begin{align}\label{bound-J-113-001}
\sum_{k=p+1}^{ [ n^{1-\ee }] }  - J_3(k)  
\leq  c e^{ - \frac{\alpha}{2} n^{1/2-\ee} } \sum_{j = p+1}^n \mathcal F_j(a)
\leq  \frac{c}{n^{1+\beta}} \sum_{j = p+1}^n \mathcal F_j(a).
\end{align}
Since $\mathcal F_j(a) > 1$ and $1 \leq p \leq n^{\ee}$, it holds that $\sum_{j = p+1}^n \mathcal F_j(a) \geq \frac{n}{2}$. 
Therefore, 
combining \eqref{inequality-J11-hh-002}, \eqref{decom-J11-123}, \eqref{bound-J-112-001} and \eqref{bound-J-113-001}, 
and noting that $M_k= \sum_{i=1}^{k} h_p(\xi_i)$ (by \eqref{def-Mn-martingel-001}), 
we obtain 
\begin{align} \label{Start induction-001}
I_2 (a, t)
& \leq \left( 1+\frac{c}{n^{\ee }}\right)  \sum_{k=p+1}^{ [ n^{1-\ee } ] }
 \bb E_a \left( t' + M_k - \tilde f(\xi_0); 
      \tilde \tau^{\mathfrak f}_{t} >k, \tilde \nu_{n,t}=k \right)  \notag\\
   & \quad     +  \frac{c}{n^{1+\beta}}  ( 1  +  |\sigma(g_{p} \cdots g_{1}, x)| ) \bigg( \sum_{j = p+1}^n \mathcal F_j(a) \bigg)  \notag \\ 
& \quad\quad  \times  \bigg( |\tilde f(a)| + \max \{t,0\} +  |\sigma(g_{p} \cdots g_{1}, x)|  + \frac{1}{n} \sum_{j = p+1}^n \mathcal F_j(a)  \bigg). 
  \end{align}
Now we deal with the first expectation on the right-hand side of \eqref{Start induction-001}.
We shall prove that for $k\in [p+1, [n^{1-\ee }] ]$, 
\begin{align}\label{Equ-desired}
&      \bb{E}_a\left( t' + M_{k} - \tilde f(\xi_0);\  \tilde \tau^{\mathfrak f}_{t} > k,  \tilde \nu_{n,t} = k \right)   \notag\\
& \leq \bb{E}_a\left( t'+M_{ [n^{1-\ee }]} - \tilde f(\xi_0);\ \tilde \tau^{\mathfrak f}_{t} > [n^{1-\ee }],  \tilde \nu_{n,t}=k\right)   \notag\\
& \quad +  \bb{E}_a \Bigg( - \sum_{i = \tilde \tau^{\mathfrak f}_{t} -p+1}^{\tilde \tau^{\mathfrak f}_{t}} h_p(\xi_i) - \tilde f(\xi_{ \tilde \tau^{\mathfrak f}_{t} });
                               \  k + 1 \leq \tilde \tau^{\mathfrak f}_{t} \leq  [n^{1-\ee }],  \tilde \nu_{n,t} = k \Bigg). 
 \end{align}
Indeed, since the event $\{ \tilde \nu_{n,t} = k \}$ is in $\mathscr G_{k}$,
we have $\bb E_a (M_{ [n^{1-\ee }]}; \tilde \nu_{n,t} = k) = \bb E_a (M_{ k}; \tilde \nu_{n,t} = k)$, 
so that
\begin{align} \label{intermediate ident-001}
& \bb{E}_a \left( t' + M_{ [n^{1-\ee }]} - \tilde f(\xi_0);\ \tilde \tau^{\mathfrak f}_{t} \leq [n^{1-\ee }],  \tilde \nu_{n,t}=k\right) 
-  \bb{E}_a \left( t'+M_{k} - \tilde f(\xi_0);\ \tilde \tau^{\mathfrak f}_{t} \leq k,  \tilde \nu_{n,t}=k\right)   \notag\\
& =   \bb{E}_a \left( t'+M_{k}- \tilde f(\xi_0);\ \tilde \tau^{\mathfrak f}_{t} > k,  \tilde \nu_{n,t}=k\right)  \notag\\
& \qquad  - \bb{E}_a\left( t'+M_{ [n^{1-\ee }]} - \tilde f(\xi_0);\ \tilde \tau^{\mathfrak f}_{t} > [n^{1-\ee }],  \tilde \nu_{n,t}=k\right). 
\end{align}
Using \eqref{intermediate ident-001}, 
we see that proving \eqref{Equ-desired} is equivalent to showing the following inequality: 
\begin{align}\label{Equ-00a}
& \bb{E}_a\left( t'+M_{ [n^{1-\ee }]} - \tilde f(\xi_0);\ \tilde \tau^{\mathfrak f}_{t} \leq [n^{1-\ee }],  \tilde \nu_{n,t}=k\right)   \notag\\
& \leq  \bb{E}_a\left( t'+M_{k} - \tilde f(\xi_0);\ \tilde \tau^{\mathfrak f}_{t} \leq k,  \tilde \nu_{n,t}=k\right)  \notag\\
& \quad  +  \bb{E}_a \Bigg( - \sum_{i = \tilde \tau^{\mathfrak f}_{t} -p+1}^{\tilde \tau^{\mathfrak f}_{t}} h_p(\xi_i) - \tilde f(\xi_{ \tilde \tau^{\mathfrak f}_{t} });\  k + 1 \leq \tilde \tau^{\mathfrak f}_{t} \leq  [n^{1-\ee }],  \tilde \nu_{n,t} = k \Bigg). 
\end{align}
Now we prove \eqref{Equ-00a} by an induction argument. 
Let $j \in [k + 1, [n^{1-\ee }]]$ be an integer. 
Then
\begin{align*}
& \bb{E}_a \left( t' + M_j - \tilde f(\xi_0);\ \tilde \tau^{\mathfrak f}_{t} \leq j,  \tilde \nu_{n,t} = k \right)   \notag\\
& =  \bb{E}_a \left( t' + M_j - \tilde f(\xi_0);\ \tilde \tau^{\mathfrak f}_{t} \leq j -1,  \tilde \nu_{n,t} = k \right)
   + \bb{E}_a \left( t' + M_j - \tilde f(\xi_0);\ \tilde \tau^{\mathfrak f}_{t} = j,  \tilde \nu_{n,t} = k \right).  
\end{align*}
As the event $\{ \tilde \tau^{\mathfrak f}_{t} \leq  j - 1,  \tilde \nu_{n,t}=k \}$ is in $\mathscr G_{j-1}$,
by the martingale property, we have
\begin{align*}
\bb{E}_a \left( t' + M_j - \tilde f(\xi_0);\ \tilde \tau^{\mathfrak f}_{t} \leq j -1,  \tilde \nu_{n,t} = k \right)
= \bb{E}_a \left( t' + M_{j -1} - \tilde f(\xi_0);\ \tilde \tau^{\mathfrak f}_{t} \leq  j -1,  \tilde \nu_{n,t} = k \right). 
\end{align*}
Besides, by \eqref{def-tau-f-y} and \eqref{def-Mn-martingel-001}, on the set $\{\tilde \tau^{\mathfrak f}_{t} = j \}$, 
it holds that $t' + M_{j - p} + \tilde f(\xi_j) - \tilde f(\xi_0)<0$, which leads to
\begin{align*}
\bb{E}_a \left( t' + M_j - \tilde f(\xi_0);\ \tilde \tau^{\mathfrak f}_{t} = j,  \tilde \nu_{n,t}=k\right)
\leq \bb{E}_a \bigg( - \sum_{i = j - p + 1}^j h_p(\xi_i) - \tilde f(\xi_j); \ \tilde \tau^{\mathfrak f}_{t} = j,  \tilde \nu_{n,t}=k \bigg).
\end{align*}
Hence, for any integer $j \in [k+1, [n^{1-\ee }]]$,
\begin{align*}
& \bb{E}_a \left( t' + M_j - \tilde f(\xi_0);\ \tilde \tau^{\mathfrak f}_{t} \leq j,  \tilde \nu_{n,t} = k \right)   \notag\\
& \leq \bb{E}_a \left( t'+M_{j -1} - \tilde f(\xi_0);\ \tilde \tau^{\mathfrak f}_{t} \leq j -1,  \tilde \nu_{n,t} = k \right)  \notag\\
& \quad   +  \bb{E}_a \bigg( - \sum_{i = j - p + 1}^j h_p(\xi_i) - \tilde f(\xi_j);\ \tilde \tau^{\mathfrak f}_{t} = j,  \tilde \nu_{n,t} = k \bigg).
\end{align*}
Summing these bounds up in $j \in [k+1, [n^{1-\ee }]]$, we get
\begin{align*}
& \bb{E}_a \left( t'+M_{ [n^{1-\ee }]} - \tilde f(\xi_0);\ \tilde \tau^{\mathfrak f}_{t} \leq [n^{1-\ee }],  \tilde \nu_{n,t} = k \right)   \notag\\
& \leq  \bb{E}_a \left( t'+M_{k} - \tilde f(\xi_0);\ \tilde \tau^{\mathfrak f}_{t} \leq k,  \tilde \nu_{n,t}=k\right)   \notag\\
& \quad  +  \sum_{j = k+1}^{ [n^{1-\ee }] }   
\bb{E}_a \bigg( - \sum_{i = j-p+1}^h h_p(\xi_i) - \tilde f(\xi_j); \ \tilde \tau^{\mathfrak f}_{t} = j,  \tilde \nu_{n,t} = k \bigg)  \notag\\
& =  \bb{E}_a \left( t'+M_{k} - \tilde f(\xi_0);\ \tilde \tau^{\mathfrak f}_{t} \leq k,  \tilde \nu_{n,t}=k\right)   \notag\\ 
& \quad  +   \bb{E}_a \Bigg( - \sum_{i = \tilde \tau^{\mathfrak f}_{t} - p+1}^{\tilde \tau^{\mathfrak f}_{t}} h_p(\xi_i) - \tilde f(\xi_{ \tilde \tau^{\mathfrak f}_{t} });
             \  k + 1 \leq \tilde \tau^{\mathfrak f}_{t} \leq  [n^{1-\ee }],  \tilde \nu_{n,t} = k \Bigg). 
\end{align*}
This shows \eqref{Equ-00a} and therefore \eqref{Equ-desired} holds.

Summing over $k$ in \eqref{Equ-desired}, we obtain
\begin{align}\label{decom-H-H1H2}
H (a, t): & = \sum_{k=p+1}^{ [ n^{1-\ee } ] } \bb{E}_a\left( t'+M_{k} - \tilde f(\xi_0);\ \tilde \tau^{\mathfrak f}_{t} > k,  \tilde \nu_{n,t}=k\right)    \notag\\
& \leq  \bb{E}_a\left( t'+M_{ [n^{1-\ee }]} - \tilde f(\xi_0);\ \tilde \tau^{\mathfrak f}_{t} > [n^{1-\ee }],  \tilde \nu_{n,t} \leq [n^{1-\ee }]  \right)  
    \notag\\
& \quad 
+ \bb{E}_a \Bigg( \Bigg| \sum_{i = \tilde \tau^{\mathfrak f}_{t} -p+1}^{\tilde \tau^{\mathfrak f}_{t}} h_p(\xi_i) \Bigg| + |\tilde f(\xi_{ \tilde \tau^{\mathfrak f}_{t} })|; \  \tilde \nu_{n,t} + 1 \leq \tilde \tau^{\mathfrak f}_{t} \leq  [n^{1-\ee }] \Bigg). 
\end{align}
For the first term on the right-hand side of \eqref{decom-H-H1H2}, 
by the definition of $W^{\mathfrak f}_{n}(a,t)$ 
(cf.\ \eqref{EXPECT-E_x-002})
and the fact that $\tilde f(\xi_0) = \tilde f(a)$, we have 
\begin{align} \label{bound-H1-abc}
& \bb{E}_a \left( t' + M_{ [n^{1-\ee }]} - \tilde f(\xi_0);\ \tilde \tau^{\mathfrak f}_{t} > [n^{1-\ee }],  \tilde \nu_{n,t} \leq [n^{1-\ee }]  \right) \notag \\
& = W^{\mathfrak f}_{[n^{1-\ee }] } (a, t) 
- \tilde f(a) \bb P_a(\tilde \tau^{\mathfrak f}_{t} > [n^{1-\ee }],  \tilde \nu_{n,t} \leq [n^{1-\ee }] )  + H (a, t)  \notag \\
& \leq  W^{\mathfrak f}_{[n^{1-\ee }] } (a, t) + 
|\tilde f(a)| \bb P_a \left( \tilde \tau^{\mathfrak f}_{t} > [n^{1-\ee }] \right) + K (a, t),  
\end{align}
where, for short, we denote
\begin{align*}
K (a, t) = - \bb{E}_a \left( t' + M_{ [n^{1-\ee }]};\ \tilde \tau^{\mathfrak f}_{t} > [n^{1-\ee }],  \tilde \nu_{n,t} > [n^{1-\ee }]  \right). 
\end{align*}
For the second term on the right-hand side of \eqref{bound-H1-abc}, 
by Lemma \ref{Lem-tau-prior}, 
there exist constants $c, \beta, \ee > 0$  such that, 
for any $a \in \bb A_p$, $t \in \bb R$, $n\geq 1$ and $p\leq n^{\ee}$,  
\begin{align} \label{bound Q_n 001}
 \bb P_a \left( \tilde \tau^{\mathfrak f}_{t} > [n^{1-\ee }] \right) 
 \leq  \frac{c}{n^{\beta}}
\bigg( \max \{t,0\} +  |\sigma(g_p\cdots g_1,x)|   +|\tilde{f}(a)| 
 + \frac{1}{n} \sum_{k = p+1}^n \mathcal F_k(a) \bigg). 
\end{align}
For the last term $K (a, t)$ on the right-hand side of \eqref{bound-H1-abc}, 
by \eqref{def-tau-f-y}, \eqref{def-Mn-martingel-001} and 
the fact that $t' = t + \sigma(g_p \cdots g_1, x) = t + \tilde f(\xi_0)$, on the set $\{ \tilde \tau^{\mathfrak f}_{t} > [n^{1-\ee }] \}$,  
we have $t' + M_{ [n^{1-\ee }] - p }  + \tilde f(\xi_{[n^{1-\ee }]} ) \geq 0$, 
so that
 \begin{align*}
K(a, t) \leq   \bb{E}_a \Bigg(  \sum_{i = [n^{1-\ee }] - p + 1}^{ [n^{1-\ee }] } |h_p(\xi_i)|  +  |\tilde f(\xi_{[n^{1-\ee }]} )|;  
   \ \tilde \tau^{\mathfrak f}_{t} > [n^{1-\ee }],  \tilde \nu_{n,t} > [n^{1-\ee }]  \Bigg). 
\end{align*}
By the Cauchy-Schwarz inequality, the moment assumption \eqref{exp mom for f 001}
 and Lemma \ref{Lemma 2-app}, it follows that there exist constants $\beta, \ee, c_{\ee} > 0$  such that, 
for any $a \in \bb A_p$, $t \in \bb R$, $n \geq 1$ and $p \leq n^{\ee}$,  
\begin{align} \label{bound for R_n 001}
K(a, t) & \leq   \bb{E}_a^{1/2}
   \Bigg( \sum_{i = [n^{1-\ee }] - p + 1}^{ [n^{1-\ee }] } |h_p(\xi_i)| +  |\tilde f(\xi_{[n^{1-\ee }]} )|  \Bigg)^2 
   \mathbb{P}_a^{1/2}\left( \tilde \nu_{n,t} > [n^{1-\ee }]  \right)  \notag\\
& \leq  c_{\ee}  e^{-\beta n^{\ee}}  \mathcal F_{[n^{1-\ee}]} (a). 
\end{align}
For the last term on the right-hand side of \eqref{decom-H-H1H2}, 
by Lemma \ref{Lemma martingale-002}, 
there exist constants $c, \beta, \ee > 0$  such that, 
for any $a \in \bb A_p$, $t \in \bb R$, $n\geq 1$ and $p\leq n^{\ee}$, 
\begin{align}\label{bound H2-001}
& \bb{E}_a \Bigg( \Bigg| \sum_{i = \tilde \tau^{\mathfrak f}_{t} -p+1}^{\tilde \tau^{\mathfrak f}_{t}} h_p(\xi_i) \Bigg| 
 + |\tilde f(\xi_{ \tilde \tau^{\mathfrak f}_{t} })|; \  \tilde \nu_{n,t}  + 1 \leq \tilde \tau^{\mathfrak f}_{t} \leq  [n^{1-\ee }]  \Bigg)\notag  \\
&  \leq   \frac{c}{n^{1 + \beta }} \left( 1 + \max \{t,0\} + | \sigma(g_p\cdots g_1, x)| + |\tilde f(a)|  \right) \sum_{k = p+1}^n \mathcal F_k(a). 
\end{align}
Substituting \eqref{bound-H1-abc}, \eqref{bound Q_n 001}, \eqref{bound for R_n 001} and \eqref{bound H2-001} 
into \eqref{decom-H-H1H2}, 
and taking into account that $\sum_{j = p+1}^n \mathcal F_j(a) \geq \frac{n}{2}$ for $1 \leq p \leq n^{\ee}$, 
we get
\begin{align*}
& H (a, t)   \leq  W^{\mathfrak f}_{[n^{1-\ee }] } (a, t)   \notag\\
&  +  \frac{c}{n^\beta} \bigg( |\tilde f(a)| +  \frac{1}{n} \sum_{k = p+1}^n \mathcal F_k(a) \bigg)
\bigg( \max \{t,0\} +  |\sigma(g_p\cdots g_1,x)| + |\tilde f(a)| +  \frac{1}{n} \sum_{k = p+1}^n \mathcal F_k(a) \bigg). 
\end{align*}
Recalling the definition of $H(a, t)$ (cf.\ \eqref{decom-H-H1H2})
and substituting the latter bound into \eqref{Start induction-001} gives 
\begin{align*} 
I_2 (a, t)
&\leq \left( 1+\frac{c_{\ee } }{n^{\ee }}\right)  
   W^{\mathfrak f}_{[n^{1-\ee }] } (a, t)  \notag  
    + \frac{c}{n^{\beta}}  ( 1 +  |\sigma(g_{p} \cdots g_{1}, x)|) 
     \bigg(  |\tilde f(a)| +  \frac{1}{n} \sum_{j = p+1}^n \mathcal F_j(a)  \bigg) \notag  \\
& \qquad\qquad \times   \bigg( \max \{t,0\} +  |\sigma(g_{p} \cdots g_{1}, x)| + |\tilde f(a)| 
  + \frac{1}{n} \sum_{j = p+1}^n \mathcal F_j(a) \bigg). 
  \end{align*}
This, together with \eqref{bound J1 001} and \eqref{bound J12 001}, implies that
\begin{align*}
& \bb{E}_a \left( t'+M_{n};\ \tilde \tau^{\mathfrak f}_{t} > n,\ \tilde \nu_{n,t} \leq n^{1-\ee } \right)  \notag\\
& \leq \left( 1+\frac{c_{\ee } }{n^{\ee }}\right)  
   W^{\mathfrak f}_{[n^{1-\ee }] } (a, t)  + \frac{c}{n^{\beta}}  ( 1 +  |\sigma(g_{p} \cdots g_{1}, x)|) 
  \bigg( |\tilde f(a)|  +  \frac{1}{n} \sum_{j = p+1}^n \mathcal F_j(a) \bigg)      \notag  \\
&  \qquad\qquad  \times    
\bigg( \max \{t,0\} +  |\sigma(g_{p} \cdots g_{1}, x)| +  |\tilde f(a)| + \frac{1}{n} \sum_{j = p+1}^n \mathcal F_j(a) \bigg) .
\end{align*}
Implementing this and the bound \eqref{bound-J2-ay} into \eqref{bound J1+J2}, 
we obtain 
\begin{align*}
W^{\mathfrak f}_{n} (a, t) 
 &\leq \left( 1+\frac{c_{\ee } }{n^{\ee }}\right)  
   W^{\mathfrak f}_{[n^{1-\ee }] } (a, t)  
 + \frac{c}{n^{\beta}}  ( 1 +  |\sigma(g_{p} \cdots g_{1}, x)| ) 
 \bigg( |\tilde f(a)| + \frac{1}{n} \sum_{j = p+1}^n \mathcal F_j(a)  \bigg) \notag  \\ 
 & \qquad\qquad  \times 
 \bigg( \max \{t,0\} +  |\sigma(g_{p} \cdots g_{1}, x)| + |\tilde f(a)| + \frac{1}{n} \sum_{j = p+1}^n \mathcal F_j(a) \bigg).
\end{align*}
This finishes the proof of Lemma \ref{Lemma 4}.
\end{proof}

\subsection{Proof of the quasi-decreasing behaviour without twist function} \label{AAsubsec-proof Prop 001}

We now proceed to establish inequality \eqref{bound with m for U-105-01-3} of Theorem \ref{Pro-Appendix-Final2-Inequ}. 
We first address the case where 
the twist function $\theta$ equals $1$
and the functions $f_n$  depend only on the first $p$ coordinates.

\begin{proposition} \label{Prop 001-app}
Suppose that the cocycle $\sigma$ admits finite exponential moments \eqref{exp mom for f 001}
and is centered \eqref{centering-001}. 
We also suppose that the effective central limit theorem \eqref{BEmart-001}  is satisfied. 
Then, there exist constants $\ee > 0$ and $c>0$ such that for any $n \geq 1$, $1\leq p\leq n^{\ee/2}$, 
$t \in \bb R,$ 
and any sequence $\mathfrak f=(f_n)_{n\geq0}$ of measurable functions on $ \bb G^{\{1,\ldots,p \}} \times \bb  X$, 
\begin{align*}
U^{\mathfrak f}_{n} (t) 
 \leq \left( 1+\frac{c }{n^{\ee }}\right)  
   U^{\mathfrak f}_{[n^{1-\ee }] } (t)  + c (1+ \max\{t, 0\}) \frac{1}{n^{\ee/2 }} 
	 C_{\alpha}(\mathfrak f).
\end{align*}
\end{proposition}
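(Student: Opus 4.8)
The plan is to deduce Proposition \ref{Prop 001-app} from the martingale estimate in Lemma \ref{Lemma 4} by averaging over the initial coordinates of the Markov chain $(\xi_i)_{i\geq 0}$, in exactly the same spirit as Lemma \ref{lem-U is increasing-1001} was obtained from Lemma \ref{lem-U is increasing-000}. Recall that for a sequence $\mathfrak f$ of $\scr A_p$-measurable functions, the decomposition \eqref{MAIN_GOAL-002}--\eqref{MAIN_GOAL-002bbb} gives $U^{\mathfrak f}_n(t)=I_1+I_2$, where by \eqref{Expect-E_x001} the main term $I_1$ equals the integral of $W^{\mathfrak f}_n((g_0,\ldots,g_p,x,0),t)$ against $\mu(dg_0)\cdots\mu(dg_p)\nu(dx)$, and where by Corollary \ref{Lem-bound-tau-p} the error term satisfies $|I_2|\leq c\,n^{-\beta}(1+\max\{t,0\})\,C_\alpha(\mathfrak f)$ for $p\leq n^{\ee}$. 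The same bound applies with $n$ replaced by $[n^{1-\ee}]$, since $p\leq n^{\ee/2}\leq [n^{1-\ee}]^{\ee}$ for $n$ large.

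First I would apply Lemma \ref{Lemma 4} pointwise: for $a=(g_0,\ldots,g_p,x,0)\in\bb A_p$ and $1\leq p\leq n^{\ee_0}$,
\begin{align*}
W^{\mathfrak f}_{n}(a,t)\leq \Big(1+\tfrac{c_{\ee}}{n^{\ee}}\Big)W^{\mathfrak f}_{[n^{1-\ee}]}(a,t)+\tfrac{c_{\ee}}{n^{\ee}}\,\Phi(a,t),
\end{align*}
where $\Phi(a,t)=(1+|\sigma(g_p\cdots g_1,x)|)\big(|\tilde f(a)|+\tfrac1n\sum_{j=p+1}^n\mathcal F_j(a)\big)\big(\max\{t,0\}+|\sigma(g_p\cdots g_1,x)|+|\tilde f(a)|+\tfrac1n\sum_{j=p+1}^n\mathcal F_j(a)\big)$. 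Then I would integrate this inequality against $\mu(dg_0)\cdots\mu(dg_p)\nu(dx)$. The integral of the left side is $I_1$ (for the walk of length $n$), and the integral of the first term on the right is $(1+c_{\ee}n^{-\ee})$ times the corresponding $I_1$ for length $[n^{1-\ee}]$. The key computation is to bound the integral of $\Phi$. Here I would use the Cauchy--Schwarz inequality to split the product, then use: (i) $\bb E\,\sigma(g_p\cdots g_1,x)^2\leq cp\leq cn^{\ee}$ by the moment assumption \eqref{exp mom for f 001} and centering \eqref{centering-001}; (ii) the stationarity identity \eqref{stationary-nu-001}, which gives $\int\int \mathcal F_j(g_0,\ldots,g_p,x,0)\,\mu^{\otimes(p+1)}(dg)\,\nu(dx)=\int\int e^{\alpha|f_j(\omega,x)|}\bb P(d\omega)\nu(dx)\leq C_\alpha(\mathfrak f)$, and similarly $\int\int|\tilde f(a)|\cdots\leq cC_\alpha(\mathfrak f)$ with appropriate exponents; to control the quadratic terms I would run the argument with $\alpha/4$ in place of $\alpha$, exactly as in the proof of Lemma \ref{lem-U is increasing-1001}, so that H\"older's inequality closes. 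This yields $\int\Phi\,d(\mu^{\otimes(p+1)}\otimes\nu)\leq c\,n^{\ee}(1+\max\{t,0\})\,C_\alpha(\mathfrak f)^2$ (or a power of $C_\alpha$ absorbed into the constant, noting $C_\alpha\geq 1$), hence the $\Phi$-contribution is $\leq c\,n^{-\ee/2}(1+\max\{t,0\})C_\alpha(\mathfrak f)$ after a cruder estimate — actually one gets $n^{\ee}\cdot n^{-\ee}=O(1)$ times $(1+\max\{t,0\})C_\alpha$, but then one uses that this must be compared against $U^{\mathfrak f}_{[n^{1-\ee}]}$ and reinserts a genuine power; I would arrange the bookkeeping so the final error is $c\,n^{-\ee/2}(1+\max\{t,0\})C_\alpha(\mathfrak f)$.

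Finally, combining the integrated inequality with the $I_2$-bounds from Corollary \ref{Lem-bound-tau-p} (for both lengths $n$ and $[n^{1-\ee}]$), and using Corollary \ref{Lem-bound-tau-p-000} together with \eqref{Expect-E_x001} to pass back from $I_1$ to $U^{\mathfrak f}_n$ and $U^{\mathfrak f}_{[n^{1-\ee}]}$, I obtain
\begin{align*}
U^{\mathfrak f}_n(t)\leq \Big(1+\tfrac{c}{n^{\ee}}\Big)U^{\mathfrak f}_{[n^{1-\ee}]}(t)+c(1+\max\{t,0\})\,n^{-\ee/2}\,C_\alpha(\mathfrak f),
\end{align*}
possibly after shrinking $\ee$ and renaming constants. \textbf{The main obstacle} I anticipate is the careful exponent bookkeeping in estimating the integral of $\Phi$: one has three competing factors, each contributing powers of $n^{\ee}$ (from $p\leq n^{\ee}$ and from $\sqrt{n}$-type bounds on $\sigma$) or linear-in-$n$ sums $\sum_{j=p+1}^n\mathcal F_j(a)$ divided by $n$, and one must verify that all the exponential moments survive the H\"older splitting — this is precisely why the statement uses $p\leq n^{\ee/2}$ rather than $p\leq n^{\ee}$, giving a little slack. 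Once the moment bookkeeping is handled, everything else is a routine repetition of the argument already carried out for the unperturbed case in Subsection \ref{subsec: Quasi-monotonicity bounds for sequences} and for the quasi-increasing direction in Subsection \ref{sec: approximation by finite range perturb}.
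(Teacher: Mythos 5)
Your proposal is correct and follows essentially the same route as the paper's proof: decompose $U^{\mathfrak f}_n = I_1 + I_2$ via \eqref{MAIN_GOAL-002}, control $I_2$ by Corollary \ref{Lem-bound-tau-p}, rewrite $I_1$ through \eqref{Expect-E_x001}, integrate the pointwise bound of Lemma \ref{Lemma 4} against $\mu^{\otimes(p+1)}\otimes\nu$ using stationarity \eqref{stationary-nu-001}, the bound $\bb E\,\sigma(g_p\cdots g_1,x)^2\leq cp$, and the $\alpha/4$ substitution to close the H\"older estimates. The exponent bookkeeping you flag as the main obstacle is exactly where the restriction $p\leq n^{\ee/2}$ is consumed, and it works out as you anticipate.
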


\begin{proof}
The assertion of Proposition \ref{Prop 001-app} 
is obtained from \eqref{MAIN_GOAL-002}, \eqref{Expect-E_x001}, 
Corollary \ref{Lem-bound-tau-p} and Lemma \ref{Lemma 4}.
Note that $\bb E \sigma(g_{p} \cdots g_{1}, x)^2 \leq c p$, by the moment assumption \eqref{exp mom for f 001} 
and the centering assumption \eqref{centering-001}.  
Since $a=(g_0,\ldots,g_{p},x,0) \in \bb A_p$, 
integrating both sides of \eqref{bound-M_n-001} in Lemma \ref{Lemma 4} (with $\alpha/4$ instead of $\alpha$) 
with respect to the measure $\mu(dg_0) \ldots \mu(dg_{p})  \nu(dx)$ and using  \eqref{Expect-E_x001} 
yields that, for any $t \in \bb R$, $n\geq 1$ and $1\leq p \leq n^{\ee/2}$, 
\begin{align} \label{Expect-E_x001-002}
\int_{\bb X} \bb  E \left( t+S_{n+p}^x ; \tau^{\mathfrak f}_{x, t}>n \right) \nu(dx) 
& \leq 
\left( 1+\frac{c_{\ee}}{n^{\ee }}\right)  
   \int_{\bb X} \bb E \left( t+S_{[n^{1-\ee}]+p}^x ; \tau^{\mathfrak f}_{x, t}> [n^{1-\ee}] \right) \nu(dx)  \notag  \\
& \quad + \frac{c_{\ee}}{n^{\ee/2}}    \left(1+ \max \{t,0\} \right)  C_{\alpha}(\mathfrak f),
\end{align}
where we have used the fact that the measure $\nu$ is $\mu$-stationary, as in \eqref{stationary-nu-001}. 
Using Lemma \ref{Lem-bound-tau-p}, we can slightly modify the expectations on both sides of \eqref{Expect-E_x001-002} to get
\begin{align*} 
U_n^{\mathfrak f}(t)&= \int_{\bb X} \bb E \left( t+S_{n}^x + f_n( T^n (\omega,x) ) - f_0(\omega,x) ; \tau^{\mathfrak f}_{x, t}>n \right) \nu(dx) \notag  \\
& \leq 
\left( 1+\frac{c_{\ee}}{n^{\ee }}\right)  
  U_{[n^{1-\ee}]}^{\mathfrak f}(t) 
   + \frac{c_{\ee}}{n^{\ee/2}}    \left(1+ \max \{t,0\} \right) 
     C_{\alpha}(\mathfrak f).
\end{align*}
This concludes the proof of Proposition \ref{Prop 001-app}.
\end{proof}

Using Proposition \ref{Prop 001-app} and the approximation by finite-size 
perturbations from Proposition \ref{Prop g approx 002}, we now obtain the following result for functions $f_n$ depending on infinitely many coordinates. 

\begin{proposition} \label{AA-Prop iterative 002}
Suppose that the cocycle $\sigma$ admits finite exponential moments \eqref{exp mom for f 001}
and is centered \eqref{centering-001}. 
We also suppose that the effective central limit theorem \eqref{BEmart-001} is satisfied. 
Assume that $\mathfrak f = (f_n)_{n \geq 0}$ is a sequence of  measurable functions on $\Omega \times \bb X$ 
satisfying the moment condition \eqref{exp mom for g 002} and the approximation property \eqref{approxim rate for gp-002}.
Then, there exist constants $\ee > 0$ and $c>0$ such that for any $n \geq 1$ and $t \in \bb R$,  
\begin{align*}
U^{\mathfrak f}_{n} (t) 
 &\leq \left( 1+\frac{c }{n^{\ee }}\right)  
   U^{\mathfrak f}_{[n^{1-\ee }] } (t+ 2 n^{-\ee})  
   + \frac{c}{n^{\ee/2 }} \Big( \max \{t,0\} + C_{\alpha}(\mathfrak f)\Big)  
   \Big( C_{\alpha}(\mathfrak f)  + D_{\alpha,\beta}(\mathfrak f) \Big).
\end{align*}
\end{proposition}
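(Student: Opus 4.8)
The idea is to transfer the finite-size result of Proposition~\ref{Prop 001-app} to the general perturbation sequence $\mathfrak f$ by sandwiching $U^{\mathfrak f}_n$ between values of $U^{\mathfrak f_p}_n$ for a suitable $p$, exactly in the spirit of the argument used for Proposition~\ref{lem-U is increasing-1002} in Section~\ref{sec quasi-increasingness}. The natural choice is $p = [n^{\delta}]$ for a small $\delta>0$ to be fixed; this is large enough that the approximation error $e^{-\beta n^{\delta}}$ is negligible, yet small enough that $p \le m^{\ee'/2}$ where $m = [n^{1-\ee'}]$ and $\ee'$ is the exponent furnished by Proposition~\ref{Prop 001-app}. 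Concretely one picks $\ee = \min\{\ee', \ee'(1-\ee')/2\}$ or similar, and $\delta$ strictly smaller than $\ee(1-\ee)$, so that all the exponent bookkeeping closes.

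\textbf{Key steps, in order.} First, invoke Proposition~\ref{Prop g approx 002} (with the same $\alpha,\beta$ from \eqref{exp mom for g 002}, \eqref{approxim rate for gp-002}) with $\delta$ as above and a free parameter $\gamma>0$: for $p=[n^{\delta}]$ this gives, for every $t\in\bb R$,
\begin{align*}
U^{\mathfrak f}_n(t) \le U^{\mathfrak f_p}_n(t+n^{-\gamma}) + c\big(\max\{t,0\}+C_\alpha(\mathfrak f)\big)e^{-\beta n^{\delta}}D_{\alpha,\beta}(\mathfrak f),
\end{align*}
and the analogous lower bound $U^{\mathfrak f_p}_{[n^{1-\ee}]}(t-n^{-\gamma}) \ge U^{\mathfrak f}_{[n^{1-\ee}]}(t) - (\text{error})$; here one uses that $U^{\mathfrak f_p}_{[n^{1-\ee}]}$ is non-decreasing in $t$ together with $n^{-\gamma} \le [n^{1-\ee}]^{-\gamma'}$ for a modified exponent. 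Second, apply Proposition~\ref{Prop 001-app} to the finite-size sequence $\mathfrak f_p = (f_{n,p})_{n\ge 0}$: since $p=[n^{\delta}]\le n^{\ee'/2}$, we get
\begin{align*}
U^{\mathfrak f_p}_n(t+n^{-\gamma}) \le \Big(1+\frac{c}{n^{\ee'}}\Big)U^{\mathfrak f_p}_{[n^{1-\ee'}]}(t+n^{-\gamma}) + c(1+\max\{t,0\})\,n^{-\ee'/2}\,C_\alpha(\mathfrak f_p),
\end{align*}
using $C_\alpha(\mathfrak f_p)\le C_\alpha(\mathfrak f)$ by Jensen's inequality applied to the conditional expectation in \eqref{def-approxi-fnp}. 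Third, feed the lower-bound half of Step~1 (at the scale $[n^{1-\ee'}]$, with the replacement $t\mapsto t+n^{-\gamma}$ and an extra $n^{-\gamma}$ shift so that the total shift becomes $\le 2n^{-\ee}$) back in to replace $U^{\mathfrak f_p}_{[n^{1-\ee'}]}$ by $U^{\mathfrak f}_{[n^{1-\ee'}]}$, absorbing the new approximation error. Finally, collect terms: the multiplicative factor stays $1+cn^{-\ee}$, the $t$-shift is bounded by $2n^{-\ee}$ once $\gamma$ is chosen $\ge\ee$, and the additive error is a sum of $n^{-\ee'/2}$, $e^{-\beta n^{\delta}}$, etc., each dominated by $n^{-\ee/2}(\max\{t,0\}+C_\alpha(\mathfrak f))(C_\alpha(\mathfrak f)+D_{\alpha,\beta}(\mathfrak f))$ after possibly shrinking $\ee$. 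One also uses Corollary~\ref{Lem-bound-tau-p-000aa} (or Corollary~\ref{Main th-001}-type bounds) to control $\max\{t,0\}$ terms arising from the $t$-shifts acting on the non-decreasing functions.

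\textbf{Main obstacle.} The delicate point is the exponent accounting: one must choose $\delta$, $\gamma$, and the output exponent $\ee$ simultaneously so that (i) $p=[n^{\delta}]$ satisfies the smallness constraint $p\le [n^{1-\ee'}]^{\ee'/2}$ required by Proposition~\ref{Prop 001-app} at scale $[n^{1-\ee'}]$ — which forces $\delta < \ee'(1-\ee')/2$; (ii) the shift $n^{-\gamma}$ survives the passage to the smaller scale $[n^{1-\ee'}]$, i.e. $n^{-\gamma}\le c\,[n^{1-\ee'}]^{-\ee}$, which holds provided $\gamma \ge \ee(1-\ee')$ and can be upgraded to give total shift $\le 2n^{-\ee}$; and (iii) every additive error term $e^{-\beta n^\delta}$, $n^{-\ee'/2}$, $n^{-\beta_0}$ (from the finite-size quasi-increasingness inside Proposition~\ref{Prop 001-app}) is $\le n^{-\ee/2}$. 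None of this is conceptually hard, but it requires care to state a single $\ee$ that works; the cleanest route is to prove the inequality first with an unspecified small exponent and then note at the end that it may be taken to be $\ee/2$ as written in the statement. A secondary technical nuisance is that the errors in Proposition~\ref{Prop g approx 002} carry the factor $\max\{t,0\}+C_\alpha(\mathfrak f)$ rather than $1+\max\{t,0\}$, so one keeps the $C_\alpha(\mathfrak f)$ and $D_{\alpha,\beta}(\mathfrak f)$ factors explicit throughout — matching the form of the asserted bound — rather than invoking the uniform constants $B$, which are only available in the versions of these results stated with the hypotheses $C_\alpha(\mathfrak f)\le B$, $D_{\alpha,\beta}(\mathfrak f)\le B$.
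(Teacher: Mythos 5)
Your proposal is correct and follows essentially the same route as the paper: sandwich $U^{\mathfrak f}_n$ between $U^{\mathfrak f_p}_n$ via the two-sided bounds of Proposition \ref{Prop g approx 002} with $p$ polynomial in $n$, apply the finite-size Proposition \ref{Prop 001-app} to $\mathfrak f_p$, use Jensen to get $C_{\alpha}(\mathfrak f_p)\leq C_{\alpha}(\mathfrak f)$, and return to $U^{\mathfrak f}_{[n^{1-\ee}]}$ with the accumulated shift $2n^{-\ee}$ (the paper simply fixes $p=[n^{\ee/2}]$, $\gamma=\ee$, $\delta=\ee/2$). The only superfluous point is your constraint (i): Proposition \ref{Prop 001-app} is invoked once at scale $n$, so only $p\leq n^{\ee'/2}$ is needed, not the smallness condition at scale $[n^{1-\ee'}]$ — but taking $\delta$ smaller than necessary does no harm.
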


\begin{proof} 
Let $\ee>0$ be as in Proposition \ref{Prop 001-app}.
Pick $n\geq 1$ and set $p=[n^{\ee/2}]$.
Using the upper bound from Proposition \ref{Prop g approx 002} with $\gamma=\ee$ and $\delta=\ee/2$, we get
\begin{align} \label{AA-proof-propapprox-001}
U^{\mathfrak f}_n (t) 
\leq U^{\mathfrak f_p}_n (t+ n^{-\ee}) 
+ c \left( \max \{t,0\} + C_{\alpha}(\mathfrak f) \right) e^{ -\beta n^{\ee/2}} D_{\alpha,\beta}(\mathfrak f).
\end{align}
By Proposition \ref{Prop 001-app}, we have
\begin{align*} 
U^{\mathfrak f_p}_{n} (t+ n^{-\ee}) 
 &\leq \left( 1+\frac{c }{n^{\ee }}\right)  
   U^{\mathfrak f_p}_{[n^{1-\ee }] } (t+ n^{-\ee})  + c (1+ \max \{t,0\}) \frac{1}{n^{\ee/2 }} C_{\alpha}(\mathfrak f_p).
\end{align*}
By Jensen's inequality, we have $C_{\alpha}(\mathfrak f_p) \leq C_{\alpha}(\mathfrak f)$. 
Hence, using the lower bound from Proposition \ref{Prop g approx 002}, we obtain 
\begin{align*}
U^{\mathfrak f_p}_{n} (t + n^{-\ee}) 
 \leq \left( 1+\frac{c }{n^{\ee }}\right)  
   U^{\mathfrak f}_{[n^{1-\ee }] } (t+ 2n^{-\ee})  + c (\max \{t,0\} + C_{\alpha}(\mathfrak f)) \frac{1}{n^{\ee/2 }} D_{\alpha,\beta}(\mathfrak f).
\end{align*}
Combining this with \eqref{AA-proof-propapprox-001}, we deduce the assertion of Proposition \ref{AA-Prop iterative 002}.
\end{proof}

Thanks to the general technique from Lemma \ref{lem-iter-proc-001}, we get the following assertion, which
proves inequality \eqref{bound with m for U-105-01-3} of Theorem \ref{Pro-Appendix-Final2-Inequ}
in the case where $\theta=1$.  

\begin{proposition}\label{Pro-Appendix-Final-Inequ}
Suppose that the cocycle $\sigma$ admits finite exponential moments \eqref{exp mom for f 001}
and is centered \eqref{centering-001}.  
We also suppose that the effective central limit theorem \eqref{BEmart-001} is satisfied.  
For any $B >0$, there exist constants $A, \ee, \gamma >0$ with the following property. 
Assume that $\mathfrak f = (f_n)_{n \geq 0}$ is a sequence of  measurable functions on $\Omega \times \bb X$ 
satisfying the moment condition \eqref{exp mom for g 002} and the approximation property \eqref{approxim rate for gp-002}
with $C_{\alpha}(\mathfrak f) \leq B$ and $D_{\alpha,\beta}(\mathfrak f) \leq B$. 
Then, for any $1 \leq n\leq m$ and $t \in \bb R$, we have 
\begin{align*}
U^{\mathfrak f}_{m} (t) \leq   
U^{\mathfrak f}_{n} \left( t + A n^{-\gamma} \right) + A n^{-\ee} (1+\max \{t,0\}). 
\end{align*}
\end{proposition}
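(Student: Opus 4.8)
The statement is the quasi-decreasing bound \eqref{bound with m for U-105-01-3} for the untwisted functional $U^{\mathfrak f}_n$, and it is the exact analogue of Proposition \ref{Prop-Vn-m-001} but with perturbations depending on the future. The plan is to deduce it from Proposition \ref{AA-Prop iterative 002} (the one-step recursive inequality $U^{\mathfrak f}_{n}(t) \leq (1 + c n^{-\ee}) U^{\mathfrak f}_{[n^{1-\ee}]}(t + 2n^{-\ee}) + c n^{-\ee/2}(1+\max\{t,0\})$) together with the quasi-increasing bound already in hand, namely Proposition \ref{lem-U is increasing-1002} (or equivalently the already-proved inequality \eqref{bound with m for U-105-01-2}), by feeding both into the abstract iteration Lemma \ref{lem-iter-proc-001}. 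This mirrors precisely how Proposition \ref{Prop-Vn-m-001} was obtained from Proposition \ref{Prop-Vn-001} and Lemma \ref{Lem-Vn-increasing}.

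\textbf{Key steps.} First I would fix $B \geq 1$ and assume $C_{\alpha}(\mathfrak f) \leq B$, $D_{\alpha,\beta}(\mathfrak f) \leq B$, so that all the constants appearing in Propositions \ref{AA-Prop iterative 002} and \ref{lem-U is increasing-1002} become absolute (depending only on $\beta$ and $B$). Then I would define $V_n(t) := U^{\mathfrak f}_n(t)$ and verify the three hypotheses of Lemma \ref{lem-iter-proc-001}: (i) each $V_n$ is non-negative and non-decreasing in $t$ — this is clear from the definition \eqref{def-U-f-n-t-001} since $t + \widetilde S^x_n$ is non-negative on $\{\tau^{\mathfrak f}_{x,t} > n\}$ and the event is monotone in $t$; (ii) the quasi-increasing property \eqref{V-n-m-second}, $V_n(t) \leq V_m(t + a n^{-\ee}) + b n^{-\beta}(1+\max\{t,0\})$ for $n \leq m$, which is exactly inequality \eqref{bound with m for U-105-01-2} (Proposition \ref{lem-UfA is increasing-1002-app} with $\theta = 1$); (iii) the bound $V_1(t) \leq b(1+\max\{t,0\})$, which follows from Corollary \ref{Main th-001} (itself a consequence of the already-established \eqref{bound with m for U-105-01-2}), together with the recursive inequality \eqref{V-n-m-first}, $V_n(t) \leq (1 + b n^{-\ee}) V_{[n^{1-\ee}]}(t + a n^{-\ee}) + b n^{-\beta}(1+\max\{t,0\})$, which is the content of Proposition \ref{AA-Prop iterative 002}. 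Having matched all hypotheses (choosing a common small $\ee > 0$ and common $\beta > 0$, shrinking if necessary and replacing $\beta$ by $\min$ of the relevant exponents), Lemma \ref{lem-iter-proc-001} directly yields constants $A, B' \geq 0$ such that $V_m(t) \leq V_n(t + A n^{-\ee(1-\ee)}) + B' n^{-\min\{\beta(1-\ee),\ee\}}(1+\max\{t,0\})$ for all $1 \leq n \leq m$. Renaming $\gamma := \ee(1-\ee)$ and taking $A$ large enough to absorb $B'$ and the exponent $\min\{\beta(1-\ee),\ee\}$ renamed as the new $\ee$, we obtain exactly the claimed statement.

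\textbf{Main obstacle.} The bookkeeping hazard is that Propositions \ref{AA-Prop iterative 002} and \ref{lem-U is increasing-1002} come with different exponents and with right-hand sides involving $C_{\alpha}(\mathfrak f)$, $D_{\alpha,\beta}(\mathfrak f)$ as explicit multiplicative factors rather than being already absorbed into a single constant; so the first real work is to pass from those quantitative-in-$\mathfrak f$ forms to the ``for any $\beta, B$ there exist $\ee, \gamma, A$'' form required by Lemma \ref{lem-iter-proc-001}, which only needs the shape \eqref{V-n-m-second}, \eqref{eq-bound for V_1-001}, \eqref{V-n-m-first}. Once the bounds $C_{\alpha}(\mathfrak f), D_{\alpha,\beta}(\mathfrak f) \leq B$ are imposed this is routine — all the $\mathfrak f$-dependent factors become $\leq$ a constant times a power of $B$. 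A second minor point is harmonizing the two $\ee$'s and the two $\beta$'s: I would simply take the minimum of all the exponents produced by the two input propositions and note that the approximation property \eqref{approxim rate for gp-002} persists when $\beta$ is decreased, as remarked right after its statement. Beyond this, there is genuinely nothing new to prove here: all the analytic substance is in Proposition \ref{AA-Prop iterative 002} (hence in Lemma \ref{Lemma 4} and the Markov-chain machinery of Section \ref{sec-quasi-decreasing property}) and in Lemma \ref{lem-iter-proc-001}, both of which we may invoke.
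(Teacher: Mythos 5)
Your proposal is correct and follows essentially the same route as the paper: the paper also deduces this proposition directly from Propositions \ref{lem-U is increasing-1002} and \ref{AA-Prop iterative 002} via the iteration Lemma \ref{lem-iter-proc-001}, with the hypothesis \eqref{eq-bound for V_1-001} checked from Lemma \ref{Lem-trick} and the moment assumptions (your invocation of Corollary \ref{Main th-001} for that step is an equally valid variant, since the underlying inequality \eqref{bound with m for U-105-01-2} is already available at this point).
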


\begin{proof}
This is a direct consequence of Propositions \ref{lem-U is increasing-1002} and \ref{AA-Prop iterative 002}, 
by using  Lemma \ref{lem-iter-proc-001}.
Note that the property \eqref{eq-bound for V_1-001} in Lemma \ref{lem-iter-proc-001} holds due to Lemma \ref{Lem-trick}
and our moment assumption. 
\end{proof}

\subsection{Proof of the quasi-decreasing behaviour with twist function}

We finally prove inequality \eqref{bound with m for U-105-01-3} of Theorem  \ref{Pro-Appendix-Final2-Inequ} in full generality,
by considering an arbitrary twist function $\theta$. 
We begin with an analog of Proposition \ref{Prop 001-app} that includes the twist function $\theta$, 
where the functions $f_n$ depend on the first $p$ coordinates.

\begin{proposition} \label{Prop 001-app2}
Suppose that the cocycle $\sigma$ admits finite exponential moments \eqref{exp mom for f 001}
and is centered \eqref{centering-001}. 
We also suppose that the effective central limit theorem \eqref{BEmart-001} is satisfied.  
Then, there exist constants $\ee > 0$ and $c>0$ such that for any $n \geq 1$, $1\leq p\leq n^{\ee/2}$, $t \in \bb R,$ 
any sequence $\mathfrak f = (f_n)_{n\geq0}$ of measurable functions on $ \bb G^{\{1,\ldots,p \}} \times \bb  X$, 
and any non-negative measurable function $\theta$ on $ \bb G^{\{1,\ldots,p \}}$, we have 
\begin{align*}
U^{\mathfrak f, \theta}_{n} (t) 
 \leq \left( 1+\frac{c }{n^{\ee }}\right)  
   U^{\mathfrak f, \theta}_{[n^{1-\ee }] } (t)  + c n^{- \ee/2} (1+ \max\{t, 0\}) 
	 C_{\alpha}(\mathfrak f) \|\theta\|_{\infty}.
\end{align*}
\end{proposition}

\begin{proof}
The demonstration is derived using a similar approach to that of Proposition \ref{Prop 001-app}.
This involves multiplying the inequality from Lemma \ref{Lemma 4} by $\theta(g_1, \ldots, g_p)$
 and then applying the integral formula \eqref{integral-formula-app}.
\end{proof}

By using the technique based on finite-size approximation 
of perturbations, we extend Proposition \ref{Prop 001-app2} to the case of 
perturbation functions $f_n$ depending on infinitely many coordinates.

\begin{proof}[Proof of Proposition \ref{AA-Prop iterative 002-app}]
The bound \eqref{AA-bound-M_n-002-app} follows from Propositions \ref{Prop 001-app2} and \ref{Prop UfA g approx 002-app}. 
\end{proof}

\begin{proof}[Proof of the second part of Theorem \ref{Pro-Appendix-Final2-Inequ}]
Inequality \eqref{bound with m for U-105-01-3} follows from 
Propositions  \ref{AA-Prop iterative 002-app} and \ref{lem-UfA is increasing-1002-app}, 
by using Lemmas \ref{Lem-trick} and \ref{lem-iter-proc-001} as in the proof of Proposition \ref{Pro-Appendix-Final-Inequ}.  
\end{proof}

\section*{Acknowledgments}  

Ion Grama was supported by 
the ANR project "Rawabranch" number ANR-23-CE40-0008, 
the France 2030 framework program, Centre Henri Lebesgue ANR-11-LABX-0020-01,
and Hui Xiao was supported by the National Natural Science Foundation of China (Grant Nos. 12595283 and 12288201). 
We would like to thank the referees for their helpful comments and remarks.


\end{document}